\documentclass[11pt]{article}
\usepackage{amsfonts,amsmath,amsthm,amssymb,enumerate,color}
\usepackage[makeroom]{cancel}
\usepackage{authblk}

\textheight 205 true mm \textwidth  150 true mm \oddsidemargin
2.5true mm \evensidemargin 2.5 true mm

%
% equations
%

\numberwithin{equation}{section}
\newcommand{\M}{\mathbb{M}}

\newcommand{\beq}{\begin{equation}}
\newcommand{\eeq}{\end{equation}}
\newcommand{\bea}{\begin{eqnarray}}
\newcommand{\eea}{\end{eqnarray}}
\newcommand{\beas}{\begin{eqnarray*}}
\newcommand{\eeas}{\end{eqnarray*}}

%
% theorem/proposition/etc.
%
\newtheorem{theorem}{Theorem}[section]

\newtheorem{definition}[theorem]{Definition}
\newtheorem{proposition}[theorem]{Proposition}

\newtheorem{corollary}[theorem]{Corollary}
\newtheorem{lemma}[theorem]{Lemma}
\newtheorem{remark}[theorem]{Remark}
\newtheorem{example}[theorem]{Example}
\newtheorem{examples}[theorem]{Examples}
\newtheorem{foo}[theorem]{Remarks}

%
% proof environment
%
% \newenvironment{proof}{\addvspace{\medskipamount}\par\noindent{\it
% Proof}.}
% {\unskip\nobreak\hfill$\Box$\par\addvspace{\medskipamount}}

  % angular brackets for projection

    % round brackets
   % curly brackets
     % edgy brackets

%\newcommand{\p}[1]{{\rm P}\left[#1\right]}

     % Norm
     % absolute value

  % angular brackets for projection

    % round brackets
   % curly brackets
     % edgy brackets

%\newcommand{\p}[1]{{\rm P}\left[#1\right]}

     % Norm
     % absolute value
%\newcommand{\R}[1]{\mathbb{R}}     % absolute value

\newcommand{\p}{\partial}

\newcommand{\bM}{\mathbb M}

\newcommand{\Rn}{\mathbb R^n}
\newcommand{\Om}{\Omega}

\newcommand{\Ho}{\mathcal H}

\newcommand{\R}{\mathbb R}

\newcommand{\ve}{\varepsilon}

%%%%%% additional symbols %%%%%%

\parindent=0pt

\title{Geometric Inequalities  on Riemannian and sub-Riemannian manifolds by heat semigroups techniques \\ Levico Summer School 2017}

\author{Fabrice Baudoin\footnote{ Research was supported in part by NSF Grant DMS 1660031}
}
\setlength{\affilsep}{3em}
 
\affil{Department of Mathematics, University of Connecticut,\par
   341 Mansfield Road, Storrs, CT  06269-1009, USA\par
   \texttt{fabrice.baudoin@uconn.edu}\vspace{1em}}

\begin{document}
\maketitle

\begin{abstract}
In those lecture notes, we review some applications of heat semigroups methods in Riemannian and sub-Riemannian geometry. 
\end{abstract}

\tableofcontents

\section{Subelliptic diffusion operators}

\subsection{Diffusion operators}

\begin{definition}\label{D:do}
A differential operator $L$ on $\mathbb{R}^n$,  is called a diffusion operator if it can be written
\[
L=\sum_{i,j=1}^n \sigma_{ij} (x) \frac{\partial^2}{ \partial x_i \partial x_j} +\sum_{i=1}^n b_i (x)\frac{\partial}{\partial x_i},
\]
where $b_i$ and $\sigma_{ij}$ are continuous functions on $\mathbb{R}^n$, and if for every $x \in \mathbb{R}^n$ the matrix $\sigma(x) \overset{def}{=} (\sigma_{ij}(x))_{1\le i,j\le n}$ is symmetric and nonnegative. 
\end{definition}
If for every $x \in \mathbb{R}^n$ the matrix $(\sigma_{ij}(x))_{1\le i,j\le n}$ is positive definite, then the operator $L$ is said to be elliptic.
The canonical example of an elliptic diffusion operator is the Laplace operator in $\mathbb{R}^n$:
\[
\Delta=\sum_{i=1}^n \frac{\partial^2}{\partial x_i^2}.
\]
However, Definition \ref{D:do} includes also non-elliptic operators such as for instance the heat operator
\[
H = \Delta - \frac{\partial}{\partial t},
\]
or very degenerate operators such as the so-called Kolmogorov operator in $\R^3$,
\[
\mathcal K = \frac{\partial^2}{\partial x^2} + x\frac{\partial }{\partial y} - \frac{\partial}{\partial t}.
\]
One of the most basic properties of diffusion operators is that they satisfy a maximum principle. Before we state it let us recall a simple  result from linear algebra.

\begin{lemma}\label{L:nonneg}
Let $A$ and $B$ be two symmetric and nonnegative matrices, then
\[
\operatorname{tr} (AB) \ge 0.
\]
\end{lemma}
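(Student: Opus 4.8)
The plan is to reduce the statement to the elementary fact that a symmetric nonnegative matrix has nonnegative trace, by exploiting the spectral theorem for the symmetric nonnegative matrix $A$. Concretely, since $A$ is symmetric it is orthogonally diagonalizable, $A = O D O^{T}$ with $D = \operatorname{diag}(\lambda_1,\dots,\lambda_n)$, and all $\lambda_i \ge 0$ because $A$ is nonnegative. I would then introduce the symmetric nonnegative square root $A^{1/2} := O\,\operatorname{diag}(\sqrt{\lambda_1},\dots,\sqrt{\lambda_n})\,O^{T}$, which satisfies $A^{1/2}A^{1/2} = A$ and is again symmetric and nonnegative.

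Next, using the cyclic invariance of the trace, I would write $\operatorname{tr}(AB) = \operatorname{tr}(A^{1/2}A^{1/2}B) = \operatorname{tr}(A^{1/2}BA^{1/2})$. Set $M := A^{1/2}BA^{1/2}$. This matrix is symmetric, since $A^{1/2}$ and $B$ are, and it is nonnegative: for every $x \in \mathbb{R}^n$ one has $\langle Mx,x\rangle = \langle B A^{1/2}x,\, A^{1/2}x\rangle \ge 0$ by nonnegativity of $B$. A symmetric nonnegative matrix has all its eigenvalues $\ge 0$, hence its trace — being the sum of those eigenvalues — is $\ge 0$; applying this to $M$ yields $\operatorname{tr}(AB) = \operatorname{tr}(M) \ge 0$.

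Alternatively, one can bypass the square root entirely: writing $A = \sum_{i=1}^{n} \lambda_i u_i u_i^{T}$ in an orthonormal eigenbasis $(u_i)$ with $\lambda_i \ge 0$, one computes directly $\operatorname{tr}(AB) = \sum_{i=1}^{n} \lambda_i \operatorname{tr}(u_i u_i^{T} B) = \sum_{i=1}^{n} \lambda_i \langle B u_i, u_i\rangle \ge 0$. There is no genuine obstacle in either route; the only points that deserve a word of justification are the existence of the nonnegative square root (equivalently, an appeal to the spectral theorem) and the cyclicity of the trace, both entirely standard. I would favour the second, diagonalization-based argument for its brevity.
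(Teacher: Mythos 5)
Your first argument — taking the symmetric nonnegative square root $S = A^{1/2}$, using cyclicity of the trace to get $\operatorname{tr}(AB) = \operatorname{tr}(SBS)$, and observing that $SBS$ is symmetric nonnegative and hence has nonnegative trace — is precisely the proof given in the paper. The alternative diagonalization route you sketch is a mild variant but is not needed; the proposal is correct.
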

\begin{proof}
Since $A$ is symmetric and non negative, there exists a symmetric and nonnegative matrix $S$ such that $S^2=A$. We have then
\[
\operatorname{tr} ( AB)=\operatorname{tr}(S^2 B)= \operatorname{tr} (S B S)=\operatorname{tr} (^t S BS).
\]
The matrix $^t S BS$ is seen to be symmetric and nonnegative and therefore tr$(^t S BS)\ge 0$.

\end{proof}

\begin{proposition}[Maximum principle for diffusion operators]\label{P:maxpr}
Let $f:\mathbb{R}^n \rightarrow \mathbb{R}$ be a $C^2$ function that attains a local minimum at $x$. If $L$ is a diffusion operator, then $Lf (x) \ge 0$.
\end{proposition}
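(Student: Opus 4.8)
The plan is to use the two classical first- and second-order conditions for a local minimum of a $C^2$ function, together with Lemma \ref{L:nonneg}. First I would recall that if $f$ attains a local minimum at $x$, then the gradient vanishes there, $\partial f/\partial x_i(x)=0$ for every $i$, so that the first-order part $\sum_i b_i(x)\,\partial f/\partial x_i(x)$ contributes nothing to $Lf(x)$. This reduces the problem to controlling the second-order term $\sum_{i,j}\sigma_{ij}(x)\,\partial^2 f/\partial x_i\partial x_j(x)$.

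Next I would invoke the second-order necessary condition: at a local minimum the Hessian $\mathrm{Hess}\, f(x)=\big(\partial^2 f/\partial x_i\partial x_j(x)\big)_{1\le i,j\le n}$ is a symmetric nonnegative (positive semidefinite) matrix. Since $\sigma(x)$ is symmetric and nonnegative by the definition of a diffusion operator, I would then observe that the second-order term is exactly a trace of a product of two symmetric nonnegative matrices, namely
\[
\sum_{i,j=1}^n \sigma_{ij}(x)\,\frac{\partial^2 f}{\partial x_i\partial x_j}(x)=\operatorname{tr}\big(\sigma(x)\,\mathrm{Hess}\, f(x)\big).
\]
Applying Lemma \ref{L:nonneg} with $A=\sigma(x)$ and $B=\mathrm{Hess}\, f(x)$ gives that this quantity is nonnegative, and combining with the vanishing of the first-order term yields $Lf(x)\ge 0$.

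There is really no serious obstacle here; the only point that needs a little care is checking that the sum $\sum_{i,j}\sigma_{ij}(x)\,\partial^2 f/\partial x_i\partial x_j(x)$ genuinely equals $\operatorname{tr}(\sigma(x)\,\mathrm{Hess}\, f(x))$, which is a direct computation using the symmetry of both matrices (and the equality of mixed partials, valid since $f$ is $C^2$). Everything else is just quoting the standard calculus facts about local minima and the linear-algebra lemma already proved above.
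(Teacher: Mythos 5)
Your proof is correct and follows essentially the same route as the paper's: both use the vanishing of the gradient at the local minimum to drop the first-order term, identify the remaining second-order term as $\operatorname{tr}(\sigma(x)\,\mathrm{Hess}\,f(x))$, and conclude by Lemma \ref{L:nonneg} together with the nonnegativity of the Hessian. The only minor difference is that you spell out the gradient-vanishing step explicitly, which the paper leaves implicit.
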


\begin{proof}
Let 
\[
L=\sum_{i,j=1}^n \sigma_{ij} (x) \frac{\partial^2}{ \partial x_i \partial x_j} +\sum_{i=1}^n b_i (x)\frac{\partial}{\partial x_i},
\]
and let  $f:\mathbb{R}^n \rightarrow \mathbb{R}$ be a $C^2$ function that attains a local minimum at $x$. We have
\begin{align*}
Lf(x) & =\sum_{i,j=1}^n \sigma_{ij} (x) \frac{\partial^2 f}{ \partial x_i \partial x_j} (x) \\
 & =\operatorname{tr} \left( \sigma (x) \operatorname{Hess} f (x) \right),
\end{align*}
where $\operatorname{Hess} f(x)  = (\frac{\partial^2 f}{ \partial x_i \partial x_j} (x))$ is the Hessian matrix of $f$. Since, by the assumption, we have  $\operatorname{Hess } f (x) \ge 0$, the desired conclusion immediately follows from Lemma \ref{L:nonneg}.

\end{proof}

Combined with the linearity,  Proposition \ref{P:maxpr} actually characterizes the diffusion operators.

\begin{theorem}\label{caracterisation}
Let $L: C^{\infty} (\mathbb{R}^n) \rightarrow C^0 (\mathbb{M})$
be an operator such that:
\begin{itemize}
\item[1)] $L$ is linear;
\item[2)] for any $f \in C^{\infty} (\mathbb{R}^n)$ such that $f$ has a local minimum at $x$, one has $Lf (x) \ge 0$.
\end{itemize}
Then, $L$ is a diffusion operator.
\end{theorem}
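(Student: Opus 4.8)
The plan is to fix an arbitrary point $x_0 \in \R^n$ and show that $Lf(x_0)$ is a linear function of the $2$-jet of $f$ at $x_0$; once that is known, the representation in Definition \ref{D:do} can be read off by evaluating $L$ on suitable polynomials. A preliminary observation, used throughout, is that $L$ annihilates constants: a constant function attains a local minimum at every point, so assumption 2) gives $L(1) \ge 0$ on $\R^n$, and the same applied to $-1$ gives $L(1) \le 0$; hence $L(1) \equiv 0$ and, by linearity, $L(c) \equiv 0$ for every constant $c$.

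The heart of the argument is the claim that if $h \in C^\infty(\R^n)$ has $h(x_0) = 0$, $\nabla h(x_0) = 0$ and $\operatorname{Hess} h(x_0) = 0$, then $Lh(x_0) = 0$. The obstacle here — and the step I expect to be the only genuinely delicate one — is that such an $h$ need not actually have a local minimum at $x_0$, so assumption 2) does not apply directly; I would get around this by a convexifying perturbation. Set $q(x) = |x - x_0|^2$ and $c_0 := Lq(x_0)$. For each $\ve > 0$ the function $h + \ve q$ is smooth with vanishing gradient and with Hessian $2\ve I > 0$ at $x_0$, so by the second derivative test it has a (strict) local minimum at $x_0$; assumption 2) together with linearity then gives $Lh(x_0) + \ve c_0 \ge 0$. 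Running the same argument with $-h$ in place of $h$ gives $-Lh(x_0) + \ve c_0 \ge 0$, so $|Lh(x_0)| \le \ve c_0$ for every $\ve > 0$, whence $Lh(x_0) = 0$ (and, incidentally, $c_0 \ge 0$).

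Since the map sending $f$ to its $2$-jet $(f(x_0), (\p_i f(x_0))_i, (\p_{ij} f(x_0))_{i \le j})$ is a linear surjection of $C^\infty(\R^n)$ onto a finite-dimensional vector space — polynomials of degree at most $2$ already realize every $2$-jet — the previous step shows that $f \mapsto Lf(x_0)$ factors through it. Thus there exist coefficients $c(x_0)$, $b_i(x_0)$ and symmetric $\sigma_{ij}(x_0)$ with
\[
Lf(x_0) = c(x_0) f(x_0) + \sum_{i=1}^n b_i(x_0)\,\p_i f(x_0) + \sum_{i,j=1}^n \sigma_{ij}(x_0)\,\p_{ij} f(x_0).
\]
Evaluating on $f \equiv 1$ and using the first paragraph gives $c(x_0) = 0$. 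Evaluating on the coordinate function $x \mapsto x_i$ gives $b_i(x_0) = L(x_i)(x_0)$, which is a continuous function of $x_0$ because $L(x_i)$ lies in $C^0(\R^n)$; evaluating on $x \mapsto x_i x_j$ then expresses each $\sigma_{ij}(x_0)$ in terms of $L(x_i x_j)(x_0)$ and the $b_k(x_0)$, so the $\sigma_{ij}$ are continuous as well.

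It remains to check that $\sigma(x_0)$ is nonnegative. Given $\xi \in \R^n$, I would apply $L$ to $f(x) = \langle \xi, x - x_0\rangle^2$: this function is nonnegative and vanishes at $x_0$, hence has a global (in particular local) minimum there, and $f(x_0) = 0$, $\nabla f(x_0) = 0$, while $\operatorname{Hess} f(x_0)$ has entries $2\xi_i \xi_j$; so the formula above yields $0 \le Lf(x_0) = 2\sum_{i,j}\sigma_{ij}(x_0)\,\xi_i \xi_j$. Hence $\sigma(x_0)$ is symmetric and nonnegative for every $x_0$, and $L$ has exactly the form demanded by Definition \ref{D:do}. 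Apart from the perturbation trick of the second paragraph, the proof is just linear bookkeeping with $2$-jets together with the choice of the well-adapted test functions $1$, $x_i$, $x_i x_j$, $|x - x_0|^2$ and $\langle \xi, x - x_0\rangle^2$.
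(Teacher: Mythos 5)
Your proof is correct and follows essentially the same strategy as the paper: a convexifying perturbation by $\pm\ve|x-x_0|^2$ to show $Lf(x_0)$ depends only on the second-order data of $f$ at $x_0$, followed by evaluation on the test polynomials $1$, $x_i$, $x_i x_j$, $\langle\xi,x-x_0\rangle^2$ and a continuity argument for the coefficients. The only cosmetic difference is that you phrase the key step as ``$L$ annihilates any $h$ with vanishing $2$-jet'' and invoke the second-derivative test, whereas the paper works with the explicit Taylor remainder $\|x-y\|^3 g(x)$ and a direct estimate, but these are the same idea.
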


\begin{proof}
Suppose $L$ satisfy the above properties. It is readily seen from 2) that $L1 = 0$. Fix now $y \in \mathbb{R}^n$. Our first observation is that if $g\in C^\infty(U)$, where $U\subset \Rn$ is a neighborhood of $y$, then 
\begin{equation}\label{Lcube}
L(\|\cdot- y\|^3 g)(y) =0.
\end{equation}
To see this, for $\varepsilon >0$ consider the function
\[
x \rightarrow \| x-y \|^3 g(x) +\varepsilon \| x -y \|^2.
\]
Since such function admits a local minimum at $y$, by 2) we have
\[
L( \|\cdot -y \|^3 g) (y)  \ge - \varepsilon L ( \|\cdot -y \|^2)(y).
\]
Letting $\varepsilon \to 0$, we thus obtain
\[
L( \|\cdot-y \|^3 g) (y)  \ge 0.
\]
By considering now the function
\[
x \rightarrow \| x-y \|^3 g(x) -\varepsilon \| x -y \|^2,
\]
which has a local maximum at $y$, we obtain similarly that 
\[
L( \| x-y \|^3 g) (y)  \le 0.
\]
In conclusion, we have proved \eqref{Lcube}.

In order to show that $L$ is a diffusion operator we now consider $f\in C^\infty(\Rn)$, and would like to show that
\begin{equation}\label{do}
Lf(y)=\sum_{i,j=1}^n \sigma_{ij} (y) \frac{\partial^2 f}{ \partial x_i \partial x_j}(y) +\sum_{i=1}^n b_i (y)\frac{\partial f}{\partial x_i}(y).
\end{equation}
for continuous functions $\sigma_{ij}$ and $b_i $, with $(\sigma_{ij})_{1\le i,j\le n} \ge 0$. By the Taylor formula there exists  a neighborhood $U$ of $y$, and a function $g\in C^\infty(U)$, such that for any $x\in U$ one has
\[
f(x)=f(y)+\sum_{i=1}^n (x_i -y_i) \frac{\partial f}{\partial x_i}(y)+\frac{1}{2} \sum_{i,j=1}^n (x_i-y_i)(x_j-y_j)  \frac{\partial^2 f}{\partial x_i \partial x_j} (y) +  \| x-y \|^3 g(x).
\]
By applying the operator $L$ to the previous identity, and by taking 1), the fact that $L1 = 0$, and \eqref{Lcube} into account, we obtain
\[
Lf(y)=\sum_{i=1}^n L(x_i -y_i)(y)  \frac{\partial f}{\partial x_i}(y)+\frac{1}{2} \sum_{i,j=1}^n L((x_i-y_i)(x_j-y_j))(y) \frac{\partial^2 f}{\partial x_i \partial x_j} (y).
\]
By denoting
\[
b_i(y)= L(x_i -y_i)(y),\ \ \ \ \ \ \text{and}\ \   
\sigma_{ij} (y)=\frac{1}{2} L((x_i-y_i)(x_j-y_j))(y),
\]
we conclude that \eqref{do} holds. Furthermore, since $L$ transforms smooth into continuous functions, the functions $b_i$'s and $\sigma_{ij}$'s are continuous.
To complete the proof it would thus suffice to show that the matrix 
$(\sigma_{ij}(y))_{1\le i,j \le n}$ be nonnegative. i.e., that for every $\xi\in \Rn$ one has $\sum_{i,j=1}^n  \sigma_{ij} (y) \xi_i \xi_j \ge 0$.
Indeed, by 1) again we have 
\[
\sum_{i,j=1}^n \sigma_{ij} (y)  \xi_i \xi_j =\frac{1}{2} L( \langle \xi, x-y \rangle^2)(y).
\]
Since the function $x \rightarrow \langle \xi, x-y \rangle^2$ attains a local minimum at $y$, we obtain from 2)
\[
L( \langle \xi, x-y \rangle^2)(y) \ge 0.
\]
From the arbitrariness of $y\in \Rn$, the proof is completed.

\end{proof}

The previous characterization of diffusion operators is intrinsic and has the advantage of being coordinate free. This suggests to adopt the following natural definition on manifolds.
 
 \begin{definition}\label{D:dm}
 Let $\mathbb{M}$ be a smooth manifold. A diffusion operator $L$ on $\mathbb{M}$ is an operator $L: C^{\infty} (\mathbb{M}) \rightarrow C^0 (\mathbb{M})$
such that:
\begin{enumerate}
\item $L$ is linear;
\item for any $f \in C^{\infty} (\mathbb{M})$ which has a local minimum at $x$, one has $Lf (x) \ge 0$.
\end{enumerate}
 \end{definition}
 
Of course, in any local chart, a diffusion operator $L$ reads as
\[
L=\sum_{i,j=1}^n \sigma_{ij} (x) \frac{\partial^2}{ \partial x_i \partial x_j} +\sum_{i=1}^n b_i (x)\frac{\partial}{\partial x_i}.
\]
where $\sigma$ is symmetric and nonnegative.

\subsection{Subelliptic diffusion operators}

In this Section, we give without proofs some results of regularity theory for subelliptic diffusion operators. For an introduction to those topics in the case of elliptic operators, we refer to Folland \cite{Fol} in the Euclidean case and \cite{Gri} in the manifold case. Regularity theory for hypoelliptic diffusion operators was developed by L. H\"ormander \cite{Hor} and we refer to \cite{Bra} for a recent presentation.

\begin{definition} Let $L$ be a diffusion operator with smooth coefficients which is defined on an open set $\Omega \subset \mathbb{R}^n$. We say that $L$ is subelliptic on $\Omega$, if for every compact set $K \subset \Omega$, there exist a constant $C$ and $\varepsilon >0$ such that for every $u \in C^\infty_0(K)$,
\begin{align}\label{subelliptic estimate}
\| u \|^2_{(2\varepsilon)} \le C \left( \| Lu \|_2^2+\|u\|^2_2\right).
\end{align}
\end{definition}

In the above definition, we denoted for $s\in \mathbb{R}$,  the Sobolev norm $$\|f \|^2_{(s)}=\int_{\mathbb{R}^n} | \hat{f} (\xi) |^2 (1+\| \xi \|^2)^s d\xi <+\infty ,$$ where $ \hat{f} (\xi)$ is the Fourier transform of $f$, and $\| \cdot \|_2$ is the classical $L^2$ norm. It is well-known that elliptic operators are subelliptic in the sense of the previous definition with $\varepsilon=1$.  There are many interesting examples of diffusion operators which are subelliptic but not elliptic.  Let, for instance,
\[
L=\sum_{i=1}^d V_i^2 +V_0
\]
where $V_0,V_1,\cdots,V_d$ are smooth vector fields defined on an open set $\Omega$. We denote by $\mathfrak{V}$ the Lie algebra generated by the $V_i$'s, $1 \le i \le d$, and for $x \in \Omega$,
\[
\mathfrak{V}(x)=\{ V(x), V \in \mathfrak{V} \}.
\]
The celebrated H\"ormander's theorem states that if  for every $x \in \Omega$, $\mathfrak{V}(x)=\mathbb{R}^n$, then $L$ is a subelliptic operator. In that case $\varepsilon$ is $1/d$, where $d$ is the maximal length of the brackets that are needed to generate $\mathbb{R}^n$.

If $L$ is a subelliptic diffusion operator, using the theory of pseudo-differential operators, it can be proved that the inequality \eqref{subelliptic estimate} self-improves into a family of inequalities of the type 
\[
\| u \|^2_{(2\varepsilon+s)} \le C \left( \| Lu \|_{(s)}^2+\|u\|^2_{(s)}\right), \quad u \in C^\infty_0(K),
\]
where $s\in \mathbb{R}$ and the constant $C$ only depends on $K$ and $s$. This implies, in particular, by a usual bootstrap argument and Sobolev lemma that subelliptic operators are hypoelliptic. Iterating the latter inequality also leads to
\[
\| u \|^2_{(2k\varepsilon)} \le C \sum_{j=0}^k \| L^j u\|^2_2, \quad u \in C^\infty_0(K), 
\]
where $k \ge 0$. This may be used to bound derivatives of $u$ in terms of $L^2$ norms to iterated powers of $u$. Indeed, if $\alpha$ is a multi-index and $k$ is such that $4k\varepsilon> 2 | \alpha | +n$, then we get $\sup_{x \in K} | \partial^\alpha u (x) |^2 \le C\| u \|^2_{(2k\varepsilon)} $ and therefore
\[
\sup_{x \in K} | \partial^\alpha u (x) |^2 \le C'   \sum_{j=0}^k \| L^j u\|^2_2.
\]
Along the same lines, we also get the following result.

\begin{proposition}\label{regula}
Let $L$ be a subelliptic diffusion operator with smooth coefficients on an open set $\Omega \subset \mathbb{R}^n$. Let $u \in L^2(\Omega)$ such that, in the sense of distributions,
\[
Lu,L^2u,\cdots, L^ku \in L^2(\Omega),
\]
for some positive integer $k$. Let $K$ be a compact subset of $\Omega$ and denote by $\varepsilon$ the same constant as in \eqref{subelliptic estimate}. If $k>\frac{n}{4 \varepsilon}$, then $u$ is a continuous function on the interior of $K$ and there exists a positive constant $C$ such that 
\[
\sup_{x \in K} | u(x) |^2 \le C  \sum_{j=0}^k \|L^j u \|^2_{ L^2(\Omega)}.
\]
More generally, if $k>\frac{m}{2\varepsilon}+\frac{n}{4\varepsilon}$ for some non negative integer $m$, then $u$ is $m$-times continuously differentiable in the interior of $K$ and  there exists a positive constant $C$  such that 
\[
\sup_{|\alpha| \le m}  \sup_{x \in K} |\partial^\alpha u(x) |^2 \le C  \sum_{j=0}^k \|L^j u \|^2_{ L^2(\Omega)} .
\]
\end{proposition}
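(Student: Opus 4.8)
The plan is to reduce the statement, via a cutoff and the Sobolev embedding theorem, to the local regularity fact that the hypotheses $u,Lu,\dots,L^ku\in L^2(\Omega)$ force $u\in H^{(2k\varepsilon)}_{\mathrm{loc}}(\Omega)$, together with a quantitative bound. Since $K$ is compact in the open set $\Omega$ we have $\mathrm{dist}(K,\partial\Omega)>0$, so we may fix $\chi\in C_0^\infty(\Omega)$ with $\chi\equiv1$ on a neighbourhood of $K$. If we can show that $\chi u\in H^{(2k\varepsilon)}(\mathbb{R}^n)$ with $\|\chi u\|^2_{(2k\varepsilon)}\le C\sum_{j=0}^k\|L^j u\|^2_{L^2(\Omega)}$, then the Sobolev embedding $H^{(2k\varepsilon)}(\mathbb{R}^n)\hookrightarrow C^m_b(\mathbb{R}^n)$ — valid exactly when $2k\varepsilon>m+\tfrac{n}{2}$, i.e. $k>\tfrac{m}{2\varepsilon}+\tfrac{n}{4\varepsilon}$, reducing to $k>\tfrac{n}{4\varepsilon}$ when $m=0$ — gives $\sup_{|\alpha|\le m}\sup_{x\in\mathbb{R}^n}|\partial^\alpha(\chi u)(x)|^2\le C\|\chi u\|^2_{(2k\varepsilon)}$, and since $\chi u=u$ near $K$ this is precisely the assertion. (This Sobolev--Morrey inequality is the same one underlying the a priori sup bounds already displayed above for test functions.)

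To obtain the local $H^{(2k\varepsilon)}$ regularity I would bootstrap in the order of differentiability, using the following local lemma: \emph{if $v$ and $Lv$ both belong to $H^{(s)}_{\mathrm{loc}}(\Omega)$, then $v\in H^{(s+2\varepsilon)}_{\mathrm{loc}}(\Omega)$, with the $H^{(s+2\varepsilon)}$-norm over a slightly smaller open set controlled by the $H^{(s)}$-norms of $v$ and $Lv$.} Applying this with $v=L^iu$ and iterating is then routine bookkeeping: writing $P_r$ for the statement ``$L^iu\in H^{(2r\varepsilon)}_{\mathrm{loc}}(\Omega)$ for all $0\le i\le k-r$'', the hypotheses are exactly $P_0$, and the lemma applied to $v=L^iu$ for $0\le i\le k-r-1$ (for which both $L^iu$ and $L^{i+1}u$ lie in $H^{(2r\varepsilon)}_{\mathrm{loc}}$ by $P_r$) yields $P_{r+1}$; after $k$ steps $P_k$ is the statement $u\in H^{(2k\varepsilon)}_{\mathrm{loc}}(\Omega)$, and following the constants through a fixed finite chain of nested open neighbourhoods of $K$ with compact closure gives the quantitative bound. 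In carrying out the lemma one must legitimise the use of the subelliptic a priori estimates, which hold a priori only on $C_0^\infty$; this is done by Friedrichs mollification $v\mapsto v*\eta_\delta$, the relevant point being that for $w,Lw\in L^2_{\mathrm{loc}}$ the commutator of $w\mapsto w*\eta_\delta$ with the smooth-coefficient operators that appear is bounded on $L^2_{\mathrm{loc}}$ uniformly in $\delta$, so the estimates pass to the limit.

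The one genuinely delicate step — and the one I expect to be the main obstacle — is the commutator estimate inside the lemma. Choosing cutoffs $\chi\prec\tilde\chi$ in $C_0^\infty(\Omega)$ (so $\tilde\chi\equiv1$ on a neighbourhood of $\mathrm{supp}\,\chi$) and applying the self-improved estimate $\|w\|^2_{(s+2\varepsilon)}\le C(\|Lw\|^2_{(s)}+\|w\|^2_{(s)})$ (for $w\in C_0^\infty$) to a mollification of $\chi v$, the matter reduces to bounding $L(\chi v)=\chi\,Lv+[L,\chi]v$ in $H^{(s)}$. Here $\chi\,Lv=\chi\tilde\chi\,Lv$ lies in $H^{(s)}$ by hypothesis, and a direct computation gives $[L,\chi]v=(L\chi)\,v+2\langle\sigma\nabla\chi,\nabla v\rangle$, whose zeroth-order part is harmless; the trouble is the first-order term $\langle\sigma\nabla\chi,\nabla v\rangle$, for which the obvious bound would require the unavailable information $v\in H^{(s+1)}_{\mathrm{loc}}$ (recall $2\varepsilon\le1$, often strictly). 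The resolution is that this term is subordinate to $L$: since $\sigma(x)$ is symmetric and nonnegative, the Cauchy--Schwarz inequality for the quadratic form $\sigma(x)$ gives the pointwise bound
\[
\bigl|\langle\sigma\nabla\chi,\nabla v\rangle\bigr|^2\ \le\ \langle\sigma\nabla\chi,\nabla\chi\rangle\ \langle\sigma\nabla v,\nabla v\rangle ,
\]
and the ``carr\'e du champ'' $\langle\sigma\nabla v,\nabla v\rangle$ is controlled in $L^1_{\mathrm{loc}}$ by $\|Lv\|_2\|v\|_2+\|v\|_2^2$ by integrating by parts twice (rewriting $L$ in the form $\sum_i\partial_i\bigl(\sum_j\sigma_{ij}\partial_j\cdot\bigr)+\sum_i\tilde b_i\partial_i$, which is legitimate because $\sigma$ is smooth). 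This places $[L,\chi]v$ in $L^2_{\mathrm{loc}}$, with the correct quantitative dependence, when $s=0$; for $s>0$ one conjugates by the Bessel potential $\Lambda^s$ and argues in the same way, the additional commutators being absorbed precisely by the pseudodifferential calculus already used to produce the self-improved estimates. Combining these ingredients yields the lemma, and with it the proposition.
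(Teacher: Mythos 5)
Your proof follows the approach that the paper sketches without proof (the paper says explicitly "In this Section, we give without proofs some results of regularity theory" and introduces the proposition with "Along the same lines, we also get the following result"): iterate the self-improved subelliptic estimate $\|w\|^2_{(s+2\varepsilon)}\le C(\|Lw\|^2_{(s)}+\|w\|^2_{(s)})$ on a chain of nested cutoffs and finish with the Sobolev--Morrey embedding. Since the paper offers no details beyond this sketch, the real question is whether the bootstrap through the localization error $[L,\chi]$ closes, and this is exactly where you concentrate your effort.

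Your treatment of the $s=0$ step is correct and is the genuine content: the commutator $[L,\chi]v=(L\chi)v+2\langle\sigma\nabla\chi,\nabla v\rangle$ has a first-order term that cannot be bounded in $L^2$ by $\|v\|_2$ alone, but the pointwise Cauchy--Schwarz $|\langle\sigma\nabla\chi,\nabla v\rangle|^2\le\langle\sigma\nabla\chi,\nabla\chi\rangle\,\Gamma(v)$ combined with the energy identity $\int h^2\Gamma(v)\,d\mu\lesssim\|Lv\|_2\|v\|_2+\|v\|_2^2$ (from writing $L$ in divergence form and integrating by parts) does give $[L,\chi]v\in L^2_{\mathrm{loc}}$ with the right quantitative dependence. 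This is the correct resolution of the only nonobvious obstacle, and it matches the degenerate-diffusion structure that drives the whole theory.

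The one place you leave a gap is the $s>0$ step of the bootstrap, which is unavoidable since your induction ascends through $s=0,2\varepsilon,\dots,2(k-1)\varepsilon$. Conjugating by $\Lambda^s$ produces the commutator $[L,\Lambda^s]$, which is a pseudodifferential operator of order $s+1$ (not $s$), so it sends $H^{(s)}$ to $H^{(-1)}$ rather than to $L^2$; the statement that these terms are "absorbed precisely by the pseudodifferential calculus" is not a proof and in particular the pointwise Cauchy--Schwarz trick does not apply directly after conjugation. The fix is standard but nontrivial: one must integrate by parts inside the energy pairing $\int\tilde\chi^2(\Lambda^s v)[L,\Lambda^s]v$ to redistribute one derivative, exploit that $[\sigma_{ij},\Lambda^s]$ is skew-adjoint modulo lower order, and absorb the resulting $\Gamma(\Lambda^s v)$ terms. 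This is the same technical point the paper silently delegates to the references (Folland, Bramanti, H\"ormander), so your sketch is at the level of detail the paper itself adopts; but if you wish the argument to stand alone, this is the step that needs to be written out.
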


As a consequence of the previous result, we see  in particular that
\[
\bigcap_{k \ge 0} \mathcal{D}(L^k) \subset C^\infty(\mathbb{R}^n).
\]

We can define subelliptic operators on a manifold by using charts:

\begin{definition}
Let $L$ be a diffusion operator on a manifold $\mathbb{M}$. We say that $L$ is subelliptic on $\mathbb{M}$ if it is in any local chart.
\end{definition}

Proposition \ref{regula} can then be extended to the manifold case:

\begin{proposition}\label{regula}
Let $\mathbb{M}$ be a manifold endowed with a smooth positive measure $\mu$, and let  $L$ be a subelliptic diffusion operator with smooth coefficients on an open set $\Omega \subset \mathbb{M}$. Let $u \in L^2(\Omega,\mu)$ such that, in the sense of distributions,
\[
Lu,L^2u,\cdots, L^ku \in L^2(\Omega,\mu),
\]
for some positive integer $k$. Let $K$ be a compact subset of $\Omega$. There exists a constant $\varepsilon >0$ such that  If $k>\frac{n}{4 \varepsilon}$, then $u$ is a continuous function on the interior of $K$ and there exists a positive constant $C$ such that 
\[
\sup_{x \in K} | u(x) |^2 \le C \sum_{j=0}^k \|L^j u \|^2_{ L^2(\Omega,\mu)} .
\]
More generally, if $k>\frac{m}{2\varepsilon}+\frac{n}{4\varepsilon}$ for some non negative integer $m$, then $u$ is $m$-times continuously differentiable in the interior of $K$ and  there exists a positive constant $C$  such that 
\[
\sup_{|\alpha| \le m}  \sup_{x \in K} |\partial^\alpha u(x) |^2 \le C  \sum_{j=0}^k \|L^j u \|^2_{ L^2(\Omega,\mu)} .
\]
\end{proposition}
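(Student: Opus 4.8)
The plan is to reduce the statement to the Euclidean version established above by a localization argument through coordinate charts. Aside from routine bookkeeping, the only delicate points are the passage of the distributional hypothesis through a chart together with the smooth density of $\mu$, and the fact that the subellipticity constant $\ep$ must be taken as a minimum over a finite cover; the latter is what I expect to be the only genuine (and mild) obstacle, and it is harmless because a smaller $\ep$ only makes the hypothesis on $k$ more demanding.

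\emph{Localization and transfer of the hypothesis.} For every $x\in K$ fix a chart $(U_x,\phi_x)$ around $x$ and an open set $V_x$ with $x\in V_x\subset\overline{V_x}\subset U_x$, $\overline{V_x}$ compact, and set $\Omega_x:=\phi_x(V_x)\subset\Rn$. By the definition of subellipticity on a manifold, $\widetilde L:=(\phi_x)_\ast L$ is a subelliptic diffusion operator with smooth coefficients on $\phi_x(U_x)\supset\overline{\Omega_x}$, and since $\mu$ is a smooth positive measure we may write $(\phi_x)_\ast\mu=\rho_x\,dy$ with $\rho_x\in C^\infty(\phi_x(U_x))$, $\rho_x>0$. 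On the compact set $\overline{\Omega_x}$ the density $\rho_x$ is bounded above and below by positive constants, so on $\Omega_x$ the norms $\|\cdot\|_{L^2(\,\cdot\,,\rho_x dy)}$ and $\|\cdot\|_{L^2(\,\cdot\,,dy)}$ are equivalent; moreover, for a smooth-coefficient operator $P$, the property that $Pv$, as a distribution, lies in $L^2$ is insensitive, locally, to whether one uses $dy$ or $\rho_x\,dy$ as reference measure, because conjugation by $\rho_x$ turns the $\rho_x\,dy$-adjoint of $P^j$ into the Lebesgue adjoint of $P^j$. Consequently $w:=u\circ\phi_x^{-1}$ satisfies $w,\widetilde Lw,\dots,\widetilde L^k w\in L^2(\Omega_x,dy)$ in the sense of (Lebesgue) distributions.

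\emph{Local application of the Euclidean statement.} Fix a compact neighbourhood $K'_x\subset\Omega_x$ of $\phi_x(x)$. The Euclidean version of the proposition, applied to $w$, $\widetilde L$ and $K'_x$, yields constants $\ep_x>0$ and $C_x>0$ such that, whenever $k>\tfrac{n}{4\ep_x}$ (resp. $k>\tfrac{m}{2\ep_x}+\tfrac{n}{4\ep_x}$), the function $w$ is continuous (resp. $m$-times continuously differentiable) on $\mathrm{int}(K'_x)$ and
\[
\sup_{|\alpha|\le m}\,\sup_{K'_x}\big|\partial^\alpha w\big|^2\ \le\ C_x\sum_{j=0}^k\big\|\widetilde L^j w\big\|_{L^2(\Omega_x,dy)}^2 .
\]
By the norm equivalence above and the change of variables $\phi_x$, the right-hand side is bounded by $C'_x\sum_{j=0}^k\|L^j u\|_{L^2(V_x,\mu)}^2\le C'_x\sum_{j=0}^k\|L^j u\|_{L^2(\Omega,\mu)}^2$.

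\emph{Globalization.} The open sets $\phi_x^{-1}(\mathrm{int}\,K'_x)$, $x\in K$, cover the compact set $K$; choose a finite subcover indexed by $x_1,\dots,x_N$ and put $\ep:=\min_i\ep_{x_i}>0$, $C:=\max_i C'_{x_i}$. Since the hypothesis on $k$ only weakens as $\ep$ grows, $k>\tfrac{n}{4\ep}$ (resp. $k>\tfrac{m}{2\ep}+\tfrac{n}{4\ep}$) entails the corresponding inequality for each $\ep_{x_i}$, so all the conclusions of the previous step are available. As being continuous (resp. $C^m$) is a chart-independent property, the transition maps being smooth, $u$ is continuous (resp. $C^m$) on $\mathrm{int}(K)$ because it is so near each of its points; and for $x\in K$, choosing $i$ with $x\in\phi_{x_i}^{-1}(\mathrm{int}\,K'_{x_i})$, one gets
\[
|u(x)|^2\ \le\ \sup_{K'_{x_i}}\big|u\circ\phi_{x_i}^{-1}\big|^2\ \le\ C\sum_{j=0}^k\|L^j u\|_{L^2(\Omega,\mu)}^2 ,
\]
which yields the first bound after taking the supremum over $x\in K$. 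The $C^m$ bound follows identically: to compare the coordinate derivatives coming from the various charts $\phi_{x_i}$ with those of a fixed atlas one uses the chain rule together with the fact that the finitely many transition maps have locally bounded derivatives of every order, and the resulting constants are absorbed into $C$.
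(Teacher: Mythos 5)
The paper does not supply a proof of the manifold version of Proposition~\ref{regula} (it merely states that the Euclidean result "can then be extended to the manifold case"), so the natural reading is that the intended proof is exactly the localization argument you give. Your proof is correct and is the standard argument the paper is implicitly invoking: the key observations — that the distributional hypothesis $L^j u\in L^2$ passes through a chart unchanged because the smooth positive density $\rho_x$ conjugates the $\mu$-adjoint into the Lebesgue adjoint, that the local $L^2$ norms with respect to $\rho_x\,dy$ and $dy$ are equivalent on a compactly contained set, and that taking $\varepsilon=\min_i\varepsilon_{x_i}$ over a finite subcover only strengthens the hypothesis on $k$ so that all local Euclidean conclusions remain valid — are the right ones, and the final reconciliation of coordinate derivatives via the finitely many transition maps with locally bounded derivatives is handled appropriately.
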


\subsection{The distance associated to subelliptic diffusion operators}

Let $L$  be a subelliptic diffusion operator defined on a manifold $\M$. For every smooth functions $f,g: \mathbb{M} \rightarrow \mathbb{R}$, let us define the so-called \textit{ carr\'e du champ}, which is   the  symmetric first-order differential form defined by:
\[
\Gamma (f,g) =\frac{1}{2} \left( L(fg)-fLg-gLf \right).
\]
A straightforward computation shows that if, in a local chart,
\[
L=\sum_{i,j=1}^n \sigma_{ij} (x) \frac{\partial^2}{ \partial x_i \partial x_j} +\sum_{i=1}^n b_i (x)\frac{\partial}{\partial x_i},
\]
then, in the same chart
\[
\Gamma (f,g)=\sum_{i,j=1}^n  \sigma_{ij} (x) \frac{\partial f}{\partial x_i} \frac{\partial g}{\partial x_j}.
\]
As a consequence, for every smooth function $f$,
\[
\Gamma(f) \ge 0.
\]

\begin{definition}
An absolutely continuous curve $\gamma: [0,T] \rightarrow \bM$ is said to be subunit for the operator $L$ if for every smooth function $f : \M \to \mathbb{R}$ we have $ \left| \frac{d}{dt} f ( \gamma(t) ) \right| \le \sqrt{ (\Gamma f) (\gamma(t)) }$.  We then define the subunit length of $\gamma$ as $\ell_s(\gamma) = T$. 
\end{definition}

Given $x, y\in \M$, we indicate with 
\[
S(x,y) =\{\gamma:[0,T]\to \M\mid \gamma\ \text{is subunit for}\ \Gamma, \gamma(0) = x,\ \gamma(T) = y\}.
\]
In these lecture notes we always assume that 
\[
S(x,y) \not= \emptyset,\ \ \ \ \text{for every}\ x, y\in \M.
\]
If $L$ is an elliptic operator or if $L$ is a sum of squares operator that satisfies H\"ormander's condition, then this assumption is satisfied (this is the so-called Chow theorem).

Under such assumption  it is easy to verify that
\begin{equation}\label{ds}
d(x,y) = \inf\{\ell_s(\gamma)\mid \gamma\in S(x,y)\},
\end{equation}
defines a true distance on $\M$. This is the intrinsic distance associated to the subelliptic operator $L$. A beautiful result by Fefferman and Phong \cite{FP} relates  the subellipticity of $L$ to the size of the balls for this metric:

\begin{theorem}[Fefferman-Phong]\label{FPtheorem}
Let $\Omega$ be a relatively compact open subset of $\M$. For some $\varepsilon >0$, there are constants $r_0 >0$ and $C$ such that for all $x$ and $r$,
\[
B(x,r)\cap \Omega \subset B_d (x,Cr^\varepsilon) \cap \Omega
\]
whenever $0 \le r < r_0$. $B_d$ denotes here the ball for the metric $d$ and $B$ the ball for an arbitrary Riemannian metric on $\Omega$.
\end{theorem}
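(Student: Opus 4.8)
I would deduce the inclusion from a dual description of the sub-Riemannian distance together with a H\"older regularity estimate for functions with bounded carr\'e du champ, the latter being where the subelliptic inequality \eqref{subelliptic estimate} enters. Fix a compact $K\subset\M$ with $\overline\Om\subset\mathrm{int}(K)$, and let $d_R$ be the auxiliary Riemannian distance on $\Om$ from the statement. The first step is the identity, valid for $x,y\in\overline\Om$,
\[
d(x,y)=\sup\bigl\{\,f(y)-f(x)\ :\ f\in C^\infty(\mathrm{int}(K)),\ \Gamma(f)\le 1\ \text{on}\ \mathrm{int}(K)\,\bigr\}.
\]
Only the inequality ``$d(x,y)\le\sup$'' is needed below, but both are easy. ``$\ge$'': if $\gamma\in S(x,y)$ with $\ell_s(\gamma)=T$ and $\Gamma(f)\le1$, then $f(y)-f(x)=\int_0^T\frac{d}{dt}f(\gamma(t))\,dt\le\int_0^T\sqrt{\Gamma(f)(\gamma(t))}\,dt\le T$; take the infimum over $\gamma$. ``$\le$'': use the competitor $z\mapsto d(x,z)$, which is $1$-Lipschitz for $d$ and hence has subunit gradient, $\Gamma(d(x,\cdot))\le1$ a.e.\ (a standard consequence of the definition of subunit curves, via Cauchy--Schwarz at points of differentiability); mollifying in a coordinate chart and renormalizing gives smooth competitors $f_\delta$ with $\Gamma(f_\delta)\le1$ and $f_\delta(y)-f_\delta(x)\to d(x,y)$. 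Truncating $d(x,\cdot)$ and multiplying by a cutoff supported in $\mathrm{int}(K)$, which does not change the values at $x$ and $y$, is a routine modification.

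\textbf{Step 2 (H\"older regularity of $\Gamma$-Lipschitz functions).} This is the core. I claim there are $C>0$ and $r_0>0$, depending only on $K$, such that every $f\in C^\infty(\mathrm{int}(K))$ with $\Gamma(f)\le1$ obeys
\[
|f(z)-f(w)|\le C\,d_R(z,w)^{\varepsilon}\qquad\text{for all }z,w\in\overline\Om\text{ with }d_R(z,w)<r_0,
\]
where $\varepsilon$ is the exponent in \eqref{subelliptic estimate}. The bound $\Gamma(f)\le1$ controls $f$ only in the nondegenerate directions of $\sigma$, so $f$ need not be $d_R$-Lipschitz; subellipticity upgrades this to $d_R$-H\"older continuity of order $\varepsilon$. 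Concretely I would invoke the companion first-order subelliptic estimate $\|u\|_{(\varepsilon)}^2\le C\bigl(\int_K\Gamma(u)+\|u\|_2^2\bigr)$ for $u\in C_0^\infty(K)$ --- which holds with the same $\varepsilon$ and comes out of the same pseudodifferential machinery that yields \eqref{subelliptic estimate} --- apply it to $\varphi f$ for well-chosen cutoffs $\varphi$, combine with the Sobolev embedding $H^s\hookrightarrow C^{0,\alpha}$ and the iterated estimates, and run a localization-and-rescaling argument based at points of $\overline\Om$ to turn the fixed gain $\varepsilon$ into the modulus $d_R^{\,\varepsilon}$.

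\textbf{Step 3 (conclusion) and the main obstacle.} Let $x\in\Om$ and $y\in B(x,r)\cap\Om$ with $0\le r<r_0$. Combining the two steps (applying Step 2 to the renormalized mollifications $f_\delta$ of $d(x,\cdot)$),
\[
d(x,y)=\lim_{\delta\to0}\bigl(f_\delta(y)-f_\delta(x)\bigr)\le C\,d_R(x,y)^{\varepsilon}<C\,r^{\varepsilon},
\]
so $y\in B_d(x,Cr^{\varepsilon})$, and since $y\in\Om$ we get $y\in B_d(x,Cr^{\varepsilon})\cap\Om$. Shrinking $r_0$ so that $\bigcup_{x\in\overline\Om}B(x,r_0)\subset\mathrm{int}(K)$ makes $C,r_0$ independent of $x$, which is the assertion. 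The real difficulty is Step 2: \eqref{subelliptic estimate} is not scale invariant and the $\sigma_{ij}$ are only smooth (they cannot simply be frozen), so extracting from it a \emph{local H\"older modulus with the sharp exponent $\varepsilon$} requires care with the cutoffs, with an anisotropic rescaling adapted to the geometry, and with the uniformity of constants over $K$ --- this is precisely the Fefferman--Phong analysis. The facts used in Step 1 (that $d$-Lipschitz functions have $\Gamma\le1$ a.e.\ and that mollification essentially preserves this) are routine but should be verified; and the reverse inclusion $B_d(x,r)\subset B(x,Cr)$, not required here, is immediate since subunit curves have bounded Riemannian speed ($\Gamma$ of the coordinate functions is bounded on $K$), whence $d_R\le Cd$.
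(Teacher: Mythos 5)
The paper does not prove this theorem; it states it and refers to Fefferman--Phong \cite{FP}, so there is no proof in the text for your attempt to be measured against. Your outline reduces the claim to a H\"older modulus for $\Gamma$-Lipschitz functions via a dual description of $d$, which is indeed a sound way to think about it, but as written it has two genuine gaps. In Step 1 you assert that mollifying $d(x,\cdot)$ in a chart produces smooth competitors $f_\delta$ with $\Gamma(f_\delta)\le 1$. For a genuinely degenerate $\Gamma$ this is not routine: $d(x,\cdot)$ need not be Lipschitz in coordinates (that is precisely what the theorem quantifies), so its mollifications need not have pointwise bounded gradient, and even where they do, $\Gamma(f_\delta)(z)=\sum\sigma_{ij}(z)\,\partial_i f_\delta(z)\,\partial_j f_\delta(z)$ pairs $\sigma(z)$ with gradients averaged over nearby points, and the bound $\le 1$ does not propagate. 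The paper's own proof of the dual distance formula (the Proposition immediately following) explicitly flags this difficulty and avoids mollification altogether, instead approximating $L$ by the elliptic operators $L_k=L+\tfrac1k\Delta$ and passing to the limit by Arzel\`a--Ascoli. You need that device, or a genuinely anisotropic smoothing adapted to the sub-Riemannian geometry; coordinate mollification, as stated, does not close.

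Second, Step 2 carries essentially all the analytic content and you do not prove it. Given the dual formula, the estimate you state there is \emph{equivalent} to the theorem: one direction is your Step 3, and conversely the theorem forces $|f(z)-f(w)|\le d(z,w)\le C\,d_R(z,w)^{\varepsilon}$ whenever $\Gamma(f)\le 1$. So the proposal is, in effect, a reduction of Fefferman--Phong to itself, with an honest indication of the tools (a first-order subelliptic estimate, Sobolev embedding, localization and rescaling). Two further cautions on that sketch: the first-order estimate $\|u\|^2_{(\varepsilon)}\le C\big(\int\Gamma(u)\,d\mu+\|u\|_2^2\big)$ is not a direct consequence of \eqref{subelliptic estimate} by naive interpolation, since $\int\Gamma(u)\,d\mu=-\langle u,Lu\rangle\le\|u\|_2\|Lu\|_2$ points the wrong way and so does not dominate the right-hand side; and the ``localization-and-rescaling'' step, which must convert a fixed Sobolev gain into a $d_R^{\,\varepsilon}$ modulus with constants uniform in the base point over $\overline{\Omega}$, is exactly the hard analysis of \cite{FP} and cannot be dispatched in a sentence. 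You flag both points yourself, which is candid, but it means the proposal is a plan rather than a proof.
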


A corollary of this result is that the topology induced by $d$ coincides with the manifold topology of $\M$. The distance $d$ can also be computed using the following definition: 

\begin{proposition}
For every $x,y \in \M$,
\begin{equation}\label{di}
d(x,y)=\sup \left\{ |f(x) -f(y) | , f \in  C^\infty(\bM) , \| \Gamma(f) \|_\infty \le 1 \right\},\ \ \  \ x,y \in \bM.
\end{equation}
\end{proposition}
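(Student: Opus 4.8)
Write $\delta(x,y)$ for the right-hand side of \eqref{di}. The plan is to prove the two inequalities $\delta(x,y)\le d(x,y)$ and $d(x,y)\le \delta(x,y)$ separately; the first is elementary, and the second rests on a regularization of the distance function, which is where I expect the real work to be.

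\textbf{The inequality $\delta(x,y)\le d(x,y)$.} I would take $f\in C^\infty(\M)$ with $\|\Gamma(f)\|_\infty\le 1$ and a subunit curve $\gamma:[0,T]\to\M$ with $\gamma(0)=x$, $\gamma(T)=y$. By the very definition of a subunit curve, $|f(x)-f(y)|\le\int_0^T\bigl|\tfrac{d}{dt}f(\gamma(t))\bigr|\,dt\le\int_0^T\sqrt{\Gamma(f)(\gamma(t))}\,dt\le T=\ell_s(\gamma)$. Taking the infimum over $\gamma\in S(x,y)$ gives $|f(x)-f(y)|\le d(x,y)$, and then the supremum over all admissible $f$ gives $\delta(x,y)\le d(x,y)$.

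\textbf{The inequality $d(x,y)\le\delta(x,y)$.} Fix $x$ and set $\rho=d(x,\cdot)$, which is finite everywhere since $S(x,\cdot)\neq\emptyset$ and continuous for the manifold topology by the Fefferman--Phong theorem (Theorem \ref{FPtheorem}). Two observations form the backbone. First, $\rho$ is $1$-Lipschitz along subunit curves: if $\gamma$ is subunit, its restriction to $[s,t]$ is subunit of subunit length $t-s$, so $d(\gamma(s),\gamma(t))\le t-s$ and hence $|\rho(\gamma(t))-\rho(\gamma(s))|\le t-s$ by the triangle inequality. Second, \emph{were $\rho$ smooth}, this would force $\Gamma(\rho)\le 1$: at a point $z$, for any vector $v$ in $\sqrt{\sigma(z)}\,\overline{B}$ (in a chart) one finds short subunit curves issuing from $z$ with initial velocity $v$, and letting $t\downarrow 0$ in the first observation yields $|\langle d\rho_z,v\rangle|\le 1$ for all such $v$, i.e. $\Gamma(\rho)(z)\le 1$. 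Consequently, if for each $\varepsilon>0$ I can produce $f_\varepsilon\in C^\infty(\M)$ with $\|\Gamma(f_\varepsilon)\|_\infty\le 1+\varepsilon$ and $f_\varepsilon(x)\to\rho(x)=0$, $f_\varepsilon(y)\to\rho(y)=d(x,y)$, then $(1+\varepsilon)^{-1/2}f_\varepsilon$ is admissible in \eqref{di}, whence $\delta(x,y)\ge\lim_\varepsilon(1+\varepsilon)^{-1/2}|f_\varepsilon(x)-f_\varepsilon(y)|=d(x,y)$.

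\textbf{Where the difficulty lies.} The remaining, genuinely technical point is the construction of the $f_\varepsilon$. The obstruction is that $\rho$ is not differentiable in general and, in the truly subelliptic case, not even locally Lipschitz in a Euclidean chart (Theorem \ref{FPtheorem} only gives H\"older control of $d$ by a background Riemannian distance), so a plain mollification $\rho*\phi_\varepsilon$ need not have bounded $\Gamma$. My plan to get around this is to work in a chart, first truncate $\rho$ to $\rho\wedge d(x,y)$ (still $1$-Lipschitz along subunit curves, and now bounded), and then smooth it by a mollification adapted to the principal symbol $\sigma$ --- Friedrichs-type mollifiers along a finite family of smooth vector fields spanning the range of $\sigma$, glued by a partition of unity. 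Since $\Gamma$ is a local first-order quadratic form and convolution is an average, the bound $\Gamma(f_\varepsilon)\le 1$ should be preserved up to an $o(1)$ error coming only from the oscillation of $\sigma$ over an $\varepsilon$-ball; letting $\varepsilon\to 0$ then finishes the proof. (Alternatively, one may invoke the known density of $C^\infty$ functions among those that are $1$-Lipschitz along $\Gamma$-subunit curves.) I expect this regularization step to be the main obstacle; everything else in the argument is formal.
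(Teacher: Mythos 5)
Your first inequality $\delta(x,y)\le d(x,y)$ is fine and matches the paper. The issue is with the second one, and you have correctly located where the difficulty lies, but the mechanism you propose to deal with it has a gap that you have not resolved, and the paper handles it by a genuinely different device.

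The gap: mollifying $\rho$ along a finite family of smooth vector fields spanning the range of $\sigma$ produces at best a function that is regular along those horizontal directions; it does not, by itself, make the function $C^\infty$ on $\M$, because in the truly subelliptic case the horizontal distribution is a proper subbundle of $T\M$, and $\rho$ has no a priori regularity transversal to it beyond the H\"older control from Fefferman--Phong. To get a genuinely smooth $f_\varepsilon$ you must also average in the transversal directions, and there $\rho$ is not Euclidean-Lipschitz, which is precisely the obstruction you yourself identified. Your parenthetical alternative --- invoking ``the known density of $C^\infty$ functions among those that are $1$-Lipschitz along $\Gamma$-subunit curves'' --- is essentially the statement you are trying to prove, so it cannot be used as input. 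In short, your regularization step is not a complete argument, and you flag it as such.

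The paper circumvents this by never attempting a direct subelliptic regularization. It introduces the elliptic approximations $L_k = L+\frac{1}{k}\Delta$ for a fixed elliptic $\Delta$ on $\M$, so that each $d_k$ is a Riemannian distance and the standard mollification argument applies to give $d_k=\delta_k$, with $\delta_k \le \delta$ because the admissible class for $d_k$ is smaller. It then shows $d_k \nearrow d$ by extracting, via Arzel\`a--Ascoli, a uniform limit of near-minimizing subunit curves for $L_k$ and observing that the limit is subunit for $L$ with the right length. This yields $d = \lim_k d_k \le \delta$ without ever having to smooth $\rho$ in the degenerate directions. If you want to complete your own route, you would need to make precise why your adapted mollification gives a bona fide $C^\infty$ function with the stated $\Gamma$-bound, which in the hypoelliptic case is a nontrivial fact and not a formal consequence of ``$\Gamma$ is local and convolution averages''; adopting the paper's elliptic-approximation scheme is the cleaner way out.
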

\begin{proof}
Let $x,y \in \mathbb{M}$. We denote 
\[
\delta (x,y)=\sup \{ | f(x)-f(y) |, f \in C_0^\infty(\mathbb{M}), \| \Gamma(f) \|_\infty \le 1  \}.
\]
Let $\gamma: [0,T] \to \mathbb{M}$ be a sub-unit curve such that
\[
\gamma(0)=x, \gamma(T)=x.
\]
We have $ \left| \frac{d}{dt} f ( \gamma(t) ) \right| \le \sqrt{ (\Gamma f) (\gamma(t)) }$, therefore, if $\Gamma(f) \le 1$,
\begin{align*}
\left| f(y)-f(x) \right| \le T.
\end{align*}
As a consequence
\[
\delta (x,y) \le d(x,y).
\]

We now prove the converse inequality which is trickier. The idea would be to use the function $f(y)=d(x,y)$ that satisfies $| f(x) -f(y)|=d(x,y)$ and $"\Gamma(f,f) =1"$. However, giving a precise meaning to $\Gamma(f,f) =1$, is not so easy, because it turns out that $f$ is not everywhere differentiable.  If the operator $L$ is elliptic, then the distance $d$ is Riemannian and one can use an approximate identity to regularize $f$, that is to find $C^\infty$ functions $\eta_\varepsilon$ such that $| f(y)-\eta_\varepsilon(y) | \le \varepsilon$ and $\| \Gamma(\eta_\varepsilon) \|_\infty \le 1 +C\varepsilon$ for some constant $C>0$ (see \cite{MR898496} and \cite{MR521983} for more details). We have then $| \eta_\varepsilon(y) - \eta_\varepsilon(x) | \le (1 +C\varepsilon)\delta (x,y)$, which implies $d(x,y) \le \delta(x,y)$ by letting $\varepsilon \to 0$. Thus, if $L$ is elliptic then $d(x,y)=\delta(x,y)$. If $L$ is only subelliptic, we consider an elliptic diffusion operator $\Delta$ on $\M$ and consider the sequence of operators $L_k=L+\frac{1}{k} \Delta$. We denote by $d_k$ the distance associated to $L_k$. It is easy to see that $d_k$ increases with $k$ and that $d_k(x,y) \le d(x,y)$. We can find a curve $\gamma_k:[0,1] \to \M$, such that $\gamma(0)=x,\gamma(1)=y$ and for every $f \in C^\infty(M)$,
\[
\left| \frac{d}{dt} f(\gamma_k(t)) \right|^2 \le \left(d^2_k(x,y)+\frac{1}{k} \right)\left( \Gamma(f) (\gamma_k(t)) + \frac{1}{k} \Gamma_\Delta(f)(\gamma_k(t)) \right),
\]
where $\Gamma_\Delta$ is the carr\'e du champ operator of $\Delta$. Since $d_k \le d$, we see  that the sequence $\gamma_k$ is uniformly equicontinuous. As a consequence of the Arzel\`a-Ascoli theorem, we deduce that there exists a subsequence which we continue to denote  $\gamma_k$ that converges uniformly to a curve  $\gamma:[0,1] \to \M$, such that $\gamma(0)=x,\gamma(1)=y$ and for every $f \in C^\infty(M)$,
\[
\left| \frac{d}{dt} f(\gamma (t)) \right|^2 \le \sup_k d^2_k(x,y) \Gamma(f) (\gamma_k(t)).
\]
By definition of $d$, we deduce $d(x,y) \le  \sup_k d_k(x,y)$. As a consequence, we proved that $d(x,y)=\lim_{k \to \infty} d_k(x,y)$. Since it is clear that 
\[
d_k(x,y)= \sup \left\{ |f(x) -f(y) | , f \in  C^\infty(\bM) , \left\| \Gamma(f)  + \frac{1}{k} \Gamma_\Delta(f) \right\|_\infty \le 1 \right\} \le \delta(x,y),
\]
we finally conclude that $d(x,y) \le \delta(x,y)$, hence $d(x,y)=\delta(x,y)$.
\end{proof}

A straightforward corollary of the previous proposition is the following useful result:

\begin{corollary}
If $f \in C^\infty(\M)$ satisfies $\Gamma(f)=0$, then $f$ is constant.
\end{corollary}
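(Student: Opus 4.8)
The plan is to read the statement off directly from the characterization of the distance in the preceding proposition, exploiting the quadratic homogeneity of $\Gamma$. First I would fix two arbitrary points $x,y\in\M$ and recall that, by our standing assumption $S(x,y)\neq\emptyset$, the intrinsic distance $d(x,y)$ defined in \eqref{ds} is finite.

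Next, given $f\in C^\infty(\M)$ with $\Gamma(f)=0$, I would observe that for every real $\lambda$ the function $\lambda f$ again lies in $C^\infty(\M)$ and satisfies $\Gamma(\lambda f)=\lambda^2\,\Gamma(f)=0$, so in particular $\|\Gamma(\lambda f)\|_\infty\le 1$. Applying formula \eqref{di} of the previous proposition to the admissible test function $\lambda f$ then yields
\[
|\lambda|\,|f(x)-f(y)| = |\lambda f(x)-\lambda f(y)| \le d(x,y).
\]

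Finally, since this bound holds for every $\lambda\in\mathbb{R}$ while $d(x,y)<\infty$, dividing by $|\lambda|$ and letting $|\lambda|\to\infty$ forces $f(x)=f(y)$. As $x,y\in\M$ were arbitrary, $f$ is constant. The only point that needs a word of care is the finiteness of $d(x,y)$, which is exactly what the standing assumption $S(x,y)\neq\emptyset$ guarantees (implicitly this means $\M$ is connected, without which the supremum in \eqref{di}, hence $d$, could be infinite and the argument would at best give locally constant $f$). Beyond this bookkeeping I do not expect any genuine obstacle; one could equally run the argument with subunit curves in place of \eqref{di}, but the homogeneity trick above is the shortest route.
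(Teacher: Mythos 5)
Your argument is exactly the ``straightforward corollary'' the paper has in mind: one applies the variational formula \eqref{di} to $\lambda f$, which is admissible since $\Gamma(\lambda f)=\lambda^2\Gamma(f)=0$, and lets $|\lambda|\to\infty$ using that $d(x,y)<\infty$ (guaranteed by the standing assumption $S(x,y)\neq\emptyset$). The proof is correct and matches the paper's intended route.
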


The following theorem  is  known as the Hopf-Rinow theorem, it provides a necessary and sufficient condition for the completeness of the metric space  $(\mathbb{M},d)$.

\begin{theorem}(Hopf-Rinow theorem)
The metric space $(\mathbb{M},d)$ is complete (i.e. Cauchy sequences are convergent) if and only the compact sets are the closed and bounded  sets.
\end{theorem}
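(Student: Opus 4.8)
The plan is to run the classical length-space proof of Hopf--Rinow. What makes it work here is that, by the Fefferman--Phong estimate (Theorem~\ref{FPtheorem}) and the resulting identification of the metric topology of $d$ with the manifold topology of $\mathbb{M}$, the space $(\mathbb{M},d)$ is locally compact, and that, directly from \eqref{ds}, $d$ is an intrinsic (length) distance. The only structural fact about $d$ that I will need is the following consequence of \eqref{ds}: for all $x,y\in\mathbb{M}$ and all $\eta>0$ there is a subunit curve $\gamma:[0,T]\to\mathbb{M}$ with $\gamma(0)=x$, $\gamma(T)=y$ and $T<d(x,y)+\eta$; restricting $\gamma$ to subintervals then gives $d(x,\gamma(t))\le t$ and $d(\gamma(t),y)\le T-t$ for every $t\in[0,T]$. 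The implication ``(compact sets) $=$ (closed and bounded sets)'' $\Rightarrow$ ``complete'' is immediate: a Cauchy sequence is bounded, so the closure of its range is closed and bounded, hence compact, hence sequentially compact, and a Cauchy sequence with a convergent subsequence converges.

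For the converse, suppose $(\mathbb{M},d)$ is complete. Since a closed bounded set is a closed subset of some closed ball, it suffices to show that every closed ball $\bar{B}_d(x_0,R)$ is compact. Fix $x_0$ and set $\rho=\sup\{r\ge 0:\bar{B}_d(x_0,r)\text{ is compact}\}$; local compactness gives $\rho>0$, and if $\bar{B}_d(x_0,r)$ is compact then so is $\bar{B}_d(x_0,r')$ for every $r'<r$. I claim $\rho=+\infty$. First, if $\rho<\infty$ then $\bar{B}_d(x_0,\rho)$ is compact: it is complete, being a closed subset of the complete space $\mathbb{M}$, and it is totally bounded, because given $\varepsilon>0$ one picks $r<\rho$ with $\rho-r<\varepsilon/4$, covers the compact set $\bar{B}_d(x_0,r)$ by finitely many balls of radius $\varepsilon/2$, and uses the length property above to check that every point of $\bar{B}_d(x_0,\rho)$ lies within $\varepsilon/2$ of $\bar{B}_d(x_0,r)$, hence within $\varepsilon$ of one of the chosen centres. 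Second, still assuming $\rho<\infty$, one reaches a contradiction: covering the (now compact) $\bar{B}_d(x_0,\rho)$ by finitely many of the relatively compact balls supplied by local compactness yields a single $\delta>0$ with $\bar{B}_d(y,\delta)$ compact for all $y\in\bar{B}_d(x_0,\rho)$; then, using the length property once more, $\bar{B}_d(x_0,\rho+\delta/4)$ is contained in a finite union of the compact balls $\bar{B}_d(z_j,\delta)$ with $z_j\in\bar{B}_d(x_0,\rho)$, and is closed, hence compact, contradicting the definition of $\rho$. Therefore $\rho=+\infty$, every closed ball is compact, and every closed bounded set is compact.

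The step that needs the most care is this repeated use of the length (approximate-geodesic) structure: extracting from a nearly length-minimizing subunit curve joining $x_0$ to a far point an intermediate point that is simultaneously within a prescribed radius of $x_0$ and within a prescribed distance of the endpoint. Everything else---boundedness of Cauchy sequences, the reduction ``closed and bounded $=$ closed subset of a ball'', closedness of $d$-balls, and passing from a finite cover by relatively compact sets to a uniform $\delta$---is routine, once the coincidence of topologies furnished by Theorem~\ref{FPtheorem} is in hand.
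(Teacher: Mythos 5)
Your proposal is correct and runs the same overall strategy as the paper: dispose of the easy implication, set $\rho=\sup\{r:\bar B_d(x_0,r)\ \text{compact}\}$, show $\rho>0$ from local compactness, show $\bar B_d(x_0,\rho)$ is compact via completeness plus total boundedness (using exactly the paper's trick of taking a nearly length-minimizing subunit curve and stopping it at the exit time from a slightly smaller ball), and then contradict the maximality of $\rho$. Where you genuinely diverge is in the contradiction step. The paper argues that a finite $\varepsilon/2$-net $S$ of $\bar B(x,R)$ gives a finite $\varepsilon$-net of $\bar B(x,R+\varepsilon/4)$ and concludes directly that $\bar B(x,R+\varepsilon/4)$ is ``totally bounded and therefore compact.'' As written this is loose: the radius of the enlarged ball shrinks with $\varepsilon$, so having an $\varepsilon$-net of $\bar B(x,R+\varepsilon/4)$ for each $\varepsilon$ does not hand you a \emph{fixed} enlarged ball that is totally bounded at \emph{all} scales. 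Your version fixes this: you use the compactness of $\bar B_d(x_0,\rho)$ together with local compactness (itself available because $d$ induces the manifold topology, via Fefferman--Phong) to extract a single radius $\delta>0$ such that $\bar B_d(y,\delta)$ is compact for every $y\in\bar B_d(x_0,\rho)$, and then you cover $\bar B_d(x_0,\rho+\delta/4)$ by finitely many such compact $\delta$-balls centred on a finite $\delta/2$-net of $\bar B_d(x_0,\rho)$; the enlarged ball, being closed in that compact union, is compact. This is the standard length-space Hopf--Rinow argument and it is the more robust way to close the proof; the only thing I would tighten in your write-up is to make explicit that the finitely many centres $z_j$ are taken to be a $\delta/2$-net of $\bar B_d(x_0,\rho)$, so that the triangle inequality combined with the exit-time estimate lands every point of $\bar B_d(x_0,\rho+\delta/4)$ inside some $\bar B_d(z_j,\delta)$.
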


\begin{proof}
It is clear that if closed and bounded sets are compact then the metric space $(\mathbb{M},d)$ is complete; It comes from the fact that Cauchy sequence are convergent if and only if they have at least one cluster value.
So, we need to prove that closed and bounded sets for the distance $d$ are compact provided that $(\mathbb{M},d)$ is complete. To check this, it is enough to prove that closed balls are compact.

Let $x \in \mathbb{M}$. Observe that if the closed ball $\bar{B}(x, r)$ is compact for some $r>0$, then $\bar{B}(x, \rho)$ is closed for any $\rho <r$. Define 
\[
R=\sup \{ r>0, \bar{B}(x, r) \text{ is compact } \}.
\]
Since $d$ induces the manifold topology of $\mathbb{M}$, $R>0$. Let us assume that $R<\infty$ and let us show that it leads to a contradiction.

We first show that $\bar{B}(x, R)$ is compact. Since $(\mathbb{M},d)$ is assumed to be complete, it suffices to prove that $\bar{B}(x, R)$ is totally bounded: That is, for every $\varepsilon >0$ there is a finite set $S_\varepsilon$ such that every point of $\bar{B}(x, R)$  lies in a $\varepsilon$-neighborhood of $S_\varepsilon$. 

So, let $\varepsilon >0$ small enough. By definition of $R$, the ball $\bar{B}(x, R-\varepsilon / 4)$ is compact; It is therefore totally bounded. We can find a finite set $S=\{ y_1,\cdots,y_N\}$ such that every point of $\bar{B}(x, R-\varepsilon / 4)$  lies in a $\varepsilon / 2$-neighborhood of $S$. Let now $y \in \bar{B}(x, R)$. We claim that there exists $y' \in \bar{B}(x, R-\varepsilon / 4)$ such that $d(y,y') \le \varepsilon /2 $. If $y \in \bar{B}(x, R-\varepsilon / 4)$, there is nothing to prove, we may therefore assume that $y \notin \bar{B}(x, R-\varepsilon / 4)$. Consider then a sub-unit curve $\gamma: [0,R+\varepsilon / 4]\to \M$ such that $\gamma(0)=x$,  $\gamma(R+\varepsilon/2)=y$. Let
\[
\tau =\inf \{t, \gamma(t) \notin \bar{B}(x, R-\varepsilon / 4) \}.
\]
We have $\tau \ge R-\varepsilon / 4$. On the other hand,
\[
d(\gamma(\tau), \gamma(R+\varepsilon/2)) \le R+\varepsilon/4 -\tau.
\]
As a consequence,
\[
d(\gamma(\tau),y) \le \varepsilon /2.
\]
In all cases, there exists therefore $y' \in \bar{B}(x, R-\varepsilon / 4)$ such that $d(y,y') \le \varepsilon /2 $. We may then pick $y_k$ in $S$ such that $d(y_k,y') \le \varepsilon / 2$. From the triangle inequality, we have $d(y,y_k) \le \varepsilon$. 

So, at the end, it turns out that every point of $\bar{B}(x, R)$  lies in a $\varepsilon$-neighborhood of $S$. This shows that $\bar{B}(x, R)$ is totally bounded and therefore compact because $(\mathbb{R}^n,d)$ is assumed to be complete. Actually, the previous argument shows more, it shows that if every point of $\bar{B}(x, R)$  lies in a $\varepsilon /2$-neighborhood of a finite $S$, then every point of $\bar{B}(x, R+\varepsilon/4)$  will lie  $\varepsilon$-neighborhood of $S$, so that the ball $\bar{B}(x, R+\varepsilon/4)$ is also compact. This contradicts  the definition of $R$. Therefore every closed ball is a compact set, due to the arbitrariness of $x$.  
\end{proof}

\subsection{Essentially self-adjoint subelliptic operators}

We consider on a smooth manifold $\mathbb{M}$ a subelliptic  diffusion operator $L$. We assume that $L$  is symmetric with respect to some measure $\mu$, that is: For every smooth and compactly supported functions $f,g : \mathbb{M} \rightarrow \mathbb{R}$,
\[
\int_{\mathbb{M}} g Lf d\mu= \int_{\mathbb{M}} fLg d\mu.
\]

We observe that if $\mu$ is a symmetric measure for $L$ as above, then in the sense of distributions
\[
L'\mu=0,
\] 
where $L'$ is the adjoint of $L$ in distribution sense. As a consequence,  $\mu$  needs to be a smooth measure.

\

 The \textit{carr\'e du champ} introduced above allows to integrate by parts: For every $f,g \in C_0^\infty(\mathbb{M})$,
\[
\int_\mathbb{M} fLg d\mu=-\int_\mathbb{M} \Gamma(f,g) d\mu =\int_{\mathbb{M}} g Lf d\mu.
\]

\

The operator $L$ on its domain $\mathcal{D}(L)= C_0^\infty (\mathbb{M})$ is a densely defined  non positive symmetric operator on the Hilbert space  $L^2(\M,\mu)$. However, it is not self-adjoint, indeed it is easily checked that
\[
\left\{ f \in  C^\infty (\mathbb{M}), \| f \|_{L^2(\M,\mu)} +\|Lf \|_{L^2(\M,\mu)} < \infty \right\} \subset \mathcal{D}(L^*).
\] 

\ 

A famous theorem of Von Neumann asserts that any non negative and symmetric operator may be extended into a self-adjoint operator. The following construction, due to Friedrich, provides a canonical non negative self-adjoint extension.

\begin{theorem}\label{Friedrichs_extension}(Friedrichs extension)
On the Hilbert space $L^2(\M,\mu)$, there exists a densely defined  non positive self-adjoint extension of $L$. 
\end{theorem}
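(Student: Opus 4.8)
Starting from the pre-Dirichlet form $\mathcal{E}(f,g) = \int_\M \Gamma(f,g)\, d\mu$ defined on $C_0^\infty(\M)$, together with the reference inner product $\mathcal{E}_1(f,g) = \mathcal{E}(f,g) + \int_\M fg\, d\mu$, one would first verify that $\mathcal{E}_1$ is indeed an inner product on $C_0^\infty(\M)$: symmetry is immediate, and positive-definiteness follows since $\mathcal{E}_1(f,f) \ge \|f\|_{L^2}^2$, which vanishes only when $f = 0$. Then I would take $\mathcal{D}(\mathcal{E})$ to be the completion of $C_0^\infty(\M)$ with respect to the norm $\sqrt{\mathcal{E}_1(\cdot,\cdot)}$; the main subtlety here — and the point I expect to be the chief obstacle — is to check that this abstract completion embeds as a genuine subspace of $L^2(\M,\mu)$, i.e.\ that a $\mathcal{E}_1$-Cauchy sequence which converges to $0$ in $L^2$ also converges to $0$ in the $\mathcal{E}_1$-norm (closability of the form). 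This follows from the fact that $L$ is symmetric: for $f_n \to 0$ in $L^2$ with $(f_n)$ being $\mathcal{E}_1$-Cauchy, one writes $\mathcal{E}(f_n, f_n) = \mathcal{E}(f_n, f_n - f_m) - \int_\M f_n L f_m\, d\mu$ and controls both terms by Cauchy-Schwarz in the $\mathcal{E}_1$-norm and in $L^2$ respectively.

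**Next I would invoke the standard representation theorem for closed, symmetric, nonnegative bilinear forms** (Kato, or Reed--Simon): to the closed form $(\mathcal{E}, \mathcal{D}(\mathcal{E}))$ there corresponds a unique nonpositive self-adjoint operator $\tilde L$ on $L^2(\M,\mu)$ characterized by $\mathcal{D}(\tilde L) = \{ f \in \mathcal{D}(\mathcal{E}) : \exists\, h \in L^2(\M,\mu) \text{ with } \mathcal{E}(f,g) = -\int_\M hg\, d\mu \text{ for all } g \in \mathcal{D}(\mathcal{E})\}$, with $\tilde L f = h$. One then checks that $C_0^\infty(\M) \subset \mathcal{D}(\tilde L)$ and that $\tilde L$ restricted to $C_0^\infty(\M)$ agrees with $L$: indeed for $f, g \in C_0^\infty(\M)$ the integration-by-parts identity $\int_\M fLg\, d\mu = -\int_\M \Gamma(f,g)\, d\mu$ recorded above shows exactly that $Lg$ serves as the element $h$. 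Hence $\tilde L$ is a self-adjoint extension of $(L, C_0^\infty(\M))$, and it is nonpositive since $\langle \tilde L f, f\rangle = -\mathcal{E}(f,f) = -\int_\M \Gamma(f,f)\,d\mu \le 0$ by the nonnegativity of $\Gamma$ established earlier.

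**Finally I would note density of the domain.** Since $C_0^\infty(\M) \subset \mathcal{D}(\tilde L)$ and $C_0^\infty(\M)$ is already dense in $L^2(\M,\mu)$ (as $\mu$ is a smooth positive measure on the manifold $\M$), the extension $\tilde L$ is densely defined, which completes the proof. The only genuinely delicate point in the whole argument is the closability of the form; once that is in hand, the rest is an application of the abstract form–operator correspondence together with the integration-by-parts formula. I would therefore devote most of the writeup to the closability step and treat the remaining items briskly.
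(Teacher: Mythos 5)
Your proposal is correct and follows essentially the same route as the paper: both define the form norm $\|f\|_{\mathcal{E}}^2=\|f\|_{L^2}^2+\int_\M\Gamma(f,f)\,d\mu$, complete $C_0^\infty(\M)$, and reduce the whole matter to showing that the completion embeds injectively into $L^2(\M,\mu)$ (closability), which both proofs obtain from the symmetry of $L$ by the same Cauchy--Schwarz argument. The only difference is presentational: you invoke the abstract form--operator correspondence (Kato/Reed--Simon), while the paper reproves that correspondence in this instance by constructing the bounded self-adjoint operator $B=\bar\iota\cdot\bar\iota^*$ and taking $-\bar L = B^{-1}-\mathbf{Id}$.
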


\begin{proof}
 On $C_0^\infty (\mathbb{M})$, let us consider the following norm
\[
\| f\|^2_{\mathcal{E}}=\| f \|^2_{L^2(\M,\mu) }+\int_\mathbb{M} \Gamma(f,f)d\mu.
\]
By completing $C_0^\infty (\mathbb{M})$ with respect to this norm, we get a Hilbert space $(\mathcal{H},\langle \cdot , \cdot \rangle_{\mathcal{E}})$. Since for $f \in C_0^\infty (\mathbb{M})$,  $\| f \|_{L^2(\M,\mu) } \le \| f\|_{\mathcal{E}}$, the injection map 
\[
\iota : ( C_0^\infty (\mathbb{M}), \| \cdot \|_{\mathcal{E}}) \rightarrow (L^2(\M,\mu), \| \cdot \|_{L^2(\M,\mu) })
\]
is continuous and it may therefore be extended into a continuous map 
\[
\bar{\iota}: (\mathcal{H}, \| \cdot \|_{\mathcal{E}}) \rightarrow (L^2(\M,\mu), \| \cdot \|_{L^2(\M,\mu) }).
\]
 Let us show that $\bar{\iota}$ is injective so that $\mathcal{H}$ may be identified with a subspace of $L^2(\M,\mu) $. So, let $f \in \mathcal{H}$ such that $\bar{\iota} (f)=0$. We can find a sequence $f_n \in C_0^\infty (\mathbb{M})$, such that $\| f_n -f \|_{\mathcal{E}} \to 0$ and $\| f_n \|_{\mathbf{L}_{\mu}^2 (\mathbb{M}) } \to 0$. We have
\begin{align*}
\| f \|_{\mathcal{E}}  &=\lim_{m,n \to + \infty} \langle f_n, f_m \rangle_{\mathcal{E}} \\
 &=\lim_{m \to + \infty} \lim_{n \to + \infty}  \langle f_n,f_m \rangle_{\mathbf{L}_{\mu}^2 (\mathbb{R}^n,\mathbb{R}) }- \langle f_n,Lf_m \rangle_{\mathbf{L}_{\mu}^2 (\mathbb{R}^n,\mathbb{R}) } \\
 &=0,
\end{align*}
thus $f=0$ and $\bar{\iota}$ is injective.  Let us now consider  the map
\[
B=\bar{\iota} \cdot \bar{\iota}^* : L^2(\M,\mu)  \rightarrow L^2(\M,\mu) .
\]
It is well defined due to the fact that since $\bar{\iota}$ is bounded, it is easily checked that
\[
\mathcal{D}(\bar{\iota}^*)= L^2(\M,\mu).
\] 

 Moreover, $B$ is easily seen to be symmetric, and thus self-adjoint because its domain is equal to $L^2(\M,\mu)$. Also, it is readily checked that the injectivity of $\bar{\iota}$ implies the injectivity of $B$. Therefore, we deduce that the inverse
\[
A=B^{-1}: \mathcal{R} (\bar{\iota} \cdot \bar{\iota}^*) \subset L^2(\M,\mu) \rightarrow L^2(\M,\mu)
\]
is a densely defined self-adjoint operator on $L^2(\M,\mu)$.  Now, we observe that for $f,g \in C_0^\infty (\mathbb{M})$,
\begin{align*}
 & \langle f,g \rangle_{L^2(\M,\mu)}-\langle Lf,g \rangle_{L^2(\M,\mu)} \\
=&  \langle \bar{i}^{-1}(f),\bar{i}^{-1}(g) \rangle_\mathcal{E} \\
=& \langle (\bar{i}^{-1})^* \bar{i}^{-1} f,g \rangle_{L^2(\M,\mu)} \\
=& \langle  (\bar{i} \bar{i}^*)^{-1} f,g \rangle_{L^2(\M,\mu)}
\end{align*}
Thus $A$ and $\mathbf{Id}-L$ coincide on $C_0^\infty (\mathbb{M})$.
By  defining,
\[
-\bar{L}=A-\mathbf{Id},
\]
we get the required self-adjoint extension of $-L$.
\end{proof}

\begin{remark}
The operator $\bar{L}$, as constructed above, is called the Friedrichs extension of $L$.
\end{remark}

\begin{definition}
If $\bar{L}$ is the unique non positive self-adjoint extension of $L$, then the operator $L$ is said to be essentially self-adjoint on $C_0^\infty (\mathbb{M})$. In that case, there is no ambiguity and we shall denote $\bar{L}=L$.
\end{definition}

We have the following first criterion for essential self-adjointness.

\begin{lemma}
If for some $\lambda >0$, 
\[
\mathbf{Ker} (-L^* +\lambda \mathbf{Id} )= \{ 0 \},
\]
then the operator $L$ is essentially self-adjoint on $C_0^\infty (\mathbb{M})$.
\end{lemma}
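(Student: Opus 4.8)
The plan is to prove directly that the Friedrichs extension $\bar{L}$ built in Theorem \ref{Friedrichs_extension} is the \emph{unique} non positive self-adjoint extension of $L$, which by the preceding definition is exactly the assertion that $L$ is essentially self-adjoint on $C_0^\infty(\M)$. Two standard facts drive the argument. First, any symmetric — in particular any self-adjoint — extension $\tilde{L}$ of $L$ is a restriction of $L^*$: indeed $L \subset \tilde{L} = \tilde{L}^* \subset L^*$, since passing to adjoints reverses inclusions. Second, if $\tilde{L}$ is non positive and self-adjoint, then $-\tilde{L} + \lambda\,\mathbf{Id}$ is self-adjoint with spectrum contained in $[\lambda,+\infty)$ (because $-\tilde L \ge 0$), hence it is a bijection of $\mathcal{D}(\tilde{L})$ onto $L^2(\M,\mu)$ with bounded inverse of norm at most $1/\lambda$.

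First I would fix $\lambda>0$ as in the hypothesis, let $\bar{L}$ denote the Friedrichs extension, and let $\tilde{L}$ be an arbitrary non positive self-adjoint extension of $L$. By the first fact above both $\bar{L}$ and $\tilde{L}$ are restrictions of $L^*$, i.e. $L^* u = \bar{L} u$ for $u \in \mathcal{D}(\bar{L})$ and $L^* u = \tilde{L} u$ for $u \in \mathcal{D}(\tilde{L})$. By the second fact, applied to both operators, every $f \in L^2(\M,\mu)$ can be written as $f = (-\bar{L}+\lambda\,\mathbf{Id}) v = (-\tilde{L}+\lambda\,\mathbf{Id}) u$ for unique $v \in \mathcal{D}(\bar{L})$ and $u \in \mathcal{D}(\tilde{L})$.

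Next I would subtract these two representations. Since $u,v \in \mathcal{D}(L^*)$, also $u - v \in \mathcal{D}(L^*)$, and
\[
(-L^*+\lambda\,\mathbf{Id})(u-v) = (-\tilde{L}+\lambda\,\mathbf{Id})u - (-\bar{L}+\lambda\,\mathbf{Id})v = f - f = 0,
\]
so $u-v \in \mathbf{Ker}(-L^*+\lambda\,\mathbf{Id})$. By hypothesis this kernel is $\{0\}$, hence $u=v$. Since $f$ was arbitrary this shows $(-\tilde{L}+\lambda\,\mathbf{Id})^{-1} = (-\bar{L}+\lambda\,\mathbf{Id})^{-1}$ as bounded operators on $L^2(\M,\mu)$, and inverting gives $\tilde{L}=\bar{L}$. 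Thus $\bar{L}$ is the unique non positive self-adjoint extension of $L$, which is the claim.

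There is no deep obstacle here; the only care needed is bookkeeping the chain of inclusions among $L$, its symmetric extensions, and $L^*$, together with the correct functional-analytic input that a non negative self-adjoint operator shifted by $\lambda>0$ is boundedly invertible (spectral theorem). The one genuinely substantive ingredient is the existence of \emph{some} non positive self-adjoint extension to compare against, namely the Friedrichs extension of Theorem \ref{Friedrichs_extension}; without it the kernel hypothesis alone would only yield that $\mathbf{Ran}(-L+\lambda\,\mathbf{Id})$ is dense in $L^2(\M,\mu)$.
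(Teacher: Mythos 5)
Your proof is correct, and it follows a genuinely different route than the paper's. The paper re-enters the Friedrichs construction: from $\mathbf{Ker}(-L^*+\mathbf{Id})=\{0\}$ it asserts that $C_0^\infty(\M)$ is dense in $\mathcal{D}(-L^*)$ for the energy norm $\|f\|_{\mathcal{E}}^2 = \|f\|^2_{L^2} - \langle f, L^* f\rangle$, uses that any non positive self-adjoint extension $\tilde{L}$ satisfies $\mathcal{D}(\tilde{L}) \subset \mathcal{D}(L^*)$, propagates the identity $\langle f,g\rangle - \langle \tilde{L}f,g\rangle = \langle \bar{\iota}^{-1}f, \bar{\iota}^{-1}g\rangle_{\mathcal{E}}$ from test functions to $\mathcal{D}(\tilde{L})$ by density, and then concludes $\bar{L}=\tilde{L}$ from the defining property of the Friedrichs extension. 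You instead argue at the level of resolvents: since both $-\bar{L}+\lambda$ and $-\tilde{L}+\lambda$ are bijections onto $L^2(\M,\mu)$ (spectral theorem, $\lambda>0$), and both $\bar{L}$ and $\tilde{L}$ are restrictions of $L^*$ (the chain $L\subset\tilde{L}=\tilde{L}^*\subset L^*$), the difference of the two preimages of any $f$ lands in $\mathbf{Ker}(-L^*+\lambda\,\mathbf{Id})=\{0\}$, forcing the resolvents — hence the operators — to coincide. Your route is more economical and avoids the one step in the paper that requires care, namely turning the kernel hypothesis into an energy-norm density statement for $C_0^\infty(\M)$ inside $\mathcal{D}(-L^*)$ (a space on which $\|\cdot\|_{\mathcal{E}}$ is not obviously even a norm, since $L^*$ need not be symmetric). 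The trade-off is that your argument leans explicitly on the spectral theorem for the invertibility of $-\tilde{L}+\lambda$, whereas the paper's stays closer to the quadratic-form picture it has already set up. Both are standard; yours is essentially the textbook resolvent proof of the semibounded essential self-adjointness criterion, and it is complete as written.
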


\begin{proof}
We make the proof for $\lambda=1$ and let the reader adapt it for $\lambda \neq 0$.

Let $-\tilde{L}$ be a non negative self-adjoint extension of $-L$. We want to prove that actually, $-\tilde{L}=-\bar{L}$. The assumption 
\[
\mathbf{Ker} (-L^* + \mathbf{Id} )= \{ 0 \}
\]
implies that $C_0^\infty (\mathbb{M})$ is dense in $\mathcal{D}(-L^*)$ for the norm
\[
\| f \|^2_{\mathcal{E}}=\| f \|^2_{L^2(\M,\mu) } -\langle f , L^* f \rangle_{L^2(\M,\mu)}.
\]
Since, $-\tilde{L}$ is a non negative self-adjoint extension of $-L$, we have 
\[
\mathcal{D}(-\tilde{L}) \subset  \mathcal{D}(-L^*).
\]
The space $C_0^\infty (\mathbb{M})$ is therefore dense in $\mathcal{D}(-\tilde{L}) $ for the norm $\| \cdot \|_{\mathcal{E}}$.  At that point, we use some notations introduced in the proof of the Friedrichs extension (Theorem \ref{Friedrichs_extension}). Since  $C_0^\infty (\mathbb{M})$ is dense in $\mathcal{D}(-\tilde{L}) $ for the norm $\| \cdot \|_{\mathcal{E}}$, we deduce that the equality
\begin{align*}
  \langle f,g \rangle_{L^2(\M,\mu)}-\langle \tilde{L}f,g \rangle_{L^2(\M,\mu)}=  \langle \bar{i}^{-1}(f),\bar{i}^{-1}(g) \rangle_\mathcal{E} ,
\end{align*}
which is obviously satisfied for $f,g \in \mathcal{C}_0^\infty (\mathbb{M})$ actually also holds for $f,g \in \mathcal{D}(\tilde{L})$. From the definition of the Friedrichs extension, we deduce that $\bar{L}$ and $\tilde{L}$ coincide on $\mathcal{D}(\tilde{L})$. Finally, since these two operators are self adjoint we conclude $\bar{L}=\tilde{L}$.
\end{proof}

\begin{remark}
Given the fact that $-L$ is given here with the domain  $C_0^\infty (\mathbb{M})$, the condition
\[
\mathbf{Ker} (-L^* +\lambda \mathbf{Id} )= \{ 0 \},
\]
is equivalent to the fact that if  $f \in L^2(\M,\mu)$ is a function that satisfies in the sense of distributions
\[
-Lf+\lambda f=0,
\]
then $f=0$.
\end{remark}

As a corollary of the previous lemma,  the following proposition provides a useful sufficient condition for essential self-adjointness that is easy to check for several subelliptic diffusion operators (including Laplace-Beltrami operators on complete Riemannian manifolds).

\begin{proposition}\label{completeness self adjoint} 
If the diffusion operator $L$ is a subelliptic diffusion operator and if there exists an increasing sequence $h_n\in C_0^\infty (\mathbb{M}))$, $0 \le h_n \le 1$,  such that $h_n\nearrow 1$ on
$\mathbb{M}$, and $||\Gamma (h_n)||_{\infty} \to 0$, as $n\to \infty$,  then the operator $L$ is essentially self-adjoint on $C_0^\infty (\mathbb{M})$.
\end{proposition}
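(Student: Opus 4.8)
The plan is to invoke the essential self-adjointness criterion established above: it suffices to find some $\lambda>0$ with $\mathbf{Ker}(-L^*+\lambda\,\mathbf{Id})=\{0\}$, and by the remark following that lemma this is the same as showing that every $f\in L^2(\mathbb{M},\mu)$ solving $-Lf+\lambda f=0$ in the distributional sense is identically zero. I will take $\lambda=1$ and fix such an $f$, so that $Lf=f$ holds in the sense of distributions.

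First I would improve the regularity of $f$. Since $L^kf=f\in L^2(\mathbb{M},\mu)$ for every $k\ge 0$, the regularity theory for subelliptic operators recalled above (hypoellipticity; recall in particular that $\bigcap_{k\ge 0}\mathcal{D}(L^k)\subset C^\infty(\mathbb{M})$) gives $f\in C^\infty(\mathbb{M})$, so that $Lf=f$ in fact holds pointwise, $\Gamma(f)$ is a continuous function, and for each $n$ the function $h_n^2 f$ lies in $C_0^\infty(\mathbb{M})$. In particular the integration-by-parts formula attached to $\mu$ may be applied with the compactly supported smooth factor $h_n^2 f$ paired against $f$.

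The core of the argument is then to test the identity $Lf=f$ against $h_n^2 f$:
\[
\int_{\mathbb{M}} h_n^2 f^2\, d\mu=\int_{\mathbb{M}} (h_n^2 f)\,Lf\, d\mu=-\int_{\mathbb{M}} \Gamma(h_n^2 f, f)\, d\mu=-\int_{\mathbb{M}} h_n^2\,\Gamma(f)\, d\mu-2\int_{\mathbb{M}} h_n f\,\Gamma(h_n,f)\, d\mu,
\]
using the first-order Leibniz rule $\Gamma(h_n^2 f,f)=h_n^2\Gamma(f,f)+f\,\Gamma(h_n^2,f)$ together with $\Gamma(h_n^2,f)=2h_n\,\Gamma(h_n,f)$. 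Set $a_n=\int_{\mathbb{M}} h_n^2\,\Gamma(f)\, d\mu$, which is finite and nonnegative because $h_n$ is compactly supported and $\Gamma(f)\ge 0$ is continuous. Estimating the cross term by the pointwise Cauchy--Schwarz inequality $|\Gamma(h_n,f)|\le\Gamma(h_n)^{1/2}\Gamma(f)^{1/2}$, then pulling out $\|\Gamma(h_n)\|_\infty^{1/2}$ and applying Cauchy--Schwarz in $L^2(\mathbb{M},\mu)$, I obtain
\[
0\le\int_{\mathbb{M}} h_n^2 f^2\, d\mu+a_n\le 2\,\|\Gamma(h_n)\|_\infty^{1/2}\,\|f\|_{L^2(\mathbb{M},\mu)}\,a_n^{1/2}.
\]
Since the left-hand side dominates $a_n$, this yields $a_n^{1/2}\le 2\,\|\Gamma(h_n)\|_\infty^{1/2}\,\|f\|_{L^2(\mathbb{M},\mu)}$, hence $a_n\to 0$ and then $\int_{\mathbb{M}} h_n^2 f^2\, d\mu\to 0$ as $n\to\infty$ by the hypothesis $\|\Gamma(h_n)\|_\infty\to 0$. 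On the other hand $0\le h_n\nearrow 1$, so monotone convergence gives $\int_{\mathbb{M}} h_n^2 f^2\, d\mu\to\|f\|_{L^2(\mathbb{M},\mu)}^2$; therefore $f=0$, which proves the kernel is trivial and hence that $L$ is essentially self-adjoint on $C_0^\infty(\mathbb{M})$.

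The step I expect to need the most care is the bookkeeping just before the estimate closes: verifying that $f$ is genuinely smooth so that the distributional equation becomes a classical one to which integration by parts legitimately applies, that $a_n$ is finite, and — crucially — arranging the two Cauchy--Schwarz applications so that the entire $a_n$ contribution ends up on the right-hand side where it can be absorbed into $a_n^{1/2}$. Once the inequality $\int_{\mathbb{M}} h_n^2 f^2\, d\mu+a_n\le 2\|\Gamma(h_n)\|_\infty^{1/2}\|f\|_{L^2(\mathbb{M},\mu)}\,a_n^{1/2}$ is in hand, the rest is immediate.
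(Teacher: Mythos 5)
Your proof is correct and takes essentially the same approach as the paper: reduce to showing $\mathbf{Ker}(-L^*+\lambda\,\mathbf{Id})=\{0\}$, upgrade $f$ to smoothness by hypoellipticity, test $Lf=\lambda f$ against $h_n^2 f$, and run the double Cauchy--Schwarz estimate on $\int h_n f\,\Gamma(h_n,f)\,d\mu$. The only (minor) deviation is in the closing step: the paper simply drops the nonnegative term $\lambda\int h_n^2 f^2\,d\mu$ to get $\int h_n^2\Gamma(f)\,d\mu\le 4\|\Gamma(h_n)\|_\infty\|f\|_2^2$, lets $n\to\infty$ to conclude $\Gamma(f)\equiv 0$, and then invokes the fact that $\Gamma(f)=0$ forces $f$ constant (hence $f=0$ since $\lambda f=Lf=0$); you instead retain that term, deduce $\int h_n^2 f^2\,d\mu\to 0$ directly, and finish with monotone convergence — a slightly more self-contained variant of the same argument.
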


\begin{proof}
Let $\lambda >0$. According to the previous lemma, it is enough to check that  if $L^* f=\lambda f$ with $\lambda >0$, then
$f=0$. As it was observed above, $L^* f=\lambda f$ is equivalent to the fact that, in sense of distributions, $Lf =\lambda f$.
From the hypoellipticity of $L$, we deduce therefore that $f$ is  a smooth function. Now, for $h \in C_0^\infty (\mathbb{M})$,
\begin{align*}
\int_{\mathbb{M}} \Gamma( f, h^2f) d\mu & =-\langle f, L(h^2f)\rangle_{L^2(\M,\mu)}\\
 & =-\langle L^*f ,h^2f \rangle_{L^2(\M,\mu)}\\
  & =-\lambda  \langle f,h^2f\rangle_{L^2(\M,\mu)}\\
   &=-\lambda \langle f^2,h^2 \rangle_{L^2(\M,\mu)} \\
   & \le 0.
\end{align*}
Since
\[
 \Gamma( f, h^2f)=h^2 \Gamma (f)+2 fh \Gamma(f,h),
 \]
 we deduce that
 \[
 \langle h^2, \Gamma (f) \rangle_{L^2(\M,\mu)}+2 \langle fh, \Gamma(f,h)\rangle_{L^2(\M,\mu)} \le 0.
 \]
 Therefore, by Cauchy-Schwarz inequality
 \[
 \langle h^2, \Gamma (f) \rangle_{L^2(\M,\mu)} \le 4 \| f \|_{L^2(\M,\mu)}^2 \| \Gamma (h) \|_\infty.
 \]
 If we now use the sequence $h_n$  and let $n \to \infty$, we obtain $\Gamma(f,f)=0$ and therefore $f=0$, as desired.
\end{proof}

Interestingly, the existence of such a sequence $h_n$ is closely related to the completeness of the metric space $(\M,d)$ where $d$ is the intrinsic distance associated to the operator $L$.

\begin{proposition}\label{L:exhaustion}
There exists an increasing
sequence $h_n\in C^\infty_0(\bM)$, $0 \le h_n \le 1$,  such that $h_n\nearrow 1$ on
$\bM$, and $\| \Gamma(h_n) \|_{\infty} \to 0$, as $n\to \infty$ if and only if the metric space $(\M,d)$ is complete.
\end{proposition}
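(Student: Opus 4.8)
The plan is to prove the two implications separately. The ``only if'' direction (a cutoff sequence forces completeness) is elementary and uses only the characterisation \eqref{di} of $d$. The ``if'' direction carries the real content and rests on the Hopf--Rinow theorem together with a regularisation of the (non-smooth) distance function $\rho(\cdot)=d(x_0,\cdot)$.

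\textbf{Cutoff sequence $\Rightarrow$ completeness.} Suppose $(h_n)$ is as in the statement and set $\varepsilon_n=\|\Gamma(h_n)\|_\infty^{1/2}\to 0$. If $\varepsilon_n>0$, then $h_n/\varepsilon_n\in C_0^\infty(\M)$ has $\|\Gamma(h_n/\varepsilon_n)\|_\infty\le 1$, so \eqref{di} gives $|h_n(x)-h_n(y)|\le \varepsilon_n\,d(x,y)$ for all $x,y$; if $\varepsilon_n=0$ then $\Gamma(h_n)=0$, so $h_n$ is constant, hence $\equiv 0$ (compact support), and the same inequality holds trivially. Fix $x_0\in\M$ and $R>0$ and let $K_n=\operatorname{supp}(h_n)$. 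If $\bar B_d(x_0,R)\not\subset K_n$, pick $y_n\in \bar B_d(x_0,R)\setminus K_n$; then $h_n(y_n)=0$ and $h_n(x_0)=|h_n(x_0)-h_n(y_n)|\le \varepsilon_n R$. Since $h_n(x_0)\to 1$ and $\varepsilon_n R\to 0$, this fails for at most finitely many $n$, so $\bar B_d(x_0,R)\subset K_n$ for $n$ large. As $K_n$ is compact and $\bar B_d(x_0,R)$ is closed in $\M$ (the metric $d$ induces the manifold topology, by Fefferman--Phong), $\bar B_d(x_0,R)$ is compact. Thus every closed bounded subset of $(\M,d)$ is compact, and by the Hopf--Rinow theorem $(\M,d)$ is complete (directly: a Cauchy sequence is bounded, hence contained in a compact closed ball, hence has a convergent subsequence, hence converges).

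\textbf{Completeness $\Rightarrow$ cutoff sequence.} By Hopf--Rinow the closed balls $\bar B_d(x_0,r)$ are compact, and they exhaust $\M$. It suffices to prove the claim: \emph{for every compact $K\subset\M$ and every $\varepsilon>0$ there is $h\in C_0^\infty(\M)$ with $0\le h\le 1$, $h\equiv 1$ on $K$, and $\|\Gamma(h)\|_\infty<\varepsilon$.} Granting this, one builds the sequence by induction: take $h_1$ from the claim with $K=\bar B_d(x_0,1)$, $\varepsilon=1$; given $h_n$ with compact support $S_n$, apply the claim to $S_n\cup \bar B_d(x_0,n+1)$ with $\varepsilon=1/(n+1)$ to get $h_{n+1}$. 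Then $h_{n+1}\equiv 1\ge h_n$ on $S_n$ and $h_{n+1}\ge 0=h_n$ off $S_n$, so $(h_n)$ increases; $h_n\equiv 1$ on $\bar B_d(x_0,n)$ forces $h_n\nearrow 1$; and $\|\Gamma(h_n)\|_\infty<1/n\to 0$.

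\textbf{Proof of the claim, and the main obstacle.} Enlarging $K$ we may assume $K=\bar B_d(x_0,r)$ (and that $\M$ is non-compact, the compact case being trivial). By the triangle inequality $\rho=d(x_0,\cdot)$ is $1$-Lipschitz for $d$, which by \eqref{di} amounts to $\Gamma(\rho)\le 1$ almost everywhere, and $\rho$ is continuous with precompact sublevel sets. Fix a large integer $N$ and a smooth profile $\phi:\R\to[0,1]$ with $\phi\equiv 1$ on $(-\infty,r+1]$, $\phi\equiv 0$ on $[r+1+N,\infty)$, and $|\phi'|\le 2/N$; were $\rho$ smooth, $\phi(\rho)$ would be the desired cutoff. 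The point is to regularise $\rho$: on the compact ball $\bar B_d(x_0,r+2+N)$ there is $\tilde\rho\in C^\infty$ with $|\tilde\rho-\rho|<\tfrac12$ there and $\|\Gamma(\tilde\rho)\|_\infty\le 2$ there, obtained by mollifying $\rho$ in finitely many charts and gluing by a partition of unity — precisely the regularisation already invoked in the proof of \eqref{di} (see \cite{MR898496,MR521983}; alternatively via the elliptic approximations $L_k=L+\tfrac1k\Delta$, whose distance is Riemannian and smoothable, together with $\Gamma\le\Gamma_{L_k}$). Put $h:=\phi(\tilde\rho)$ on $B_d(x_0,r+2+N)$ and $h:=0$ elsewhere; this is a well-defined smooth function, since near $\partial B_d(x_0,r+2+N)$ one has $\rho$ close to $r+2+N$, hence $\tilde\rho>\rho-\tfrac12>r+1+N$ and $\phi(\tilde\rho)=0$, so the two definitions match on a neighbourhood of the separating sphere. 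Then $h\equiv 1$ on $\bar B_d(x_0,r)$ (there $\tilde\rho<r+\tfrac12$), $\operatorname{supp}(h)\subset\bar B_d(x_0,r+2+N)$ is compact, and $\Gamma(h)=\phi'(\tilde\rho)^2\,\Gamma(\tilde\rho)\le 8/N^2<\varepsilon$ for $N$ large. The only non-formal ingredient is the existence of $\tilde\rho$ — a smooth uniform approximation of the sub-Riemannian distance with $\Gamma$ under control; this is classical in the Riemannian case and is handled exactly as in the proof of \eqref{di} in the subelliptic case, while everything else is bookkeeping with cutoffs.
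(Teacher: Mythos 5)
Your overall strategy matches the paper's: both directions hinge on Hopf--Rinow, and the construction of the cutoff sequence in both cases composes a smooth profile $\phi_n$ with a smooth function approximating a distance-type exhaustion. In the direction ``cutoff $\Rightarrow$ complete'' your argument (show closed $d$-balls are compact and invoke Hopf--Rinow) and the paper's (show directly that a $d$-Cauchy sequence is eventually trapped in $\operatorname{supp}(h_N)$, hence converges by compactness) are two correct variants of the same idea.

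The gap is in your regularisation step for ``complete $\Rightarrow$ cutoff.'' You assert that on a compact $d$-ball one can find $\tilde\rho\in C^\infty$ with $|\tilde\rho-\rho|<\tfrac12$ and $\Gamma(\tilde\rho)\le 2$ ``by mollifying $\rho$ in finitely many charts and gluing,'' and you say this is ``precisely the regularisation already invoked in the proof of \eqref{di}.'' That is not quite right: the paper's proof of \eqref{di} only claims direct mollification works in the \emph{elliptic} case, and in the subelliptic case it deliberately switches to the elliptic approximations $L_k=L+\tfrac1k\Delta$. The reason is that a function with $\Gamma(\rho)\le 1$ a.e. need not have a bounded Euclidean gradient in the degenerate directions (think of the Carnot--Carath\'eodory distance near the centre of the Heisenberg group), so Euclidean mollification in a chart does not obviously keep $\Gamma$ under control; one needs a Friedrichs/Meyers--Serrin type commutator argument. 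The paper's own proof of this proposition avoids the issue entirely: it replaces $\rho=d(x_0,\cdot)$ by the \emph{Riemannian} distance $d_R(x_0,\cdot)$ associated to $L+\Delta$, where smooth exhaustion functions with $\|\nabla_R\rho\|\le L$ are standard, and then uses the pointwise inequality $\Gamma(h)\le \Gamma_{L+\Delta}(h)=\|\nabla_R h\|^2$ to transfer the gradient bound to $\Gamma$ for free. Your parenthetical alternative via $L_k$ is essentially this route and is the one that should be developed; as written, the primary path rests on an unproved claim that is the real substance of the argument, not ``bookkeeping.''
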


\begin{proof}
Let us assume that the metric space $(\M,d)$ is complete. Let $\Delta$ be an elliptic operator on $\M$ and denote by $d_R$ the Riemannian distance associated to the operator $L+\Delta$. We have $d_R \le d$. Together with the Fefferman-Phong result recalled in Proposition \ref{FPtheorem}, we deduce that the metric space  $(\M,d_R)$ is complete.

Iif we fix a base point $x_0\in \bM$, we can find an
exhaustion function $\rho\in C^\infty(\bM)$ such that
\[ |\rho - d_R(x_0,\cdot)| \le L,\ \ \ \ \ \   \|\nabla_R \rho \| \le L
\ \ \text{on}\ \bM. \] By the completeness of $(\M,d_R)$ and the
Hopf-Rinow theorem, the level sets $\Omega_s = \{x\in \bM\mid
\rho(x)<s\}$ are relatively compact and, furthermore, $\Omega_s
\nearrow \bM$ as $s\to \infty$. We now pick an increasing sequence
of functions $\phi_n\in C^\infty([0,\infty))$ such that
$\phi_n\equiv 1$ on $[0,n]$, $\phi_n \equiv 0$ outside $[0,2n]$, and
$|\phi_n'|\le \frac{2}{n}$. If we set $h_n(x) = \phi_n(\rho(x))$,
then we have $h_n\in C^\infty_0(\bM)$, $h_n\nearrow1$ on $\bM$ as
$n\to \infty$, and \[ \| \sqrt{\Gamma(h_n)} \|_\infty \le ||\nabla_R h_n||_{\infty} \le \frac{2L}{n}.
\]

\

Conversely, let us assume that here exists an increasing
sequence $h_n\in C^\infty_0(\bM)$, $0 \le h_n \le 1$,  such that $h_n\nearrow 1$ on
$\bM$, and $\| \Gamma(h_n) \|_{\infty} \to 0$, as $n\to \infty$. Let $(x_k)$ be a Cauchy sequence in the metric space $(\M,d)$. We can find $N$ such that $h_N(x_1) \ge 1/2$ and $d(x_1,x_k) \le \frac{1}{4\| \Gamma(h_N) \|_{\infty}} $ for every $k$. For every $k$, we have then $h_N(x_k) \ge \frac{1}{4}$, so that $x_k$ belongs to the support of $h_N$. By compactness, we deduce that $x_k$ is convergent, hence $(\M,d)$ is complete.
\end{proof}

\subsection{The heat semigroup associated to a subelliptic diffusion operator}

We consider in this section a subelliptic diffusion operator $L$ which is essentially self-adjoint on $C^\infty_0(\bM)$. As a consequence, $L$ admits a unique self-adjoint extension (its Friedrichs extension). We shall continue to denote such extension by $L$. The domain of this extension shall be denoted by $\mathcal{D}(L)$. 

If $L=-\int_0^{+\infty} \lambda dE_\lambda$ denotes the spectral
decomposition of $L$ in $L^2 (\bM,\mu)$, then by definition, the
heat semigroup $(P_t)_{t \ge 0}$ is given by $P_t= \int_0^{+\infty}
e^{-\lambda t} dE_\lambda$. It is a one-parameter family of bounded operators on
$L^2 (\bM,\mu)$. Since the quadratic form $-<f,Lf>$ is a Dirichlet
form, we deduce 
that $(P_t)_{t \ge 0}$ is a sub-Markov semigroup: it transforms non-negative functions into non-negative functions and satisfies
\begin{equation}\label{submarkov}
P_t 1 \le 1.
\end{equation}
The sub-Markov property and  Riesz-Thorin interpolation classically allows to construct the semigroup $(P_t)_{t \ge 0}$ in $L^p(\M,\mu)$ and for $f \in L^p(\M,\mu)$ one has
\begin{equation}\label{smp}
||P_tf||_{L^p(\bM , \mu)} \le ||f||_{L^p(\bM, \mu)},\ \  1\le p\le \infty.
\end{equation}

For more details about the functional analytic construction of heat semigroups, we refer fro instance  to \cite{Bau6}.

We now address the proof of the existence and regularity of the heat kernel associated to a a locally subelliptic operator. 

The key estimate to prove the existence of a heat kernel is the following:
\begin{lemma}
Let $K$ be a compact  subset of $\mathbb{M}$. There exists a positive constant $C$  and $k >0$ such that for $f \in L^2(\M,\mu)$:
\[
\sup_{x \in K} | P_t f(x)| \le C \left( 1 +\frac{1}{t^{k}} \right) \| f \|_{L^2(\M,\mu)}.
\]
\end{lemma}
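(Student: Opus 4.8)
The plan is to combine the spectral-theoretic smoothing of the semigroup with the subelliptic regularity estimate in Proposition \ref{regula}. The starting point is that for $f \in L^2(\M,\mu)$ and any integer $j \ge 0$, the function $P_t f$ lies in the domain of $L^j$, and moreover, using the spectral decomposition $L = -\int_0^\infty \lambda\, dE_\lambda$, one has
\[
\| L^j P_t f \|_{L^2(\M,\mu)}^2 = \int_0^\infty \lambda^{2j} e^{-2\lambda t}\, d\langle E_\lambda f, f\rangle \le \left( \sup_{\lambda \ge 0} \lambda^{2j} e^{-2\lambda t} \right) \| f \|_{L^2(\M,\mu)}^2.
\]
An elementary calculus estimate gives $\sup_{\lambda \ge 0} \lambda^{2j} e^{-2\lambda t} = (j/(et))^{2j}$, so that $\| L^j P_t f \|_{L^2(\M,\mu)} \le C_j t^{-j} \| f \|_{L^2(\M,\mu)}$ for $0 < t \le 1$, say (for $t \ge 1$ one simply bounds $\lambda^{2j} e^{-2\lambda t} \le \lambda^{2j} e^{-2\lambda} \le C_j$). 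Thus each iterated power of $L$ applied to $P_t f$ is controlled in $L^2$ by a negative power of $t$ times $\| f \|_{L^2(\M,\mu)}$.

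Next I would invoke Proposition \ref{regula}: fix the compact set $K$, and choose (or rather, receive from that proposition) the constant $\varepsilon > 0$ and then pick an integer $k$ with $k > n/(4\varepsilon)$. Since $P_t f$ is in $L^2$ and $L^j P_t f \in L^2(\M,\mu)$ for $j = 0, 1, \dots, k$ by the previous paragraph, Proposition \ref{regula} applies with $u = P_t f$ on a slightly larger relatively compact open set, and yields
\[
\sup_{x \in K} | P_t f(x)|^2 \le C \sum_{j=0}^k \| L^j P_t f \|_{L^2(\M,\mu)}^2 \le C \sum_{j=0}^k C_j' \left(1 + \frac{1}{t^{2j}}\right) \| f \|_{L^2(\M,\mu)}^2 \le C'' \left(1 + \frac{1}{t^{2k}}\right) \| f \|_{L^2(\M,\mu)}^2.
\]
Taking square roots and absorbing constants gives the claimed bound with the exponent $k$ in the statement replaced by this $k$ (one can rename it). Note this also shows $P_t f$ is continuous on the interior of $K$, which is consistent with the regularity claims to follow.

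The main obstacle — and it is a mild one — is the bookkeeping needed to legitimately apply Proposition \ref{regula}: one must know that $P_t f \in L^2$ globally (true, since $P_t$ is bounded on $L^2$) and that $L^j P_t f \in L^2(\M,\mu)$ in the sense of distributions, which follows from the spectral calculus since $P_t$ maps $L^2$ into $\bigcap_j \mathcal D(L^j)$, together with the fact that the distributional and operator-theoretic actions of $L^j$ agree on $\mathcal D(L^j)$ (here one uses essential self-adjointness and hypoellipticity). A secondary technical point is that Proposition \ref{regula} is local, stated on an open set $\Omega$, so one should fix a relatively compact open $\Omega \supset K$ and apply the estimate there; the global $L^2$ norms on the right-hand side only get larger when $\Omega$ is enlarged to all of $\M$, so the final bound is clean. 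Everything else is the calculus estimate $\sup_\lambda \lambda^{2j} e^{-2\lambda t}$, which is routine.
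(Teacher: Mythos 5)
Your argument matches the paper's proof exactly: both invoke the spectral theorem to bound $\| L^j P_t f \|_{L^2(\M,\mu)}$ by $\bigl(\sup_{\lambda \ge 0} \lambda^j e^{-\lambda t}\bigr)\|f\|_{L^2(\M,\mu)}$ (you phrase it in terms of $\lambda^{2j} e^{-2\lambda t}$ on the squared norms, which is the same thing), and then apply Proposition \ref{regula} on the compact set $K$ to control the sup norm by a finite sum of those $L^2$ norms. Your extra remarks about the local nature of the regularity estimate and about distributional versus operator-theoretic $L^j$ are reasonable bookkeeping but do not change the line of reasoning.
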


\begin{proof}
Let us first  observe that from the spectral theorem that if $f \in L^2(\M,\mu)$ then for every $k \ge 0$, $L^k P_t f \in L^2(\M,\mu)$ and
\[
 \|L^k P_t f \|_{ L^2(\M,\mu)} \le \left(\sup_{\lambda \ge 0} \lambda^k e^{-\lambda t}\right) \|f \|_{L^2(\M,\mu)}.
 \]
Now, let $K$ be  a compact set of $\mathbb{M}$. From Proposition  \ref{regula}  there exists a positive constant $C$ such that 
\[
\sup_{x \in K} | P_t f(x) |^2 \le C  \sum_{k=0}^{k} \|L^k P_t f \|^2_{ L^2(\M,\mu)} .
\]
Since it is immediately checked that 
 \[
 \sup_{\lambda \ge 0} \lambda^k e^{-\lambda t}=\left( \frac{k}{t}\right)^k e^{-k},
 \]
the bound 
\[
\sup_{x \in K} | P_t f(x)| \le C \left( 1 +\frac{1}{t^{k}} \right) \| f \|_{L^2(\M,\mu)}.
\]
easily follows. 
\end{proof}

We are now in position to prove the smoothing property of the semigroup.

\begin{proposition}
For $f \in L^2(\M,\mu)$, the function $(t,x)\rightarrow P_tf (x)$ is smooth on $(0,+\infty)\times \mathbb{M}$.
\end{proposition}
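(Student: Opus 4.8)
The plan is to reduce the smoothness of $(t,x)\mapsto P_tf(x)$ to the manifold version of the regularity result, Proposition \ref{regula}, applied on suitable relatively compact coordinate charts, together with the semigroup property and the key a priori bound just established. Fix a relatively compact open set $\Omega\subset\mathbb M$ with $\bar\Omega$ contained in a single coordinate chart, and fix a compact $K\subset\Omega$ and a time interval $[a,b]\subset(0,+\infty)$. The goal is to show $(t,x)\mapsto P_tf(x)$ is $C^\infty$ on $[a,b]\times K$; since $\Omega$, $K$, $a$, $b$ are arbitrary, this gives smoothness on $(0,+\infty)\times\mathbb M$. The engine is: for any $m$, choose $k$ with $k>\frac{m}{2\varepsilon}+\frac{n}{4\varepsilon}$, where $\varepsilon$ is the subellipticity exponent attached to $\Omega$; then Proposition \ref{regula} bounds $\sup_{x\in K}\sum_{|\alpha|\le m}|\partial^\alpha u(x)|^2$ by $C\sum_{j=0}^k\|L^ju\|_{L^2(\Omega,\mu)}^2$ for any $u\in L^2(\Omega,\mu)$ with $L^ju\in L^2(\Omega,\mu)$ in the distributional sense.

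First I would treat the spatial regularity for fixed $t>0$. Apply the above with $u=P_tf$: by the spectral theorem $L^jP_tf\in L^2(\mathbb M,\mu)$ for all $j$ (with norm at most $\big(\sup_{\lambda\ge0}\lambda^je^{-\lambda t}\big)\|f\|_2=(j/t)^je^{-j}\|f\|_2$), hence a fortiori $L^jP_tf\in L^2(\Omega,\mu)$, and distributionally $L^j(P_tf)$ agrees with this $L^2$ function. So for every $m$, $P_tf\in C^m$ on the interior of $K$, i.e. $P_tf\in C^\infty(\mathbb M)$, and the derivative bounds are locally uniform in $t$ on $[a,b]$. Next I would handle joint regularity by exploiting the semigroup law: write $P_tf=P_s(P_{t-s}f)$ and differentiate in $t$. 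On the domain of $L$ one has $\frac{d}{dt}P_tf=LP_tf$ in $L^2$; more usefully, $\partial_t P_tf=L P_tf$ should hold pointwise after we know spatial smoothness, since for $g=P_{t_0}f\in\bigcap_k\mathcal D(L^k)\subset C^\infty$ (by the consequence of Proposition \ref{regula} noted in the text) the map $t\mapsto P_tg$ solves the heat equation $\partial_t u=Lu$ classically on $(0,\infty)\times\mathbb M$. Iterating, $\partial_t^\ell P_tf=L^\ell P_tf$, and each $L^\ell P_tf$ is again of the form $P_{t/2}(L^\ell P_{t/2}f)$ with $L^\ell P_{t/2}f\in L^2$, so the spatial estimate above applies uniformly for $t\in[a,b]$, giving joint continuity of all derivatives $\partial_t^\ell\partial_x^\alpha P_tf$.

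More concretely, I would organize the joint-smoothness step as a mixed-derivative bound: for fixed $t\in[a,b]$ and any $\ell$, $\partial_t^\ell P_tf=L^\ell P_tf$, and
\[
\sup_{x\in K}\sum_{|\alpha|\le m}|\partial^\alpha L^\ell P_tf(x)|^2\le C\sum_{j=0}^k\|L^{j+\ell}P_tf\|_{L^2(\Omega,\mu)}^2\le C\sum_{j=0}^k\Big(\frac{(j+\ell)}{a}\Big)^{2(j+\ell)}e^{-2(j+\ell)}\|f\|_{L^2(\mathbb M,\mu)}^2,
\]
which is finite and locally uniform in $t$. Continuity in $t$ of these quantities then follows from the strong continuity of $(P_t)$ on $L^2$ applied to $L^{j+\ell}P_{a/2}f\in L^2$ together with the uniform bounds, and a standard Arzel\`a--Ascoli / uniform-limit argument upgrades this to genuine $C^\infty$ regularity of $(t,x)\mapsto P_tf(x)$ on $[a,b]\times K$.

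The main obstacle is the justification that $\partial_t^\ell P_tf=L^\ell P_tf$ holds in the classical (pointwise) sense rather than merely in $L^2$, i.e. that one may legitimately interchange the time derivative with pointwise evaluation. This is where one must use that $P_sf\in\bigcap_k\mathcal D(L^k)\subset C^\infty(\mathbb M)$ for each $s>0$, hypoellipticity of $\partial_t-L$ (or a direct argument via difference quotients controlled by the Lemma's bound $\sup_K|P_tg|\le C(1+t^{-k})\|g\|_2$ applied to $g=\frac{P_hf-f}{h}$-type expressions), to promote the $L^2$-valued differentiability to locally uniform pointwise differentiability with continuous derivatives. Everything else is a routine combination of the spectral estimates and Proposition \ref{regula}.
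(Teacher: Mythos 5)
Your argument follows the same route as the paper's: spatial regularity of $P_t f$ from the spectral theorem plus the inclusion $\bigcap_k \mathcal D(L^k)\subset C^\infty(\M)$, joint regularity from Proposition \ref{regula} combined with the spectral estimate $\|L^j P_t f\|_2\le (j/t)^j e^{-j}\|f\|_2$ and the strong continuity of the semigroup. The paper states only the joint-continuity step and explicitly "lets the details to the reader" for joint smoothness; your treatment of the pointwise identity $\partial_t^\ell P_t f = L^\ell P_t f$ via local uniform bounds and the semigroup law is exactly the omitted content, and it is sound.
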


\begin{proof}
Let $f \in L^2(\M,\mu)$. Let  $t >0$. As above,  from the spectral theorem, $ P_t f  \in \cap_{k \ge 1} \mathcal{D}(L^k) \subset C^\infty(\M)$. Hence $P_t f $ is a smooth function.

Next, we prove joint continuity in the variables $(t,x)\in (0,+\infty)\times \mathbb{R}^n$. It is enough to prove that if $t_0 >0$ and if $K$ is a compact set on $\mathbb{M}$,
\[
\sup_{x \in K} | P_{t} f(x) - P_{t_0} f(x) | \rightarrow_{t \to t_0} 0.
\]
From Proposition \ref{regula},  there exists a positive constant $C$ such that 
\[
\sup_{x \in K} | P_{t} f(x) - P_{t_0} f(x) |  \le C  \sum_{k=0}^{\kappa} \|L^k P_t f-L^k P_{t_0} f \|^2_{ L^2(\M,\mu)  }.
\]
Now, again from the spectral theorem, it is checked that
\[
\lim_{t \to t_0} \sum_{k=0}^{\kappa} \|L^k P_t f-L^k P_{t_0} f \|^2_{ L^2(\M,\mu)  }=0.
\]
This gives the expected joint continuity in $(t,x)$. The joint smoothness in $(t,x)$ is a consequence of the second part of Proposition \ref{regula} and the details are let to the reader.
\end{proof}

We now prove the following fundamental theorem about the existence of a heat kernel for the semigroup generated by subelliptic diffusion operators.

\begin{theorem}
There is a smooth function $p(x,y,t)$, $t \in (0,+\infty), x,y \in \mathbb{M}$, such that for every $f \in L^2(\M,\mu)$ and $x \in \mathbb{M}$ ,
\[
P_t f (x)=\int_{\mathbb{M}} p(x,y,t) f(y) d\mu (y).
\]
The function $p(x,y,t)$ is called the heat kernel associated to $(P_t)_{t \ge 0}$. It satisfies furthermore:
\begin{itemize}
\item (Symmetry) $p(x,y,t)=p(y,x,t)$;
\item (Chapman-Kolmogorov relation) $p(x,y,t+s)=\int_{\mathbb{M}} p(x,z,t)p(z,y,s)d\mu(z)$. 
\end{itemize}
\end{theorem}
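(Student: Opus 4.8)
I would produce the kernel by Riesz representation, using the pointwise bound of the Lemma above, and then deduce the symmetry, the Chapman--Kolmogorov relation and the regularity from, respectively, the self-adjointness of $P_t$, the semigroup law $P_{t+s}=P_tP_s$, and the smoothing estimates of Proposition \ref{regula}. \emph{Step 1 (construction of the kernel).} Fix $t>0$ and $x\in\M$. By the Lemma above, $|P_tf(x)|\le C(1+t^{-k})\|f\|_{L^2(\M,\mu)}$, so $f\mapsto P_tf(x)$ is a bounded linear functional on the Hilbert space $L^2(\M,\mu)$. By the Riesz representation theorem there is a unique $p_{t,x}\in L^2(\M,\mu)$, with moreover $\|p_{t,x}\|_{L^2(\M,\mu)}\le C(1+t^{-k})$, such that
\[
P_tf(x)=\int_\M p_{t,x}(y)\,f(y)\,d\mu(y)\qquad\text{for all }f\in L^2(\M,\mu).
\]
I set $p(x,y,t):=p_{t,x}(y)$. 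Since $x\mapsto\langle p_{t,x},f\rangle=P_tf(x)$ is continuous for each $f$, the map $x\mapsto p_{t,x}$ is weakly continuous into $L^2(\M,\mu)$, hence $(x,y)\mapsto p(x,y,t)$ is jointly measurable, and the bound on $\|p_{t,x}\|_{L^2}$ makes the double integrals below absolutely convergent for $f,g\in C_0^\infty(\M)$.

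\emph{Step 2 (symmetry and Chapman--Kolmogorov).} Since $P_t=\int_0^\infty e^{-\lambda t}\,dE_\lambda$ is a bounded Borel function of the self-adjoint operator $L$, it is self-adjoint and satisfies $P_tP_s=P_{t+s}$. For $f,g\in C_0^\infty(\M)$, Fubini's theorem gives
\begin{align*}
\int_\M\int_\M p(x,y,t)\,f(y)\,g(x)\,d\mu(y)\,d\mu(x)
&=\langle P_tf,g\rangle_{L^2}=\langle f,P_tg\rangle_{L^2}\\
&=\int_\M\int_\M p(y,x,t)\,f(y)\,g(x)\,d\mu(x)\,d\mu(y),
\end{align*}
so $p(x,y,t)=p(y,x,t)$ for $\mu\otimes\mu$-a.e. $(x,y)$, hence for all $(x,y)$ once continuity is established. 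Likewise, for $f\in L^2(\M,\mu)$,
\begin{align*}
\int_\M p(x,y,t+s)\,f(y)\,d\mu(y)&=P_{t+s}f(x)=P_t(P_sf)(x)\\
&=\int_\M p(x,z,t)\Big(\int_\M p(z,y,s)\,f(y)\,d\mu(y)\Big)\,d\mu(z),
\end{align*}
and, $f$ being arbitrary, $p(x,y,t+s)=\int_\M p(x,z,t)\,p(z,y,s)\,d\mu(z)$.

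\emph{Step 3 (regularity).} Writing $P_t=P_{t/2}P_{t/2}$ and using the self-adjointness of $P_{t/2}$, for all $f$ one has $\langle p_{t,x},f\rangle_{L^2}=P_tf(x)=\langle p_{t/2,x},P_{t/2}f\rangle_{L^2}=\langle P_{t/2}p_{t/2,x},f\rangle_{L^2}$, hence $p_{t,x}=P_{t/2}p_{t/2,x}$ in $L^2(\M,\mu)$. Since $P_{t/2}$ maps $L^2(\M,\mu)$ into $\bigcap_{k\ge0}\mathcal D(L^k)\subset C^\infty(\M)$, the function $y\mapsto p(x,y,t)$ is smooth, and by the symmetry of Step 2 so is $x\mapsto p(x,y,t)$; in particular $p$ is continuous in $(x,y)$, which upgrades the a.e. symmetry to symmetry everywhere. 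For the joint smoothness on $(0,\infty)\times\M\times\M$ I would fix $t_0>0$ and use, for $t$ near $t_0$, the identity $p(x,y,t)=\int_\M p(x,z,t_0/2)\,p(z,y,t-t_0/2)\,d\mu(z)$ from Step 2, then differentiate under the integral sign in $(x,y,t)$; the interchange is justified by the locally uniform bounds of the Lemma above and of Proposition \ref{regula}, exactly as in the proof of the smoothing property of $(P_t)_{t\ge0}$, and the details are left to the reader.

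\emph{Main obstacle.} The construction of $p$ and the two algebraic identities are soft consequences of abstract semigroup theory once the Lemma is granted; the only step requiring genuine work is the joint regularity in $(x,y,t)$, which cannot be obtained from spectral calculus alone and rests on the quantitative, compact-uniform estimates furnished by subelliptic regularity theory.
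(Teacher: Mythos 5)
Your proposal is correct and matches the paper's argument step for step: Riesz representation applied to the pointwise bound gives the kernel, self-adjointness of $P_t$ gives symmetry, the semigroup law $P_{t+s}=P_tP_s$ gives Chapman--Kolmogorov, and the half-time decomposition $p(x,y,t)=\langle p(x,\cdot,t/2),p(y,\cdot,t/2)\rangle_{L^2(\M,\mu)}$ together with the subelliptic smoothing estimates of Proposition \ref{regula} yields joint regularity. The only cosmetic difference is that the paper establishes joint smoothness by noting that $(t,x)\mapsto p(x,\cdot,t)$ is a smooth $L^2(\M,\mu)$-valued map (a consequence of the preceding smoothing proposition, with weak smoothness upgraded to norm smoothness) and then reads off smoothness of the inner product, whereas you propose differentiating under the Chapman--Kolmogorov integral; both rest on the same half-time trick.
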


\begin{proof}
Let $x\in \mathbb{M}$ and $t>0$. From the previous proposition, the linear form $f \rightarrow P_t f (x)$ is continuous on $ L^2(\M,\mu)$, therefore from the Riesz representation theorem, there is a function $p(x,\cdot,t)\in  L^2(\M,\mu)$, such that for $f \in  L^2(\M,\mu)$,
\[
P_t f (x)=\int_{\mathbb{M}} p(x,y,t) f(y) d\mu (y).
\]
From the fact that $P_t$ is self-adjoint on  $L^2(\M,\mu)$:
\[
\int_{\mathbb{M}} (P_t f) g d\mu=\int_{\mathbb{M}} f(P_t g)  d\mu,
\]
we easily deduce the symmetry property:
\[
p(x,y,t)=p(y,x,t).
\]
And the Chapman-Kolmogorov relation $p(t+s,x,y)=\int_{\mathbb{M}} p(x,z,t)p(z,y,s)d\mu(z)$ stems from the semigroup property $P_{t+s} =P_t P_s$. Finally, from the previous proposition the map $(t,x) \rightarrow p(x,\cdot,t) \in L^2(\M,\mu)$ is smooth on $ \mathbb{M} \times (0,+\infty)$ for the weak topology on $L^2(\M,\mu)$. This implies that it is also smooth on $(0,+\infty) \times \mathbb{R}^n$ for the norm topology. Since, from the Chapman-Kolmogorov relation
\[
p(x,y,t)=\langle p(x,\cdot,t/2), p(y, \cdot, t/2) \rangle_{L_{\mu}^2 (\mathbb{M} )},
\]
we conclude that $(x,y,t)\rightarrow p(x,y,t)$ is smooth on $\mathbb{M} \times \mathbb{M} \times (0,+\infty)$.
\end{proof}

The semigroup $(P_t)_{t \ge 0}$  actually solves a parabolic Cauchy problem.
 
\begin{lemma}
Let $f \in L^p(\M,\mu)$, $1 \le p \le \infty$, and let
\[
u (t,x)= P_t f (x), \quad t \ge 0, x\in \mathbb{M}.
\]
Then $u$ is smooth on $(0,+\infty)\times \mathbb{M}$ and is a strong solution of the Cauchy problem
\[
\frac{\partial u}{\partial t}= L u,\quad u (0,x)=f(x).
\]
\end{lemma}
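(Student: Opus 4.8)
The plan is to reduce the statement to the $L^2$ theory and the heat kernel already constructed, and to use the subelliptic sup-estimate proved just before the heat kernel theorem to upgrade $L^2$-convergences to locally uniform ones. I would first dispose of $f\in L^2(\M,\mu)$ and, along the way, extract the evolution equation for the kernel. For $t>0$ the spectral theorem gives $P_tf\in\bigcap_{k\ge0}\mathcal D(L^k)$ and $\frac{d}{ds}P_sf=LP_sf$ in $L^2$; to pass to a pointwise statement, write for small $\delta$
\[
\frac{P_{t+\delta}f-P_tf}{\delta}=P_{t/2}\!\left(\frac{P_{t/2+\delta}f-P_{t/2}f}{\delta}\right),
\]
note that the inner difference quotient converges in $L^2$ to $LP_{t/2}f$, and apply the sup-estimate with semigroup time $t/2$: the left side then converges uniformly on compacta to $P_{t/2}(LP_{t/2}f)(x)=(LP_tf)(x)$, which is a genuine pointwise value since $P_tf$ is smooth (by the smoothing property proved above). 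Hence $\partial_t u(t,x)=Lu(t,x)$. Taking $f=p(\cdot,y,s)\in L^2$ and using $p(\cdot,y,t+s)=P_t(p(\cdot,y,s))$, the same computation shows that the kernel itself solves $\partial_t p(x,y,t)=L_xp(x,y,t)$ on $(0,\infty)$ for every fixed $y$.

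For general $f\in L^p$ I would instead work from the representation $u(t,x)=\int_\M p(x,y,t)f(y)\,d\mu(y)$. This is the heat kernel theorem when $f\in L^2\cap L^p$, and it extends to all of $L^p$ once one knows $p(x,\cdot,t)\in L^{p'}(\mu)$, locally uniformly in $(t,x)$, which follows from $\int_\M p(x,y,t)\,d\mu(y)=P_t1(x)\le1$ (so $p(x,\cdot,t)\in L^1$) together with the local sup-bounds on $p$ coming from Proposition \ref{regula} and Chapman–Kolmogorov. Smoothness of $u$ on $(0,\infty)\times\M$ and the identities $\partial_t u=\int_\M\partial_t p\,f\,d\mu$ and $Lu=\int_\M L_xp\,f\,d\mu$ then follow by differentiation under the integral sign. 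The domination required is an $L^{p'}(d\mu(y))$ bound, uniform for $(t,x)\in(a,b)\times K$, on $\partial_t^j\partial_x^\alpha p(x,y,t)=\partial_x^\alpha L_x^jp(x,y,t)$ for $|\alpha|\le2$, $j\le1$; this is produced by the $C^m$ version of Proposition \ref{regula} applied in the $x$ variable, whose right-hand side, a sum of $L^2$-norms of $L$-iterates of $p(\cdot,y,t)$, is recast via Chapman–Kolmogorov as a value on the diagonal of higher space/time derivatives of $p$, hence is locally bounded because $p$ is smooth on $\M\times\M\times(0,\infty)$. Combined with the previous step, $\partial_t u=Lu$ pointwise on $(0,\infty)\times\M$.

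The initial condition $u(t,\cdot)\to f$ as $t\to0^+$ is then read off: for $p=2$ it is $\|P_tf-f\|_2\to0$ from the spectral theorem; for $1\le p<\infty$ it follows from that together with the density of $L^2\cap L^p$ in $L^p$ and $\|P_t\|_{L^p\to L^p}\le1$; for $p=\infty$ it holds in the weak-$\ast$ sense (and at Lebesgue points of $f$), which is the natural reading of the datum here. The step I expect to be the real obstacle is the differentiation under the integral sign above: the only available source of spatial bounds is Proposition \ref{regula}, which controls sup-norms by $L^2$-norms of iterates of $L$, and turning those into quantities integrable against $f$ — uniformly over compacta in $(t,x)$ — is exactly the point where Chapman–Kolmogorov and the smoothness of the already-built kernel must be deployed carefully; for $p=1$ one must additionally keep an eye on the global integrability of $p(x,\cdot,t)$.
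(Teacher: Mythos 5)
Your approach is genuinely different from the paper's. The paper tests $u$ against $\phi\in C^\infty_0((0,\infty)\times\M)$, moves $P_t$ onto $\phi$ by the $L^p$--$L^{p'}$ duality of the self-adjoint semigroup, recognizes $P_t\bigl[(-\partial_t-L)\phi\bigr]=-\partial_t\bigl(P_t\phi\bigr)$ and integrates the total derivative to zero, so $u$ is a distributional solution of the heat equation; smoothness (parabolic hypoellipticity) then promotes it to a strong solution. This duality move is the whole point: it transfers all the analytic burden onto the test function $\phi$, which lives in a fixed compact set, so no global information about the heat kernel is ever needed.

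Your $L^2$ argument is sound: writing $\frac{1}{\delta}(P_{t+\delta}f-P_tf)=P_{t/2}\bigl(\frac{1}{\delta}(P_{t/2+\delta}f-P_{t/2}f)\bigr)$ and applying the local sup-estimate to the $L^2$-convergent inner quotient does upgrade the spectral identity $\frac{d}{dt}P_tf=LP_tf$ to a locally uniform pointwise one. The gap is in the extension to general $L^p$. You base it on the representation $u(t,x)=\int_\M p(x,y,t)f(y)\,d\mu(y)$ and differentiation under the integral sign, which requires an $L^{p'}(d\mu(y))$-dominant, uniform for $(t,x)$ in a compact set, for $p(x,\cdot,t)$ and its first two $x$-derivatives and one $t$-derivative. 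You assert that $p(x,\cdot,t)\in L^{p'}$ ``locally uniformly in $(t,x)$'' follows from $P_t1\le1$ (giving $L^1$) together with ``the local sup-bounds on $p$ coming from Proposition \ref{regula} and Chapman--Kolmogorov.'' But Proposition \ref{regula} only controls $\sup_{x\in K}$ over a \emph{compact} $K$; it gives no bound as $y\to\infty$. Chapman--Kolmogorov gives $p(x,y,t)\le\sqrt{p(x,x,t)\,p(y,y,t)}$, and $p(y,y,t)$ is not a priori bounded in $y$ with no geometric assumption on $\M$. So $p(x,\cdot,t)\in L^\infty(\mu)$ (needed for $p=1$), and more generally $p(x,\cdot,t)\in L^{p'}$ for $p'>1$, are not established, and neither is the uniform dominant for $\partial_t^j\partial_x^\alpha p$. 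You flag this yourself as ``the real obstacle,'' but it is not resolved; as written, the $L^p$ case for $p$ near $1$ is open. The paper's duality computation is precisely how one avoids having to prove any such global kernel decay.
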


\begin{proof}
For $\phi \in C^\infty_0 ((0,+\infty) \times \mathbb{M})$, we have

\begin{align*}
\int_{\M \times \mathbb{R}} \left( \left( -\frac{\partial}{\partial t} -L \right) \phi (t,x) \right) u(t,x) d\mu(x) dt & =\int_{\mathbb{R}} \int_{\mathbb{M}} \left( \left( -\frac{\partial}{\partial t} -L \right) \phi (t,x) \right)  P_t f (x) dx dt  \\
 &= \int_{\mathbb{R}} \int_{\mathbb{M}}   P_t \left( \left( -\frac{\partial}{\partial t} -L \right) \phi (t,x) \right)  f (x) dx dt \\
 &= \int_{\mathbb{R}} \int_{\mathbb{M}}   -\frac{\partial}{\partial t} \left(  P_t \phi (t,x) f(x) \right) dx dt \\
 &=0.
\end{align*}
Therefore $u$ is a weak solution of the equation $\frac{\partial u}{\partial t}= L u$. Since $u$ is smooth it is also a strong solution.
\end{proof}

We now address the uniqueness of solutions.

\begin{lemma}\label{uniqueness heat equation}
Let $v(x,t)$ be a non negative function such that
\[
\frac{\partial v}{\partial t} \le L v,\quad v(x,0)=0,
\]
and such that for every $t >0$,
\[
\| v ( \cdot,t) \|_{L^p(\M,\mu)} <+\infty,
\]
where $1 <p <+\infty$. Then $v(x,t)=0$.
\end{lemma}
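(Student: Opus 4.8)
The plan is to run a duality argument against the heat semigroup $(P_t)_{t\ge 0}$, using the differential inequality $\frac{\partial v}{\partial t}\le Lv$ (understood, as in the preceding discussion, in the weak/distributional sense). Fix $T>0$. Since $v\ge 0$, it suffices to show that $\int_{\mathbb{M}} v(x,T)\,g(x)\,d\mu(x)\le 0$ for every nonnegative $g\in C_0^\infty(\mathbb{M})$, because then $v(\cdot,T)=0$ $\mu$-a.e., and letting $T$ vary gives $v\equiv 0$. Fix such a $g$, let $q=p/(p-1)$ be the conjugate exponent, and introduce
\[
\Phi(s)=\int_{\mathbb{M}} v(x,s)\,(P_{T-s}g)(x)\,d\mu(x),\qquad s\in[0,T].
\]
This is well defined and finite: $g\in C_0^\infty(\mathbb{M})\subset L^q(\mathbb{M},\mu)$, so by \eqref{smp} $P_{T-s}g\in L^q(\mathbb{M},\mu)$, while $v(\cdot,s)\in L^p(\mathbb{M},\mu)$ by hypothesis, and one applies H\"older's inequality. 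Moreover $P_{T-s}g\ge 0$ by the sub-Markov property \eqref{submarkov}, and $r\mapsto P_r g$ is smooth with $\frac{d}{dr}P_r g=LP_r g$.

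The heart of the argument is that $\Phi$ is non-increasing on $(0,T)$. Differentiating under the integral sign (legitimate on compact subintervals of $(0,T)$, by the smoothness of $P_r g$ and the spectral bounds on $L^k P_r g$) gives
\[
\Phi'(s)=\int_{\mathbb{M}} \frac{\partial v}{\partial s}(x,s)\,(P_{T-s}g)(x)\,d\mu(x)-\int_{\mathbb{M}} v(x,s)\,(LP_{T-s}g)(x)\,d\mu(x).
\]
Because $P_{T-s}g\ge 0$ and $\frac{\partial v}{\partial s}\le Lv$, the first integral is bounded above by $\int_{\mathbb{M}}(Lv)(P_{T-s}g)\,d\mu$; integrating by parts (the delicate step, discussed below) this equals $\int_{\mathbb{M}} v\,(LP_{T-s}g)\,d\mu$, so $\Phi'(s)\le 0$. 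Since $\Phi(0)=\int_{\mathbb{M}} v(\cdot,0)\,(P_T g)\,d\mu=0$ (using $v(\cdot,0)=0$) and $\Phi(T)=\int_{\mathbb{M}} v(\cdot,T)\,g\,d\mu$, the monotonicity of $\Phi$, together with its continuity up to the endpoints, yields $\int_{\mathbb{M}} v(\cdot,T)\,g\,d\mu=\Phi(T)\le\Phi(0)=0$, which is what we wanted.

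The main obstacle is justifying $\int_{\mathbb{M}}(Lv)\psi\,d\mu=\int_{\mathbb{M}} v\,(L\psi)\,d\mu$ with $\psi=P_{T-s}g$: we know only $v\in L^p$ and $\frac{\partial v}{\partial s}\le Lv$ distributionally, so $Lv$ is not controlled near infinity. I would handle this by localizing: replace $\psi$ by $\chi_n\psi$ for a cutoff sequence $\chi_n\in C_0^\infty(\mathbb{M})$, so that the integration by parts is elementary for each $n$, and then bound the commutator contributions $\int_{\mathbb{M}} v\,\Gamma(\chi_n,\psi)\,d\mu$ and $\int_{\mathbb{M}} v\,(L\chi_n)\,\psi\,d\mu$ by H\"older's and Cauchy--Schwarz inequalities, using $\|v(\cdot,s)\|_{L^p}<\infty$ and the integrability/decay of $\psi=P_{T-s}g$ and of $\Gamma(\psi)$, before letting $n\to\infty$. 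The remaining minor points — differentiating $\Phi$ and its continuity at $s=0$ and $s=T$ — follow from the smoothness of $(r,x)\mapsto P_r g(x)$, the strong $L^q$-continuity of $(P_r)$, and the meaning of the initial condition $v(\cdot,0)=0$; alternatively one first establishes $\Phi(s)\le\Phi(\varepsilon)$ on $[\varepsilon,T-\varepsilon]$ and passes to the limit $\varepsilon\to 0$.
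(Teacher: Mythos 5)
Your overall strategy — a duality argument against the heat flow $\psi_s=P_{T-s}g$, showing $\Phi(s)=\int_{\mathbb{M}} v(\cdot,s)\,\psi_s\,d\mu$ is nonincreasing — is a legitimate and elegant route, and you correctly flag the integration by parts $\int_{\mathbb{M}}(Lv)\psi\,d\mu=\int_{\mathbb{M}} v\,L\psi\,d\mu$ as the delicate step. However, the fix you propose for it does not work in the generality of this lemma, and this is a genuine gap.

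The issue is the commutator term $\int_{\mathbb{M}} v\,(L\chi_n)\,\psi\,d\mu$. The only thing the hypotheses of the paper provide about the cutoff sequence is $\|\Gamma(\chi_n)\|_\infty\to 0$ (this is what follows from completeness of $(\mathbb{M},d)$, cf.\ Proposition~\ref{L:exhaustion}); absolutely nothing is assumed or known about $\|L\chi_n\|_\infty$. In the subelliptic setting of Section~1 there is no Laplacian comparison theorem to keep $L\chi_n$ bounded, and no growth control on the carr\'e du champ of second order. So $\|\psi\,L\chi_n\|_q$ can blow up as $n\to\infty$ and H\"older gives nothing. One cannot evade this by integrating by parts once more, since $\int v\,\psi\,L\chi_n\,d\mu=-\int\psi\,\Gamma(v,\chi_n)\,d\mu-\int v\,\Gamma(\psi,\chi_n)\,d\mu$, and now the term $\int\psi\,\Gamma(v,\chi_n)\,d\mu$ requires an $L^p$-type bound on $\sqrt{\Gamma(v)}$, which is not part of the hypotheses either. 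In short, your approach requires second-order information on the cutoffs (or first-order information on $v$), neither of which is available.

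The paper's proof is built precisely to avoid this obstruction. Instead of pairing $Lv$ against the heat-flow test function $P_{T-s}g$, it pairs against $h^2 v^{p-1}$. After a single integration by parts the identity $\int h^2 v^{p-1}Lv\,d\mu=-2\int h\,v^{p-1}\Gamma(h,v)\,d\mu-(p-1)\int h^2 v^{p-2}\Gamma(v)\,d\mu$ produces a \emph{nonpositive} term $-(p-1)\int h^2 v^{p-2}\Gamma(v)\,d\mu$ into which the cross term $\int h\,v^{p-1}\Gamma(h,v)\,d\mu$ is absorbed by Cauchy--Schwarz, at the cost of a harmless multiple of $\|\Gamma(h)\|_\infty\int v^p\,d\mu$. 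Only $\Gamma(h)$ appears, never $Lh$. Your duality functional $\Phi$ has no analogous sign-definite term available to absorb the cutoff error. To salvage your approach one would need either a hypothesis like $\|L\chi_n\|_\infty\to 0$ (true on manifolds with Ricci bounded below, via Laplacian comparison, but not in this generality), or sufficiently fast decay of $P_{T-s}g$ to overpower arbitrary growth of $L\chi_n$, which is not available either.
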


\begin{proof}
Let $h \in C_0^\infty(\M)$. Since $u$ is a subsolution with the zero initial condition, for any $\tau\in (0,T)$,
\begin{align*}
 & \int_0^\tau \int_\M  h^2(x) v^{p-1}(x,t) L v(x,t) d\mu(x) dt  \\
  \geq & \int_0^\tau \int_\M h^2(x) v^{p-1} \frac{\partial v}{\partial t} d\mu(x) dt  \\
  = & \frac{1}{p} \int_0^\tau \frac{\partial }{\partial t} \left( \int_\M h^2(x) v^{p} d\mu(x)\right) dt \\
   = & \frac{1}{p}\int_\M h^2(x) v^{p}(x,\tau) d\mu(x).
\end{align*}
On the other hand, integrating by parts yields
\begin{align*}
  &  \int_0^\tau \int_\M  h^2(x) v^{p-1}(x,t) L v(x,t) d\mu(x) dt \\
 =&  - \int_0^\tau \int_\M 2h v^{p-1} \Gamma(h,v)  d\mu dt - \int_0^\tau \int_\M  h^2 (p-1) v^{p-2} \Gamma(v)  d\mu dt .
\end{align*}
Observing that
\begin{align*}
0\leq & \left(\sqrt{\frac{2}{p-1}\Gamma(h)}v - \sqrt{\frac{p-1}{2}\Gamma(v)}h \right)^2  \leq \frac{2}{p-1}\Gamma(h)v^2 + 2 \Gamma(h,v) h v +\frac{p-1}{2}\Gamma(v)h^2 ,
\end{align*}
we obtain the following estimate.
\begin{align*}
 &  \int_0^\tau \int_\M  h^2(x) v^{p-1}(x,t) L v(x,t) d\mu(x) dt \\
  \leq&  \int_0^\tau \int_\M  \frac{2}{p-1} \Gamma(h) v^p  d\mu dt - \int_0^\tau \int_\M  \frac{p-1}{2}h^2 v^{p-2} \Gamma(v)  d\mu dt \\
 =& \int_0^\tau \int_\M  \frac{2}{p-1} \Gamma(h) v^p  d\mu dt - \frac{2(p-1)}{p^2} \int_0^\tau \int_\M  h^2 \Gamma(v^{p/2})  d\mu dt .
\end{align*}
Combining with the previous conclusion we obtain ,
\[
 \int_\M h^2(x) v^{p}(x,\tau) d\mu(x) + \frac{2(p-1)}{p} \int_0^\tau \int_\M  h^2 \Gamma(v^{p/2})  d\mu dt
   \leq \frac{2 p}{(p-1) } \| \Gamma(h) \|_\infty^2 \int_0^\tau \int_\M   v^p  d\mu dt.
\]
By using (H.1) and the previous inequality  with an increasing sequence $h_n\in C_0^\infty(\M)$, $0 \le h_n \le 1$,  such that $h_n\nearrow 1$ on $\mathbb{M}$, and $||\Gamma (h_n,h_n)||_{\infty} \to 0$, as $n\to \infty$, and letting $n \to +\infty$, we obtain $\int_\M  v^{p}(x,\tau) d\mu(x)=0$ thus $v=0$.
\end{proof}

As a consequence of this result, any solution  in $L^p(\M,\mu)$, $1<p<+\infty$ of the heat equation $\frac{\partial u}{\partial t}= L u$ is uniquely determined by its initial condition, and is therefore of the form $u(t,x)=P_tf(x)$. We stress that without further conditions, this result may fail when $p=1$ or $p=+\infty$.

\section{The heat semigroup on a complete Riemannian manifold and its geometric applications}

In this Section we shall consider a smooth and complete Riemannian manifold $(\mathbb{M},g)$ with dimension $n$. The Riemannian measure will be denoted by $\mu$ and we will often use the notation $\langle \cdot, \cdot \rangle$ for $g(\cdot,\cdot)$. A canonical elliptic diffusion operator on $\M$ is the Laplace-Beltrami operator. In this Section, we will show some applications to geometry of the Laplace-Beltrami operator and the semigroup it generates. We assume some basic knowledge about Riemannian geometry (see for instance \cite{Chavel,Petersen}). Some of the covered topics are inspired by M. Ledoux \cite{Led}.

\subsection{The Laplace-Beltrami operator}

The Laplace-Beltrami of $\mathbb{M}$ will be denoted by $L$.  We recall that it is the generator of the pre-Dirichlet form
\[
\mathcal{E}(f_1,f_2)=\int_\M g(\nabla f_1, \nabla f_2) d\mu, \quad f_1,f_2 \in C_0^\infty(\M),
\]
where $\nabla$ is the Riemannian gradient on $\M$.
Since $\mathbb{M}$ is assumed to be complete, as we have seen in the previous section, the operator $L$ is essentially self-adjoint on the space $C_0^\infty(\M)$. More precisely, 
there exists an increasing
sequence $h_n\in C_0^\infty(\M)$  such that $h_n\nearrow 1$ on
$\mathbb{M}$, and $||\Gamma(h_n,h_n)||_{\infty} \to 0$, as $n\to \infty$. Observe that for the Laplace-Beltrami operator $L$, the operator $\Gamma$ is simply given by $\Gamma(f_1,f_2)=\langle \nabla f_1, \nabla f_2 \rangle$.

The Friedrichs extension of $L$, which is therefore the unique self-adjoint extension of $L$ in $L^2(\M,\mu)$ will still be denoted by $L$ and the domain of this extension is denoted by $\mathcal{D}(L)$.

Using the results of the previous section, we then have:

\begin{itemize}
\item By using the spectral theorem for $L$ in the Hilbert space $L^2(\M,\mu)$, we may construct a strongly continuous contraction semigroup $(P_t)_{t \ge 0}$ in $L^2(\M,\mu)$ whose infinitesimal generator is $L$;
\item By using the ellipticity of $L$, we may prove that $(P_t)_{t \ge 0}$ admits a heat kernel, that is: There is a smooth function $p(t,x,y)$, $t \in (0,+\infty), x,y \in \mathbb{M}$, such that for every $f \in L^2(\M,\mu)$ and $x \in \mathbb{M}$ ,
\[
P_t f (x)=\int_{\mathbb{M}} p(t,x,y) f(y) d\mu (y).
\]
 Moreover, the heat kernel satisfies the two following conditions:
\begin{itemize}
\item (Symmetry) $p(t,x,y)=p(t,y,x)$;
\item (Chapman-Kolmogorov relation) $p(t+s,x,y)=\int_{\mathbb{M}} p(t,x,z)p(s,z,y)d\mu(z)$. 
\end{itemize}
\item The semigroup  $(P_t)_{t \ge 0}$ is a sub-Markov semigroup: If $ 0\le f \le 1$ is a function in $L^2(\M,\mu)$, then $0 \le P_t f \le 1$.
\item By using the Riesz-Thorin interpolation theorem, $(P_t)_{t \ge 0}$ defines a contraction semigroup on $L^p(\M,\mu)$, $1 \le p \le \infty$.
\end{itemize} 

\subsection{The heat semigroup on a compact Riemannian manifold}

In this Section, we study some spectral properties of the Laplace-Beltrami operator and of the heat semigroup on a compact Riemannian manifold. So, let $(\mathbb{M},g)$ be a compact Riemannian manifold. As usual, we denote by $(P_t)_{t \ge 0}$ the heat semigroup and by $p(t,x,y)$ the corresponding heat kernel.
As a preliminary result, we have the following Liouville's type theorem\index{Liouville theorem}.

\begin{lemma}
Let $f \in \mathcal{D}(L)$ such that $Lf=0$, then $f$ is a constant function.
\end{lemma}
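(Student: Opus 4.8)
The plan is to use the fact that on a \emph{compact} manifold constants are integrable and that the Dirichlet form controls the gradient. First I would observe that since $f \in \mathcal{D}(L)$ and $Lf = 0$, we may integrate against $f$ itself: by the integration by parts formula for the Friedrichs extension (valid for $f \in \mathcal{D}(L)$ since $C_0^\infty(\M)$ is a core and $\M$ is compact, so all functions are automatically compactly supported),
\[
0 = \int_\M f\, Lf \, d\mu = -\int_\M \Gamma(f,f)\, d\mu = -\int_\M |\nabla f|^2 \, d\mu.
\]
Hence $\int_\M |\nabla f|^2\, d\mu = 0$, which forces $\nabla f = 0$ $\mu$-almost everywhere, and since $f$ is smooth (by hypoellipticity of $L$, as $Lf = 0 \in C^\infty$), we get $\nabla f \equiv 0$ on $\M$. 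A connected manifold with $\nabla f \equiv 0$ has $f$ constant; alternatively one may invoke the corollary proved above stating that $\Gamma(f) = 0$ implies $f$ constant.

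The one point that requires a little care is the justification that $\int_\M f\, Lf\, d\mu = -\int_\M \Gamma(f,f)\, d\mu$ for a general $f \in \mathcal{D}(L)$ rather than for $f \in C_0^\infty(\M)$. Here I would use that on the compact manifold $\M$ the space $C_0^\infty(\M) = C^\infty(\M)$ is dense in $\mathcal{D}(L)$ for the graph norm, pick a sequence $f_k \in C^\infty(\M)$ with $f_k \to f$ in $L^2$ and $Lf_k \to Lf$ in $L^2$, apply the identity $\int f_k L f_k = -\int \Gamma(f_k,f_k)$ for each $k$, and pass to the limit: the left side converges to $\int f\, Lf$, and the right side, being $\int f_k\,Lf_k$, also converges, giving the desired identity in the limit. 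This is the main (and only real) obstacle, and it is routine given the self-adjointness machinery set up in the previous section.

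An even shorter route, if one prefers to avoid the limiting argument, is spectral: since $\M$ is compact the semigroup $(P_t)$ is conservative ($P_t 1 = 1$) and $1$ is an eigenfunction with eigenvalue $0$; writing $f = \int_0^\infty dE_\lambda\, f$, the condition $Lf = 0$ means $f$ lies in the kernel of $L$, i.e. $f = E_{\{0\}} f$, so $P_t f = f$ for all $t$. Then one would use that $P_t f$ is smooth and, by the strong maximum principle for the subsolution-type estimates (or directly by the fact that $\int \Gamma(P_t f) \to 0$), conclude $f$ is constant. I would present the direct integration-by-parts proof as the cleaner one and only mention the spectral viewpoint as a remark.
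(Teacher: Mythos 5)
Your proof is correct and follows essentially the same route as the paper: deduce smoothness of $f$ from (hypo)ellipticity of $L$, integrate by parts to obtain $\int_\M \Gamma(f,f)\,d\mu = 0$, and conclude $f$ is constant from $\Gamma(f)=0$. You spell out the density argument justifying the integration by parts for $f\in\mathcal{D}(L)$ more explicitly than the paper does, which is a welcome addition; the spectral remark at the end is a reasonable alternative viewpoint but, as you say, not needed.
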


\begin{proof}
From the ellipticity of $L$, we first deduce that $f$ is smooth. Then, since $\mathbb{M}$ is compact, the following equality holds
\[
-\int_{\mathbb{M}} f Lf d\mu=\int_{\mathbb{M}} \Gamma(f,f) d\mu.
\]
Therefore $\Gamma(f,f)=0$, which implies that $f$ is a constant function.
\end{proof}

In the compact case, the heat semigroup satisfies the so-called stochastic completeness (or Markov) property.

\begin{proposition}
For $t \ge 0$,
\[
P_t 1=1.
\]
\end{proposition}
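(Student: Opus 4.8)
The plan is to prove that on a compact Riemannian manifold the heat semigroup is conservative, i.e. $P_t 1 = 1$ for all $t \ge 0$. Since $\mathbb{M}$ is compact, the constant function $1$ belongs to $L^2(\M,\mu)$, so $P_t 1$ is well-defined, and moreover by the smoothing property established above, $u(t,x) = P_t 1(x)$ is smooth on $(0,\infty)\times\mathbb{M}$ and solves the heat equation $\partial_t u = Lu$ with $u(0,\cdot) = 1$. The idea is that the constant function $1$ is itself the obvious solution of this Cauchy problem, so the result will follow from a uniqueness argument; alternatively one can argue directly at the level of the spectral decomposition.

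First I would observe that, since $L 1 = 0$ (the Laplace-Beltrami operator annihilates constants, as $\Gamma(1,1)=0$), the constant function $1$ lies in $\mathcal{D}(L)$ with $L1 = 0$, hence from the functional calculus $P_t 1 = \int_0^\infty e^{-\lambda t}\, dE_\lambda 1 = 1$, because $1$ is in the kernel of $L$ and thus fixed by every $E_\lambda$ with $\lambda > 0$ in the appropriate sense; more carefully, $1$ is an eigenfunction with eigenvalue $0$, so $e^{-tL}1 = e^{0}1 = 1$. This is the cleanest route and uses only the spectral theorem together with the fact that compactness makes $1 \in L^2(\M,\mu)$.

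Alternatively, and perhaps more in the spirit of the preceding lemmas, I would set $v(t,x) = 1 - P_t 1(x)$. By the sub-Markov property \eqref{submarkov} we have $P_t 1 \le 1$, so $v \ge 0$; since $\mathbb{M}$ is compact, $\mu(\mathbb{M}) < \infty$ and hence $v(\cdot,t) \in L^p(\M,\mu)$ for any $p$; and $v$ satisfies $\partial_t v = -L P_t 1 = L v$ (using $L1=0$) with $v(\cdot,0) = 0$. Applying Lemma \ref{uniqueness heat equation} — valid here since on a compact manifold the required exhaustion sequence $h_n$ trivially exists (take $h_n \equiv 1$, or invoke Proposition \ref{L:exhaustion} together with completeness of $(\mathbb{M},d)$) — we conclude $v \equiv 0$, that is $P_t 1 = 1$.

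I do not expect any serious obstacle: the only point requiring a little care is making sure the hypotheses of the uniqueness lemma are genuinely met, in particular the finiteness $\|v(\cdot,t)\|_{L^p} < \infty$, which is immediate from compactness, and the existence of the cutoff sequence, which is immediate as well. The spectral-theoretic argument is essentially one line once one notes $1 \in L^2(\M,\mu)$ and $L1 = 0$. I would present the spectral argument as the main proof and perhaps remark that it also follows from the uniqueness of bounded solutions to the heat equation.
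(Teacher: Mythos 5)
Your proof is correct. You give two arguments, and it is worth noting that your ``alternative'' argument --- set $v = 1 - P_t 1 \ge 0$, observe $v$ is a nonnegative solution of the heat equation with zero initial data lying in $L^p$ by compactness, and invoke the uniqueness lemma --- is precisely the route the paper takes (its one-line proof cites the $L^p$-uniqueness lemma directly). Your primary spectral argument is a genuinely different and cleaner route: on a compact manifold $C^\infty_0(\mathbb{M}) = C^\infty(\mathbb{M})$, so the constant function $1$ lies in $C^\infty_0(\mathbb{M}) \subset \mathcal{D}(L)$ with $L1 = 0$ (which holds for any diffusion operator, as noted in the proof of the characterization theorem --- the appeal to $\Gamma(1,1)=0$ is slightly circular since $\Gamma(1,1) = -\tfrac12 L1$, but this is harmless), hence $1$ is an eigenfunction with eigenvalue $0$ and $P_t 1 = e^{0\cdot t} 1 = 1$ by functional calculus. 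The spectral route bypasses both the sub-Markov inequality and the uniqueness lemma, using only that $1 \in \mathcal{D}(L)$ and $L1=0$; the paper's route is heavier here but has the pedagogical advantage of illustrating the uniqueness lemma that was just proved, and it is the argument that generalizes to the non-compact complete case with Ricci bounded below (Theorem \ref{T:sc}), where $1 \notin L^2$ and the spectral shortcut is unavailable.
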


\begin{proof}
Since the constant function $1$ is in $L^2(\M,\mu)$, by compactness of $\mathbb{M}$, we may  apply Proposition \ref{uniqueness heat equation}.
\end{proof}

It turns out that the compactness of $\mathbb{M}$ implies the compactness of the semigroup.

\begin{proposition}
For $t>0$ the operator $P_t$ is a compact operator on the Hilbert space $L^2(\M,\mu)$. It is moreover trace class and 
\[
\mathbf{Tr} (P_t)=\int_\mathbb{M} p(t,x,x)\mu(dx).
\]
\end{proposition}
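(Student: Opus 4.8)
The plan is to show that $P_t$ is Hilbert--Schmidt, which in particular implies it is compact, and then to upgrade this to the trace-class statement by using the semigroup property together with the symmetry of the heat kernel. First I would fix $t>0$ and observe that by compactness of $\mathbb{M}$ the heat kernel $(x,y)\mapsto p(t,x,y)$ is continuous on the compact set $\mathbb{M}\times\mathbb{M}$, hence bounded, hence belongs to $L^2(\mathbb{M}\times\mathbb{M},\mu\otimes\mu)$ since $\mu$ is a finite measure. This is exactly the statement that $P_t$, viewed as the integral operator with kernel $p(t,\cdot,\cdot)$, is a Hilbert--Schmidt operator on $L^2(\mathbb{M},\mu)$; and Hilbert--Schmidt operators are compact. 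So compactness of $P_t$ for every $t>0$ follows immediately.

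Next I would establish the trace-class property and the trace formula. The key identity is the Chapman--Kolmogorov relation applied at time $t/2$ together with the symmetry $p(t/2,x,z)=p(t/2,z,x)$, which gives
\[
p(t,x,x)=\int_\mathbb{M} p(t/2,x,z)\,p(t/2,z,x)\,d\mu(z)=\int_\mathbb{M} p(t/2,x,z)^2\,d\mu(z).
\]
This means $P_t=P_{t/2}\circ P_{t/2}$ is a composition of two Hilbert--Schmidt operators, hence trace class. For the trace formula I would then use the general fact that if $A$ is an integral operator on $L^2(\mathbb{M},\mu)$ with continuous kernel $a(x,y)$ and $A$ is trace class, then $\mathbf{Tr}(A)=\int_\mathbb{M} a(x,x)\,d\mu(x)$. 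Concretely, this can be seen by choosing an orthonormal basis $(\varphi_k)$ of $L^2(\mathbb{M},\mu)$ consisting of eigenfunctions of $P_t$ (which exists since $P_t$ is compact, self-adjoint, and nonnegative), writing $\mathbf{Tr}(P_t)=\sum_k \langle P_t\varphi_k,\varphi_k\rangle=\sum_k \lambda_k$, and expanding $p(t,x,y)=\sum_k \lambda_k\varphi_k(x)\varphi_k(y)$ in $L^2(\mathbb{M}\times\mathbb{M})$; integrating the diagonal and justifying the interchange of sum and integral via Mercer-type positivity (all $\lambda_k\ge 0$ and the partial sums are dominated) yields $\int_\mathbb{M} p(t,x,x)\,d\mu(x)=\sum_k\lambda_k=\mathbf{Tr}(P_t)$.

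The main obstacle is the rigorous justification of the last interchange, i.e. proving the Mercer-type identity $\sum_k\lambda_k\varphi_k(x)\varphi_k(y)=p(t,x,x)$ with convergence good enough to integrate term by term. One clean way to sidestep delicate pointwise convergence issues is to argue entirely at the level of the factorization $P_t=P_{t/2}P_{t/2}$: writing $c_k=\langle p(t/2,\cdot,\cdot),\varphi_k\otimes\varphi_j\rangle$-type Fourier coefficients, one has $\lambda_k=\|P_{t/2}\varphi_k\|_2^2 = \int_\mathbb{M}\big(\int_\mathbb{M} p(t/2,x,z)\varphi_k(z)d\mu(z)\big)^2 d\mu(x)$, and then $\sum_k\lambda_k=\sum_k\|P_{t/2}\varphi_k\|_2^2=\|p(t/2,\cdot,\cdot)\|_{L^2(\mathbb{M}\times\mathbb{M})}^2=\int_\mathbb{M}\int_\mathbb{M} p(t/2,x,z)^2 d\mu(z)d\mu(x)$, which by the displayed Chapman--Kolmogorov identity equals $\int_\mathbb{M} p(t,x,x)\,d\mu(x)$. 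Here the only nontrivial input is Parseval applied to the Hilbert--Schmidt norm, which is elementary, together with the already-established continuity of the diagonal $x\mapsto p(t,x,x)$ so that the final integral makes classical sense. I would present the argument in this order, as it avoids invoking the full strength of Mercer's theorem.
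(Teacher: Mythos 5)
Your proof is correct and, once you discard the initial Mercer-theorem digression in favor of the factorization argument you rightly prefer, it is essentially the same as the paper's: both establish that $P_t$ is Hilbert--Schmidt from continuity of the kernel on the compact space $\mathbb{M}\times\mathbb{M}$, conclude trace-class from $P_t=P_{t/2}P_{t/2}$, and compute $\mathbf{Tr}(P_t)=\iint p(t/2,x,y)p(t/2,y,x)\,d\mu(x)\,d\mu(y)=\int p(t,x,x)\,d\mu(x)$ via symmetry and Chapman--Kolmogorov. The only presentational difference is that you route the trace computation through the eigenfunction expansion and Parseval for the Hilbert--Schmidt norm, while the paper directly uses the kernel formula for the trace of a product of two Hilbert--Schmidt integral operators; these are the same identity.
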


\begin{proof}
From the existence of the heat kernel we have
\[
P_t f (x)=\int_{\mathbb{M}} p(t,x,y) f(y) d\mu (y).
\]
But from the compactness of $\mathbb{M}$, we have
\[
\int_{\mathbb{M}}\int_{\mathbb{M}} p(t,x,y)^2 d \mu(x) d\mu(y) <+\infty.
\]
Therefore,  the operator 
\[
P_t: L^2(\mathbb{M},\mu) \rightarrow L^2(\mathbb{M},\mu)
\]
is a Hilbert-Schmidt operator. It is thus in particular a compact operator.

Since $P_t=P_{t/2} P_{t/2}$, $P_t$ is a product of two Hilbert-Schmidt operators. It is therefore a class trace operator and,
\[
\mathbf{Tr} ( P_t)=\int_{\mathbb{M}}\int_{\mathbb{M}} p(t/2,x,y)p(t/2,y,x) d \mu(x) d\mu(y).
\]
 We conclude then by applying the Chapman-Kolmogorov relation.
\end{proof}

In this compact framework, we have the following theorem

\begin{theorem}\label{spectral compact manifold}
There exists a complete orthonormal basis $(\phi_n)_{n \in \mathbb{N}}$ of $L^2(\mathbb{M},\mu)$, consisting of eigenfunctions of $-L$, with $\phi_n$ having an eigenvalue $\lambda_n$ with finite multiplicity satisfying
\[
0=\lambda_0 < \lambda_1\le \lambda_2  \le \cdots \nearrow +\infty.
\]
Moreover, for $t>0$, $x,y \in \mathbb{M}$,
\[
p(t,x,y)=\sum_{n=0}^{+\infty} e^{-\lambda_n t} \phi_n (x) \phi_n (y),
\]
with convergence absolute and uniform for each $t>0$.
\end{theorem}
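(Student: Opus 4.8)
The plan is to combine the spectral theorem for the self-adjoint operator $-L$ with the compactness established in the previous proposition. Since we have shown that $P_t$ is a compact, self-adjoint, non-negative operator on $L^2(\M,\mu)$ for every $t>0$, the spectral theorem for compact self-adjoint operators gives, for a fixed $t_0>0$, an orthonormal basis of eigenfunctions $(\phi_n)$ of $P_{t_0}$ with eigenvalues $\mu_n \ge 0$ accumulating only at $0$, each nonzero eigenvalue having finite multiplicity. First I would check that $\ker P_{t_0} = \{0\}$: if $P_{t_0} f = 0$ then by the semigroup property $P_{t_0/2} f$ has zero norm (as $\langle P_{t_0} f, f\rangle = \|P_{t_0/2} f\|^2$), and iterating, $P_{t_0/2^k} f \to f$ in $L^2$ forces $f=0$. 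Hence all $\mu_n > 0$ and we may write $\mu_n = e^{-\lambda_n t_0}$ with $\lambda_n \in \R$; non-negativity of $-L$ (the Dirichlet form is non-negative) gives $\lambda_n \ge 0$, and accumulation of $\mu_n$ at $0$ forces $\lambda_n \nearrow +\infty$. Next I would argue that the $\phi_n$ are simultaneously eigenfunctions of $P_s$ for all $s \ge 0$ and of $L$: since all $P_s$ commute and are self-adjoint, each eigenspace of $P_{t_0}$ (finite-dimensional) is invariant under every $P_s$, and restricting to it one diagonalizes the commuting family, but in fact the standard argument is cleaner — the spectral decomposition $L = -\int_0^\infty \lambda\, dE_\lambda$ shows $P_{t_0} = \int_0^\infty e^{-\lambda t_0}\, dE_\lambda$, and compactness of $P_{t_0}$ forces the spectral measure to be supported on a discrete set of atoms $\{\lambda_n\}$ with $E_{\{\lambda_n\}}$ finite rank; then $\phi_n$ may be chosen as an orthonormal basis of $\mathrm{Ran}\, E_{\{\lambda_n\}}$, giving $L\phi_n = -\lambda_n \phi_n$, $P_s \phi_n = e^{-\lambda_n s}\phi_n$, and ellipticity of $L$ makes each $\phi_n$ smooth. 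That $\lambda_0 = 0$ with $\phi_0$ constant is exactly the Liouville lemma proved above, and compactness of $\M$ gives $\lambda_1 > 0$ (no other constants).

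It remains to prove the eigenfunction expansion of the heat kernel together with absolute and uniform convergence. For fixed $x$, the function $p(t/2, x, \cdot)$ lies in $L^2(\M,\mu)$, so we may expand $p(t/2,x,\cdot) = \sum_n c_n(t,x) \phi_n$ with $c_n(t,x) = \int_\M p(t/2,x,y)\phi_n(y)\, d\mu(y) = (P_{t/2}\phi_n)(x) = e^{-\lambda_n t/2}\phi_n(x)$. Then the Chapman–Kolmogorov identity $p(t,x,y) = \langle p(t/2,x,\cdot), p(t/2,y,\cdot)\rangle_{L^2(\M,\mu)}$ proved earlier gives $p(t,x,y) = \sum_n e^{-\lambda_n t}\phi_n(x)\phi_n(y)$ as an $L^2$-convergent (in $y$, for fixed $x,t$) series; pointwise equality for every $x,y$ follows because both sides are continuous in $y$. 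To upgrade to absolute and uniform convergence, I would estimate $\sum_n e^{-\lambda_n t} |\phi_n(x)||\phi_n(y)| \le \big(\sum_n e^{-\lambda_n t}\phi_n(x)^2\big)^{1/2}\big(\sum_n e^{-\lambda_n t}\phi_n(y)^2\big)^{1/2} = p(t,x,x)^{1/2} p(t,y,y)^{1/2}$ by Cauchy–Schwarz, using that $\sum_n e^{-\lambda_n t}\phi_n(x)^2 = p(t,x,x)$ (the diagonal case of what we just proved, which converges since it is a series of non-negative terms with sum equal to the finite quantity $\|p(t/2,x,\cdot)\|_{L^2}^2$). Since $p(t,x,x)$ is continuous on the compact manifold $\M$, it is bounded, so the tail $\sum_{n \ge N} e^{-\lambda_n t}|\phi_n(x)||\phi_n(y)|$ is controlled by $\big(\sum_{n\ge N} e^{-\lambda_n t}\phi_n(x)^2\big)^{1/2} \cdot \sup_y p(t,y,y)^{1/2}$, and a Dini-type argument (each partial sum $\sum_{n<N} e^{-\lambda_n t}\phi_n(x)^2$ is continuous, increasing in $N$, converging to the continuous function $p(t,x,x)$ on a compact space) makes the convergence of $\sum_n e^{-\lambda_n t}\phi_n(x)^2$ uniform in $x$; hence the double series converges absolutely and uniformly on $\M \times \M$ for each fixed $t>0$.

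The main obstacle is the uniform convergence statement: getting from $L^2$-convergence in one variable to absolute and uniform convergence in both variables requires the Cauchy–Schwarz reduction to the diagonal together with Dini's theorem, and one must be careful that the diagonal series $\sum e^{-\lambda_n t}\phi_n(x)^2$ genuinely equals the continuous function $p(t,x,x)$ — this in turn rests on having already established pointwise (not just $L^2$) validity of the expansion, which uses the smoothness/continuity of $p$ in the space variables from the earlier heat-kernel regularity theorem. Everything else (discreteness of the spectrum, finite multiplicities, smoothness of eigenfunctions, $\lambda_0 = 0$) is a routine consequence of compactness of $P_t$, ellipticity, and the Liouville lemma already in hand.
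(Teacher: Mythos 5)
Your proof is correct, and it reaches the same conclusion as the paper while taking a somewhat different route at each of the three main steps. For the spectral decomposition, the paper fixes no $t_0$ but instead applies Hilbert--Schmidt to each $P_t$ separately, then argues via the semigroup law (first for integer $t$, then rational, then real by strong continuity) that the eigendata are consistent, and finally differentiates $P_t\phi_n = e^{-\lambda_n t}\phi_n$ at $t=0$ to conclude $\phi_n\in\mathcal D(L)$ and $L\phi_n=-\lambda_n\phi_n$; you instead fix one $t_0$, invoke compactness of $P_{t_0}$ together with the spectral measure of $L$ to see at once that $\sigma(-L)$ is a discrete set of finite-multiplicity eigenvalues, which is slicker and avoids the three-stage $t$-consistency argument. (Your explicit verification that $\ker P_{t_0}=\{0\}$ via $\langle P_{t_0}f,f\rangle = \|P_{t_0/2}f\|^2$ is a small gap the paper leaves implicit, so that is a genuine improvement in rigor.) For the kernel expansion, the paper expands $p(t,\cdot,\cdot)$ in the product orthonormal basis $\{\phi_m\otimes\phi_n\}$ of $L^2(\M\times\M)$ and identifies the coefficients; you expand $p(t/2,x,\cdot)$ in the one-variable basis and then apply Chapman--Kolmogorov together with Parseval, which is arguably more direct and gives pointwise equality immediately without needing a separate continuity argument. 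For the uniform and absolute convergence, the paper simply cites Mercer's theorem, whereas you essentially reprove Mercer's theorem in this setting by the Cauchy--Schwarz reduction to the diagonal plus Dini's theorem on the compact manifold. Both routes are valid; yours is more self-contained and makes explicit where compactness of $\M$ and continuity of $p$ enter.
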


\begin{proof}
Let $t>0$. From the Hilbert-Schmidt theorem for the  non negative self adjoint compact operator $P_t$, there exists a complete orthonormal basis $(\phi_n(t) )_{n \in \mathbb{N}}$ of $L^2(\mathbb{M},\mu)$ and a non increasing sequence $\alpha_n(t) \ge 0$, $\alpha_n(t) \searrow 0$ such that
\[
P_t \phi_n(t)=\alpha_n(t) \phi_n (t).
\]
The semigroup property $P_{t+s}=P_t P_s$ implies first that for $k \in \mathbb{N}$, $k \ge 1$,
\[
\phi_n(k)=\phi_n(1), \alpha_n(k)=\alpha_n (1)^k.
\]
The same result is then seen to hold for $k \in \mathbb{Q}$, $k >0$ and finally for $k \in \mathbb{R}$, due to the strong continuity of the semigroup. Since the map $t \to \| P_t \|_2$ is decreasing, we deduce that $\alpha_n (1) \le 1$. Thus, there is a $\lambda_n \ge 0$ such that
\[
\alpha_n(1)=e^{-\lambda_n}.
\]
As a conclusion, there exists a complete orthonormal basis $(\phi_n)_{n \in \mathbb{N}}$ of $L^2(\M,\mu)$,  and a sequence $\lambda_n$  satisfying
\[
0 \le \lambda_0 \le \lambda_1\le \lambda_2  \le \cdots \nearrow +\infty,
\]
such that 
\[
P_t \phi_n =e^{-\lambda_n t}  \phi_n.
\]
Since $P_t 1=1$, we actually have $\lambda_0=0$. Also,  if $f \in L^2(\M,\mu)$ is such that $P_t f=f$, it is straightforward that $f \in \mathcal{D}(L)$ and that $Lf=0$, so that thanks to Liouville theorem, $f$ is a constant function. Therefore $\lambda_1>0$.

Since $P_t \phi_n =e^{-\lambda_n t}  \phi_n$, by differentiating as $t \to 0$ in $L^2(\M,\mu)$, we obtain furthermore that $\phi_n \in \mathcal{D}(L)$ and that $L\phi_n=-\lambda_n \phi_n$.

The family $(x,y)\to \phi_n(x) \phi_m(y)$ forms an orthonormal basis of $L^2 (\mathbb{M}\times \mathbb{M},\mu \otimes \mu)$. We therefore have a decomposition in $L^2 (\mathbb{M}\times \mathbb{M},\mu \otimes \mu)$,
\[
p(t,x,y)=\sum_{m,n \in \mathbb{M}} c_{mn} \phi_m(x) \phi_n(y).
\]
Since $p(t,\cdot,\cdot)$ is the kernel of $P_t$, it is then straightforward that for $m \neq n$, $c_{mn}=0$ and that $c_{nn}=e^{-\lambda_n t}$. Therefore in $L^2(\M,\mu)$,
\[
p(t,x,y)=\sum_{n=0}^{+\infty} e^{-\lambda_n t} \phi_n (x) \phi_n (y).
\]
The continuity of $p$, together with the positivity of $P_t$ imply, via Mercer's theorem that actually, the above series is absolutely and uniformly convergent for $t>0$.
\end{proof}

As we stressed it in the statement of the theorem, in the decomposition
\[
p(t,x,y)=\sum_{n=0}^{+\infty} e^{-\lambda_n t} \phi_n (x) \phi_n (y),
\]
the eigenvalue $\lambda_n$ is repeated according to its multiplicity. It is often useful to rewrite this decomposition under the form
\[
p(t,x,y)=\sum_{n=0}^{+\infty} e^{-\alpha_n t} \sum_{k=1}^{d_n} \phi^n_k (x) \phi^n_k (y),
\]
where the eigenvalue $\alpha_n$ is not repeated, that is 
\[
0=\alpha_0 < \alpha_1 <\alpha_2 < \cdots
\]
In this decomposition, $d_n$ is  the dimension of the eigenspace $\mathcal{V}_n$ corresponding to the eigenvalue $\alpha_n$ and $(\phi^n_k)_{1 \le k \le d_n } $ is an orthonormal basis of $\mathcal{V}_n$.
If we denote,
\[
\mathcal{K}_n(x,y)= \sum_{k=1}^{d_n} \phi^n_k (x) \phi^n_k (y),
\]
then $\mathcal{K}_n$ is called the reproducing kernel\index{Reproducing kernel} of the eigenspace $\mathcal{V}_n$. It satisfies the following properties whose proofs are let to the reader:

\begin{proposition}

\

\begin{itemize}
\item $\mathcal{K}_n$ does not depend on the choice of the basis  $(\phi^n_k)_{1 \le k \le d_n } $;
\item If $f \in \mathcal{V}_n$, then $\int_{\mathbb{M}} \mathcal{K}_n (x,y) f(y) d\mu(y) =f(x)$.
\end{itemize}
\end{proposition}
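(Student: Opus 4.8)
The plan is to prove both assertions by elementary manipulations of the finite sum defining $\mathcal{K}_n$, using the fact — already available since $\mathbb{M}$ is compact — that $\dim\mathcal{V}_n=d_n<\infty$. The starting remark is that for each fixed $y\in\mathbb{M}$ the function
\[
x\longmapsto \mathcal{K}_n(x,y)=\sum_{k=1}^{d_n}\phi^n_k(y)\,\phi^n_k(x)
\]
is a finite linear combination of the eigenfunctions $\phi^n_k$, hence lies in $\mathcal{V}_n$; by symmetry the same holds in the second variable. Moreover all functions in sight are smooth, being (combinations of) eigenfunctions of the elliptic operator $L$, so the identities below will be genuine pointwise identities.

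For the reproducing property I would take $f\in\mathcal{V}_n$ and expand it in the orthonormal basis as $f=\sum_{k=1}^{d_n}c_k\phi^n_k$, where $c_k=\langle f,\phi^n_k\rangle_{L^2(\mathbb{M},\mu)}=\int_\mathbb{M} f(y)\phi^n_k(y)\,d\mu(y)$. Since the sum over $k$ is finite, one may interchange it with the integral to get
\[
\int_\mathbb{M}\mathcal{K}_n(x,y)f(y)\,d\mu(y)=\sum_{k=1}^{d_n}\phi^n_k(x)\int_\mathbb{M}\phi^n_k(y)f(y)\,d\mu(y)=\sum_{k=1}^{d_n}c_k\,\phi^n_k(x)=f(x),
\]
which is the second bullet. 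Equivalently, $\mathcal{K}_n$ is the integral kernel of the orthogonal projection $L^2(\mathbb{M},\mu)\to\mathcal{V}_n$.

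For basis-independence I would argue directly. If $(\psi^n_j)_{1\le j\le d_n}$ is another orthonormal basis of $\mathcal{V}_n$, there is an orthogonal matrix $O=(O_{jk})_{1\le j,k\le d_n}$ with $\psi^n_j=\sum_k O_{jk}\phi^n_k$, and then
\[
\sum_{j=1}^{d_n}\psi^n_j(x)\psi^n_j(y)=\sum_{k,l=1}^{d_n}\Big(\sum_{j=1}^{d_n}O_{jk}O_{jl}\Big)\phi^n_k(x)\phi^n_l(y)=\sum_{k,l=1}^{d_n}\delta_{kl}\,\phi^n_k(x)\phi^n_l(y)=\sum_{k=1}^{d_n}\phi^n_k(x)\phi^n_k(y),
\]
using $\sum_j O_{jk}O_{jl}=(O^{\mathsf T}O)_{kl}=\delta_{kl}$. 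Alternatively, the reproducing property from the previous step together with the membership $\mathcal{K}_n(\cdot,y)\in\mathcal{V}_n$ already pins $\mathcal{K}_n$ down uniquely: two such kernels $K,K'$ would satisfy $K(\cdot,y)-K'(\cdot,y)\in\mathcal{V}_n$ and $\langle K(\cdot,y)-K'(\cdot,y),f\rangle=f(y)-f(y)=0$ for every $f\in\mathcal{V}_n$, forcing $K(\cdot,y)=K'(\cdot,y)$. There is no real obstacle in this proposition; the only points deserving a word are the (trivial) interchange of the finite sum with the integral and the remark that, thanks to ellipticity, the identities hold pointwise rather than merely $\mu$-a.e.
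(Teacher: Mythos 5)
Your proof is correct and complete; note that the paper explicitly leaves these verifications to the reader, so there is no paper proof to compare against. Both arguments you give are the standard ones: the change-of-basis computation via an orthogonal matrix for basis-independence, and the expansion of $f$ in the orthonormal basis (with the trivial finite-sum/integral interchange) for the reproducing property, together with the useful remark that the reproducing property plus $\mathcal{K}_n(\cdot,y)\in\mathcal{V}_n$ already characterizes $\mathcal{K}_n$ uniquely.
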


From the very definition of the reproducing kernels, we have
\begin{align}\label{heat_semigroup_compact}
p(t,x,y)=\sum_{n=0}^{+\infty} e^{-\alpha_n t} \mathcal{K}_n(x,y).
\end{align}

The compactness of $\mathbb{M}$ also implies the convergence to equilibrium for the semigroup.

\begin{proposition}
Let $f \in L^2(\M,\mu)$, then uniformly on $\mathbb{M}$, when $t \to +\infty$,
\[
P_t f \rightarrow \frac{1}{\mu(\mathbb{M})} \int_\mathbb{M} f d\mu.
\]
\end{proposition}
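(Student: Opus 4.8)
The plan is to use the spectral decomposition of the heat kernel obtained in Theorem \ref{spectral compact manifold}, namely $p(t,x,y)=\sum_{n=0}^{+\infty} e^{-\alpha_n t} \mathcal{K}_n(x,y)$ with $0=\alpha_0<\alpha_1<\cdots$, together with the fact that the eigenspace $\mathcal{V}_0$ associated to $\alpha_0=0$ is spanned by the constant function; since $\mu(\mathbb{M})<\infty$, the normalized constant is $\phi_0 = \mu(\mathbb{M})^{-1/2}$, so that $\mathcal{K}_0(x,y) = \frac{1}{\mu(\mathbb{M})}$ and the orthogonal projection onto $\mathcal{V}_0$ is precisely $f \mapsto \frac{1}{\mu(\mathbb{M})}\int_\mathbb{M} f\, d\mu$.

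First I would write, for $f \in L^2(\M,\mu)$ and $x \in \mathbb{M}$,
\[
P_t f(x) - \frac{1}{\mu(\mathbb{M})}\int_\mathbb{M} f\, d\mu = \int_\mathbb{M} \left( p(t,x,y) - \frac{1}{\mu(\mathbb{M})} \right) f(y)\, d\mu(y) = \sum_{n=1}^{+\infty} e^{-\alpha_n t} \int_\mathbb{M} \mathcal{K}_n(x,y) f(y)\, d\mu(y),
\]
where the interchange of sum and integral is justified for fixed $t>0$ by the absolute and uniform convergence of the series $\sum_n e^{-\alpha_n t}\mathcal{K}_n(x,y)$ asserted in Theorem \ref{spectral compact manifold}. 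Next I would estimate the right-hand side by Cauchy--Schwarz in $L^2(\M,\mu)$ in the variable $y$: the integral against $\mathcal{K}_n$ is the projection of $f$ onto $\mathcal{V}_n$ evaluated at $x$, call it $(\Pi_n f)(x)$, and $\sum_{n\geq 1}\|\Pi_n f\|_{L^2}^2 \leq \|f\|_{L^2}^2$. The cleanest route is to bound $|P_tf(x) - \frac{1}{\mu(\mathbb{M})}\int f|$ uniformly in $x$ by writing it as $\langle p(t,x,\cdot) - \frac{1}{\mu(\mathbb{M})}, f\rangle_{L^2}$ and applying Cauchy--Schwarz, so that
\[
\sup_{x\in\mathbb{M}}\left| P_t f(x) - \frac{1}{\mu(\mathbb{M})}\int_\mathbb{M} f\, d\mu \right| \leq \|f\|_{L^2(\M,\mu)} \cdot \sup_{x \in \mathbb{M}} \left\| p(t,x,\cdot) - \frac{1}{\mu(\mathbb{M})} \right\|_{L^2(\M,\mu)}.
\]
Then $\left\| p(t,x,\cdot) - \frac{1}{\mu(\mathbb{M})}\right\|_{L^2}^2 = \sum_{n\geq 1} e^{-2\alpha_n t}\mathcal{K}_n(x,x)$ by orthonormality, and by Chapman--Kolmogorov this equals $p(2t,x,x) - \frac{1}{\mu(\mathbb{M})}$, which by continuity of $p$ and compactness of $\mathbb{M}$ is bounded uniformly in $x$ and tends to $0$ as $t\to\infty$ (since each term decays and, e.g., $\sum_{n\geq 1}e^{-\alpha_n}\mathcal{K}_n(x,x) \leq p(1,x,x) < \infty$ gives a uniform dominating bound, letting one pass the limit inside the sum; alternatively invoke $\alpha_n \nearrow +\infty$ directly).

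The main obstacle is the uniformity in $x$: pointwise convergence $P_tf(x)\to \frac{1}{\mu(\mathbb{M})}\int f$ is immediate from the spectral expansion, but to get it uniformly one must control $\sup_x \sum_{n\geq 1} e^{-2\alpha_n t}\mathcal{K}_n(x,x)$. Identifying this sum with $p(2t,x,x) - \mu(\mathbb{M})^{-1}$ via Chapman--Kolmogorov neatly reduces the matter to the already-established continuity of the heat kernel and the fact that $p(s,x,x)$ is decreasing in $s$ with $p(s,x,x)\to \mu(\mathbb{M})^{-1}$; compactness then upgrades this to uniform convergence via a routine argument (the functions $x\mapsto p(s,x,x)$ decrease monotonically to the continuous limit $\mu(\mathbb{M})^{-1}$, so Dini's theorem applies). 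I would close by remarking that this shows $\|P_t - \Pi_0\|_{L^2\to L^\infty}\to 0$, which in particular gives the stated uniform convergence on $\mathbb{M}$.
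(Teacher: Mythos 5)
Your proof is correct, but the uniformity step is handled in a genuinely different way from the paper's. The paper first establishes $L^2$ convergence via spectral theory, then upgrades to $L^\infty$ via the bootstrap
\[
\| P_{t+T} f -P_{s+T} f \|_\infty \le \Bigl( \sup_{x \in \mathbb{M}} \sqrt{ \textstyle\int_{\mathbb{M}} p(T,x,y)^2 d\mu(y)}\Bigr) \| P_{t} f -P_{s} f \|_2,
\]
concluding that $P_t f$ is $L^\infty$-Cauchy, and finally identifies the constant limit using conservation of mass $\int_\M P_t f\,d\mu = \int_\M f\,d\mu$. You instead bound $|P_t f(x) - \mu(\M)^{-1}\!\int f|$ directly by Cauchy--Schwarz against $p(t,x,\cdot) - \mu(\M)^{-1}$, compute the resulting $L^2$ norm via Chapman--Kolmogorov and stochastic completeness as $p(2t,x,x) - \mu(\M)^{-1}$, and then invoke Dini's theorem (monotone decrease of $s\mapsto p(s,x,x)$, continuity of the heat kernel, compactness of $\M$) to get the supremum over $x$ to zero. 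Your route buys an explicit quantitative bound $\|P_t - \Pi_0\|_{L^2\to L^\infty} = \sup_x \sqrt{p(2t,x,x) - \mu(\M)^{-1}}$, at the cost of leaning more heavily on the full eigenfunction expansion and Mercer's theorem established in Theorem \ref{spectral compact manifold}; the paper's argument is less explicit about a rate but requires only the crude fact that $\sup_x \|p(T,x,\cdot)\|_2 < \infty$ for each fixed $T$. Both arguments rely on stochastic completeness ($P_t 1 = 1$), you for the Chapman--Kolmogorov identity, the paper for identifying the constant limit. One small remark: the monotonicity of $s\mapsto p(s,x,x)$, which Dini requires, does follow from the spectral series since $\mathcal{K}_n(x,x)\ge 0$, as you indicate; alternatively it can be seen directly from $\partial_s p(s,x,x) = -2\int_\M \Gamma(p(s/2,x,\cdot))\,d\mu \le 0$.
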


\begin{proof}
It is obvious from the previous proposition and from spectral theory that in $L^2(\M,\mu)$, $P_t f $ converges to a constant function that we denote $P_\infty f $. The convergence is also uniform, because for $s,t,T>0$,
\begin{align*}
\| P_{t+T} f -P_{s+T} f \|_\infty &= \sup_{x \in \mathbb{M}} \left| P_{T} ( P_{t} f -P_{s} f) (x) \right| \\
 & = \sup_{x \in \mathbb{M}} \left|\int_{\mathbb{M}} p(T,x,y) ( P_{t} f -P_{s} f) (y) d\mu(y) \right| \\
 & \le  \left( \sup_{x \in \mathbb{M}} \sqrt{ \int_{\mathbb{M}} p(T,x,y)^2 d\mu(y)}\right) \| P_{t} f -P_{s} f \|_2. 
\end{align*}
Moreover, for every $t \ge 0$, $\int_{\mathbb{M}} P_t f d\mu =\int_{\mathbb{M}} f d\mu$. Therefore
\[
\int_{\mathbb{M}} P_\infty f d\mu =\int_{\mathbb{M}} f d\mu.
\]
Since $ P_\infty f$ is constant, we finally deduce the expected result:
\[
 P_\infty f=\frac{1}{\mu(\mathbb{M})} \int_\mathbb{M} f d\mu.
\]
\end{proof}

\subsection{Bochner's identity}

The Bochner's identity is a fundamental identity that connects the Ricci curvature of a Riemannian manifold $(\M,g)$ to the Laplace-Beltrami-operator (see \cite{Chavel,Petersen}).

\begin{theorem}
If $f \in C^\infty(\M)$, then
\[
\frac{1}{2} L (\| \nabla f \|^2) - \langle \nabla f ,\nabla Lf\rangle=\| \mathbf{Hess} f \|^2_{HS}+ \mathbf{Ric} (\nabla f, \nabla f),
\]
where $\mathbf{Ric}$ denotes the Ricci curvature tensor of $\M$ and $\| \mathbf{Hess} f \|_{HS}$ the Hilbert-Schmidt norm of the Hessian of $f$.
\end{theorem}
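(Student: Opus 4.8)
The plan is to prove Bochner's identity by a direct computation in normal coordinates centered at an arbitrary point $p \in \M$. Working in such coordinates, at the point $p$ the metric satisfies $g_{ij}(p) = \delta_{ij}$ and $\partial_k g_{ij}(p) = 0$, so the Christoffel symbols vanish at $p$ and many terms simplify; this is the standard device that reduces the identity to a clean tensorial statement. Both sides are tensorial (the left-hand side is built from $L$, $\nabla$, and $\langle\cdot,\cdot\rangle$, all intrinsic; the right-hand side from $\mathbf{Hess}\,f$ and $\mathbf{Ric}$), so verifying the identity at $p$ in normal coordinates suffices. All quantities appearing are smooth since $f \in C^\infty(\M)$, so differentiation under the relevant operations is legitimate.

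First I would expand the left-hand side. Write $\|\nabla f\|^2 = g^{ij} \partial_i f \partial_j f$, and at $p$ this is $\sum_i (\partial_i f)^2$. Applying $\tfrac12 L$ and evaluating at $p$, I would carefully track which derivatives of the metric survive: $L(\|\nabla f\|^2)$ produces a sum of terms, one of which is $\sum_{i,j}(\partial_i\partial_j f)^2 = \|\mathbf{Hess}\,f\|^2_{HS}$ at $p$ (recall $\mathbf{Hess}\,f = \nabla^2 f$ has components $\partial_i\partial_j f$ at $p$), another is $\sum_i \partial_i f \, \partial_i(\Delta f)$-type terms, and the remaining terms involve second derivatives of the metric, which are precisely what assemble into the Ricci tensor. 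Simultaneously, $\langle \nabla f, \nabla Lf\rangle$ at $p$ equals $\sum_i \partial_i f \, \partial_i(Lf)$, and $Lf$ at $p$ equals $\sum_j \partial_j\partial_j f$ plus first-derivative-of-metric corrections. Subtracting $\langle\nabla f,\nabla Lf\rangle$ from $\tfrac12 L(\|\nabla f\|^2)$ cancels the ``transport'' terms and leaves exactly $\|\mathbf{Hess}\,f\|^2_{HS}$ plus a curvature contribution.

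The key algebraic step is recognizing that the leftover second-derivative-of-metric terms equal $\mathbf{Ric}(\nabla f,\nabla f)$. This follows from the commutation of covariant derivatives: the difference $\nabla_i\nabla_j\nabla_k f - \nabla_j\nabla_i\nabla_k f = -R_{ijk}{}^\ell \partial_\ell f$ (the Ricci identity applied to the one-form $df$), and contracting appropriately with $g^{jk}$ and $\partial_i f$ produces the Ricci contraction $R_{i\ell}\,\partial^i f\, \partial^\ell f = \mathbf{Ric}(\nabla f,\nabla f)$. The cleanest route is therefore to work invariantly with covariant derivatives from the start: compute $\tfrac12 \Delta|\nabla f|^2 = \langle \nabla\nabla f, \nabla\nabla f\rangle + \langle \nabla f, \nabla \Delta f\rangle + \mathbf{Ric}(\nabla f,\nabla f)$ by writing $\tfrac12\nabla_i(\nabla_k f \nabla^k f) = \nabla_k f\, \nabla_i\nabla^k f$, then applying $\nabla^i$ and commuting to move a derivative past $\nabla_i\nabla^k$, invoking the Ricci identity at that commutation. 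The main obstacle is purely bookkeeping: keeping the index contractions, the Ricci identity sign convention, and the trace over $\mathbf{Hess}\,f$ consistent so that the cross-terms cancel exactly and the curvature term emerges with the correct sign. There is no conceptual difficulty beyond careful tensor calculus.
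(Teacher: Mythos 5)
The paper states Bochner's identity without proof, deferring to the references \cite{Chavel,Petersen}, so there is no in-paper argument to compare against. Your proposed approach is the standard one: compute $\tfrac12\Delta\|\nabla f\|^2$ invariantly by first differentiating once to get $\nabla_k f\,\nabla_i\nabla^k f$, then applying $\nabla^i$, which yields $\|\mathbf{Hess}\,f\|^2_{HS}$ from the squared symmetric Hessian term together with $\nabla_k f\,\nabla^i\nabla_i\nabla^k f$; commuting $\nabla^i$ past $\nabla^k$ in the last term via the Ricci identity on the one-form $df$ then produces $\langle\nabla f,\nabla Lf\rangle + \mathbf{Ric}(\nabla f,\nabla f)$. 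Two small points worth making explicit if you write this out fully: first, the initial swap $\nabla_i\nabla^k f=\nabla^k\nabla_i f$ is free because the Hessian of a \emph{function} is symmetric (no curvature), and the curvature only appears at the next order when commuting derivatives on the one-form $\nabla f$; second, the contraction $g^{jk}R_{ijk}{}^\ell\partial_\ell f\,\partial^i f$ must be checked against the chosen sign convention for $R$ and for $\mathbf{Ric}$ so that the curvature term lands with a plus sign on the right-hand side. With that bookkeeping done, the argument is complete and correct.
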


\subsection{The curvature dimension inequality}
We now introduce the Bakry's $\Gamma_2$ operator. For $f,g \in C^\infty(\M)$, it is defined as
\[
\Gamma_2(f,f)=\frac{1}{2} L (\| \nabla f \|^2) - \langle \nabla f ,\nabla Lf\rangle.
\]
In the previous Section, we have seen that on  our Riemannian manifold $\mathbb{M}$,
\[
\Gamma_2 (f,f)=\| \mathbf{Hess} f \|^2_{HS}+ \mathbf{Ric} (\nabla f, \nabla f),
\] 
Therefore it should come as no surprise that  a lower bound on $\mathbf{Ric}$ translates into a lower bound on $\Gamma_2$.

\begin{theorem}
Let $\mathbb{M}$ be a Riemannian manifold. We have, in the sense of bilinear forms,  $\mathbf{Ric} \ge \rho$ if and only if for every $f \in C^\infty(\mathbb{M})$,
\[
\Gamma_2(f,f) \ge \frac{1}{n} (Lf)^2 + \rho \Gamma(f,f).
\]
\end{theorem}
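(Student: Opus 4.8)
The plan is to prove both directions using the pointwise identity from Bochner's formula, namely $\Gamma_2(f,f) = \|\mathbf{Hess}\, f\|_{HS}^2 + \mathbf{Ric}(\nabla f,\nabla f)$, together with a linear-algebra fact about the Hilbert-Schmidt norm of a symmetric matrix.

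First I would establish the purely algebraic inequality: for any symmetric $n\times n$ matrix $S$, one has $\|S\|_{HS}^2 \ge \frac{1}{n}(\operatorname{tr} S)^2$, with equality if and only if $S = \frac{\operatorname{tr} S}{n}\,\mathbf{Id}$. This is Cauchy-Schwarz applied to the eigenvalues $s_1,\dots,s_n$ of $S$: $(\operatorname{tr} S)^2 = (\sum_i s_i)^2 \le n\sum_i s_i^2 = n\|S\|_{HS}^2$. Applying this at a point $x\in\mathbb{M}$ with $S = \mathbf{Hess}\, f(x)$, and recalling that $\operatorname{tr}(\mathbf{Hess}\, f) = Lf$ for the Laplace-Beltrami operator, gives $\|\mathbf{Hess}\, f\|_{HS}^2 \ge \frac{1}{n}(Lf)^2$ pointwise.

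For the ``only if'' direction, assume $\mathbf{Ric}\ge\rho$. Then $\mathbf{Ric}(\nabla f,\nabla f)\ge\rho\,\Gamma(f,f)$ pointwise, since $\Gamma(f,f)=\|\nabla f\|^2$, and combining this with the Bochner identity and the algebraic bound yields $\Gamma_2(f,f) \ge \frac{1}{n}(Lf)^2 + \rho\,\Gamma(f,f)$ for all $f\in C^\infty(\mathbb{M})$. For the ``if'' direction, fix a point $x_0\in\mathbb{M}$ and a tangent vector $v\in T_{x_0}\mathbb{M}$; I would construct a test function $f\in C^\infty(\mathbb{M})$ whose Hessian at $x_0$ vanishes and whose gradient at $x_0$ equals $v$. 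In geodesic normal coordinates centered at $x_0$, the function $f(x) = \langle v, x\rangle - \frac{1}{2}(\text{something quadratic})$ can be chosen so that $\nabla f(x_0) = v$ and $\mathbf{Hess}\, f(x_0) = 0$; concretely one can take $f$ to agree to second order with the coordinate linear functional and cut it off away from $x_0$. Plugging such an $f$ into the assumed inequality at $x_0$, the term $\frac{1}{n}(Lf)^2 = \frac{1}{n}(\operatorname{tr}\mathbf{Hess}\,f(x_0))^2 = 0$, and the Bochner identity gives $\Gamma_2(f,f)(x_0) = \|\mathbf{Hess}\,f(x_0)\|_{HS}^2 + \mathbf{Ric}(v,v) = \mathbf{Ric}(v,v)$, so the inequality reduces to $\mathbf{Ric}(v,v)\ge\rho\|v\|^2$. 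Since $x_0$ and $v$ are arbitrary, $\mathbf{Ric}\ge\rho$.

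The main obstacle is the ``if'' direction, specifically verifying that a test function with prescribed gradient and vanishing Hessian at a single point actually exists as a global $C^\infty$ function on $\mathbb{M}$. This is routine once one works in normal coordinates and uses a smooth cutoff, but it does require a small argument that the cutoff does not disturb the first and second derivatives at $x_0$ — which is automatic since the cutoff is identically $1$ near $x_0$. One should also note the subtlety that $Lf(x_0)$ depends only on $\mathbf{Hess}\,f(x_0)$ and the metric at $x_0$ (the first-order term $\sum b_i\partial_i f$ contributes via Christoffel symbols, but in normal coordinates these vanish at the center), so choosing $f$ linear to second order in normal coordinates indeed kills both $\mathbf{Hess}\,f(x_0)$ and $Lf(x_0)$.
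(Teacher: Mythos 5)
Your proof is correct and follows essentially the same route as the paper: Bochner's identity combined with the Cauchy--Schwarz bound $\|\mathbf{Hess}\, f\|_{HS}^2 \ge \frac{1}{n}(\operatorname{tr}\mathbf{Hess}\, f)^2$ for one direction, and a test function with prescribed gradient and vanishing Hessian at a point for the other. You supply somewhat more detail than the paper on constructing that test function (normal coordinates plus a cutoff), which the paper simply asserts exists, but the argument is the same.
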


\begin{proof}
Let us assume that  $\mathbf{Ric} \ge \rho$. In that case, from Bochner's formula we deduce that
\[
\Gamma_2 (f,f) \ge \| \mathbf{Hess} f \|^2_{HS}+  \rho \Gamma(f,f).
\]
From Cauchy-Schwartz inequality, we have the bound
\[
 \| \mathbf{Hess} f \|^2_{HS} \ge \frac{1}{n} \mathbf{Tr} \left(\mathbf{Hess} f \right)^2.
\]
Since $\mathbf{Tr} \left(\mathbf{Hess} f \right)=Lf$, we conclude that
\[
\Gamma_2(f,f) \ge \frac{1}{n} (Lf)^2 + \rho \Gamma(f,f).
\]
Conversely, let us now assume that for every $f \in C^\infty(\mathbb{M})$,
\[
\Gamma_2(f,f) \ge \frac{1}{n} (Lf)^2 + \rho \Gamma(f,f).
\]
Let $x \in \mathbb{M}$ and $v \in \mathbf{T}_x \mathbb{M}$. It is possible to find a function $f \in C^\infty(\mathbb{M})$ such that, at $x$,  $\mathbf{Hess} f =0$ and $\nabla f=v$. We have then, by using Bochner's identity at $x$,
\[
\mathbf{Ric} (v,v) \ge \rho \| v \|^2.
\]
\end{proof}

\begin{remark}
The inequality
\[
\Gamma_2(f,f) \ge \frac{1}{n} (Lf)^2 + \rho \Gamma(f,f).
\]
is called the curvature dimension inequality. It is satisfied for more general operators than Laplace-Beltrami operators on manifolds with Ricci curvature lower bounds. Many of the results covered in those notes extend to those operators (see \cite{BGL}).
\end{remark}

We finally mention another consequence of Bochner's identity which shall be later used.

\begin{lemma}
Let $\mathbb{M}$ be a Riemannian manifold  such that  $\mathbf{Ric} \ge \rho$.  For every $f \in C^\infty(\mathbb{M})$, 
\[
\Gamma(\Gamma(f)) \le 4 \Gamma (f) \left( \Gamma_2(f)-\rho\Gamma(f)\right).
\]
\end{lemma}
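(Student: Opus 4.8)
The plan is to derive the pointwise inequality $\Gamma(\Gamma(f)) \le 4\Gamma(f)\bigl(\Gamma_2(f) - \rho\Gamma(f)\bigr)$ from Bochner's identity by exploiting the fact that $\nabla(\|\nabla f\|^2)$ can be written in terms of the Hessian of $f$. First I would recall that, since $\Gamma(f) = \|\nabla f\|^2$ for the Laplace--Beltrami operator, we have
\[
\Gamma(\Gamma(f)) = \|\nabla \|\nabla f\|^2\|^2 = 4\,\|\mathbf{Hess} f\,(\nabla f)\|^2,
\]
using the standard identity $\nabla(\|\nabla f\|^2) = 2\,\mathbf{Hess} f(\nabla f, \cdot)$ (viewing $\mathbf{Hess} f$ as a symmetric endomorphism). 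This is the key algebraic reduction: the left-hand side is essentially the squared norm of the image of the gradient vector under the Hessian operator.

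Next I would apply the Cauchy--Schwarz inequality for the symmetric operator $\mathbf{Hess} f$. For any symmetric endomorphism $A$ and any vector $v$, one has $\|Av\|^2 \le \|A\|_{HS}^2\,\|v\|^2$. Applying this with $A = \mathbf{Hess} f$ and $v = \nabla f$ gives
\[
\|\mathbf{Hess} f\,(\nabla f)\|^2 \le \|\mathbf{Hess} f\|_{HS}^2\,\|\nabla f\|^2 = \|\mathbf{Hess} f\|_{HS}^2\,\Gamma(f).
\]
Combining with the reduction above yields $\Gamma(\Gamma(f)) \le 4\,\Gamma(f)\,\|\mathbf{Hess} f\|_{HS}^2$.

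Finally I would invoke Bochner's identity in the form $\Gamma_2(f,f) = \|\mathbf{Hess} f\|_{HS}^2 + \mathbf{Ric}(\nabla f, \nabla f)$ together with the hypothesis $\mathbf{Ric} \ge \rho$, which gives $\mathbf{Ric}(\nabla f, \nabla f) \ge \rho\,\|\nabla f\|^2 = \rho\,\Gamma(f)$, hence
\[
\|\mathbf{Hess} f\|_{HS}^2 = \Gamma_2(f) - \mathbf{Ric}(\nabla f, \nabla f) \le \Gamma_2(f) - \rho\,\Gamma(f).
\]
Substituting this bound into the previous inequality completes the proof. The only step requiring genuine care is the first one: justifying the identity $\nabla(\|\nabla f\|^2) = 2\,\mathbf{Hess} f(\nabla f)$ and the operator Cauchy--Schwarz bound, which I expect to be the main (though still routine) obstacle, since everything else is immediate from Bochner and the curvature hypothesis. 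One should also note the inequality holds pointwise, so no integration or completeness assumptions enter.
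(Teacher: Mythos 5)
Your proof is correct and takes essentially the same approach as the paper: the paper's (very terse) argument is precisely the chain ``Cauchy--Schwarz gives $\Gamma(\Gamma(f)) \le 4\|\mathbf{Hess} f\|^2_{HS}\,\Gamma(f)$, then substitute $\|\mathbf{Hess} f\|^2_{HS} \le \Gamma_2(f) - \rho\Gamma(f)$ from Bochner,'' with ``details left to the reader.'' You have simply supplied those details, correctly identifying $\nabla\Gamma(f) = 2\,\mathbf{Hess} f(\nabla f)$ and the operator-norm-to-Hilbert--Schmidt bound as the substance of the Cauchy--Schwarz step.
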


\begin{proof}
It follows from the fact that 
\[
\Gamma_2 (f,f) \ge \| \mathbf{Hess} f \|^2_{HS}+  \rho \Gamma(f,f).
\]
and Cauchy-Schwarz inequality implies $\Gamma(\Gamma(f)) \le 4  \| \mathbf{Hess} f \|^2_{HS} \Gamma(f)$. Details are let to the reader.
\end{proof}

\subsection{Stochastic completeness}

In this section, we will prove a first interesting consequence of the Bochner's identity: We will prove that if, on a complete Riemannian manifold $\mathbb{M}$, the Ricci curvature  is bounded from below, then the heat semigroup is stochastically complete, that is $P_t 1=1$.  This result is due to S.T. Yau, and we will see this property is also  equivalent  to the uniqueness in $L^\infty$ for solutions of the heat equation. The proof we give is due to D. Bakry.

Let $\mathbb{M}$ be a complete Riemannian manifold and denote by $L$ its Laplace-Beltrami operator. As usual, we denote by $P_t$ the heat semigroup generated by $L$. Throughout the section, we will assume that the Ricci curvature of $\mathbb{M}$ is bounded from below by $\rho \in \mathbb{R}$. As seen in the previous Section, this is equivalent to the fact that for every $f \in C^\infty(\mathbb{M})$,
\[
\Gamma_2(f,f) \ge \frac{1}{n} (Lf)^2 + \rho \Gamma(f,f).
\]
We start with a technical lemma:
\begin{lemma}
If $f \in L^2(\M,\mu)$, then for every $t>0$,   the functions $\Gamma (P_t f), L\Gamma(P_t f), \Gamma(P_tf, LP_t f)$ and $\Gamma_2 (P_t f)$ are in  $ L^1(\mathbb{M},\mu)$.
\end{lemma}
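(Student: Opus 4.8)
The plan is to reduce all four claims to a single integration-by-parts estimate against the exhaustion functions $h_n\in C_0^\infty(\M)$ ($0\le h_n\le 1$, $h_n\nearrow 1$, $\|\Gamma(h_n)\|_\infty\to 0$) provided by Proposition \ref{L:exhaustion}, combined with a self-improvement (absorption) argument. First I would prove an auxiliary integrability lemma: if $g\in C^\infty(\M)\cap L^2(\M,\mu)$ and $Lg\in L^2(\M,\mu)$, then $\Gamma(g)\in L^1(\M,\mu)$. To see this, apply the identity $\int_\M \varphi\, Lg\, d\mu=-\int_\M \Gamma(\varphi,g)\, d\mu$ (valid for $\varphi\in C_0^\infty(\M)$ and $g$ merely smooth, by Green's formula with no boundary term) to $\varphi=h_n^2 g\in C_0^\infty(\M)$. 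Using $\Gamma(h_n^2 g,g)=h_n^2\Gamma(g)+2h_n g\,\Gamma(h_n,g)$ gives
\[
\int_\M h_n^2\,\Gamma(g)\, d\mu=-\int_\M h_n^2 g\, Lg\, d\mu-2\int_\M h_n g\,\Gamma(h_n,g)\, d\mu .
\]
Bounding the first term by $\|g\|_2\|Lg\|_2$ and the second, via Cauchy–Schwarz for $\Gamma$ and then in $L^2$, by $2\|\Gamma(h_n)\|_\infty^{1/2}\|g\|_2\big(\int_\M h_n^2\Gamma(g)\, d\mu\big)^{1/2}$, one gets with $A_n=\int_\M h_n^2\Gamma(g)\, d\mu$ the quadratic inequality $A_n\le \|g\|_2\|Lg\|_2+2\|\Gamma(h_n)\|_\infty^{1/2}\|g\|_2 A_n^{1/2}$. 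Since $\|\Gamma(h_n)\|_\infty\to 0$, the $A_n$ stay bounded, and monotone convergence ($h_n^2\nearrow 1$, $\Gamma(g)\ge 0$) yields $\int_\M\Gamma(g)\, d\mu<\infty$.

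Next I would apply this to $g=P_tf$ and to $g=LP_tf$. Both are smooth ($P_tf$ by the smoothing proposition, $LP_tf$ because $LP_tf\in\bigcap_k\mathcal D(L^k)\subset C^\infty(\M)$), and by the spectral theorem $P_tf,\ LP_tf,\ L^2P_tf\in L^2(\M,\mu)$; hence $\Gamma(P_tf)\in L^1$ and $\Gamma(LP_tf)\in L^1$. Cauchy–Schwarz for $\Gamma$ then gives $|\Gamma(P_tf,LP_tf)|\le\sqrt{\Gamma(P_tf)}\sqrt{\Gamma(LP_tf)}$, so $\Gamma(P_tf,LP_tf)\in L^1$, with $\|\Gamma(P_tf,LP_tf)\|_1\le\|\Gamma(P_tf)\|_1^{1/2}\|\Gamma(LP_tf)\|_1^{1/2}$.

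The crux is $\Gamma_2(P_tf)\in L^1$. Write $g=P_tf$ and recall $\Gamma_2(g)=\tfrac12 L\Gamma(g)-\Gamma(g,Lg)$, and that the curvature–dimension inequality gives $\Gamma_2(g)-\rho\Gamma(g)\ge\tfrac1n(Lg)^2\ge 0$; so it suffices to show $\int_\M(\Gamma_2(g)-\rho\Gamma(g))\, d\mu<\infty$ and then add back $\rho\Gamma(g)\in L^1$. Integrating $\tfrac12 h_n^2 L\Gamma(g)$ by parts (against $h_n^2\in C_0^\infty$, with $\Gamma(g)$ smooth) gives
\[
\int_\M h_n^2\,\Gamma_2(g)\, d\mu=-\int_\M h_n\,\Gamma(h_n,\Gamma(g))\, d\mu-\int_\M h_n^2\,\Gamma(g,Lg)\, d\mu .
\]
Set $B_n=\int_\M h_n^2(\Gamma_2(g)-\rho\Gamma(g))\, d\mu\ge 0$. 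Using $|\Gamma(h_n,\Gamma(g))|\le\|\Gamma(h_n)\|_\infty^{1/2}\sqrt{\Gamma(\Gamma(g))}$, the bound $\Gamma(\Gamma(g))\le 4\Gamma(g)(\Gamma_2(g)-\rho\Gamma(g))$ established above, Cauchy–Schwarz, and $\Gamma(g)\in L^1$, one estimates $\big|\int_\M h_n\,\Gamma(h_n,\Gamma(g))\, d\mu\big|\le 2\|\Gamma(h_n)\|_\infty^{1/2}\|\Gamma(g)\|_1^{1/2}B_n^{1/2}$, hence $B_n\le 2\|\Gamma(h_n)\|_\infty^{1/2}\|\Gamma(g)\|_1^{1/2}B_n^{1/2}+\|\Gamma(g,Lg)\|_1+|\rho|\,\|\Gamma(g)\|_1$ — again a quadratic inequality in $B_n^{1/2}$ with finite right-hand constants and leading coefficient tending to $0$. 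Thus $\sup_n B_n<\infty$, monotone convergence gives $\int_\M(\Gamma_2(g)-\rho\Gamma(g))\, d\mu<\infty$, so $\Gamma_2(P_tf)\in L^1$, and finally $L\Gamma(P_tf)=2\Gamma_2(P_tf)+2\Gamma(P_tf,LP_tf)\in L^1$ as a combination of $L^1$ functions.

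The main obstacle is exactly this last step: the quantity $\int_\M h_n^2(\Gamma_2(g)-\rho\Gamma(g))\, d\mu$ reappears on the right-hand side after integrating by parts, and one must close the estimate without circularity. This is resolved precisely by the absorption trick, which is made possible by the curvature–dimension inequality (positivity of $\Gamma_2-\rho\Gamma$, used both to guarantee $B_n\ge 0$ and to control $\Gamma(\Gamma(g))$) and by $\|\Gamma(h_n)\|_\infty\to 0$. A secondary technical point to verify carefully is that the integration-by-parts formula stated for $f,g\in C_0^\infty(\M)$ remains valid when only one factor is compactly supported and the other is merely smooth, which is the standard Green identity with vanishing boundary term.
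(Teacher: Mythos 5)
Your proposal is correct and reaches the same conclusion as the paper, but the route for the hardest part (the $\Gamma_2$ term) is genuinely different and worth comparing. The paper argues as follows: for $g\in C_0^\infty(\M)$ the exact identity $\int_\M\Gamma_2(g)\,d\mu=\int_\M (Lg)^2\,d\mu$ holds, whence (using positivity of $\Gamma_2(g)-\rho\Gamma(g)$ from CD) $\int_\M|\Gamma_2(g)-\rho\Gamma(g)|\,d\mu\le(1+\tfrac12|\rho|)\|Lg\|_2^2+\tfrac12|\rho|\,\|g\|_2^2$, and then the paper asserts that a density argument extends this bound to all $g\in\mathcal D(L)\cap C^\infty(\M)$, in particular to $g=P_tf$. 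You instead localize $g=P_tf$ directly: integrating $h_n^2\,\Gamma_2(g)$ by parts gives
\[
\int_\M h_n^2\,\Gamma_2(g)\,d\mu=-\int_\M h_n\,\Gamma\bigl(h_n,\Gamma(g)\bigr)\,d\mu-\int_\M h_n^2\,\Gamma(g,Lg)\,d\mu,
\]
and you then close the resulting quadratic inequality for $B_n=\int h_n^2(\Gamma_2(g)-\rho\Gamma(g))\,d\mu$ by absorption, using the Bochner estimate $\Gamma(\Gamma(g))\le 4\Gamma(g)\bigl(\Gamma_2(g)-\rho\Gamma(g)\bigr)$ to control the cross term. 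The trade-off: the paper's route has the elegant exact identity on $C_0^\infty$, but the density step is delicate, since one needs a.e.~convergence of the third-order expression $\Gamma_2(g_k)$ for a core sequence $g_k\to g$ in the graph norm of $L$, which $L^2$ control of $g_k,Lg_k$ alone does not obviously supply. Your absorption argument sidesteps any passage to a limit of approximants, working directly with the fixed smooth function $P_tf$ and the exhaustion $h_n$, at the cost of invoking the extra gradient-of-$\Gamma$ bound. You also make explicit (again via absorption) the step the paper dismisses as ``straightforward from the spectral theorem,'' namely that $\Gamma(P_tf)\in L^1$; this adds rigor. Both arguments are valid, but yours is more self-contained.

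One small thing to keep in mind: the auxiliary integration-by-parts identity $\int_\M\varphi\,Lg\,d\mu=-\int_\M\Gamma(\varphi,g)\,d\mu$ for $\varphi\in C_0^\infty(\M)$ and $g\in C^\infty(\M)$ (not compactly supported) is indeed the standard Green identity with no boundary term, valid because the operator can be written in divergence form with respect to $\mu$ and $\varphi$ localizes all integrals; you flagged this correctly as a point to verify, and it holds.
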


\begin{proof}
It is straightforward to see from the spectral theorem that $\Gamma (P_t f) \in L^1(\M,\mu)$. Similarly, $| \Gamma(P_tf, LP_t f) | \le \sqrt{\Gamma(P_tf) \Gamma(LP_tf) }   \in L^1(\M,\mu)$. 
Since, 
\[
\Gamma_2 (P_t f) =\frac{1}{2} \left( L\Gamma(P_t f) -2  \Gamma(P_tf, LP_t f)\right) 
\]
we are let with the problem of proving that $\Gamma_2 (P_t f) \in  L^1(\M,\mu)$. If $g \in C_0^\infty(\mathbb{M})$, then an integration by parts easily yields
\[
\int_\mathbb{M} \Gamma_2 (g) d\mu=\int_\mathbb{M} (Lg)^2 d\mu.
\]
As a consequence, 
\[
\int_\mathbb{M} \Gamma_2 (g)-\rho \Gamma(g)  d\mu=\int_\mathbb{M} (Lg)^2 +\rho g Lg  d\mu,
\]
and we obtain
\[
\int_\mathbb{M} | \Gamma_2 (g)-\rho \Gamma(g) | d\mu \le\left(1 +\frac{1}{2}| \rho | \right) \int_\mathbb{M} (Lg)^2 d\mu +\frac{1}{2}| \rho | \int_\mathbb{M} g^2    d\mu.
\]
Using a density argument, it is then easily proved that for $g \in \mathcal{D}(L)\cap C^\infty(\mathbb{M})$ we have
\[
\int_\mathbb{M} | \Gamma_2 (g)-\rho \Gamma(g) | d\mu \le\left(1 +\frac{1}{2}| \rho | \right) \int_\mathbb{M} (Lg)^2 d\mu +\frac{1}{2}| \rho | \int_\mathbb{M} g^2    d\mu.
\]
In particular, we deduce that if $g \in \mathcal{D}(L)\cap C^\infty(\mathbb{M})$, then $\Gamma_2(g) \in L^1(\M,\mu)$.
\end{proof}

We will also need the following fundamental parabolic comparison theorem that shall be extensively used throughout these lecture notes.

\begin{proposition}
Let $T>0$. Let $u,v: \mathbb{M}\times [0,T] \to \mathbb{R}$ be  smooth functions such that:
\begin{itemize}
\item[(i)]  For every $t \in [0,T]$, $u(\cdot,t) \in L^2(\mathbb{M})$ and $\int_0^T \| u(\cdot,t)\|_2 dt <\infty$;
\item[(ii)]  $\int_0^T \| \sqrt{\Gamma(u) (\cdot,t)} \|_p dt <\infty$ for some $1 \le p \le \infty$;
\item[(iii)]  For every $t \in [0,T]$, $v(\cdot,t) \in L^q(\mathbb{M})$ and $\int_ 0^T \| v(\cdot,t ) \|_q dt <\infty$ for some $1 \le q \le \infty$. 
\end{itemize}
 If the  inequality 
\[
Lu+\frac{\partial u}{\partial t} \ge v,
\]
holds on $\mathbb{M}\times [0,T]$, then we have
\[
P_T u(\cdot,T)(x) \ge u(x,0) +\int_0^T P_s v(\cdot,s)(x) ds.
\]
\end{proposition}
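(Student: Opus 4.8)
The natural approach is to integrate the differential inequality against the heat kernel in both the space and the time variable. Fix $x\in\M$ and $0<a<b\le T$. Recall that for $s>0$ the function $p(s,x,\cdot)$ is smooth, belongs to $L^r(\M,\mu)$ for every $1\le r\le2$ (it is in $L^1$ since $P_s1\le1$, and in $L^2$ by the heat kernel construction), has $\sqrt{\Gamma(p(s,x,\cdot))}\in L^2(\M,\mu)$ (spectral theorem), and satisfies $\partial_s p(s,x,y)=L_xp(s,x,y)=L_yp(s,x,y)$, the first identity from $\partial_s P_s=LP_s$ and the second from the symmetry $p(s,x,y)=p(s,y,x)$. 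Let $h_n\in C_0^\infty(\M)$, $0\le h_n\le1$, $h_n\nearrow1$, $\|\Gamma(h_n)\|_\infty\to0$, be the exhaustion sequence furnished by the completeness of $(\M,d)$ in Proposition \ref{L:exhaustion}. The plan is: multiply $Lu+\frac{\partial u}{\partial t}\ge v$ by the nonnegative weight $h_n^2\,p(s,x,\cdot)$, integrate over $\M\times[a,b]$, integrate by parts in $s$ and in $y$, let $n\to\infty$, and finally let $a\to0^+$ with $b=T$.

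Integrating by parts in $s$, using $\partial_s\big(p(s,x,\cdot)u(\cdot,s)\big)=(L_yp)\,u+p\,\partial_s u$, gives
\[
\int_a^b\!\!\int_\M h_n^2\,p\,\partial_s u\,d\mu\,ds=\int_\M h_n^2\big(p(b,x,\cdot)u(\cdot,b)-p(a,x,\cdot)u(\cdot,a)\big)d\mu-\int_a^b\!\!\int_\M h_n^2\,(L_yp)\,u\,d\mu\,ds .
\]
Integrating by parts in $y$, which is legitimate because $h_n^2\,p(s,x,\cdot)\in C_0^\infty(\M)$ and $u$ is smooth, gives
\[
\int_\M h_n^2\,p\,Lu\,d\mu-\int_\M h_n^2\,(L_yp)\,u\,d\mu=2\int_\M h_n\big(u\,\Gamma(h_n,p)-p\,\Gamma(h_n,u)\big)d\mu=:R_n(s),
\]
and by the Cauchy--Schwarz inequality for $\Gamma$ one has $|R_n(s)|\le 2\|\sqrt{\Gamma(h_n)}\|_\infty\int_\M\big(|u|\sqrt{\Gamma(p(s,x,\cdot))}+p\,\sqrt{\Gamma(u)}\big)d\mu$. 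Adding the two identities, and using $Lu+\partial_s u\ge v$ together with $h_n^2\,p\ge0$, we arrive at
\[
\int_\M h_n^2\big(p(b,x,\cdot)u(\cdot,b)-p(a,x,\cdot)u(\cdot,a)\big)d\mu+\int_a^b R_n(s)\,ds\ \ge\ \int_a^b\!\!\int_\M h_n^2\,p\,v\,d\mu\,ds .
\]

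It remains to let $n\to\infty$ and then $a\to0^+$. By dominated convergence (Cauchy--Schwarz with $p(a,x,\cdot),p(b,x,\cdot)\in L^2$ and hypothesis (i)) the two boundary integrals converge to $P_b(u(\cdot,b))(x)$ and $P_a(u(\cdot,a))(x)$; by dominated convergence and hypothesis (iii) the right-hand side converges to $\int_a^b P_s(v(\cdot,s))(x)\,ds$; and $\int_a^b R_n(s)\,ds\to0$ because, upon integrating its bound in $s$ and applying H\"older's inequality in $y$, it is at most $2\|\sqrt{\Gamma(h_n)}\|_\infty$ times a finite quantity — finiteness coming from hypotheses (i)--(ii) and the short-time heat-kernel bounds $\sup_{s\in[a,b]}\big(\|p(s,x,\cdot)\|_{p'}+\|\sqrt{\Gamma(p(s,x,\cdot))}\|_2\big)<\infty$. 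This yields $P_b(u(\cdot,b))(x)-P_a(u(\cdot,a))(x)\ge\int_a^b P_s(v(\cdot,s))(x)\,ds$; taking $b=T$ and letting $a\to0^+$, a further cut-off argument together with the continuity of $u$ shows $P_a(u(\cdot,a))(x)\to u(x,0)$ as $a\to0^+$, and (iii) ensures $\int_0^T P_s(v(\cdot,s))(x)\,ds$ converges, so passing to the limit gives the asserted inequality.

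The main obstacle is precisely this passage to the limit on a non-compact $\M$: the integrability hypotheses (i)--(iii) are not decoration but exactly what forces the cut-off and boundary terms to vanish, and one must check throughout that the quantities $\partial_t u$, $Lu$ and $\Gamma(u)$ — none of which is individually assumed integrable — enter only in combinations that are controlled by them. The computational skeleton underneath, by contrast, is short and robust: differentiate $s\mapsto P_s(u(\cdot,s))(x)$, use $\partial_s p=L_yp$ to trade the time derivative for the spatial operator $L$, insert the hypothesis $Lu+\partial_t u\ge v$, and integrate in $s$ from $0$ to $T$.
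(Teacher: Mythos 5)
Your overall strategy is the natural ``dual'' of the paper's: where the paper considers $\phi(t)=\int_\M g\,P_t\bigl(f\,u(\cdot,t)\bigr)\,d\mu$ for \emph{two} compactly supported test functions $f,g\in C_0^\infty(\M)$, differentiates in $t$, and only at the very end discards $g$ to pass from the integrated statement to a pointwise one, you work directly with the heat kernel $p(s,x,\cdot)$ at a fixed $x$, using $h_n^2\,p(s,x,\cdot)$ as a single combined test function. The algebraic core is identical in both cases: integrate by parts, use $\partial_s p=L_y p$ to trade the time derivative for $L$, and isolate an error term of size $O(\|\sqrt{\Gamma(h_n)}\|_\infty)$. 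Your identity for $R_n(s)$ is correct.

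The gap is in the integrability estimates that make the limits go through, and it is not cosmetic. Your bound on $R_n$ requires $\int_\M p(s,x,\cdot)\sqrt{\Gamma(u)}\,d\mu\le\|p(s,x,\cdot)\|_{p'}\|\sqrt{\Gamma(u)}\|_p$, hence $p(s,x,\cdot)\in L^{p'}$; similarly the convergence $\int_a^b\int_\M h_n^2\,p\,v\,d\mu\,ds\to\int_a^b P_s v(\cdot,s)(x)\,ds$ and the finiteness of $\int_0^T P_s v(\cdot,s)(x)\,ds$ at the fixed $x$ need $p(s,x,\cdot)\in L^{q'}$. But the only information available at this point in the paper is $p(s,x,\cdot)\in L^1\cap L^2$ ($L^1$ from $P_s1\le 1$, $L^2$ from the Riesz construction), so by interpolation $p(s,x,\cdot)\in L^r$ only for $1\le r\le 2$. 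This covers $p,q\ge 2$; for $p<2$ or $q<2$ the estimate $\sup_{s\in[a,b]}\|p(s,x,\cdot)\|_{p'}<\infty$ that you invoke is not provable from the standing hypotheses (no global $L^\infty$ bound on the heat kernel is available before the Li--Yau machinery, and none is true in general for complete manifolds with merely $\mathrm{Ric}\ge\rho$). The statement, however, is asserted for \emph{some} $1\le p,q\le\infty$, and the limiting cases $p=1,q=1$ genuinely escape your argument. The paper's auxiliary test function $g\in C_0^\infty(\M)$ is precisely the device that closes this gap: $P_tg\in L^1\cap L^\infty$ (sub-Markov plus $L^1$-contractivity), hence $P_tg\in L^r$ for \emph{every} $r$, so the pairings $\int(P_tg)\sqrt{\Gamma(u)}\,d\mu$ and $\int g\,P_t(fv)\,d\mu$ are controlled for arbitrary $p,q$. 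Relatedly, your final step $P_a(u(\cdot,a))(x)\to u(x,0)$ at a fixed $x$, and the finiteness of $\int_0^T P_sv(\cdot,s)(x)\,ds$ for \emph{every} $x$, are only asserted; the paper's duality against $g$ yields the conclusion $\mu$-a.e., which is all the later applications use. To salvage your version one would either have to restrict to $p,q\ge 2$ (which happens to suffice for the gradient bound, where $\Gamma(u)\in L^1$ gives $p=2$) or import the $g$-pairing trick from the paper.
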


\begin{proof}
Let $f,g \in C_0^\infty (\mathbb{M})$, $f,g \ge 0$. We claim that we must have
\begin{align}\label{cp1}
& \int_\mathbb{M} g P_T(fu(\cdot,T)) d\mu - \int_\mathbb{M} g f u(x,0) d\mu
\ge   -  \|\sqrt{\Gamma(f)}\|_\infty \int_0^T  \int_\mathbb{M}   (P_t g) \sqrt{\Gamma(u)}d\mu dt
\\
& -  \| \sqrt{\Gamma(f)} \|_\infty \int_0^T \| \sqrt{\Gamma(P_t g) }\|_2 \| u(\cdot,t) \|_2   dt  +  \int_\mathbb{M} g \int_0^T  P_t( f v(\cdot,t)) d\mu dt,
\notag
\end{align}
where for every $1\le p \le \infty$ and a measurable $F$, we have let $||F||_p = ||F||_{L^p(\mathbb{M})}$.
To establish \eqref{cp1} we consider the function
\[
\phi(t)=\int_\mathbb{M} g P_t (fu(\cdot,t)) d\mu.
\]
Differentiating $\phi$ we find
\begin{align*}
\phi'(t)& =\int_\mathbb{M} g P_t \left(L( fu) + f\frac{\partial u}{\partial t} \right) d\mu \\
 &= \int_\mathbb{M} g P_t \left((L f) u+2 \Gamma (f,u) +f Lu + f\frac{\partial u}{\partial t} \right) d\mu \\
 &\ge \int_\mathbb{M} g P_t \left((L f) u+2 \Gamma (f,u)  \right) d\mu+\int_\mathbb{M} g P_t( f v) d\mu.
\end{align*}
Since
\begin{align*}
\int_\mathbb{M} g P_t \left((L f) u\right) d\mu &= \int_\mathbb{M}  (P_t g) (L f) u d\mu \\
 &= -\int_\mathbb{M}  \Gamma( f, u(P_t g)) d\mu  \\
 &=-\left( \int_\mathbb{M}  P_t g \Gamma( f, u)+ u \Gamma(f,P_t g) d\mu\right),
\end{align*}
we obtain
\[
\phi'(t) \ge \int_\mathbb{M} P_t g \Gamma(f,u) d\mu - \int_\mathbb{M} u \Gamma(f,P_t g) d\mu + \int_\mathbb{M} g P_t(fv) d\mu.
\]
Now, we can bound
\[
\left| \int_\mathbb{M}  (P_t g) \Gamma( f, u) d\mu\right| 
 \le    \| \sqrt{\Gamma(f) } \|_\infty \int_\mathbb{M}  (P_t g)  \sqrt{\Gamma( u)} d\mu,
\]
and for a.e. $t\in [0,T]$ the integral in the right-hand side is finite in view of the assumption (ii) above.
We have thus obtained
\begin{align*}
\phi'(t)  \ge -  \| \sqrt{\Gamma(f) } \|_\infty  \int_\mathbb{M}  (P_t g)  \sqrt{\Gamma(u)} d\mu- \int_\mathbb{M} u \Gamma(f , P_t g) d\mu+ \int_\mathbb{M} g P_t( f v(\cdot,t)) d\mu.
\end{align*}
As a consequence, we find
\begin{align*}
 & \int_\mathbb{M} g P_T (fu(\cdot,T)) d\mu-\int_\mathbb{M} gfu(x,0)d\mu  \\
 \ge&  -  \| \sqrt{\Gamma(f) } \|_\infty   \int_0^T  \int_\mathbb{M}   (P_t g) \sqrt{\Gamma(u)} d\mu dt -  \int_0^T \int_\mathbb{M}  u \Gamma\left(f,P_t g\right) d\mu dt + \int_0^T  \int_\mathbb{M} g  P_t(f v(\cdot,t)) d\mu dt
 \\
 \ge &  -  \| \sqrt{\Gamma(f) } \|_\infty   \int_0^T \int_\mathbb{M} (P_t g) \sqrt{\Gamma(u)} d\mu dt - \int_0^T \| u(\cdot,t) \|_2 \| \Gamma(f, P_t g) \|_2 dt  + \int_\mathbb{M} g \int_0^T  P_t( f v(\cdot,t)) dt d\mu \\
 \ge &    -  \| \sqrt{\Gamma(f) } \|_\infty   \int_0^T  \int_\mathbb{M} (P_t g) \sqrt{\Gamma(u)} d\mu dt -  \| \sqrt{\Gamma(f)} \|_\infty \int_0^T \| u(\cdot,t) \|_2  \| \sqrt{\Gamma(P_t g) }\|_2 dt \\ +  & \int_\mathbb{M} g \int_0^T  P_t(f v(\cdot,t)) dt d\mu,
\end{align*}
which proves \eqref{cp1}.
Let now $h_k\in C^\infty_0(\mathbb{M})$ be a sequence such that $0 \le h_k \le 1$, $\| \Gamma(h_k) \|_\infty \to 0$ and $h_k$ increases to 1.                
Using $h_k$ in place of $f$ in \eqref{cp1}, and letting $k \to \infty$, gives
\begin{align*}
  \int_\mathbb{M} g P_T (u(\cdot,T)) d\mu - \int_\mathbb{M} g u(x,0)d\mu  
 \ge   \int_\mathbb{M} g \int_0^T  P_t(v(\cdot,t)) dt d\mu.
\end{align*}
We observe that the assumption on $v$ and Minkowski's integral inequality guarantee that the function $x\to \int_0^T  P_t(v(\cdot,t))(x) dt$ belongs to $L^q(\mathbb{M})$. We have in fact
\begin{align*}
\left(\int_\mathbb{M} \left|\int_0^T  P_t(v(\cdot,t)) dt\right|^q d\mu\right)^{\frac 1q} & \le \int_0^T \left| \int_\mathbb{M} \left|P_t(v(\cdot,t))\right|^q d\mu\right|^{\frac 1q} dt \le \int_0^T \left| \int_\mathbb{M} \left|v(\cdot,t)\right|^q d\mu\right|^{\frac 1q} dt
\\
& \le T^{\frac{1}{q'}}  \left(\int_0^T \int_\mathbb{M} \left|v(\cdot,t)\right|^q d\mu dt \right)^{\frac 1q} <\infty.
\end{align*}
Since this must hold for every non negative $g \in C_0^\infty (\mathbb{M})$, we conclude that
\[
P_T(u(\cdot,T))(x) \ge u(x,0) +\int_0^T P_s (v(\cdot,s))(x) ds,
\]
which completes the proof.
\end{proof}

We are in position to prove the first gradient bound for the semigroup $P_t$.

\begin{proposition}
If $f$ is a smooth function in $\mathcal{D}(L)$, then for every $t \ge 0$ and $x \in \mathbb{M}$,
\[
\sqrt{\Gamma(P_t f)}(x) \le e^{-\rho t} P_t \sqrt{\Gamma(f)} (x).
\]
\end{proposition}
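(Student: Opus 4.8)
The plan is to run a semigroup interpolation argument directly on the square‑root functional, rather than on $\Gamma$ itself. Fix $T>0$, a point $x\in\M$, and (after a standard localization/approximation) assume $f$ is nice enough that all the quantities below make sense and the parabolic comparison theorem applies. For $s\in[0,T]$ set
\[
\phi(s) = e^{-\rho s}\, P_s\!\left(\sqrt{\Gamma(P_{T-s}f)\,}\right)(x),
\]
so that $\phi(0)=e^{-\rho\cdot 0}\sqrt{\Gamma(P_Tf)}(x)$ and $\phi(T)=e^{-\rho T}P_T\sqrt{\Gamma(f)}(x)$. The goal is to show $\phi$ is nondecreasing on $[0,T]$, which is exactly the claimed inequality. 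The natural tool is the parabolic comparison proposition proved just above: writing $g_s=P_{T-s}f$ and $u(s,\cdot)=\sqrt{\Gamma(g_s)}$, I would show that $u$ satisfies a differential inequality of the form $Lu+\partial_s u \ge \rho\, u$ wherever $\Gamma(g_s)>0$, and then feed this into the comparison theorem to conclude $P_T(u(T,\cdot))\ge u(0,\cdot)+\rho\int_0^T P_s(u(s,\cdot))\,ds$; a Gronwall‑type manipulation then upgrades this to the multiplicative bound $e^{-\rho T}P_T u(T,\cdot)\ge u(0,\cdot)$, i.e. $\sqrt{\Gamma(P_Tf)}\le e^{-\rho T}P_T\sqrt{\Gamma f}$.

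**The computational heart** is the pointwise differential inequality for $u=\sqrt{\Gamma(g)}$ where $g$ solves $\partial_s g = -Lg$ in the $s$‑variable (since $g_s=P_{T-s}f$). One computes, at points where $\Gamma(g)>0$,
\[
\left(L + \frac{\partial}{\partial s}\right)\sqrt{\Gamma(g)}
= \frac{1}{2\sqrt{\Gamma(g)}}\Big(L\Gamma(g) + \partial_s\Gamma(g)\Big) - \frac{\Gamma(\Gamma(g))}{4\,\Gamma(g)^{3/2}}.
\]
Using $\partial_s\Gamma(g) = \Gamma(\partial_s g, g)\cdot 2 = -2\Gamma(Lg,g)$ and the definition $\Gamma_2(g) = \tfrac12 L\Gamma(g) - \Gamma(Lg,g)$, the first bracket equals $\tfrac{1}{2\sqrt{\Gamma(g)}}\cdot 2\Gamma_2(g) = \Gamma_2(g)/\sqrt{\Gamma(g)}$. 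So the expression is
\[
\frac{1}{\sqrt{\Gamma(g)}}\left(\Gamma_2(g) - \frac{\Gamma(\Gamma(g))}{4\,\Gamma(g)}\right).
\]
Now I invoke the two curvature ingredients from the excerpt: the curvature‑dimension bound gives $\Gamma_2(g)\ge \rho\,\Gamma(g)$ (dropping the nonnegative $\tfrac1n(Lg)^2$ term), and the lemma $\Gamma(\Gamma(g))\le 4\Gamma(g)\big(\Gamma_2(g)-\rho\Gamma(g)\big)$ gives $\tfrac{\Gamma(\Gamma(g))}{4\Gamma(g)}\le \Gamma_2(g)-\rho\Gamma(g)$. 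Subtracting, $\Gamma_2(g) - \tfrac{\Gamma(\Gamma(g))}{4\Gamma(g)} \ge \rho\,\Gamma(g)$, hence
\[
\left(L+\frac{\partial}{\partial s}\right)\sqrt{\Gamma(g)} \ge \rho\,\sqrt{\Gamma(g)}
\]
on $\{\Gamma(g)>0\}$. This is precisely the subsolution property $Lu+\partial_s u\ge v$ with $v=\rho u$ needed for the comparison theorem.

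**The main obstacle**, as usual with these gradient estimates, is regularity: $\sqrt{\Gamma(g)}$ is only Lipschitz where $\Gamma(g)=0$, so the pointwise computation above is valid only on the open set $\{\Gamma(g)>0\}$, and one must justify applying the parabolic comparison proposition to a function that is merely smooth off a closed set. The standard fix is to work with $u_\varepsilon = \sqrt{\Gamma(g)+\varepsilon}$, which is smooth everywhere; redo the computation to get $(L+\partial_s)u_\varepsilon \ge \rho\,u_\varepsilon - C_\varepsilon$ where the error $C_\varepsilon\to 0$ appropriately (because the only place the $+\varepsilon$ hurts is in replacing $\Gamma(g)$ by $\Gamma(g)+\varepsilon$ in denominators, and one checks the resulting defect is controlled by $\varepsilon$ times bounded quantities), apply the comparison theorem to $u_\varepsilon$, and let $\varepsilon\to 0$. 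One also needs to check the integrability hypotheses (i)–(iii) of the comparison proposition for $u_\varepsilon$ and $v=\rho u_\varepsilon$; here the earlier technical lemma — that $\Gamma(P_tf)$, $L\Gamma(P_tf)$, $\Gamma(P_tf,LP_tf)$, $\Gamma_2(P_tf)$ all lie in $L^1(\M,\mu)$ — plus the contraction property of $P_t$ on $L^p$ and the a priori boundedness of $\Gamma(P_{T-s}f)$ away from the endpoints, supply what is required. Finally, the passage from the additive conclusion of the comparison theorem to the multiplicative bound $\phi(0)\le\phi(T)$ is the routine Gronwall step: differentiate $\phi$ directly using the subsolution inequality to get $\phi'(s)\ge 0$, or equivalently apply the comparison theorem on each subinterval and iterate.
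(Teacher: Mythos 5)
Your proof is correct and follows essentially the same route as the paper: compute $(L+\partial_s)\sqrt{\Gamma(P_{T-s}f)}=\frac{\Gamma_2(P_{T-s}f)}{\sqrt{\Gamma(P_{T-s}f)}}-\frac{\Gamma(\Gamma(P_{T-s}f))}{4\Gamma(P_{T-s}f)^{3/2}}$, bound it below by $\rho\sqrt{\Gamma(P_{T-s}f)}$ using the lemma $\Gamma(\Gamma(g))\le 4\Gamma(g)\bigl(\Gamma_2(g)-\rho\Gamma(g)\bigr)$, regularize $\sqrt{\,\cdot\,}$ where $\Gamma$ may vanish, and close via the parabolic comparison theorem. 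The paper's only cosmetic departure is to absorb the factor $e^{-\rho t}$ directly into the functional $\Phi(x,t)=e^{-\rho t}\sqrt{\Gamma(P_{T-t}f)}(x)$, so that $(L+\partial_t)\Phi\ge 0$ and the comparison theorem is invoked with $v=0$, avoiding the extra Gronwall step your additive version requires (and note that only the $\Gamma(\Gamma)$ lemma is actually used, not the separate bound $\Gamma_2\ge\rho\Gamma$).
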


\begin{proof}
 We fix $T>0$ and consider the functional
\[
\Phi(x,t)=e^{-\rho t} \sqrt{ \Gamma(P_{T-t} f)}(x).
\]
We first assume that $(x,t)\to \Gamma(P_t f)(x)>0$ on $\mathbb{M} \times [0,T]$. From the previous lemma,  we have $\Phi(t) \in L^2(\mathbb{M})$. Moreover $\Gamma(\Phi)(t)=e^{-2\rho t}\frac{\Gamma(\Gamma(P_{T-t} f))}{4 \Gamma(P_{T-t} f)}$. So,  we have $\Gamma( \Phi)(t) \le e^{-2\rho t}( \Gamma_2(P_{T-t} f)-\rho\Gamma(P_tf))$.  Therefore, again from the previous proposition , we deduce that  $\Gamma( \Phi)(t) \in L^1(\mathbb{M})$. Next, we easily compute that
\[
\frac{\partial \Phi}{\partial t}+ L\Phi =e^{-\rho t} \left( \frac{\Gamma_2(P_{T-t} f)}{\sqrt{\Gamma(P_{T-t} f)}}-\frac{\Gamma(\Gamma(P_{T-t} f))}{4 \Gamma(P_{T-t} f)^{3/2} } -\rho \sqrt{\Gamma(P_{T-t} f)} \right).
\]
Thus,
\[
\frac{\partial \Phi}{\partial t}+ L\Phi \ge 0.
\]
We can then use the parabolic comparison theorem  to infer that 
\[
\sqrt{ \Gamma(P_{T} f)} \le e^{-\rho T} P_T \left(\sqrt{\Gamma (f)} \right).
\]
If $(x,t) \to \Gamma(P_t f)(x)$ vanishes on $\mathbb{M} \times [0,T]$, we consider the functional 
\[
\Phi(t)=e^{-\rho t} g_\varepsilon (\Gamma(P_{T-t} f) ),
\]
where, for $0 < \varepsilon <1$,
\begin{align*}
g_\varepsilon (y)=\sqrt{ y+\varepsilon^2}-\varepsilon.
\end{align*}
Since $\Phi(t) \in L^2(\mathbb{M})$, an argument similar to that above (details are let to the reader) shows that
\[
g_\varepsilon (\Gamma(P_{T} f) )\le e^{-\rho T} P_T \left( g_\varepsilon( \Gamma (f)) \right).
\]
Letting $\varepsilon \to 0$, we conclude that 
\[
\sqrt{ \Gamma(P_{T} f)} \le  e^{-\rho T} P_T \left(\sqrt{\Gamma (f)} \right).
\]
\end{proof}

We now prove the promised stochastic completeness result:

\begin{theorem}\label{T:sc}
For $t \ge 0$, one has $ P_t 1 =1$.
\end{theorem}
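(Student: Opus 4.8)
The plan is to deduce stochastic completeness from the gradient estimate $\sqrt{\Gamma(P_t f)}\le e^{-\rho t}P_t\sqrt{\Gamma(f)}$ just proved, via a cutoff argument using the exhaustion functions of Proposition~\ref{L:exhaustion}. Since $(P_t)_{t\ge0}$ is sub-Markovian we already know $P_t1\le 1$ (see \eqref{submarkov}), so it suffices to show that $\int_\M f(1-P_t1)\,d\mu=0$ for every non-negative $f\in C_0^\infty(\M)$; as $1-P_t1\ge0$, this forces $P_t1=1$. I will actually prove the equivalent conservativeness statement $\int_\M P_tf\,d\mu=\int_\M f\,d\mu$ for $f\in C_0^\infty(\M)$ and then transfer it onto $P_t1$ by self-adjointness.

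Fix $f\in C_0^\infty(\M)$ and let $(h_n)$ be as in Proposition~\ref{L:exhaustion}: $h_n\in C_0^\infty(\M)$, $0\le h_n\le1$, $h_n\nearrow1$, and $\|\sqrt{\Gamma(h_n)}\|_\infty\to0$ (this is where completeness of $(\M,d)$ is used). Consider $\varphi_n(s)=\langle h_n,P_sf\rangle_{L^2(\M,\mu)}$ on $[0,t]$. Because $f\in C_0^\infty(\M)\subset\mathcal D(L)$, the map $s\mapsto P_sf$ is $C^1$ into $L^2(\M,\mu)$ with derivative $LP_sf=P_sLf$, so $\varphi_n$ is $C^1$ and
\[
\varphi_n'(s)=\langle h_n,P_sLf\rangle=\langle P_sh_n,Lf\rangle=-\int_\M\Gamma(P_sh_n,f)\,d\mu,
\]
where I used self-adjointness of $P_s$ and then an integration by parts, legitimate since $f$ has compact support and $P_sh_n$ is smooth by hypoellipticity. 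By Cauchy--Schwarz, the gradient estimate applied to $h_n$, and $\|P_sg\|_\infty\le\|g\|_\infty$,
\[
|\varphi_n'(s)|\le\|\sqrt{\Gamma(P_sh_n)}\|_\infty\,\|\sqrt{\Gamma(f)}\|_{L^1(\M,\mu)}\le e^{-\rho s}\,\|\sqrt{\Gamma(h_n)}\|_\infty\,\|\sqrt{\Gamma(f)}\|_{L^1(\M,\mu)},
\]
and here $\sqrt{\Gamma(f)}=\|\nabla f\|$ is continuous with compact support, hence in $L^1(\M,\mu)$. Integrating over $[0,t]$ gives
\[
\left|\langle h_n,P_tf-f\rangle\right|\le\|\sqrt{\Gamma(h_n)}\|_\infty\,\|\sqrt{\Gamma(f)}\|_{L^1(\M,\mu)}\int_0^t e^{-\rho s}\,ds,
\]
which tends to $0$ as $n\to\infty$.

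On the other hand $P_tf-f\in L^1(\M,\mu)$ and $0\le h_n\nearrow1$, so dominated convergence yields $\langle h_n,P_tf-f\rangle\to\int_\M(P_tf-f)\,d\mu$; hence $\int_\M P_tf\,d\mu=\int_\M f\,d\mu$. Finally, $\int_\M P_tf\,d\mu=\lim_n\langle P_tf,h_n\rangle=\lim_n\langle f,P_th_n\rangle=\int_\M f\,P_t1\,d\mu$, using self-adjointness of $P_t$ and then dominated convergence with $0\le P_th_n\le P_t1\le1$ and $P_th_n\to P_t1$ pointwise (monotone convergence for the heat kernel, since $h_n\nearrow1$). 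Therefore $\int_\M f(1-P_t1)\,d\mu=0$ for every non-negative $f\in C_0^\infty(\M)$, and since $1-P_t1\ge0$ we conclude $P_t1=1$. The only genuinely substantive ingredients are the two facts that make the cutoff close up: the existence of the $h_n$ with $\|\Gamma(h_n)\|_\infty\to0$ (equivalent to completeness) and the uniform gradient bound $\sqrt{\Gamma(P_sh_n)}\le e^{-\rho s}\|\sqrt{\Gamma(h_n)}\|_\infty$ (where the Ricci lower bound enters); the rest is routine functional analysis and measure theory.
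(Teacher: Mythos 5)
Your proof is correct and follows essentially the same route as the paper: the identity $\langle h_n, P_tf-f\rangle = -\int_0^t\int_\M\Gamma(P_sh_n,f)\,d\mu\,ds$, the gradient bound $\sqrt{\Gamma(P_sh_n)}\le e^{-\rho s}\|\sqrt{\Gamma(h_n)}\|_\infty$, and the cutoffs $h_n\nearrow 1$ with $\|\Gamma(h_n)\|_\infty\to 0$ are exactly the ingredients in the paper's argument. The only cosmetic difference is that you pass through the intermediate statement $\int_\M P_tf\,d\mu=\int_\M f\,d\mu$ and then transfer to $P_t1$ via self-adjointness, whereas the paper writes the same quantity as $\int_\M(P_th_n-h_n)g\,d\mu$ and passes directly to the limit $\int_\M(P_t1-1)g\,d\mu=0$.
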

\begin{proof}
Let $f,g \in  C^\infty_0(\mathbb M)$, we have
\begin{align*}
\int_{\bM} (P_t f -f) g d\mu = \int_0^t \int_{\bM}\left(
\frac{\partial}{\partial s} P_s f \right) g d\mu ds= \int_0^t
\int_{\bM}\left(L P_s f \right) g d\mu ds=- \int_0^t \int_{\bM}
\Gamma ( P_s f , g) d\mu ds.
\end{align*}
By means of the previous Proposition and Cauchy-Schwarz inequality, we
find
\begin{equation}\label{P1}
\left| \int_{\bM} (P_t f -f) g d\mu \right| \le \left(\int_0^t
e^{-\rho s} ds\right) \sqrt{ \| \Gamma (f) \|_\infty  } \int_{\bM}\Gamma (g)^{\frac{1}{2}}d\mu.
\end{equation}

We now apply the previous inequality with $f = h_n$, and then let $n\to \infty$.
Since by Beppo Levi's monotone convergence theorem we have $P_t
h_n(x)\nearrow P_t 1(x)$ for every $x\in \bM$, we see that the
left-hand side converges to $\int_{\bM} (P_t 1 -1) g d\mu$.  We thus reach the conclusion
\[
\int_{\bM} (P_t 1 -1) g d\mu=0,\ \ \ g\in C^\infty_0(\bM).
\]
It follows that $P_t 1 =1$.

\end{proof}

\subsection{Convergence to equilibrium, Poincar\'e and log-Sobolev inequalities}

Let $\mathbb{M}$ be a complete $n$-dimensional Riemannian manifold and denote by $L$ its Laplace-Beltrami operator. As usual, we denote by $P_t$ the heat semigroup generated by $L$. Throughout the Section, we will assume that the Ricci curvature of $\mathbb{M}$ is bounded from below by $\rho >0$. We recall that this is equivalent to the fact that for every $f \in C^\infty(\mathbb{M})$,
\[
\Gamma_2(f,f) \ge \frac{1}{n} (Lf)^2 + \rho \Gamma(f,f).
\]
Readers knowing Riemannian geometry know that from Bonnet-Myers theorem, the manifold needs to be compact and we therefore expect the semigroup to converge to equilibrium. However, our goal will be to not use the Bonnet-Myers theorem, because eventually we shall provide a proof of this fact using semigroup theory. Thus the results in this Section will not use the compactness of $\mathbb{M}$.

\begin{lemma}
The Riemannian measure $\mu$ is finite, i.e. $\mu(\bM) <+\infty$ and for every $f \in L_\mu^2(\mathbb{M})$, the following convergence holds pointwise and in $L^2(\M,\mu)$,
\[
P_t f  \to_{t \to +\infty} \frac{1}{\mu(\mathbb{M})} \int_\mathbb{M} f d\mu.
\]
\end{lemma}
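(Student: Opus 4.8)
The plan is to first prove that $\mu(\M) < +\infty$, and then deduce the convergence statement from the gradient bound $\sqrt{\Gamma(P_t f)} \le e^{-\rho t} P_t\sqrt{\Gamma(f)}$ established in the previous proposition, together with stochastic completeness $P_t 1 = 1$ (Theorem \ref{T:sc}). For finiteness of $\mu$, I would test the gradient bound against a suitable function. Take $f = h_n$, one of the exhaustion functions with $0 \le h_n \le 1$, $h_n \nearrow 1$, $\|\Gamma(h_n)\|_\infty \to 0$. Then the gradient bound gives $\sqrt{\Gamma(P_t h_n)} \le e^{-\rho t} P_t \sqrt{\Gamma(h_n)} \le e^{-\rho t}\|\sqrt{\Gamma(h_n)}\|_\infty$ (using $P_t 1 = 1$), so $\|\Gamma(P_t h_n)\|_\infty \to 0$ as $n \to \infty$, locally uniformly in $t$. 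Meanwhile $P_t h_n \nearrow P_t 1 = 1$ pointwise. The idea is that the limiting "function" $1$ then has $\Gamma(1) = 0$, which is automatic; the real point is to extract a Poincaré-type inequality.

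Here is the cleaner route I would actually take. For $f \in C_0^\infty(\M)$, write
\[
P_t f - \frac{1}{\mu(\M)}\int_\M f\, d\mu
\]
and try to show it tends to $0$; but since we do not yet know $\mu(\M) < \infty$, I would instead argue directly as follows. Using $\frac{d}{ds} P_s f = L P_s f$ and integrating, for $g \in C_0^\infty(\M)$,
\[
\int_\M (P_t f - f) g\, d\mu = -\int_0^t \int_\M \Gamma(P_s f, g)\, d\mu\, ds,
\]
and by Cauchy--Schwarz together with the gradient bound,
\[
\left| \int_\M (P_t f - f)\, g\, d\mu \right| \le \int_0^t e^{-\rho s}\, ds \cdot \sqrt{\|\Gamma(f)\|_\infty}\, \|\sqrt{\Gamma(g)}\|_{L^1(\M,\mu)} \le \frac{1}{\rho}\sqrt{\|\Gamma(f)\|_\infty}\, \|\sqrt{\Gamma(g)}\|_{L^1(\M,\mu)}.
\]
This is exactly the estimate \eqref{P1} from the proof of Theorem \ref{T:sc}. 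Since the right-hand side is bounded uniformly in $t$ and the family $P_t f$ is bounded in $L^2$, a weak-compactness argument shows $P_t f$ converges weakly in $L^2$ along a subsequence to some $f_\infty$; differentiating in $t$ and using the gradient bound shows $\Gamma(f_\infty) = 0$, hence $f_\infty$ is constant by the Corollary following \eqref{di}. A constant lies in $L^2(\M,\mu)$ only if $\mu(\M) < \infty$, and unless $f_\infty \equiv 0$ this forces finiteness of $\mu$; the case $\rho > 0$ and the conservation $\int_\M P_t f\, d\mu = \int_\M f\, d\mu$ (from $P_t 1 = 1$ and self-adjointness) pin down $f_\infty = \frac{1}{\mu(\M)}\int_\M f\, d\mu$ and rule out the degenerate possibility by choosing $f \ge 0$, $f \not\equiv 0$.

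Once $\mu(\M) < \infty$ is known, the constants are in $L^2$ and the full convergence follows by a standard spectral/semigroup argument exactly as in the compact case treated earlier: decompose $f = \frac{1}{\mu(\M)}\int_\M f\, d\mu + f_0$ with $\int_\M f_0\, d\mu = 0$, note $P_t$ fixes constants, and show $\|P_t f_0\|_2 \to 0$. For the pointwise convergence one upgrades $L^2$ convergence using the smoothing of the kernel, via the estimate $\|P_{t}f_0\|_\infty \le (\sup_x \sqrt{\int_\M p(1,x,y)^2 d\mu(y)})\|P_{t-1}f_0\|_2$ for $t \ge 1$. The main obstacle is the bootstrap from the a priori bound \eqref{P1} to the existence of the limit and, crucially, to the finiteness of $\mu$: one must be careful that the weak limit is genuinely nonzero, which is why one tests with a fixed nonnegative $f$ and invokes $\int_\M P_t f\, d\mu = \int_\M f\, d\mu > 0$ to prevent the mass from escaping to infinity. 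The positivity $\rho > 0$ enters precisely in making $\int_0^\infty e^{-\rho s}\, ds = 1/\rho$ finite, which is what yields a uniform-in-$t$ bound and hence a genuine equilibrium.
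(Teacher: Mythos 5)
Your overall framework mirrors the paper's: you derive the same key estimate (the paper's \eqref{P1}) from the gradient bound $\sqrt{\Gamma(P_t f)} \le e^{-\rho t}P_t\sqrt{\Gamma(f)}$, $P_t1=1$, and Cauchy--Schwarz, and you aim to identify the long-time limit as the average of $f$. The pointwise-convergence upgrade at the end, via the kernel estimate, is also essentially the paper's.

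However, your argument for $\mu(\M)<+\infty$ has a genuine gap. You extract a weak subsequential limit $f_\infty$, argue it is constant, and then claim that "unless $f_\infty\equiv0$ this forces finiteness of $\mu$," trying to rule out $f_\infty\equiv0$ by appealing to $\int_\M P_t f\,d\mu=\int_\M f\,d\mu>0$ for $f\ge0$. This does not work: weak convergence in $L^2$ to zero is perfectly compatible with conservation of the $L^1$ integral (mass escape to infinity). So if $\mu(\M)=\infty$, you do get $f_\infty\equiv 0$, and no contradiction has been produced. The paper avoids this by a different and sharper maneuver: assuming $\mu(\M)=\infty$, it uses the spectral theorem to deduce $P_\infty f=0$ strongly, lets $t\to\infty$ in the estimate to obtain
\[
\left|\int_\M fg\,d\mu\right| \le \Bigl(\int_0^\infty e^{-2\rho s}\,ds\Bigr)\sqrt{\|\Gamma(f)\|_\infty}\int_\M\Gamma(g)^{1/2}\,d\mu,
\]
and then crucially substitutes $f=h_n$ (the localizing sequence) for the \emph{first} argument, not the second. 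Since $\|\Gamma(h_n)\|_\infty\to0$ the right-hand side vanishes, while the left-hand side tends to $\int_\M g\,d\mu>0$ for $g\ge0$, $g\ne0$ — a contradiction. You do flirt with this idea in your first paragraph (where you observe $\|\Gamma(P_t h_n)\|_\infty\to0$), but then abandon it for the weak-compactness route, which is precisely where the argument breaks down. Reinstating the substitution $f=h_n$ into the final estimate would repair your proof.
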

\begin{proof}
 Let $f,g \in C^\infty_0(\mathbb{M})$, we have
\begin{align*}
\int_{\mathbb{M}} (P_t f -f) g d\mu = \int_0^t \int_{\mathbb{M}}\left(
\frac{\partial}{\partial s} P_s f \right) g d\mu ds= \int_0^t
\int_{\mathbb{M}}\left(L P_s f \right) g d\mu ds=- \int_0^t \int_{\mathbb{M}}
\Gamma ( P_s f , g) d\mu ds.
\end{align*}
By means of  Cauchy-Schwarz inequality, we
find
\begin{equation}\label{P1}
\left| \int_{\mathbb{M}} (P_t f -f) g d\mu \right| \le \left(\int_0^t
e^{-2\rho s} ds\right) \sqrt{ \| \Gamma (f) \|_\infty } \int_{\mathbb{M}}\Gamma (g)^{\frac{1}{2}}d\mu.
\end{equation}
Now it is seen from spectral theorem that in $L^2(\mathbb{M})$ we have  a convergence $P_t f \to P_\infty f$, where $P_\infty f$ belongs to the domain of $L$. Moreover $LP_\infty f=0$. By ellipticity of $L$ we deduce that $P_\infty f$ is a smooth function. Since $LP_\infty f=0$, we have $\Gamma(P_\infty f)=0$ and therefore $P_\infty f $ is constant.

 Let us now assume that $\mu(\mathbb{M})=+\infty$. This implies in particular that $P_\infty f =0$  because no constant besides $0$ is in $L^2(\mathbb{M})$. Using then (\ref{P1}) and letting $t \to +\infty$, we infer
 \begin{equation*}
\left| \int_{\mathbb{M}} f g d\mu \right| \le \left(\int_0^{+\infty}
e^{-2\rho s} ds\right)  \sqrt{ \| \Gamma (f) \|_\infty } \int_{\mathbb{M}}\Gamma (g)^{\frac{1}{2}}d\mu.
\end{equation*}
Let us assume $g \ge 0$, $g \ne 0$ and take for $f$ the usual localizing  sequence $h_n$. Letting $n \to \infty$, we deduce
\[
\int_\mathbb{M} g d\mu \le 0,
\]
which is clearly absurd. As a consequence $\mu (\mathbb{M}) <+\infty$. 

The invariance of $\mu$ implies then 
\[
\int_\mathbb{M} P_\infty f d\mu =\int_\mathbb{M} f d\mu,
\]
and thus
\[
P_\infty f =\frac{1}{\mu(\mathbb{M})} \int_\mathbb{M} f d\mu.
\]
Finally, using the Cauchy-Schwarz inequality, we find that for $x \in \bM$, $f \in L^2(\mathbb{M})$, $s,t,\tau \ge 0$,
\begin{align*}
| P_{t+\tau} f (x)-P_{s+\tau} f (x) | & = | P_\tau (P_t f -P_s f) (x) | \\
 &=\left| \int_\mathbb{M} p(\tau, x, y) (P_t f -P_s f) (y) \mu(dy) \right| \\
 &\le \int_\mathbb{M} p(\tau, x, y)^2 \mu(dy) \| P_t f -P_s f\|^2_2 \\
 &\le p(2\tau,x,x) \| P_t f -P_s f\|^2_2.
\end{align*}
Thus, we also have 
\[
P_t f (x) \to_{t \to +\infty} \frac{1}{\mu(\mathbb{M})} \int_\bM f d\mu.
\]
\end{proof}

\begin{proposition}
The following Poincar\'e inequality is satisfied: For $f \in \mathcal{D}(L)$,
\[
 \frac{1}{\mu(\mathbb{M})}\int_{\mathbb{M}} f^2 d\mu \le \left( \frac{1}{\mu(\mathbb{M})} \int_{\mathbb{M}} f d\mu \right)^2 +\frac{n-1}{n\rho} \frac{1}{\mu(\mathbb{M})} \int_{\mathbb{M}} \Gamma(f,f) d\mu.
\]
\end{proposition}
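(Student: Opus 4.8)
The plan is to run the standard Bakry--Émery semigroup interpolation, exploiting the dimensional term $\tfrac1n(Lf)^2$ in the curvature-dimension inequality to produce the sharp Lichnerowicz constant $\tfrac{n-1}{n\rho}$. After multiplying the asserted inequality by $\mu(\mathbb{M})$, it is equivalent to
\[
\int_{\mathbb{M}} f^2\, d\mu - \frac{1}{\mu(\mathbb{M})}\left(\int_{\mathbb{M}} f\, d\mu\right)^{\!2} \le \frac{n-1}{n\rho}\int_{\mathbb{M}}\Gamma(f,f)\,d\mu ,
\]
so it suffices to establish this. Since $C_0^\infty(\mathbb{M})$ is a core for $L$ and, using that $\mu(\mathbb{M})<\infty$ (already proved in this section), both sides are continuous in the graph norm $\|\cdot\|_2+\|L\cdot\|_2$, I would first reduce to the case $f\in C_0^\infty(\mathbb{M})$.

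For such an $f$ and $t\ge 0$, put $\Lambda(t)=\int_{\mathbb{M}}(P_tf)^2\,d\mu$, $I(t)=\int_{\mathbb{M}}\Gamma(P_tf)\,d\mu$ and $J(t)=\int_{\mathbb{M}}(LP_tf)^2\,d\mu$; these are finite for every $t\ge 0$ (trivially at $t=0$ and, for $t>0$, by the integrability lemma of this section). Differentiating under the integral sign and integrating by parts yields $\Lambda'(t)=2\int_{\mathbb{M}}(P_tf)(LP_tf)\,d\mu=-2I(t)$ and $I'(t)=2\int_{\mathbb{M}}\Gamma(P_tf,LP_tf)\,d\mu=-2J(t)$. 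Applying the identity $\int_{\mathbb{M}}\Gamma_2(g)\,d\mu=\int_{\mathbb{M}}(Lg)^2\,d\mu$ to $g=P_tf$ identifies $J(t)=\int_{\mathbb{M}}\Gamma_2(P_tf)\,d\mu$, so integrating the curvature-dimension inequality $\Gamma_2(P_tf)\ge\tfrac1n(LP_tf)^2+\rho\,\Gamma(P_tf)$ over $\mathbb{M}$ gives $J(t)\ge\tfrac1n J(t)+\rho I(t)$, that is
\[
J(t)\ \ge\ \frac{n\rho}{n-1}\,I(t),\qquad t\ge 0.
\]

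Plugging this into $I'(t)=-2J(t)$ yields the differential inequality $I'(t)\le-\tfrac{2n\rho}{n-1}I(t)$, hence $I(t)\le e^{-\frac{2n\rho}{n-1}t}I(0)$; in particular $I$ is integrable on $[0,\infty)$ with $\int_0^\infty I(t)\,dt\le\tfrac{n-1}{2n\rho}I(0)$. Integrating $\Lambda'=-2I$ from $0$ to $+\infty$ then gives
\[
\Lambda(0)-\lim_{t\to\infty}\Lambda(t)=2\int_0^\infty I(t)\,dt\ \le\ \frac{n-1}{n\rho}\,I(0)=\frac{n-1}{n\rho}\int_{\mathbb{M}}\Gamma(f,f)\,d\mu .
\]
By the convergence-to-equilibrium lemma of this section ($P_tf\to\tfrac{1}{\mu(\mathbb{M})}\int_{\mathbb{M}}f\,d\mu$ in $L^2(\mathbb{M},\mu)$, available because $\rho>0$ forces $\mu(\mathbb{M})<\infty$) one has $\lim_{t\to\infty}\Lambda(t)=\tfrac{1}{\mu(\mathbb{M})}\big(\int_{\mathbb{M}}f\,d\mu\big)^2$, while $\Lambda(0)=\int_{\mathbb{M}}f^2\,d\mu$. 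This is exactly the displayed reduction, and dividing by $\mu(\mathbb{M})$ completes the proof.

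The computations are routine; the only points needing care are the justification of differentiation under the integral sign and of $\Gamma_2(P_tf)\in L^1(\mathbb{M},\mu)$ --- both supplied by the preceding lemma --- together with the identification of $\lim_{t\to\infty}\Lambda(t)$, which rests on the finiteness of $\mu(\mathbb{M})$ established earlier in the section. Conceptually the whole point is that the dimensional term in the curvature-dimension inequality upgrades the spectral-gap decay from $e^{-2\rho t}$ to $e^{-\frac{2n\rho}{n-1}t}$, i.e. the Lichnerowicz bound $\lambda_1\ge\frac{n\rho}{n-1}$.
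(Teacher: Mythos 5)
Your proof is correct, and the key estimate is the same one that drives the paper's argument: integrating the curvature–dimension inequality and using $\int_{\mathbb{M}}\Gamma_2(g)\,d\mu=\int_{\mathbb{M}}(Lg)^2\,d\mu$ to obtain $\int_{\mathbb{M}}(Lf)^2\,d\mu\ge\frac{n\rho}{n-1}\int_{\mathbb{M}}\Gamma(f)\,d\mu$. The two proofs diverge in how this estimate is converted into the Poincar\'e inequality. The paper recognizes it as a spectral gap bound for $-L$ and, having already identified the projection onto the $0$-eigenspace as $f\mapsto\frac{1}{\mu(\mathbb{M})}\int_{\mathbb{M}}f\,d\mu$, concludes by elementary spectral theory. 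You instead run the Bakry--\'Emery semigroup interpolation: setting $\Lambda(t)=\int(P_tf)^2$, $I(t)=\int\Gamma(P_tf)$, $J(t)=\int(LP_tf)^2$, you get $\Lambda'=-2I$, $I'=-2J$, then $J\ge\frac{n\rho}{n-1}I$ (which is the same inequality applied along the flow), so $I$ decays exponentially with the Lichnerowicz rate, and integrating $\Lambda'=-2I$ from $0$ to $\infty$ together with the convergence-to-equilibrium lemma gives the result. Both are valid and both rely on the same preliminary facts ($\mu(\mathbb{M})<\infty$, identification of $P_\infty f$, integrability of $\Gamma_2(P_tf)$). The paper's route is shorter given that the spectral projection has already been computed; yours is more constructive, avoids explicit appeal to the spectral decomposition beyond what defines $P_t$, and incidentally reproves the exponential decay $I(t)\le e^{-\frac{2n\rho}{n-1}t}I(0)$ that the paper only records afterwards as a corollary of the spectral gap.
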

\begin{proof}
Let $f \in C_0^\infty(\mathbb{M})$. We have by assumption
\[
\Gamma_2(f,f) \ge  \frac{1}{n} (Lf)^2 + \rho \Gamma (f,f).
\]
Therefore, by integrating the latter inequality we obtain
\[
\int_{\mathbb{M}} \Gamma_2(f,f)  d\mu \ge  \frac{1}{n} (Lf)^2+ \rho \int_{\mathbb{M}} \Gamma (f,f) d\mu.
\]
But we have
\[
\int_{\mathbb{M}} \Gamma_2(f,f)  d\mu=-\int_{\mathbb{M}} \Gamma(f,Lf)  d\mu=\int_{\mathbb{M}} (Lf)^2  d\mu.
\]
Therefore we obtain
\[
\int_{\mathbb{M}} (Lf)^2  d\mu \ge \rho \int_{\mathbb{M}} \Gamma (f,f) d\mu=-\frac{n\rho}{n-1}  \int_{\mathbb{M}} fLf d\mu.
\]
By density, this last inequality is seen to hold for every function $f\in \mathcal{D}(L)$. It means that the $L^2$ spectrum of $-L$ lies in $\{0\} \cup \left[\frac{n\rho}{n-1} ,+\infty\right)$. Since from the previous proof the projection of $f$ onto the $0$-eigenspace is given by $\frac{1}{\mu(\mathbb{M})} \int_\bM f d\mu$, we deduce that
\[
 \int_\mathbb{M} \left( f-\frac{1}{\mu(\mathbb{M})} \int_\bM f d\mu\right)^2d\mu \le \frac{n-1}{n\rho} \int_{\mathbb{M}} \Gamma(f,f) d\mu,
 \]
 which is exactly Poincar\'e inequality
\end{proof}

As observed in the proof, the Poincar\'e inequality
\[
\int_{\mathbb{M}} f^2 d\mu \le \left( \int_{\mathbb{M}} f d\mu \right)^2 +\frac{n-1}{n\rho} \int_{\mathbb{M}} \Gamma(f,f) d\mu.
\]
is equivalent to the fact that  the $L^2$ spectrum of $-L$ lies in $\{0\} \cup \left[\frac{n\rho}{n-1} ,+\infty\right)$, or in other words that $-L$ has a spectral gap of size at least $\frac{n\rho}{n-1}$. This is Lichnerowicz estimate. It is sharp, because on the $n$-dimensional sphere it is known that $\rho=n-1$ and that the first non zero eigenvalue is exactly equal to $n$.

As a basic consequence of the spectral theorem and of the above spectral gap estimate, we also get  the rate  convergence to equilibrium in $L^2(\M,\mu)$ for $P_t$.

\begin{proposition}
Let $f \in L^2(\M,\mu)$, then for $t \ge 0$,
\[
\left\| P_tf -\int_\mathbb{M} f d\mu  \right\|^2_2 \le e^{-\frac{2n\rho}{n-1} t} \left\| f -\int_\mathbb{M} f d\mu  \right\|^2_2 .
\]
\end{proposition}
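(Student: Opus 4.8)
The plan is to obtain the estimate as an immediate consequence of the spectral theorem for $L$ combined with the spectral gap bound established in the proof of the Poincar\'e inequality above. First I would decompose $f$: set $\bar f = \frac{1}{\mu(\bM)}\int_\bM f\, d\mu$. As was shown above, $\mu(\bM) < +\infty$, the constant $\bar f$ is the orthogonal projection of $f$ onto the $0$-eigenspace of $-L$, and $P_t\bar f = \bar f$ for all $t$ since $L\bar f = 0$. Writing $g = f - \bar f$, we have $P_tf - \bar f = P_t g$ by linearity and $\|g\|_2 = \|f - \bar f\|_2$, so the problem reduces to showing $\|P_tg\|_2 \le e^{-\frac{n\rho}{n-1}t}\|g\|_2$.

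For this I would invoke the functional calculus. Let $-L = \int_0^{+\infty}\lambda\, dE_\lambda$ be the spectral decomposition of $-L$. Since $g$ is orthogonal to the kernel of $L$ and, by the Lichnerowicz estimate proved above, the $L^2$-spectrum of $-L$ is contained in $\{0\} \cup [\frac{n\rho}{n-1}, +\infty)$, the spectral measure $d\langle E_\lambda g, g\rangle$ is carried by $[\frac{n\rho}{n-1}, +\infty)$. Hence
\[
\|P_tg\|_2^2 = \int_{\frac{n\rho}{n-1}}^{+\infty} e^{-2\lambda t}\, d\langle E_\lambda g, g\rangle \le e^{-\frac{2n\rho}{n-1}t}\int_{\frac{n\rho}{n-1}}^{+\infty} d\langle E_\lambda g, g\rangle = e^{-\frac{2n\rho}{n-1}t}\,\|g\|_2^2,
\]
and combining with $P_tf - \bar f = P_tg$ finishes the proof.

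I do not expect any genuine obstacle: the argument is purely spectral-theoretic. The only points needing a word of care are the identification of the $\mu$-average $\bar f$ with the spectral projection onto the eigenvalue $0$ and the invariance of the two spectral subspaces under $P_t$ --- both follow from the functional calculus and were already used in the convergence-to-equilibrium lemma --- together with the harmless convention that the equilibrium constant is the $\mu$-average of $f$, consistent with the preceding results.
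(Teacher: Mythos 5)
Your proof is exactly the argument the paper intends: the paper states this proposition as ``a basic consequence of the spectral theorem and of the above spectral gap estimate'' and gives no further detail, and your spectral decomposition of $f - \bar f$ combined with the Lichnerowicz bound on the spectrum is precisely that consequence. The only thing you (correctly) repair is the paper's slight abuse of notation in writing $\int_\bM f\,d\mu$ rather than the $\mu$-average $\frac{1}{\mu(\bM)}\int_\bM f\,d\mu$.
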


As we have just seen, the convergence in $L^2$ of $P_t$  is connected and actually equivalent to the Poincar\'e inequality. 

We now turn to the so-called log-Sobolev inequality which is connected to the convergence in entropy for $P_t$. This inequality is much stronger (and more useful) than the Poincar\'e inequality. To simplify a little the expressions, we assume in the sequel that $\mu(\mathbb{M})=1$ (Otherwise, just replace $\mu$ by $\frac{\mu}{\mu(\mathbb{M})}$ in the following results).

\begin{theorem}
For $f \in \mathcal{D}(L)$, $f \ge 0$,
\[
 \int_{\mathbb{M}} f^2 \ln f^2 d\mu \le  \int_{\mathbb{M}} f^2 d\mu \ln \left( \int_{\mathbb{M}} f^2 d\mu \right) +\frac{2}{ \rho}  \int_{\mathbb{M}} \Gamma(f,f) d\mu.
 \]
\end{theorem}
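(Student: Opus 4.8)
The plan is to run the Bakry--\'Emery semigroup interpolation argument, feeding it the gradient bound $\sqrt{\Gamma(P_t h)}\le e^{-\rho t}P_t\sqrt{\Gamma(h)}$ proved in the preceding proposition. Write $g=f^2$, so $\Gamma(g)=\Gamma(f^2)=4f^2\Gamma(f)$ and hence $\Gamma(g)/g=4\Gamma(f)$. Since both sides of the claimed inequality are unchanged after subtracting the elementary identity corresponding to $g\mapsto\lambda g$, we may normalise $\int_{\mathbb{M}}g\,d\mu=1$, so that the inequality to be proved becomes $\int_{\mathbb{M}}g\ln g\,d\mu\le\frac{1}{2\rho}\int_{\mathbb{M}}\frac{\Gamma(g)}{g}\,d\mu$. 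By a routine approximation (apply the statement to $g+\varepsilon$, which has the same carr\'e du champ, and let $\varepsilon\downarrow 0$, using monotone convergence and $\Gamma(f)\in L^1$ for $f\in\mathcal D(L)$) it suffices to handle the case where $g$ is smooth, bounded, and bounded below by a positive constant. In that case sub-Markovianity together with the stochastic completeness $P_t1=1$ (Theorem~\ref{T:sc}) keeps $P_tg$ between two positive constants, all integrals below are finite, $\mu$ is invariant ($\int_{\mathbb{M}}P_t\varphi\,d\mu=\int_{\mathbb{M}}\varphi\,d\mu$, by self-adjointness and $P_t1=1$), and $P_tg\to\int_{\mathbb{M}}g\,d\mu=1$ pointwise and in $L^2$ by the convergence-to-equilibrium lemma.

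The heart of the proof is an entropy dissipation identity. Set $\Lambda(t)=\int_{\mathbb{M}}(P_tg)\ln(P_tg)\,d\mu$. Then $\Lambda(0)=\int_{\mathbb{M}}g\ln g\,d\mu$ and $\Lambda(t)\to 1\cdot\ln 1=0$ as $t\to\infty$, while, differentiating and integrating by parts,
\[
\Lambda'(t)=\int_{\mathbb{M}}(LP_tg)(1+\ln P_tg)\,d\mu=-\int_{\mathbb{M}}\Gamma(P_tg,\ln P_tg)\,d\mu=-\int_{\mathbb{M}}\frac{\Gamma(P_tg)}{P_tg}\,d\mu,
\]
where we used $\int_{\mathbb{M}}LP_tg\,d\mu=0$. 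Consequently
\[
\int_{\mathbb{M}}g\ln g\,d\mu=\Lambda(0)-\lim_{t\to\infty}\Lambda(t)=\int_0^{\infty}\int_{\mathbb{M}}\frac{\Gamma(P_tg)}{P_tg}\,d\mu\,dt .
\]

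It remains to estimate the integrand. Here I would combine the gradient bound with the Cauchy--Schwarz inequality for the positive kernel $P_t$: factoring $\sqrt{\Gamma(g)}=\sqrt{g}\cdot\sqrt{\Gamma(g)/g}$ gives $\bigl(P_t\sqrt{\Gamma(g)}\bigr)^2\le(P_tg)\,P_t\!\bigl(\Gamma(g)/g\bigr)$, whence
\[
\frac{\Gamma(P_tg)}{P_tg}\le e^{-2\rho t}\,\frac{\bigl(P_t\sqrt{\Gamma(g)}\bigr)^2}{P_tg}\le e^{-2\rho t}\,P_t\!\left(\frac{\Gamma(g)}{g}\right).
\]
Integrating over $\mathbb{M}$ and using the invariance of $\mu$ yields $\int_{\mathbb{M}}\frac{\Gamma(P_tg)}{P_tg}\,d\mu\le e^{-2\rho t}\int_{\mathbb{M}}\frac{\Gamma(g)}{g}\,d\mu$; substituting this into the identity above and evaluating $\int_0^{\infty}e^{-2\rho t}\,dt=\frac{1}{2\rho}$ gives $\int_{\mathbb{M}}g\ln g\,d\mu\le\frac{1}{2\rho}\int_{\mathbb{M}}\frac{\Gamma(g)}{g}\,d\mu$, which is the assertion after undoing $g=f^2$. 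The curvature hypothesis $\mathbf{Ric}\ge\rho$ enters only through the already-established gradient bound; the only genuinely delicate part is the integrability and approximation bookkeeping needed to justify differentiating $\Lambda$ and the limit $\varepsilon\downarrow 0$, and that is soft.
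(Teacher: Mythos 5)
Your proof is correct and follows essentially the same route as the paper: the substitution $g=f^2$ (equivalently, the paper's $\sqrt f$ reduction), the entropy-dissipation identity $\int g\ln g\,d\mu-\int g\,d\mu\,\ln\!\left(\int g\,d\mu\right)=\int_0^\infty\int\Gamma(P_tg)/(P_tg)\,d\mu\,dt$, the gradient bound $\sqrt{\Gamma(P_tg)}\le e^{-\rho t}P_t\sqrt{\Gamma(g)}$, and the Cauchy--Schwarz inequality $\left(P_t\sqrt{\Gamma(g)}\right)^2\le(P_tg)\,P_t\!\left(\Gamma(g)/g\right)$ to close the estimate. The only cosmetic differences are the normalisation $\int g\,d\mu=1$ and your more explicit bookkeeping of the approximation $g\mapsto g+\varepsilon$.
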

\begin{proof}
By considering $\sqrt{f}$ instead of $f$, it is enough to show that if $f$ is positive,
\[
 \int_{\mathbb{M}} f \ln f  d\mu \le  \int_{\mathbb{M}} f  d\mu \ln \left( \int_{\mathbb{M}} f d\mu \right) +\frac{1}{2 \rho}  \int_{\mathbb{M}} \frac{\Gamma(f,f)}{f} d\mu.
 \]
 We now have
 \begin{align*}
 \int_{\mathbb{M}} f \ln f  d\mu -  \int_{\mathbb{M}} f  d\mu \ln \left( \int_{\mathbb{M}} f d\mu \right)&= - \int_0^{+\infty} \frac{d}{dt} \int_{\mathbb{M}} P_t f \ln P_t f d\mu dt \\
  &=- \int_0^{+\infty}  \int_{\mathbb{M}} L P_t f \ln P_t f d\mu dt \\
  &= \int_0^{+\infty}  \int_{\mathbb{M}} \Gamma (P_t f,  \ln P_t f) d\mu dt \\
  &=\int_0^{+\infty}  \int_{\mathbb{M}} \frac{\Gamma (P_t f,   P_t f) }{P_t f} d\mu dt 
 \end{align*}
 Now, we know that
 \[
 \Gamma (P_t f,   P_t f) \le e^{-2\rho t} \left( P_t \sqrt{\Gamma(f,f)}\right)^2.
 \]
 And, from Cauchy-Schwarz inequality,
 \[
 (P_t \sqrt{\Gamma(f,f)})^2  \le P_t  \frac{\Gamma(f,f)}{f} P_t f.
 \]
 Therefore,
 \[
 \int_{\mathbb{M}} f \ln f  d\mu -  \int_{\mathbb{M}} f  d\mu \ln \left( \int_{\mathbb{M}} f d\mu \right) \le \int_0^{+\infty} e^{-2\rho t} dt \int_{\mathbb{M}} \frac{\Gamma(f,f)}{f} d\mu,
 \]
 which is the required inequality.
\end{proof}

We finally prove the entropic convergence of $P_t$.

\begin{theorem}
Let $f \in L^2(\M,\mu)$, $f \ge 0$. For $t \ge 0$,
\[
  \int_{\mathbb{M}} P_t f \ln P_t  f  d\mu - \int_{\mathbb{M}} P_t f  d\mu \ln \left( \int_{\mathbb{M}} P_t f d\mu\right) \le e^{-2\rho t}  \left( \int_{\mathbb{M}} f \ln f  d\mu-  \int_{\mathbb{M}} f  d\mu \ln \left( \int_{\mathbb{M}} f d\mu\right)\right).
\]
\end{theorem}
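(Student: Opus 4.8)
The plan is to run a Gr\"onwall argument on the entropy functional along the heat flow, feeding in the logarithmic Sobolev inequality just proved. By the $1$-homogeneity of the relative entropy and of $f\mapsto P_tf$, I may assume $\int_{\mathbb{M}} f\,d\mu=1$ (the case $f\equiv 0$ being trivial), and I may assume $f$ is bounded and strictly positive; the general case will be recovered at the end by approximation, noting that if $\int_{\mathbb{M}}f\ln f\,d\mu=+\infty$ there is nothing to prove. Since $\mu(\mathbb{M})=1$ and $P_t$ is symmetric with $P_t1=1$ (Theorem \ref{T:sc}), one has $\int_{\mathbb{M}}P_tf\,d\mu=1$ for every $t\ge 0$, so the quantity to control is simply
\[
\Lambda(t)=\int_{\mathbb{M}}P_tf\ln P_tf\,d\mu,
\]
and the claim becomes $\Lambda(t)\le e^{-2\rho t}\Lambda(0)$.

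First I would establish that $\Lambda$ is differentiable on $(0,\infty)$ with
\[
\Lambda'(t)=\int_{\mathbb{M}}(LP_tf)(1+\ln P_tf)\,d\mu=\int_{\mathbb{M}}(LP_tf)\ln P_tf\,d\mu=-\int_{\mathbb{M}}\Gamma(P_tf,\ln P_tf)\,d\mu=-\int_{\mathbb{M}}\frac{\Gamma(P_tf)}{P_tf}\,d\mu.
\]
The second equality uses $\int_{\mathbb{M}}LP_tf\,d\mu=\tfrac{d}{dt}\int_{\mathbb{M}}P_tf\,d\mu=0$; the third is an integration by parts carried out through the localizing sequence $h_n\in C_0^\infty(\mathbb{M})$ with $\|\Gamma(h_n)\|_\infty\to 0$, exactly in the style of the proof of the parabolic comparison theorem; and the last step is the chain rule $\Gamma(P_tf,\ln P_tf)=\Gamma(P_tf)/P_tf$, legitimate because $P_tf$ is smooth and bounded away from $0$ for $t>0$ (positivity of the heat kernel, or sub-Markovianity when $f$ is bounded below).

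Next, for fixed $t>0$ I apply the logarithmic Sobolev inequality of the previous theorem to $u=P_tf\in\mathcal{D}(L)$, $u>0$, in the form
\[
\int_{\mathbb{M}}u\ln u\,d\mu-\Big(\int_{\mathbb{M}}u\,d\mu\Big)\ln\Big(\int_{\mathbb{M}}u\,d\mu\Big)\le\frac{1}{2\rho}\int_{\mathbb{M}}\frac{\Gamma(u)}{u}\,d\mu.
\]
Combined with $\int_{\mathbb{M}}P_tf\,d\mu=1$ and the formula for $\Lambda'$, this gives $\Lambda(t)\le-\tfrac{1}{2\rho}\Lambda'(t)$, that is $\Lambda'(t)\le-2\rho\,\Lambda(t)$. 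Integrating this differential inequality yields $\Lambda(t)\le e^{-2\rho t}\Lambda(s)$ for $0<s\le t$; letting $s\downarrow 0$, using $P_sf\to f$ in $L^2(\mathbb{M},\mu)$ and the boundedness of $x\mapsto x\ln x$ on the range of $f$ to pass to the limit $\Lambda(s)\to\Lambda(0)$, I obtain the inequality for bounded positive $f$. A routine truncation of a general $f\in L^2(\mathbb{M},\mu)$, $f\ge 0$ with finite entropy, together with Fatou's lemma applied to the left-hand side, then gives the statement in full generality.

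The main obstacle is purely analytic rather than algebraic: one must justify the differentiation of $\Lambda$ and the integration by parts producing $\Lambda'(t)=-\int_{\mathbb{M}}\Gamma(P_tf)/P_tf\,d\mu$, the delicate point being that $\ln P_tf$ is unbounded wherever $P_tf$ is small, so all the integrals in play must be shown to be finite. The cleanest way to circumvent this is to carry out the whole argument first for $f$ satisfying $\varepsilon\le f\le\varepsilon^{-1}$, for which sub-Markovianity forces $\varepsilon\le P_tf\le\varepsilon^{-1}$, making $\ln P_tf$ bounded and letting the spectral theorem supply all the required regularity and integrability; the truncation is removed only at the very end.
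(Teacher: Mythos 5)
Your proof is correct and follows essentially the same route as the paper: differentiate the entropy $\Lambda(t)=\int_{\mathbb{M}}P_tf\ln P_tf\,d\mu$ along the heat flow, integrate by parts to identify $\Lambda'(t)=-\int_{\mathbb{M}}\Gamma(P_tf)/P_tf\,d\mu$, invoke the log-Sobolev inequality in the self-improved form $\int u\ln u\,d\mu-\int u\,d\mu\ln\int u\,d\mu\le\tfrac{1}{2\rho}\int\Gamma(u)/u\,d\mu$, and close by Gronwall. The only difference is that you flag and handle the analytic justifications (two-sided truncation $\varepsilon\le f\le\varepsilon^{-1}$, differentiability of $\Lambda$, removal of the truncation) which the paper's proof leaves implicit.
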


\begin{proof}
Let us assume $\int_{\mathbb{M}} f d\mu=1$, otherwise we use the following argument with $\frac{f}{\int_{\mathbb{R}^n} f d\mu}$ and consider the functional 
\[
\Phi(t)= \int_{\mathbb{M}} P_t f \ln P_t  f  d\mu,
\]
which by differentiation gives
\[
\Phi'(t)= \int_{\mathbb{M}} LP_t f \ln P_t  f  d\mu=-\int_{\mathbb{M}} \frac{ \Gamma(P_t f)}{P_t f} d\mu.
\]
Using now the log-Sobolev inequality, we obtain
\[
\Phi'(t)\le -2\rho \Phi(t).
\]
The Gronwall's differential inequality implies then:
\[
\Phi(t) \le e^{-2\rho t} \Phi(0),
\]
that is
\[
 \int_{\mathbb{M}} P_t f \ln P_t  f  d\mu \le e^{-2\rho t} \int_{\mathbb{M}} f \ln f d\mu.
\]
\end{proof}

\subsection{The Li-Yau inequality}

Let $\mathbb{M}$ be a complete $n$-dimensional Riemannian manifold and, as usual, denote by $L$ its Laplace-Beltrami operator. Throughout the Section, we will assume again  that the Ricci curvature of $\mathbb{M}$ is bounded from below by $\rho \in \mathbb{R}$.  The Section is devoted to the proof of a beautiful inequality due to P. Li and S.T. Yau. 

\

Henceforth, we will indicate $C_b^\infty(\mathbb M) = C^\infty(\mathbb M)\cap L^\infty(\mathbb M)$.

\begin{lemma}\label{L:derivatives}
Let $f \in C^\infty_b(\mathbb{M})$, $f > 0$ and $T>0$, and consider the function
\[
\phi (x,t)=(P_{T-t} f) (x)\Gamma (\ln P_{T-t}f)(x),
\]
which is defined on $\mathbb{M} \times [0,T]$.  We have \[
L\phi+\frac{\partial \phi}{\partial t} =2 (P_{T-t} f) \Gamma_2 (\ln P_{T-t}f). 
\]
\end{lemma}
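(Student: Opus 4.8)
The plan is to compute $L\phi + \partial_t\phi$ directly by the product rule, writing $\phi = u\cdot \Gamma(\ln u)$ with $u = P_{T-t}f$, and then to recognize the result as a total derivative in the semigroup variable up to the $\Gamma_2$ term. Throughout I will use the heat equation $\partial_t u = -L u$ (note the sign, since $u = P_{T-t}f$), the fact that $L$ is a diffusion operator so it obeys the chain and Leibniz rules through $\Gamma$, and the elementary identities $L(vw) = vLw + wLv + 2\Gamma(v,w)$ and $\partial_t \Gamma(v,w) = 2\Gamma(\partial_t v, w)$ when one argument is constant in $t$. The function $u$ is smooth and strictly positive on $\mathbb{M}\times[0,T]$ (since $f>0$ and $P_{T-t}$ is positivity-improving via the heat kernel), so $\ln u$ is a legitimate smooth function and all manipulations are justified.

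First I would set $v = \ln u$, so $\phi = u\,\Gamma(v)$, and observe the two basic differentiation rules: $\partial_t v = \partial_t u / u = -Lu/u = -Lv - \Gamma(v)$ (using $Lu = u Lv + u\Gamma(v)$, i.e. $L(\ln u) = Lu/u - \Gamma(\ln u)$, which is just the chain rule for the diffusion operator). Then I would compute $\partial_t \phi = (\partial_t u)\Gamma(v) + u\,\partial_t\Gamma(v) = (-Lu)\Gamma(v) + 2u\,\Gamma(\partial_t v, v)$. Next I would expand $L\phi = L(u\Gamma(v)) = (Lu)\Gamma(v) + u\,L\Gamma(v) + 2\Gamma(u,\Gamma(v))$. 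Adding these, the terms $\pm (Lu)\Gamma(v)$ cancel, leaving
\[
L\phi + \partial_t\phi = u\,L\Gamma(v) + 2\Gamma(u,\Gamma(v)) + 2u\,\Gamma(\partial_t v, v).
\]

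The final step is to massage this into $2u\,\Gamma_2(v)$. Recall $\Gamma_2(v) = \frac12 L\Gamma(v) - \Gamma(v, Lv)$. So I need to show $u\,L\Gamma(v) + 2\Gamma(u,\Gamma(v)) + 2u\,\Gamma(\partial_t v, v) = u\,L\Gamma(v) - 2u\,\Gamma(v,Lv)$, i.e. that $2\Gamma(u,\Gamma(v)) + 2u\,\Gamma(\partial_t v,v) = -2u\,\Gamma(v,Lv)$. Substituting $\partial_t v = -Lv - \Gamma(v)$ makes the right-hand side of that identity appear, and what remains to check is $\Gamma(u,\Gamma(v)) = u\,\Gamma(\Gamma(v), v)$, equivalently $\Gamma(u,\cdot) = u\,\Gamma(v,\cdot)$ as derivations — but that is exactly the statement $\nabla u = u\,\nabla v = u\,\nabla \ln u$, which is the chain rule again since $v = \ln u$. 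Collecting everything gives $L\phi + \partial_t\phi = 2u\,\Gamma_2(v) = 2(P_{T-t}f)\,\Gamma_2(\ln P_{T-t}f)$, as claimed.

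The only real subtlety — not an obstacle so much as a point to be careful about — is bookkeeping the sign in $\partial_t u = -Lu$ and correctly applying the first-order Leibniz rule $\Gamma(u, \Gamma(v)) = u\,\Gamma(\ln u, \Gamma(v))$; everything else is routine diffusion-operator calculus (chain rule through $\Gamma$, the identity $\partial_t\Gamma(v,w)=2\Gamma(\dot v,w)$ for $w$ independent of $t$, and the definition of $\Gamma_2$). I would present the computation compactly, doing the cancellation of $(Lu)\Gamma(v)$ first and then the single chain-rule substitution $\partial_t(\ln u) = -L\ln u - \Gamma(\ln u)$, rather than grinding through coordinates.
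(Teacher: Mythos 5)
Your proof is correct and follows essentially the same route as the paper: expand $L\phi$ and $\partial_t\phi$ by Leibniz, cancel the $(Lu)\Gamma(\ln u)$ terms, substitute the chain-rule identity for $L(\ln u)$ (equivalently $\partial_t\ln u$), and absorb the remaining cross term via $\Gamma(u,\,\cdot\,)=u\,\Gamma(\ln u,\,\cdot\,)$. (One small slip in the preamble: the rule is $\partial_t\Gamma(v,w)=\Gamma(\partial_tv,w)$ when $w$ is $t$-independent, while the factor $2$ you actually use comes from $\partial_t\Gamma(v,v)=2\Gamma(\partial_tv,v)$; the computation itself is right.)
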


\begin{proof} Let for simplicity $g(x,t) = P_{T-t} f(x)$. A simple computation gives
\[
\frac{\p \phi}{\p t} = g_t \Gamma(\ln g) + 2 g \Gamma \left(\ln g,\frac{g_t}{g}\right).
\]
On the other hand,
\[
L\phi = Lg \Gamma(\ln g) + g L \Gamma(\ln g) + 2 \Gamma(g,\Gamma(\ln g)).
\]
Combining these equations we obtain
\[
L\phi + \frac{\p \phi}{\p t} = g L\Gamma(\ln g) +  2\Gamma(g,\Gamma(\ln g)) + 2 g \Gamma \left(\ln g,\frac{g_t}{g}\right).
\]
From \eqref{gamma2} we see that
\begin{align*}
2 g \Gamma_2(\ln g) & = g (L \Gamma(\ln g) - 2 \Gamma(\ln g,L(\ln g)))
\\
& = g L\Gamma(\ln g) - 2 g \Gamma(\ln g,L(\ln g)).
\end{align*}
Observing that
\[
L(\ln g) = - \frac{\Gamma(g)}{g^2} - \frac{g_t}{g},
\]
we conclude   that
\[
L\phi+\frac{\partial \phi}{\partial t} =2 (P_{T-t} f) \Gamma_2 (\ln P_{T-t}f). 
\]
\end{proof}

We now turn to an important variational inequality that shall extensively be used throughout these sections.  Given a function $f\in C^\infty_b(\mathbb{M})$ and $\ve>0$, we let $f_\varepsilon=f+\varepsilon$. 

Suppose that $T>0$, and $x\in \mathbb{M}$ be given. For a function $f\in  C^\infty_b(\mathbb M)$ with $f \ge 0$ we define for $t\in [0,T]$,
\[
\Phi (t)=P_t \left( (P_{T-t} f_\varepsilon) \Gamma (\ln P_{T-t}f_\varepsilon) \right).
\]

\begin{theorem}\label{T:source}
 Let $a \in C^1([0,T],[0,\infty))$ and  $\gamma \in C((0,T),\R)$.  Given $f \in C_0^\infty(\mathbb M)$, with $f\ge 0$, we have
\begin{align*}
  & a(T) P_T \left(  f_\varepsilon \Gamma (\ln f_\varepsilon) \right) -a(0)(P_{T} f_\varepsilon) \Gamma (\ln P_{T}f_\varepsilon)
 \\
 \ge &  \int_0^T \left(a'+2\rho a -\frac{4a\gamma}{n} \right)\Phi (s)  ds +\left(\frac{4}{n}\int_0^T a\gamma ds\right)LP_{T} f_\varepsilon -\left(\frac{2 }{n}\int_0^T a\gamma^2ds\right)P_T f_\varepsilon.
\end{align*}
\end{theorem}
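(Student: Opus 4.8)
The plan is to introduce $g(x,t)=P_{T-t}f_\varepsilon(x)$ and the function $\phi(x,t)=g\,\Gamma(\ln g)$, which is precisely the one treated in Lemma \ref{L:derivatives} applied with $f_\varepsilon\in C_b^\infty(\mathbb M)$ (legitimate since $f_\varepsilon\ge\varepsilon>0$), and to study $\Phi(t)=P_t(\phi(\cdot,t))$. First I would note that by stochastic completeness (Theorem \ref{T:sc}) one has $g=P_{T-t}f+\varepsilon\ge\varepsilon$, so $\ln g$ is smooth and bounded below; together with the $L^2$-gradient bound of the previous subsection and the integrability lemma for $\Gamma_2(P_tf)$ this makes $t\mapsto\Phi(t)$ of class $C^1$ on $[0,T]$ and allows one to differentiate under $P_t$ and commute $L$ with $P_t$, giving
\[
\Phi'(t)=P_t\!\left(L\phi(\cdot,t)+\frac{\partial\phi}{\partial t}(\cdot,t)\right)=2\,P_t\!\left(g\,\Gamma_2(\ln g)\right),
\]
the last equality being exactly Lemma \ref{L:derivatives}. (Alternatively one can feed the pointwise lower bound derived below directly into the parabolic comparison theorem of the preceding subsection and avoid differentiating $\Phi$.)

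Next I would convert the curvature-dimension inequality into a bound that is linear in $L\ln g$, which is where the parameter $\gamma$ enters. Since $\mathbf{Ric}\ge\rho$ is equivalent to $\Gamma_2(h)\ge\frac1n(Lh)^2+\rho\,\Gamma(h)$ for all $h$, taking $h=\ln g$ and using $(L\ln g)^2\ge 2\gamma\,L\ln g-\gamma^2$ (valid for every real $\gamma$, in particular $\gamma=\gamma(t)$) yields
\[
2g\,\Gamma_2(\ln g)\ge \frac{4\gamma}{n}\,g\,L\ln g-\frac{2\gamma^2}{n}\,g+2\rho\,g\,\Gamma(\ln g).
\]
Now I substitute the identities recorded inside the proof of Lemma \ref{L:derivatives}: because $\partial_t g=-Lg$ one has $L\ln g=-\Gamma(\ln g)+\frac{Lg}{g}$, hence $g\,L\ln g=-\phi+Lg$, and the previous inequality becomes
\[
2g\,\Gamma_2(\ln g)\ge\left(2\rho-\frac{4\gamma}{n}\right)\phi+\frac{4\gamma}{n}\,Lg-\frac{2\gamma^2}{n}\,g.
\]
Applying $P_t$ and using $P_tLg=L P_tP_{T-t}f_\varepsilon=LP_Tf_\varepsilon$ together with $P_tg=P_Tf_\varepsilon$, this turns into the differential inequality
\[
\Phi'(t)\ge\left(2\rho-\frac{4\gamma(t)}{n}\right)\Phi(t)+\frac{4\gamma(t)}{n}\,LP_Tf_\varepsilon-\frac{2\gamma(t)^2}{n}\,P_Tf_\varepsilon.
\]

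To finish I would multiply by $a(t)\ge0$ and add $a'(t)\Phi(t)$, obtaining
\[
\frac{d}{dt}\big(a(t)\Phi(t)\big)\ge\left(a'+2\rho a-\frac{4a\gamma}{n}\right)\Phi(t)+\frac{4a\gamma}{n}\,LP_Tf_\varepsilon-\frac{2a\gamma^2}{n}\,P_Tf_\varepsilon,
\]
and then integrate over $[0,T]$. Since $LP_Tf_\varepsilon$ and $P_Tf_\varepsilon$ are independent of the integration variable, they factor out against $\int_0^Ta\gamma\,ds$ and $\int_0^Ta\gamma^2\,ds$, while $a(T)\Phi(T)=a(T)P_T(f_\varepsilon\Gamma(\ln f_\varepsilon))$ and $a(0)\Phi(0)=a(0)(P_Tf_\varepsilon)\Gamma(\ln P_Tf_\varepsilon)$ because $P_0=\mathbf{Id}$; this is exactly the asserted inequality. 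The genuinely delicate step is the one in the first paragraph — justifying that $\Phi\in C^1$ and that differentiation and $L$ may be pushed through $P_t$ — for which the hypotheses $f\in C_0^\infty$, the strict positivity $g\ge\varepsilon$, the gradient bound and the $L^1$-control of $\Gamma_2(P_tf)$ are exactly what is needed; one should also record the (harmless) implicit requirement that $\int_0^Ta|\gamma|\,ds$ and $\int_0^Ta\gamma^2\,ds$ be finite, so that the integrals on the right-hand side are meaningful.
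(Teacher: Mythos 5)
Your proof is correct and follows essentially the same route as the paper: both arguments push $\Gamma_2$ through the curvature-dimension inequality, linearize $(L\ln g)^2$ via $2\gamma L\ln g-\gamma^2$, rewrite $gL\ln g=Lg-\phi$, and then integrate the resulting differential inequality in $t$. The only cosmetic difference is that the paper absorbs the weight $a(t)$ into the test function $\phi$ and invokes the parabolic comparison proposition, whereas you differentiate $\Phi(t)=P_t(\phi(\cdot,t))$ directly and then multiply by $a(t)$ — a repackaging you yourself flag as equivalent.
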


\begin{proof}
Let $f \in C^\infty(\mathbb{M})$, $ f \ge 0$.
Consider the function
\[
\phi (x,t)=a(t)(P_{T-t} f) (x)\Gamma (\ln P_{T-t}f)(x)+b(t)(P_{T-t} f) (x) \Gamma^Z (\ln P_{T-t}f)(x).
\]
Applying the previous lemma and the curvature-dimension inequality, we obtain
\begin{align*}
  L\phi+\frac{\partial \phi}{\partial t} &
=a' (P_{T-t} f) \Gamma (\ln P_{T-t}f)+2a (P_{T-t} f) \Gamma_2 (\ln P_{T-t}f) \\
&\ge  \left(a'+2\rho a \right)(P_{T-t} f) \Gamma (\ln P_{T-t}f)+\frac{2a}{n}  (P_{T-t} f) (L(\ln P_{T-t} f))^2. 
\end{align*}
But, we have
\[
(L(\ln P_{T-t} f))^2 \ge 2\gamma L(\ln P_{T-t}f) -\gamma^2,
\]
and
\[
 L(\ln P_{T-t}f)=\frac{LP_{T-t}f}{P_{T-t}f} -\Gamma(\ln P_{T-t} f ).
 \]
Therefore we obtain,
 \begin{align*}
L\phi+\frac{\partial \phi}{\partial t}  & \ge \left(a'+2\rho a -\frac{4a\gamma}{n} \right) (P_{T-t} f) \Gamma (\ln P_{T-t}f) +\frac{4a\gamma}{n} LP_{T-t} f - \frac{2a\gamma^2}{n} P_{T-t} f.
\end{align*}
We then easily reach the conclusion by using the parabolic comparison theorem in $L^\infty$.
\end{proof}

As a first application the previous result, we derive a family of Li-Yau type inequalities. We choose the function $\gamma$ in a such a way that
\[
a' -\frac{4a\gamma}{n} +2\rho a=0.
\]
That is
\[
\gamma=\frac{n}{4} \left( \frac{a'}{a}+2\rho \right).
\]
Integrating  the inequality from $0$ to $T$, and denoting $V=\sqrt{a}$, we obtain the following result.

\begin{proposition}
Let $V:[0,T]\rightarrow \mathbb{R}^+$ be a smooth function such that
\[
V(0)=1, V(T)=0.
\]
We have
\begin{align}\label{family Li Yau}
\Gamma (\ln P_T f) & \le \left( 1-2\rho\int_0^T V^2(s) ds\right) \frac{L P_Tf}{P_T f} +\frac{n}{2} \left(  \int_0^T V'(s)^2 ds +\rho^2  \int_0^T V(s)^2 ds -\rho \right).
\notag\end{align}
\end{proposition}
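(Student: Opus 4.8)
The plan is to obtain the family \eqref{family Li Yau} by specializing Theorem \ref{T:source} to a one-parameter family of weights and then reading off the time integrals from the boundary data of $V$. Given a smooth $V:[0,T]\to\mathbb{R}^+$ with $V(0)=1$ and $V(T)=0$, I would set $a=V^2$, so that $a\in C^1([0,T],[0,\infty))$ with $a(0)=1$ and $a(T)=0$, and choose $\gamma$ exactly so that the coefficient of $\Phi$ in Theorem \ref{T:source} vanishes, namely
\[
\gamma = \frac{n}{4}\left(\frac{a'}{a}+2\rho\right) = \frac{n}{2}\left(\frac{V'}{V}+\rho\right),
\]
which is continuous on $(0,T)$ (the only possible singularity of $\gamma$ is at the endpoint $t=T$, where $V$ vanishes, and that point lies outside the open interval). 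Although $\gamma$ itself may blow up as $t\to T$, the combinations that actually enter the conclusion of Theorem \ref{T:source}, namely $a\gamma = \frac{n}{2}(VV'+\rho V^2)$ and $a\gamma^2 = \frac{n^2}{4}(V'+\rho V)^2$, are bounded on $[0,T]$ since $V$ is smooth, so the integrals $\int_0^T a\gamma\,ds$ and $\int_0^T a\gamma^2\,ds$ are finite and Theorem \ref{T:source} is applicable for $f\in C_0^\infty(\mathbb M)$, $f\ge 0$.

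With this choice the $\Phi$-term drops out identically, and since $a(T)=0$ and $a(0)=1$, Theorem \ref{T:source} reduces to
\[
-(P_T f_\varepsilon)\,\Gamma(\ln P_T f_\varepsilon) \ge \left(\frac{4}{n}\int_0^T a\gamma\,ds\right)LP_T f_\varepsilon - \left(\frac{2}{n}\int_0^T a\gamma^2\,ds\right)P_T f_\varepsilon.
\]
Next I would evaluate the two constants by integration by parts using $2VV'=(V^2)'$ and $V(0)=1$, $V(T)=0$:
\[
\frac{4}{n}\int_0^T a\gamma\,ds = 2\int_0^T (VV'+\rho V^2)\,ds = 2\rho\int_0^T V^2\,ds - 1,
\]
\[
\frac{2}{n}\int_0^T a\gamma^2\,ds = \frac{n}{2}\int_0^T\!\left(V'^2 + 2\rho VV' + \rho^2 V^2\right)ds = \frac{n}{2}\left(\int_0^T V'^2\,ds + \rho^2\int_0^T V^2\,ds - \rho\right).
\]
Substituting these back, multiplying through by $-1$, and dividing by $P_T f_\varepsilon>0$, one gets
\[
\Gamma(\ln P_T f_\varepsilon) \le \left(1-2\rho\int_0^T V^2\,ds\right)\frac{LP_T f_\varepsilon}{P_T f_\varepsilon} + \frac{n}{2}\left(\int_0^T V'^2\,ds + \rho^2\int_0^T V^2\,ds - \rho\right).
\]

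Finally I would let $\varepsilon\to 0$. Here $f_\varepsilon=f+\varepsilon$ and, by the stochastic completeness of Theorem \ref{T:sc}, $P_T f_\varepsilon = P_T f + \varepsilon$; since $f\in C_0^\infty(\mathbb M)$, $f\ge 0$, $f\not\equiv 0$, the strict positivity of the heat kernel gives $P_T f>0$ on all of $\mathbb M$, so $LP_T f_\varepsilon = LP_T f$ and $\Gamma(\ln P_T f_\varepsilon) = \frac{\Gamma(P_T f)}{(P_T f+\varepsilon)^2}\to\Gamma(\ln P_T f)$ pointwise, yielding \eqref{family Li Yau}. The proof carries no deep obstacle: essentially everything is the bookkeeping of the two elementary integrals above. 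The only points that require genuine care are the admissibility of $\gamma$ near $t=T$ (handled by observing that only the bounded quantities $a\gamma$ and $a\gamma^2$ appear) and the strict positivity of $P_T f$, which is used both to divide by $P_T f_\varepsilon$ and to pass to the limit $\varepsilon\to 0$.
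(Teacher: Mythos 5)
Your proposal is correct and follows exactly the paper's route: specialize Theorem \ref{T:source} with $a=V^2$ and $\gamma=\frac{n}{4}\left(\frac{a'}{a}+2\rho\right)$ so that the $\Phi$-term vanishes, then evaluate $\frac{4}{n}\int_0^T a\gamma\,ds$ and $\frac{2}{n}\int_0^T a\gamma^2\,ds$ by integration by parts using $V(0)=1$, $V(T)=0$. Your write-up supplies the bookkeeping the paper leaves implicit, and your remark that $\gamma$ may blow up at $t=T$ but only the bounded products $a\gamma$ and $a\gamma^2$ enter the conclusion is a legitimate and useful observation.
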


A first family of interesting inequalities may be obtained with the choice
\[
V(t)=\left( 1-\frac{t}{T}\right)^\alpha, \alpha>\frac{1}{2}.
\]
In this case we have
\[
 \int_0^T V(s)^2 ds=\frac{T}{2\alpha+1}
 \]
 and
\[
 \int_0^T V'(s)^2 ds=\frac{\alpha^2}{(2\alpha-1)T},
 \]
In particular, we therefore proved the celebrated Li-Yau inequality:
 
 \begin{theorem}[Li-Yau inequality]
 If $f \in C_0^\infty(\mathbb{M})$, $f \ge 0$. For $\alpha >\frac{1}{2}$ and $T > 0$, we have
 \begin{equation*}
 \Gamma (\ln P_T f) \le \left( 1-\frac{2\rho T}{2\alpha+1}\right) \frac{L P_Tf}{P_T f} +\frac{n}{2} \left(   \frac{\alpha^2}{(2\alpha-1)T}+\frac{\rho^2 T}{2\alpha+1} -\rho \right).
 \end{equation*}
 \end{theorem}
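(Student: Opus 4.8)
The plan is to read off this statement as the special case of the family inequality in the Proposition immediately above, applied to the profile $V(t)=\left(1-\tfrac{t}{T}\right)^{\alpha}$. The first thing I would check is that this $V$ is admissible. It satisfies $V(0)=1$, $V(T)=0$, and is smooth on $[0,T)$; the only delicate point is the endpoint $t=T$ when $\tfrac12<\alpha<1$, where $V'(t)=-\tfrac{\alpha}{T}\left(1-\tfrac{t}{T}\right)^{\alpha-1}$ is unbounded. However, the family inequality was obtained from Theorem~\ref{T:source} with $a=V^{2}$ and the critical choice $\gamma=\tfrac{n}{2}\left(\tfrac{V'}{V}+\rho\right)$, and the hypotheses there only ask for $a\in C^{1}([0,T],[0,\infty))$, $\gamma\in C((0,T),\mathbb{R})$, and finiteness of $\int_{0}^{T}a\gamma\,ds$ and $\int_{0}^{T}a\gamma^{2}\,ds$. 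Here $a(t)=\left(1-\tfrac{t}{T}\right)^{2\alpha}\in C^{1}([0,T])$ since $2\alpha-1>0$, $\gamma$ is continuous on $(0,T)$, and the integrand $a\gamma^{2}$ behaves like $(T-t)^{2\alpha-2}$ near $t=T$, hence is integrable exactly when $\alpha>\tfrac12$ --- which is the standing assumption.

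With admissibility in hand, I would simply evaluate the two integrals (already recorded just before the statement):
\[
\int_{0}^{T}V(s)^{2}\,ds=\int_{0}^{T}\left(1-\tfrac{s}{T}\right)^{2\alpha}ds=\frac{T}{2\alpha+1},\qquad \int_{0}^{T}V'(s)^{2}\,ds=\frac{\alpha^{2}}{T^{2}}\int_{0}^{T}\left(1-\tfrac{s}{T}\right)^{2\alpha-2}ds=\frac{\alpha^{2}}{(2\alpha-1)T},
\]
and substitute them into
\[
\Gamma(\ln P_{T}f)\le\left(1-2\rho\int_{0}^{T}V^{2}ds\right)\frac{LP_{T}f}{P_{T}f}+\frac{n}{2}\left(\int_{0}^{T}(V')^{2}ds+\rho^{2}\int_{0}^{T}V^{2}ds-\rho\right).
\]
This turns the coefficient of $\tfrac{LP_{T}f}{P_{T}f}$ into $1-\tfrac{2\rho T}{2\alpha+1}$ and the additive constant into $\tfrac{n}{2}\left(\tfrac{\alpha^{2}}{(2\alpha-1)T}+\tfrac{\rho^{2}T}{2\alpha+1}-\rho\right)$, which is precisely the asserted inequality.

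The upshot is that there is no real obstacle here: the content lies entirely in the earlier parabolic comparison machinery, and what remains is constant-chasing together with the mild boundary check at $t=T$ that explains why $\alpha>\tfrac12$ is the natural range --- at $\alpha=\tfrac12$ the integral $\int_{0}^{T}(V')^{2}ds$ diverges and the bound degenerates. Should one wish to invoke the Proposition verbatim, which is stated for smooth $V$, an equivalent route is to apply it with a mollified profile $V_{\delta}$ that coincides with $\left(1-\tfrac{t}{T}\right)^{\alpha}$ on $[0,T-\delta]$ and then let $\delta\to0$, using the continuity of $T\mapsto P_{T}f$ and of $T\mapsto\Gamma(\ln P_{T}f)$; but appealing directly to Theorem~\ref{T:source}, whose hypotheses already accommodate $\gamma\in C((0,T),\mathbb{R})$, is the cleanest.
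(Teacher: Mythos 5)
Your proposal is correct and takes the same route as the paper: specialize the preceding Proposition to $V(t)=\bigl(1-\tfrac{t}{T}\bigr)^{\alpha}$, compute the two integrals, and substitute. In fact you address a point the paper glosses over, namely that for $\tfrac12<\alpha<1$ this $V$ fails to be $C^1$ up to $t=T$ so the Proposition's literal hypothesis (``smooth $V$'') is not met; your observation that Theorem~\ref{T:source}'s hypotheses ($a=V^2\in C^1([0,T])$, $\gamma\in C((0,T))$, and finiteness of $\int_0^T a\gamma^2\,ds$, the last being exactly where $\alpha>\tfrac12$ enters) already cover this case is the right way to close that small gap.
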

 
In the case, $\rho=0$ and $\alpha=1$,  it reduces to the beautiful sharp inequality:
\begin{align}\label{LiYau}
\Gamma(\ln P_t f) \le \frac{  L P_t f }{P_t f }   + \frac{n}{2t},\ \ \ \ \ \ t>0 .
\end{align}

Although in the sequel,  we shall  firstl focus on the case $\rho=0$, let us presently briefly discuss the case $\rho >0$. 

Using the Li-Yau inequality  with $\alpha=3/2$ leads to the Bakry-Qian inequality:  
\[
 \frac{L P_tf}{P_t f}\le \frac{n \rho}{4},\ \ \ \ \ t \ge \frac{2}{\rho}.
 \]
 Also, by using 
 \[
 V(t)=\frac{e^{-\frac{\rho t}{3}} (e^{-\frac{2\rho t}{3}}-e^{-\frac{2\rho T}{3}})}{1-e^{-\frac{2\rho T}{3}}},
 \]
 we obtain the following inequality: 
 \[
\Gamma(\ln P_t f) \le e^{-\frac{2\rho t}{3}}  \frac{  L P_t f }{P_t f } +\frac{n\rho}{3} \frac{e^{-\frac{4\rho t}{3}}}{ 1-e^{-\frac{2\rho t}{3}}},\ \ \ \ \ t \ge 0.
 \]
\subsection{The parabolic Harnack inequality }

Let $\mathbb{M}$ be a complete $n$-dimensional Riemannian manifold and, as usual, denote by $L$ its Laplace-Beltrami operator. Throughout the section, we will assume that the Ricci curvature of $\mathbb{M}$ is bounded from below by $-K$ with $K \ge 0$.  Our purpose is to prove a first  important consequence of the Li-Yau  inequality: The parabolic Harnack inequality.

\begin{theorem}\label{T:harnack}
Let $f \in L^\infty(\mathbb{M})$, $f \ge 0$. For every $s \le t$ and $x,y \in \mathbb{M}$,
\begin{equation}\label{beauty}
P_s f(x) \le P_t f(y) \left(\frac{t}{s}\right)^{\frac{n}{2}} \exp\left(\frac{d(x,y)^2}{4(t-s)} +\frac{K d(x,y)^2}{6} +\frac{nK}{4}(t-s)\right).
\end{equation}
\end{theorem}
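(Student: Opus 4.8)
The plan is to derive the parabolic Harnack inequality \eqref{beauty} from the Li-Yau gradient estimate by integrating along a well-chosen path in space-time. Since the Ricci curvature is bounded below by $-K$ with $K \ge 0$, I would first record the version of the Li-Yau inequality adapted to this sign convention: applying the family of inequalities with $\rho = -K$ and the standard choice $V(t) = 1 - t/T$ (i.e. $\alpha = 1$) gives, for $u(t,x) = P_t f(x)$ with $f \ge 0$ bounded, an estimate of the form
\[
\Gamma(\ln u)(t,x) \le \left(1 + \frac{2Kt}{3}\right)\frac{\partial_t u}{u}(t,x) + \frac{n}{2t} + \frac{nK}{2}\cdot(\text{const}),
\]
more precisely $\frac{\Gamma(u)}{u^2} - \left(1 + \tfrac{2Kt}{3}\right)\frac{u_t}{u} \le \frac{n}{2t} + \frac{nK^2 t}{6} + \frac{nK}{2}$; one should massage the constants from the displayed Li-Yau theorem with $\rho = -K$, $\alpha = 1$ to get exactly the coefficients that will produce $\tfrac{Kd(x,y)^2}{6}$ and $\tfrac{nK}{4}(t-s)$ in the end.

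Next, fix $x, y \in \mathbb{M}$ and $0 < s \le t$. Let $\gamma : [s,t] \to \mathbb{M}$ be a (nearly) minimizing subunit/geodesic path with $\gamma(s) = y$, $\gamma(t) = x$ parametrized so that $|\dot\gamma(\tau)|$ is constant equal to $d(x,y)/(t-s)$; such a path exists by completeness and the Hopf-Rinow theorem proved earlier. Consider $\varphi(\tau) = \ln P_\tau f(\gamma(t+s-\tau))$ — or rather the time-reversed version — and differentiate along the curve:
\[
\frac{d}{d\tau}\ln u(\tau, \gamma(\cdot)) = \frac{u_t}{u} + \langle \nabla \ln u, \dot\gamma\rangle \ge \frac{u_t}{u} - \sqrt{\Gamma(\ln u)}\,|\dot\gamma|.
\]
Now substitute the Li-Yau bound for $\sqrt{\Gamma(\ln u)}$, complete the square in the resulting expression (using $ab \le \tfrac14 \beta a^2 + \tfrac1\beta b^2$ type inequalities to separate the $\frac{u_t}{u}$ term from the $|\dot\gamma|^2$ term), and integrate over $\tau \in [s,t]$. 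The $\frac{u_t}{u}$ contributions should telescope against the $\sqrt{\Gamma(\ln u)}$ control so that what survives is $\ln u(t,x) - \ln u(s,y)$ bounded below by $-\tfrac{d(x,y)^2}{4(t-s)}$ minus $\int_s^t \left(\tfrac{n}{2\tau} + \text{curvature terms}\right)d\tau$; the $\int \tfrac{n}{2\tau}\,d\tau = \tfrac n2 \ln(t/s)$ gives the $(t/s)^{n/2}$ factor, and the curvature integrals give the $\tfrac{Kd(x,y)^2}{6}$ and $\tfrac{nK}{4}(t-s)$ terms. Exponentiating yields \eqref{beauty}.

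The main obstacle is bookkeeping: getting the constants $\tfrac16$ and $\tfrac14$ to come out exactly right requires choosing the interpolating function $V$ (or the weight in the optimization $ab \le \cdots$) correctly and carefully tracking the $K$-dependent terms through the completion of the square and the $\tau$-integration — a suboptimal split gives a Harnack inequality with worse constants. A secondary technical point is justifying the computation for general $f \in L^\infty$, $f \ge 0$: one first proves it for $f \in C_0^\infty(\mathbb{M})$, $f \ge 0$ (where the Li-Yau theorem is stated and where $P_t f$ is smooth and positive after adding $\varepsilon$), then removes the $\varepsilon$ and passes to bounded $f$ by monotone/dominated convergence using $0 \le P_t f \le \|f\|_\infty$ and the stochastic completeness $P_t 1 = 1$ from Theorem \ref{T:sc} (valid since $\mathrm{Ric} \ge -K$). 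One should also note the path $\gamma$ can be taken with length arbitrarily close to $d(x,y)$, and let that slack tend to zero at the end.
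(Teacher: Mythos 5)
Your proposal follows essentially the same route as the paper's proof: write the Li--Yau inequality for the drift $\rho=-K$ in the form $\Gamma(\ln P_u f)\le a(u)\,\frac{LP_uf}{P_uf}+b(u)$ with $a(u)=1+\tfrac{2K}{3}u$, $b(u)=\tfrac{n}{2}\bigl(\tfrac1u+\tfrac{K^2u}{3}+K\bigr)$; set $\phi(u)=\ln P_uf(\gamma(u))$ along a path joining $x$ at time $s$ to $y$ at time $t$; use Cauchy--Schwarz/Young's inequality with the weight $\lambda=\sqrt{a(u)/2}$ to absorb $\Gamma(\ln P_uf)$ into the Li--Yau bound; integrate; and at the end approximate to pass from $f\in C_0^\infty$ to $f\in L^\infty$, $f\ge 0$. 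Your remarks on the $\varepsilon$-regularization and monotone convergence for the final approximation are exactly what is needed.

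One point worth flagging: you fix the path $\gamma$ to have \emph{constant} speed $d(x,y)/(t-s)$, but because the kinetic-energy integral that appears is $\tfrac14\int_s^t a(u)\,\|\gamma'(u)\|^2\,du$ with a time-dependent weight $a(u)$, the paper instead minimizes over all absolutely continuous paths with the given endpoints, obtaining $\inf_\gamma\int_s^t a(u)\|\gamma'(u)\|^2\,du=d(x,y)^2\big/\int_s^t\tfrac{dv}{a(v)}$, and only afterwards estimates that denominator by Cauchy--Schwarz. Since the paper then applies $\int_s^t\tfrac{dv}{a(v)}\ge (t-s)^2\big/\int_s^t a(v)\,dv$, the two routes end up producing the identical quantity $\tfrac{d^2}{4(t-s)^2}\int_s^t a(u)\,du$, so your constant-speed choice is not a real loss here; it just hides where the reparametrization optimization would become relevant if one wanted the sharper $\int_s^t dv/a(v)$ denominator. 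When $K=0$ the issue disappears entirely since $a\equiv 1$. Apart from this cosmetic difference, the only thing your sketch leaves open is precisely what you say it does --- tracking the $K$-dependent constants through the final integration --- and the paper does this by elementary estimates on $\int_s^t b(u)/a(u)\,du$ and $\int_s^t a(u)\,du$.
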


\begin{proof}
We first assume that $f \in C_0^\infty(\mathbb{M})$. Let $x,y \in \mathbb{M}$ and let $\gamma:[s,t] \to \mathbb{M}$, $s<t$ be an absolutely continuous path such that $\gamma(s)=x, \gamma(t)=y$.
We write the Li-Yau inequality in the form
\begin{align}\label{LIYAU2}
\Gamma( \ln P_u f (x) ) \le a(u) \frac{  L P_u f (x)}{P_u f (x)} +b(u),
 \end{align}
 where 
 \[
 a(u)=1+\frac{2K}{3} u, 
 \]
 and 
 \[
 b(u)=\frac{n}{2} \left(\frac{1}{u}+\frac{K^2 u}{3}+K \right).
 \]
 Let us now consider
 \[
 \phi(u)=\ln P_u f(\gamma(u)).
 \]
 We  compute
 \[
  \phi'(u)= ( \partial_u \ln P_u  f) (\gamma(u))+\langle \nabla \ln P_u f (\gamma(u)),\gamma'(u) \rangle.
 \]
 Now, for every $\lambda >0$, we have
 \[
 \langle \nabla \ln P_u f (\gamma(u)),\gamma'(u) \rangle \ge -\frac{1}{2\lambda^2} \| \nabla \ln P_u f (x) \|^2 -\frac{\lambda^2}{2} \| \gamma'(u) \|^2.
 \]
 Choosing  $\lambda=\sqrt{\frac{a(u)}{2} }$ and using then (\ref{LIYAU2}) yields
 \[
 \phi'(u) \ge -\frac{b(u)}{a(u)} -\frac{1}{4} a(u) \| \gamma'(u) \|^2.
 \]
 By integrating this inequality from $s$ to $t$ we get as a result.
 \[
 \ln P_tf(y)-\ln P_s f(x)\ge -\int_s^t \frac{b(u)}{a(u)} du  -\frac{1}{4} \int_s^t a(u) \| \gamma'(u) \|^2 du.
 \]
We now minimize the quantity  $\int_s^t a(u) \| \gamma'(u) \|^2 du$ over the set of absolutely continuous paths such that $\gamma(s)=x, \gamma(t)=y$. By using reparametrization of paths, it is seen that
 \[
 \int_s^t a(u) \| \gamma'(u) \|^2 du \ge \frac{d^2(x,y)}{\int_s^t \frac{dv}{a(v)}},
 \]
 with equality achieved for $\gamma(u)=\sigma\left( \frac{\int_s^u \frac{dv}{a(v)}}{\int_s^t \frac{dv}{a(v)}} \right)$ where $\sigma:[0,1] \to \mathbb{M}$ is a unit geodesic joining $x$ and $y$. As a conclusion,
 \[
 P_sf(x) \le \exp\left( \int_s^t \frac{b(u)}{a(u)} du + \frac{d^2(x,y)}{4\int_s^t \frac{dv}{a(v)}}  \right) P_tf(y).
 \]
 Now, from Cauchy-Schwarz inequality we have
 \[
 \int_s^t \frac{dv}{a(v)} \ge \frac{(t-s)^2}{\int_s^t a(v)dv}=\frac{(t-s)^2}{(t-s)+\frac{2K}{3}(t-s)^2 },
 \]
 and also
 \[
  \int_s^t \frac{b(u)}{a(u)} du=\frac{n}{2} \int_s^t \frac{ 1/u +K^2 u/3+K}{1+2Ku/3} du\le \frac{n}{2} \int_s^t \left( \frac{ 1}{u}+\frac{K}{2} \right) du
 \]
This proves
\eqref{beauty} when $f \in C_0^\infty(\mathbb{M})$. We can then extend the result to  $f \in L^\infty(\mathbb{M})$ by considering the approximations $h_n P_\tau f \in C_0^\infty(\mathbb{M})$ , where $h_n \in C_0^\infty(\mathbb{M})$, $h_n \ge 0$, $h_n \to_{n \to \infty} 1$ and let $n \to \infty$ and $\tau \to 0$.

\end{proof}

The following result represents an important consequence of Theorem
\ref{T:harnack}.

\begin{corollary}\label{C:harnackheat}
 Let $p(x,y,t)$ be the heat kernel on $\mathbb{M}$. For every $x,y, z\in
\mathbb{M}$ and every $0<s<t<\infty$ one has
\[
p(x,y,s) \le p(x,z,t) \left(\frac{t}{s}\right)^{\frac{n}{2}}  \exp\left(\frac{d(y,z)^2}{4(t-s)} +\frac{K d(y,z)^2}{6} +\frac{nK}{4}(t-s)\right).
\]
\end{corollary}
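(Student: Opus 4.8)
The plan is to reduce the kernel estimate to the parabolic Harnack inequality of Theorem~\ref{T:harnack} by taking the function $f$ there to be a heat kernel itself and using the Chapman--Kolmogorov relation. Fix $x \in \mathbb{M}$, reals $0 < s < t$, and an auxiliary parameter $0 < \ve < s$, and set $g_\ve = p(x,\cdot,\ve)$, a smooth nonnegative function on $\mathbb{M}$. By the symmetry of the heat kernel and the Chapman--Kolmogorov relation, for every $\tau > 0$ and $w \in \mathbb{M}$ one has $P_\tau g_\ve(w) = \int_{\mathbb{M}} p(w,\zeta,\tau)\, p(x,\zeta,\ve)\, d\mu(\zeta) = p(x,w,\tau+\ve)$; in particular $P_{s-\ve} g_\ve(y) = p(x,y,s)$ and $P_{t-\ve} g_\ve(z) = p(x,z,t)$, where $y,z$ are the points appearing in the corollary.

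The first step is to feed $g_\ve$ into Theorem~\ref{T:harnack}. Since that theorem is stated for $f \in L^\infty(\mathbb{M})$ with $f \ge 0$, and $g_\ve$ need not be bounded on a general complete manifold, I would instead apply it to the truncations $h_k g_\ve \in C_0^\infty(\mathbb{M})$, where $(h_k)$ is the exhaustion sequence used repeatedly above ($0 \le h_k \le 1$, $h_k \nearrow 1$, $\|\Gamma(h_k)\|_\infty \to 0$). Applying Theorem~\ref{T:harnack} with the pair of times $s-\ve < t-\ve$ (note $(t-\ve)-(s-\ve) = t-s$) and the ordered pair of points $(y,z)$ gives
\[
P_{s-\ve}(h_k g_\ve)(y) \le P_{t-\ve}(h_k g_\ve)(z)\left(\frac{t-\ve}{s-\ve}\right)^{\frac n2}\exp\left(\frac{d(y,z)^2}{4(t-s)} + \frac{K\, d(y,z)^2}{6} + \frac{nK}{4}(t-s)\right).
\]
Letting $k \to \infty$, Beppo Levi's monotone convergence theorem (using that $p(w,\cdot,\tau) \ge 0$ and $h_k g_\ve \nearrow g_\ve$) yields $P_\tau(h_k g_\ve)(w) \nearrow P_\tau g_\ve(w)$ for each fixed $\tau > 0$ and $w$, so the inequality survives the limit and becomes
\[
p(x,y,s) \le p(x,z,t)\left(\frac{t-\ve}{s-\ve}\right)^{\frac n2}\exp\left(\frac{d(y,z)^2}{4(t-s)} + \frac{K\, d(y,z)^2}{6} + \frac{nK}{4}(t-s)\right).
\]
Finally, letting $\ve \to 0^+$ one has $\left(\frac{t-\ve}{s-\ve}\right)^{n/2} \to (t/s)^{n/2}$ while every other term is independent of $\ve$, which is the asserted estimate.

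The only genuinely delicate point is the admissibility issue just described: ensuring that Theorem~\ref{T:harnack} is only ever invoked on legitimate test functions in $C_0^\infty(\mathbb{M})$ and that the monotone passage to the limit is justified; once the truncation device is in place the remainder is bookkeeping with which variable plays which role. I would also note that the argument produces the inequality with the sharp constant $(t/s)^{n/2}$ precisely because the regularization parameter $\ve$ is sent to $0$ at the end rather than being fixed at a positive value.
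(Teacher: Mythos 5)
Your argument is correct and is essentially the paper's own proof: both regularize by taking $f$ to be a heat kernel at a small time, truncate by the exhaustion sequence $h_k$ to land in $C_0^\infty(\mathbb{M})$, invoke Theorem~\ref{T:harnack}, pass to the limit by monotone convergence, and then send the regularization parameter to zero. The only cosmetic difference is that the paper applies Harnack at times $s,t$ to $p(x,\cdot,\tau)$ (so the factor $(t/s)^{n/2}$ is exact and the kernel arguments $s+\tau,\,t+\tau$ converge), whereas you apply it at times $s-\ve,\,t-\ve$ to $p(x,\cdot,\ve)$ (so the kernel arguments are exact and the factor $((t-\ve)/(s-\ve))^{n/2}$ converges); both yield the stated bound.
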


\begin{proof}
Let $\tau >0$ and $x\in \bM$ be fixed. By the hypoellipticity of $L - \p_t$, we know that $p(x,\cdot,\cdot +
\tau)\in C^\infty(\mathbb{M} \times (-\tau,\infty))$. From the semigroup property we
have
\[
p (x,y,s+\tau)=P_s (p(x,\cdot,\tau))(y)
\]
and
\[
p (x,z,t+\tau)=P_t (p(x,\cdot,\tau))(z)
\]
Since we cannot apply Theorem \ref{T:harnack} directly to $u(y,t) =
P_t(p(x,\cdot,\tau))(y)$, we consider 
$u_n(y,t) = P_t(h_n p(x,\cdot,\tau))(y)$, where $h_n\in
C^\infty_0(\bM)$, $0\le h_n\le 1$, and $h_n\nearrow  1$. From
\eqref{beauty} we find
\[
P_s (h_np(x,\cdot,\tau))(y) \le P_t (h_np(x,\cdot,\tau))(z)
 \left(\frac{t}{s}\right)^{\frac{n}{2}}  \exp\left(\frac{d(y,z)^2}{4(t-s)} +\frac{K d(y,z)^2}{6} +\frac{nK}{4}(t-s)\right)
\]
Letting $n \to \infty$, by Beppo Levi's monotone convergence theorem
we obtain
\[
p (x,y,s+\tau) \le p (x,z,t+\tau)
 \left(\frac{t}{s}\right)^{\frac{n}{2}}  \exp\left(\frac{d(y,z)^2}{4(t-s)} +\frac{K d(y,z)^2}{6} +\frac{nK}{4}(t-s)\right)
 \]
The desired conclusion follows by letting $\tau \to 0$.
\end{proof}

A nice  consequence of the parabolic Harnack inequality for the heat kernel is the following lower bound for the heat kernel:

\begin{proposition}[Cheeger-Yau lower bound]
For $x,z \in \mathbb{M}$ and $t>0$,
\[
 p(x,z,t) \ge  \frac{1}{(4\pi t)^{n/2}}  \exp\left(-\frac{d(x,z)^2}{4t} -\frac{K d(x,z)^2}{6} -\frac{nK}{4}t \right).
 \]
\end{proposition}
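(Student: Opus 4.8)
The plan is to deduce this lower bound from the heat kernel Harnack inequality of Corollary \ref{C:harnackheat}, together with the classical short-time on-diagonal asymptotics of the heat kernel.

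First I would record the single non-elementary input: for every complete Riemannian manifold and every $x\in\mathbb{M}$,
\[
\lim_{s\to 0^+}(4\pi s)^{n/2}\,p(x,x,s)=1
\]
(see, e.g., \cite{Chavel}). In fact only the inequality $\liminf_{s\to 0^+}(4\pi s)^{n/2}p(x,x,s)\ge 1$ is needed, and this part follows purely from the locality of the heat semigroup at small times and domain monotonicity, by comparing with the Euclidean heat kernel in a coordinate chart around $x$.

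Next I would apply Corollary \ref{C:harnackheat} with the triple of points $(x,y,z)$ chosen to be $(x,x,z)$ and with times $0<s<t$. Since then $d(y,z)=d(x,z)$, this yields
\[
p(x,x,s)\le p(x,z,t)\left(\frac{t}{s}\right)^{n/2}\exp\left(\frac{d(x,z)^2}{4(t-s)}+\frac{K\,d(x,z)^2}{6}+\frac{nK}{4}(t-s)\right),
\]
which, after rearranging, reads
\[
p(x,z,t)\ge\left(\frac{s}{t}\right)^{n/2}p(x,x,s)\,\exp\left(-\frac{d(x,z)^2}{4(t-s)}-\frac{K\,d(x,z)^2}{6}-\frac{nK}{4}(t-s)\right)
\]
for every $0<s<t$.

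Finally I would let $s\to 0^+$ in the last display, with $t,x,z$ fixed. Writing $(s/t)^{n/2}p(x,x,s)=t^{-n/2}(4\pi)^{-n/2}(4\pi s)^{n/2}p(x,x,s)$, the short-time asymptotics give $\liminf_{s\to 0^+}(s/t)^{n/2}p(x,x,s)\ge(4\pi t)^{-n/2}$, while the exponential factor converges to $\exp\left(-\frac{d(x,z)^2}{4t}-\frac{K\,d(x,z)^2}{6}-\frac{nK}{4}t\right)$. Combining these gives precisely
\[
p(x,z,t)\ge\frac{1}{(4\pi t)^{n/2}}\exp\left(-\frac{d(x,z)^2}{4t}-\frac{K\,d(x,z)^2}{6}-\frac{nK}{4}t\right),
\]
as claimed. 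The only genuine obstacle is the short-time asymptotics; the remainder of the argument is just the Harnack inequality of Corollary \ref{C:harnackheat} and passing to the limit.
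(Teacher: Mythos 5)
Your proof is correct and follows exactly the paper's route: apply Corollary \ref{C:harnackheat} with $y=x$, rearrange to isolate $p(x,z,t)$, and let $s\to 0^+$ using the on-diagonal short-time asymptotics $\lim_{s\to 0}s^{n/2}p(x,x,s)=(4\pi)^{-n/2}$. The only difference is cosmetic: you spell out the rearrangement and observe that a $\liminf$ suffices, whereas the paper states the same argument in one line.
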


\begin{proof}
We just need to use the above  Harnack inequality with $y=x$ and let $s \to 0$ using the asymptotics $\lim_{s\to 0} s^{n/2} p_s(x,x)=\frac{1}{(4\pi  )^{n/2}}.$
\end{proof}

Observe that when $K=0$, the inequality is sharp, since it is actually an equality on the Euclidean space !

\subsection{The Gaussian upper bound}

Let $\mathbb{M}$ be a complete $n$-dimensional Riemannian manifold and, as usual, denote by $L$ its Laplace-Beltrami operator. As in the previous section, we will assume that the Ricci curvature of $\mathbb{M}$ is bounded from below by $-K$ with $K \ge 0$.  Our purpose in this section is to prove a Gaussian upper bound for the heat kernel. Our main tools are the parabolic Harnack inequality proved in the previous section and the following integrated maximum principle:

\begin{proposition}
Let $g :\mathbb{M}\times \mathbb{R}_{\ge 0}  \to \mathbb{R}$ be a non positive continuous function such that, in the sense of distributions,
\[
\frac{\partial g}{\partial t} +\frac{1}{2} \Gamma(g) \le 0,
\]
then, for every $ f \in L^2(\M,\mu)$, we have
\[
\int_{\mathbb{M}} e^{g(y,t)}  (P_t f)^2 (y) d\mu(y) \le \int_{\mathbb{M}} e^{g(y,0)}  f^2 (y) d\mu(y).
\]
\end{proposition}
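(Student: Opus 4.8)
The plan is to show that the function
\[
J(t) = \int_{\mathbb{M}} e^{g(y,t)}\, (P_t f)^2(y)\, d\mu(y)
\]
is non-increasing on $[0,+\infty)$; since $g \le 0$ and $f \in L^2(\mathbb{M},\mu)$, each $J(t)$ is finite, and the desired inequality is precisely $J(t) \le J(0)$. Write $u = u(\cdot,t) = P_t f$, so that $\partial_t u = Lu$, and differentiate under the integral sign:
\[
J'(t) = \int_{\mathbb{M}} e^{g}\, (\partial_t g)\, u^2\, d\mu + 2\int_{\mathbb{M}} e^{g}\, u\, Lu\, d\mu.
\]

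For the second term I integrate by parts — justified through a localizing sequence $h_n \in C_0^\infty(\mathbb{M})$ with $\|\Gamma(h_n)\|_\infty \to 0$, exactly as in the proof of Theorem \ref{T:sc} — using the Leibniz and chain rules for the carr\'e du champ (immediate from its local-coordinate expression), namely $\Gamma(e^g u, u) = e^g\Gamma(u) + e^g u\,\Gamma(g,u)$:
\[
\int_{\mathbb{M}} e^{g}\, u\, Lu\, d\mu = -\int_{\mathbb{M}} \Gamma(e^g u, u)\, d\mu = -\int_{\mathbb{M}} e^{g}\,\Gamma(u)\, d\mu - \int_{\mathbb{M}} e^{g}\, u\,\Gamma(g,u)\, d\mu,
\]
so that
\[
J'(t) = \int_{\mathbb{M}} e^{g}\Big( (\partial_t g)\, u^2 - 2 u\,\Gamma(g,u) - 2\,\Gamma(u)\Big)\, d\mu.
\]
Now, in the spirit of Lemma \ref{uniqueness heat equation}, the Cauchy--Schwarz inequality for $\Gamma$ gives $|u\,\Gamma(g,u)| \le |u|\sqrt{\Gamma(g)}\sqrt{\Gamma(u)}$, and then Young's inequality $2ab \le \tfrac12 a^2 + 2b^2$ yields the pointwise bound
\[
- 2 u\,\Gamma(g,u) - 2\,\Gamma(u) \le \tfrac12 u^2\,\Gamma(g) + 2\,\Gamma(u) - 2\,\Gamma(u) = \tfrac12 u^2\,\Gamma(g).
\]
Therefore
\[
J'(t) \le \int_{\mathbb{M}} e^{g}\, u^2 \Big(\partial_t g + \tfrac12 \Gamma(g)\Big)\, d\mu \le 0
\]
by the hypothesis on $g$, and integrating from $0$ to $t$ completes the proof.

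The computation above is formal, and this is where I expect the real work to lie. Three points need care: (a) differentiation under the integral sign and convergence of all integrals, handled by first taking $f \in C_0^\infty(\mathbb{M})$ — so that $u = P_t f$ is smooth and $\Gamma(u)$, $\Gamma(u,Lu)$ are integrable by the spectral theorem — and then extending to general $f \in L^2(\mathbb{M},\mu)$ by density together with the $L^2$-contractivity of $P_t$; (b) the integration by parts, i.e. checking that $e^g u$ lies in the form domain and that no boundary terms at infinity appear, which is again done with the $h_n$ cutoffs and uses a bound on $\Gamma(g)$ (available in all the intended applications); and (c) the fact that $\partial_t g + \tfrac12\Gamma(g)\le 0$ is only assumed in the distributional sense while $g$ is merely continuous. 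Point (c) is the genuinely delicate one, since $\Gamma(g)$ is quadratic in $g$ and so does not commute cleanly with mollification; one either regularizes $g$ and verifies the inequality survives in the limit, or — more robustly — tests the weak inequality directly against the admissible non-negative function $e^g u^2$ once $u$ is smooth and $g$ has been truncated. In every application below $g$ is in fact a smooth function of the Riemannian distance, so the formal argument is already rigorous there, which is presumably the level at which the statement is meant to be used.
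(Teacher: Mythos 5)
Your proposal is correct and takes essentially the same route as the paper: both integrate by parts after multiplying the heat equation for $(P_tf)^2$ by $e^g$ (with the $h_n$ cutoff to justify it), and both then discard the same nonnegative quantity — you phrase it via Cauchy–Schwarz and Young, while the paper recognizes it as the completed square $\Gamma(e^{g/2}P_tf)e^{-g}=\Gamma(P_tf)+P_tf\,\Gamma(P_tf,g)+\tfrac14(P_tf)^2\Gamma(g)\ge 0$, which is the same inequality. Your closing remarks on the technical points (a)–(c) accurately identify where the care is needed and match what the paper's proof actually does.
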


\begin{proof}
Since
\[ \left(L-\frac{\partial }{\partial t}\right)(P_tf)^2 = 2 P_tf\left(L-\frac{\partial }{\partial  t}\right)(P_t f) + 2 \Gamma(P_t f) =
2 \Gamma(P_tf), \] multiplying this identity by $h_n^2(y) e^{g(y,t)}$,
where $h_n$ is the usual localizing sequence , and
integrating by parts, we obtain
\begin{align*}
0 & = 2 \int_0^\tau \int_{\mathbb{M}} h_n^2 e^g \Gamma(P_tf) d\mu(y) dt -
\int_0^\tau \int_{\mathbb{M}}h_n^2 e^g \left(L-\frac{\partial }{\partial t}\right)(P_tf)^2 d\mu(y) dt
\\
& = 2 \int_0^\tau \int_{\mathbb{M}} h_n^2 e^g \Gamma(P_tf) d\mu(y) dt + 4
\int_0^\tau \int_{\mathbb{M}} h_n e^g P_tf \Gamma(h_n,P_tf) d\mu(y) dt \\
& + 2 \int_0^\tau\int_{\mathbb{M}}h_n^2 e^g P_tf \Gamma(P_tf,g)d\mu(y) dt -
\int_0^\tau \int_{\mathbb{M}} h_n e^g (P_tf)^2 \frac{\partial g}{\partial t}  d\mu(y) dt
\\
& -   \int_{\mathbb{M}} h_n e^g (P_tf)^2 d\mu(y)\bigg|_{t=0} + \int_{\mathbb{M}} h_n e^g (P_tf)^2 d\mu(y)\bigg|_{t=\tau}
\\
& \ge 2 \int_0^\tau \int_{\mathbb{M}} h_n^2 e^g \left(\Gamma(P_tf)+P_tf\Gamma(P_tf,g)+\frac{P_tf^2}{4} \Gamma(g)\right) d\mu(y) dt + 4 \int_0^\tau \int_{\mathbb{M}} h_n
e^g P_tf \Gamma(h_n,P_tf) d\mu(y) dt
\\
& + \int_{\mathbb{M}} h_n e^g (P_tf)^2 d\mu(y)\bigg|_{t=\tau} -   \int_{\mathbb{M}} h_n e^g (P_tf)^2 d\mu(y)\bigg|_{t=0}.
\end{align*}
From this we conclude
\[
\int_{\bM} h_n e^g (P_t f)^2 d\mu(y)\bigg|_{t=\tau} \le \int_{\mathbb{M}} h_n e^g (P_tf)^2 d\mu(y)\bigg|_{t=0} - 4 \int_0^\tau \int_{\bM} h_n e^g P_tf \Gamma(h_n,P_tf) d\mu(y) dt.
\]
We now claim that
\[
\underset{n\to \infty}{\lim} \int_0^\tau \int_{\mathbb{M}} h_n e^g P_tf \Gamma(h_n,P_tf) d\mu(y) dt = 0.
\]
To see this we apply Cauchy-Schwarz inequality which gives
\begin{align*}
& \left|\int_0^\tau \int_{\mathbb{M}} h_n e^g P_tf \Gamma(h_n,P_tf) d\mu(y) dt\right|\le \left(\int_0^\tau \int_{\mathbb{M}} h_n^2 e^g (P_tf)^2 \Gamma(h_n) d\mu(y) dt\right)^{\frac{1}{2}} \left(\int_0^\tau \int_{\mathbb{M}} e^g \Gamma(P_tf) d\mu(y) dt\right)^{\frac{1}{2}}
\\
& \le \left(\int_0^\tau \int_{\mathbb{M}} e^g (P_tf)^2 \Gamma(h_n) d\mu(y)
dt\right)^{\frac{1}{2}} \left(\int_0^\tau \int_{\mathbb{M}} e^g \Gamma(P_tf)
d\mu(y) dt\right)^{\frac{1}{2}} \to 0,
\end{align*}
as $n\to \infty$. With the
claim in hands we now let $n\to \infty$ in the above inequality
obtaining
\[
\int_{\mathbb{M}} e^{g(y,t)} ) (P_t f)^2 (y) d\mu(y) \le \int_{\mathbb{M}} e^{g(y,0)} ) f^2 (y) d\mu(y)
\]
\end{proof}

We are now ready for the main bound of this section:

\begin{theorem}\label{T:ub}
For any $0<\epsilon <1$ there exist positive  constants $C_1=C_1(\epsilon)$ and   $C_2=C_2 (n,\epsilon)>0$, such that for every $x,y\in \mathbb{M}$ and $t>0$ one has
\[
p(x,y,t)\le \frac{C_1}{\mu(B(x,\sqrt t))^{\frac{1}{2}}\mu(B(y,\sqrt t))^{\frac{1}{2}}} \exp \left(C_2Kt-\frac{d(x,y)^2}{(4+\epsilon)t}\right).
\]
\end{theorem}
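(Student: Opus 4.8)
The plan is to follow the classical route of Li--Yau, Davies and Grigor'yan: obtain the bound by combining an \emph{on-diagonal} estimate, extracted from the parabolic Harnack inequality (Corollary~\ref{C:harnackheat}) together with the sub-Markov property $P_t1\le 1$, with an \emph{off-diagonal Gaussian decay} coming from the integrated maximum principle proved just above. (The Harnack inequality alone cannot give an upper bound on $p$, since it bounds the heat kernel at small times by its values at larger times; a maximum-principle tool is genuinely needed for the Gaussian factor.)

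\textbf{Step 1 (on-diagonal bound).} First I would show there are $C=C(n)$, $c=c(n)$ with $p(x,x,t)\le \frac{C\,e^{cKt}}{\mu(B(x,\sqrt t))}$ for all $x\in\M$, $t>0$. Apply Corollary~\ref{C:harnackheat} with the first slot $=x$, the second slot $=x$, the third slot $=z$, and times $s=t/2<t$: for every $z\in B(x,\sqrt t)$,
\[
p(x,x,t/2)\ \le\ p(x,z,t)\,2^{n/2}\exp\Big(\tfrac{d(x,z)^2}{2t}+\tfrac{Kd(x,z)^2}{6}+\tfrac{nKt}{8}\Big)\ \le\ p(x,z,t)\,2^{n/2}e^{1/2+Kt/6+nKt/8}.
\]
Integrating over $z\in B(x,\sqrt t)$ and using $\int_\M p(x,z,t)\,d\mu(z)=P_t1(x)\le 1$ gives $\mu(B(x,\sqrt t))\,p(x,x,t/2)\le 2^{n/2}e^{1/2+Kt/6+nKt/8}$; a change $t\mapsto 2t$ (and $\mu(B(x,\sqrt{2t}))\ge\mu(B(x,\sqrt t))$) yields the claim. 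By Cauchy--Schwarz and Chapman--Kolmogorov one then has $p(x,y,t)\le\sqrt{p(x,x,t)\,p(y,y,t)}$, which already gives the asserted estimate \emph{without} the Gaussian factor.

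\textbf{Step 2 (Gaussian tail via the integrated maximum principle).} Fix $x_0\in\M$ and, for $\tau_0>T>0$, set $g(y,s)=-\dfrac{d(x_0,y)^2}{2(\tau_0-s)}$ on $\M\times[0,T]$. Since $d(x_0,\cdot)$ is $1$-Lipschitz, $\Gamma(d(x_0,\cdot))\le 1$ in the distributional sense, so $\partial_s g+\tfrac12\Gamma(g)\le \dfrac{d(x_0,y)^2}{2(\tau_0-s)^2}\big(\Gamma(d(x_0,\cdot))-1\big)\le 0$, and $g\le 0$; the integrated maximum principle then gives, for every $f\in L^2(\M,\mu)$ and $t\le T$,
\[
\int_\M e^{-\frac{d(x_0,y)^2}{2(\tau_0-t)}}(P_tf)^2(y)\,d\mu(y)\ \le\ \int_\M e^{-\frac{d(x_0,y)^2}{2\tau_0}}f^2(y)\,d\mu(y).
\]
(Rigorously one first truncates $d(x_0,\cdot)$ to a bounded Lipschitz function and passes to the limit by monotone convergence.) Taking $f\ge 0$ supported in $B(y_0,\sqrt t)$ with $r:=d(x_0,y_0)$ large, bounding $e^{-d^2/2\tau_0}\le e^{-(r-\sqrt t)^2/2\tau_0}$ on $\operatorname{supp}f$, restricting the left integral to $B(x_0,\sqrt t)$ (where $e^{-d^2/2(\tau_0-t)}\ge e^{-t/2(\tau_0-t)}$), and optimizing over $\tau_0$ produces an $L^2$ Gaussian-decay estimate
\[
\|P_tf\|_{L^2(B(x_0,\sqrt t))}\ \le\ \exp\!\Big(-\tfrac{(r-2\sqrt t)^2}{2t}\Big)\|f\|_{L^2}.
\]

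\textbf{Step 3 (assembling the pointwise bound, and the main difficulty).} Finally I would convert the $L^2$ estimate of Step~2 into a pointwise bound by writing $p(x,y,t)=\langle p(x,\cdot,t/2),p(y,\cdot,t/2)\rangle_{L^2(\M,\mu)}$, splitting the integral over $\M$ according to whether $z$ is closer to $x$ or to $y$, controlling the "near'' contributions by the on-diagonal bound of Step~1 and the "far'' contribution by Step~2 (after approximating $p(x,\cdot,t/2)\in L^2$ by compactly supported functions), and invoking the parabolic Harnack inequality once more to pass between the heat kernel and its averages over balls of radius $\sqrt t$. Splitting $t=t/2+t/2$ in the semigroup identity is what turns the denominator $2$ of Step~2 into $4$, and the $O(\sqrt t)$ truncation corrections together with the $e^{O(Kt)}$ factors are absorbed by replacing $4$ with $4+\epsilon$ (choosing the optimization parameter $\tau_0$ slightly off its optimum, so that the prefactor becomes $C_1=C_1(\epsilon)$) and by the term $C_2Kt$. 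The main obstacle is precisely this step: organizing the near/far splitting and the semigroup composition carefully enough to land on the \emph{sharp-up-to-$\epsilon$} exponent $\tfrac{d(x,y)^2}{(4+\epsilon)t}$ rather than a cruder constant, and handling the non-compact support of $p(x,\cdot,t/2)$, which requires an auxiliary tail bound that is itself a byproduct of Step~2. A secondary technical point, already flagged, is the legitimacy of applying the integrated maximum principle to the a priori non-smooth, unbounded weight $d(x_0,\cdot)$.
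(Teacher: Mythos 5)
Your proposal identifies the two correct key tools — the parabolic Harnack inequality and the integrated maximum principle — and you are right that Harnack alone cannot give the upper bound, so the maximum-principle input is essential. Your Step~1 (on-diagonal bound from Harnack plus $P_t\mathbf 1\le 1$, then Cauchy--Schwarz via Chapman--Kolmogorov) is correct and is indeed implicitly present in the paper's argument. Your Step~2 is the right idea but has an arithmetic slip: optimizing $\tfrac{t}{2(\tau_0-t)}-\tfrac{(r-\sqrt t)^2}{2\tau_0}$ over $\tau_0$ gives $-\tfrac{(r-2\sqrt t)^2}{2t}$ as the exponent of $\|P_tf\|^2_{L^2}$, so the $L^2$-norm bound should read $\exp\!\big(-\tfrac{(r-2\sqrt t)^2}{4t}\big)\|f\|_{L^2}$ rather than $\exp\!\big(-\tfrac{(r-2\sqrt t)^2}{2t}\big)$; this factor of $2$ propagates into your Step~3 bookkeeping.

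The more substantive divergence is in Step~3, and here your route is genuinely different from the paper's. You propose the classical Grigor'yan/Davies assembly: semigroup-split $p(x,y,t)=\langle p(x,\cdot,t/2),p(y,\cdot,t/2)\rangle$, do a near/far decomposition of the $z$-integral according to which of $x,y$ is closer, control the near part by the on-diagonal estimate and the far part by the $L^2$ Gaussian decay. The paper does not split near/far and never explicitly invokes an on-diagonal bound. Instead it fixes $f(y,s)=P_s\big(p(x,\cdot,T)\mathbf 1_{B(y,\sqrt t)}\big)$, applies the integrated maximum principle on $[0,(1+\alpha)T]$ with the weight $g(z,s)=-\tfrac{d(x,z)^2}{2((1+2\alpha)T-s)}$, obtaining a one-line comparison between $\int_{B(x,\sqrt t)} f(z,(1+\alpha)T)^2\,d\mu$ and $\int_{B(y,\sqrt t)} p(x,z,T)^2\,d\mu$ weighted by the Gaussian factor, and then upgrades this to a pointwise bound by invoking the parabolic Harnack inequality \emph{twice} — once to pass from $f(x,T)$ to the ball average of $f(\cdot,(1+\alpha)T)$ over $B(x,\sqrt t)$, and once more (via Corollary~\ref{C:harnackheat}) to pass from $p(x,y,t)$ to the ball average of $p(x,\cdot,(1+\alpha)t)^2$ over $B(y,\sqrt t)$. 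The exponent $4+\epsilon$ then emerges from choosing $\alpha$ so that $4(1+2\alpha)(1+\alpha)^2=4+\epsilon$, absorbing the truncation error $O(\sqrt t)$ into the prefactor by a case split $x\in B(y,\sqrt t)$ versus $x\notin B(y,\sqrt t)$. Your route would presumably also work (it is the standard one in the literature), but Step~3 as written remains a sketch with real work left — you would need the same kind of careful ball/time parametrization and Harnack passes that the paper carries out — and you correctly flag it as the main gap.
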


\begin{proof}
Given $T>0$, and $\alpha>0$ we fix $0<\tau \le (1+\alpha)T$. For a function $\psi\in C^\infty_0(\mathbb{M})$, with $\psi \ge 0$, in $\mathbb{M} \times (0,\tau)$ we consider the function
\[
f(y,t) = \int_{\mathbb{M}} p(y,z,t) p(x,z,T) \psi(z) d\mu(z),\ \ \ x\in
\bM.
\]
Since $f = P_t(p(x,\cdot,T)\psi)$, it satisfies the Cauchy problem
\[
\begin{cases}
Lf - f_t = 0 \ \ \ \ \text{in}\ \mathbb{M} \times (0,\tau),
\\
f(z,0) = p(x,z,T)\psi(z),\ \  \ z\in \mathbb{M}.
\end{cases}
\]
Let $g :\mathbb{M}\times [0,\tau]  \to \mathbb{R}$ be a non positive continuous function such that, in the sense of distributions,
\[
\frac{\partial g}{\partial t} +\frac{1}{2} \Gamma(g) \le 0.
\]
From the previous lemma, we know that:
\begin{equation}\label{CYpsi}
\int_{\bM}  e^{g(y,\tau)} f^2(y,\tau) d\mu(y) \le \int_{\mathbb{M}}
e^{g(y,0)} f^2(y,0) d\mu(y).
\end{equation}
At this point we fix $x\in \mathbb{M}$ and for $0<t\le\tau$ consider the
indicator function $\mathbf 1_{B(x,\sqrt t)}$ of the ball $B(x,\sqrt
t)$. Let $\psi_k\in C^\infty_0(\mathbb{M})$, $\psi_k \ge 0$, be a sequence
such that $\psi_k \to \mathbf 1_{B(x,\sqrt t)}$ in $L^2(\mathbb{M})$, with
supp$\ \psi_k\subset B(x,100\sqrt t)$. Slightly abusing the notation
we now set \[ f(y,s) = P_s(p(x,\cdot,T)\mathbf{1}_{B(x,\sqrt t)})(y)
= \int_{B(x,\sqrt t)} p(y,z,s) p(x,z,T) d\mu(z). \] Thanks to the
symmetry of $p(x,y,s) = p(y,x,s)$, we have
\begin{equation}\label{psquare}
 f(x,T) = \int_{B(x,\sqrt t)} p(x,z,T)^2 d\mu(z).
\end{equation}

Applying \eqref{CYpsi} to $f_k(y,s) = P_s(p(x,\cdot,T)\psi_k)(y)$,
we find
\begin{equation}\label{CYpsik}
\int_{\mathbb{M}}  e^{g(y,\tau)} f^2_k(y,\tau) d\mu(y) \le \int_{\mathbb{M}}
e^{g(y,0)} f^2_k(y,0) d\mu(y).
\end{equation}
At this point we observe that as $k\to \infty$
\begin{align*}
& \left|\int_{\mathbb{M}}  e^{g(y,\tau)} f^2_k(y,\tau) d\mu(y) - \int_{\mathbb{M}}
e^{g(y,\tau)} f^2(y,\tau) d\mu(y)\right|
\\
& \le 2 ||e^{g(\cdot,\tau)}||_{L^\infty(\mathbb{M})}
||p(x,\cdot,T)||_{L^2(\mathbb{M})} ||p(x,\cdot,\tau)||_{L^\infty(B(x,110
\sqrt t))} ||\psi_k - \mathbf 1_{B(x,\sqrt t)}||_{L^2(\mathbb{M})} \to 0.
\end{align*}
By similar considerations we find
\begin{align*}
& \left|\int_{\bM}  e^{g(y,0)} f^2_k(y,0) d\mu(y) - \int_{\mathbb{M}}
e^{g(y,0)} f^2(y,0) d\mu(y)\right|
\\
& \le 2 ||e^{g(\cdot,0)}||_{L^\infty(\mathbb{M})}
||p(x,\cdot,T)||_{L^\infty(B(x,110 \sqrt t))} ||\psi_k - \mathbf
1_{B(x,\sqrt t)}||_{L^2(\mathbb{M})} \to 0.
\end{align*}
Letting $k\to \infty$ in \eqref{CYpsik} we thus conclude that the
same inequality holds with $f_k$ replaced by $f(y,s) =
P_s(p(x,\cdot,T)1_{B(x,\sqrt t)})(y)$. This implies in particular
the basic estimate
\begin{align}\label{be}
& \underset{z\in B(x,\sqrt t)}{\inf}\ e^{g(z,\tau)} \int_{B(x,\sqrt
t)} f^2(z,\tau) d\mu(z)
\\
& \le \int_{B(x,\sqrt t)} e^{g(z,\tau)} f^2(z,\tau) d\mu(z) \le
\int_{\bM} e^{g(z,\tau)} f^2(z,\tau) d\mu(z) \notag\\
& \le \int_{\bM} e^{g(z,0)} f^2(z,0) d\mu(z) = \int_{B(y,\sqrt t)}
e^{g(z,0)} p(x,z,T)^2 d\mu(z) \notag\\
& \le \underset{z\in B(y,\sqrt t)}{\sup}\ e^{g(z,0)} \int_{B(y,\sqrt
t)} p(x,z,T)^2 d\mu(z). \notag
\end{align}

At this point we choose in \eqref{be}
\[ g(y,t) = g_x(y,t) = -
\frac{d(x,y)^2}{2((1+2\alpha) T - t)}.
\]
Using the fact that $\Gamma(d)\le 1$, one can easily check that $g$ satisfies
\[
\frac{\partial g}{\partial t} +\frac{1}{2} \Gamma(g) \le 0.
\]
Taking into account that
\[
\underset{z\in B(x,\sqrt t)}{\inf}\ e^{g_x(z,\tau)} = \underset{z\in
B(x,\sqrt t)}{\inf}\ e^{-\frac{d(x,z)^2}{2((1+2\alpha)T- \tau)}} \ge
e^{\frac{-t}{2((1+2\alpha)T- \tau)}},
\]
if we now choose $\tau = (1+\alpha)T$, then from the previous
inequality and from \eqref{psquare} we conclude that
\begin{equation}\label{lemmaub}
\int_{B(x,\sqrt t)} f^2(z,(1+\alpha)T) d\mu(z) \le
\left(\underset{z\in B(y,\sqrt t)}{\sup}\
e^{-\frac{d(x,z)^2}{2(1+2\alpha)T} + \frac{t}{2\alpha T}}\right)
\int_{B(y,\sqrt t)} p(x,z,T)^2 d\mu(z).
\end{equation}
We now apply Theorem \ref{T:harnack} which gives for every $z\in
B(x,\sqrt t)$
\[
f(x,T)^2 \le f(z,(1+\alpha)T)^2(1+\alpha)^{n} e^{\frac{t}{2\alpha T}+\frac{Kt}{3} +\frac{nK\alpha T}{2 }}.
\]
Integrating this inequality on $B(x,\sqrt t)$ we find
\[
\left(\int_{B(y,\sqrt t)} p(x,z,T)^2 d\mu(z)\right)^2 = f(x,T)^2 \le
\frac{(1+\alpha)^{n} e^{\frac{t}{2\alpha T}+\frac{Kt}{3} +\frac{nK\alpha T}{2 }}}{\mu(B(x,\sqrt
t)) } \int_{B(x,\sqrt t)} f^2(z,(1+\alpha)T) d\mu(z).
\]
If we now use \eqref{lemmaub} in the last inequality we obtain
\begin{align*}
& \int_{B(y,\sqrt t)} p(x,z,T)^2 d\mu(z) \le
\frac{(1+\alpha)^{n} e^{\frac{t}{2\alpha T}+\frac{Kt}{3} +\frac{nK\alpha T}{2 }}}{\mu(B(x,\sqrt
t)) } \left(\underset{z\in B(y,\sqrt t)}{\sup}\
e^{-\frac{d(x,z)^2}{2(1+2\alpha)T} + \frac{t}{2\alpha T}}\right).
\end{align*}
Choosing $T = (1+\alpha)t$ in this inequality we find
\begin{align}\label{ub2}
& \int_{B(y,\sqrt t)} p(x,z,(1+\alpha)t)^2 d\mu(z) \le
\frac{(1+\alpha)^{n}
e^{\frac{Kt}{3} +\frac{nK}{2} \alpha (1+\alpha)t+
\frac{1}{2\alpha (1+\alpha)}}}{\mu(B(x,\sqrt t))}
\left(\underset{z\in B(y,\sqrt t)}{\sup}\
e^{-\frac{d(x,z)^2}{2(1+2\alpha)(1+\alpha)t} + \frac{1}{2\alpha
(1+\alpha)}}\right).
\end{align}
We now apply Corollary \ref{C:harnackheat} obtaining for every $z\in
B(y,\sqrt t)$
\[
p(x,y,t)^2 \le p(x,z,(1+\alpha)t)^2 (1+\alpha)^{n} \exp\left(\frac{1}{2\alpha }+\frac{Kt}{3}+\frac{nK\alpha t}{4} \right).
\]
Integrating this inequality in $z\in B(y,\sqrt t)$, we have
\[
\mu(B(y,\sqrt t)) p(x,y,t)^2 \le (1+\alpha)^{n} \exp\left(\frac{1}{2\alpha }+\frac{Kt}{3}+\frac{nK\alpha t}{4} \right) \int_{B(y,\sqrt t)}
p(x,z,(1+\alpha)t)^2 d\mu(z).
\]
Combining this inequality with \eqref{ub2} we conclude
\[
p(x,y,t) \le \frac{(1+\alpha)^{n} e^{\frac{3+\alpha}{4\alpha(1+\alpha)} +\frac{Kt}{3} +\frac{nKt}{4} \left( \alpha^2 +\frac{3}{2} \alpha \right) }}{\mu(B(x,\sqrt
t))^{\frac{1}{2}}\mu(B(y,\sqrt
t))^{\frac{1}{2}}}\left(\underset{z\in B(y,\sqrt t)}{\sup}\
e^{-\frac{d(x,z)^2}{4(1+2\alpha)(1+\alpha)t}}\right).
\]
If now $x\in B(y,\sqrt t)$, then
\[
d(x,z)^2 \ge (d(x,y) - \sqrt t)^2 > d(x,y)^2 - t,
\]
and therefore
\[
\underset{z\in B(y,\sqrt t)}{\sup}\
e^{-\frac{d(x,z)^2}{4(1+2\alpha)(1+\alpha)t}} \le
e^{\frac{1}{4(1+2\alpha)(1+\alpha)}}
e^{-\frac{d(x,y)^2}{4(1+2\alpha)(1+\alpha)t}}.
\]
If instead $x\not\in B(y,\sqrt t)$, then for every $\delta >0$ we
have
\[
d(x,z)^2 \ge (1-\delta) d(x,y)^2  - (1+ \delta^{-1}) t
\]
Choosing $\delta = \alpha/(\alpha+1)$ we find
\[
d(x,z)^2 \ge \frac{d(x,y)^2}{1+\alpha}  - (2 + \alpha^{-1}) t,
\]
and therefore
\[
\underset{z\in B(y,\sqrt t)}{\sup}\
e^{-\frac{d(x,z)^2}{4(1+2\alpha)(1+\alpha)t}} \le
e^{-\frac{d(x,y)^2}{4(1+2\alpha)(1+\alpha)^2 t} + \frac{2 +
\alpha^{-1}}{4(1+2\alpha)(1+\alpha)}}
\]
For any $\epsilon >0$ we now choose $\alpha>0$ such that
$4(1+2\alpha)(1+\alpha)^2 = 4+\epsilon$ to reach the desired
conclusion.

\end{proof}

\subsection{Volume doubling property}

In this Section we consider a complete and $n$-dimensional  Riemannian manifold $(\mathbb{M},g)$ with non negative Ricci curvature. Our goal is to prove  the following fundamental result, which is known as the volume doubling property. We follow an approach developed by N. Garofalo and the author.

\begin{theorem}
There exists a constant $C=C(n)>0$ such that for every $x\in \mathbb{M}$ and every $r>0$ one has
\[
\mu(B(x,2r))\le C \mu (B(x,r)).
\]
\end{theorem}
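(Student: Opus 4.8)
The plan is to obtain the doubling property entirely from the heat-kernel estimates already established for the Laplace--Beltrami operator with $\mathbf{Ric}\ge 0$: the Gaussian upper bound of Theorem~\ref{T:ub} (with $K=0$), the Cheeger--Yau lower bound, the parabolic Harnack inequality of Theorem~\ref{T:harnack} and Corollary~\ref{C:harnackheat}, and the stochastic completeness $P_t1=1$ of Theorem~\ref{T:sc}. Throughout write $V(x,r)=\mu(B(x,r))$. First I would record the elementary facts that make everything run. Taking $y=x$ in Theorem~\ref{T:ub} gives the \emph{on-diagonal upper bound} $p(t,x,x)\le C_1/V(x,\sqrt t)$. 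Applying Corollary~\ref{C:harnackheat} with $y=z=x$ and $K=0$ gives the \emph{time comparison} $p(s,x,x)\le (t/s)^{n/2}p(t,x,x)$ for $0<s<t$, i.e.\ $t\mapsto t^{n/2}p(t,x,x)$ is nondecreasing up to constants. Finally, from $P_t1=1$ and the Cheeger--Yau bound one gets the \emph{polynomial volume growth} $V(x,r)\le C_n r^n$, since $1\ge\int_{B(x,r)}p(r^2,x,y)\,d\mu(y)\ge (c_n/r^n)\,V(x,r)$.

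The heart of the matter is the matching \emph{on-diagonal lower bound}
\[
p(t,x,x)\ \ge\ \frac{c_n}{V(x,\sqrt t)}.
\]
Granting this, doubling follows by a short chain. With the choice $t=4r^2$, the on-diagonal upper bound gives $V(x,2r)\le C_1/p(4r^2,x,x)$; the time comparison with $s=r^2$ gives $p(4r^2,x,x)\ge 2^{-n}p(r^2,x,x)$; and the on-diagonal lower bound gives $p(r^2,x,x)\ge c_n/V(x,r)$. Concatenating these,
\[
V(x,2r)\ \le\ \frac{C_1}{2^{-n}\,p(r^2,x,x)}\ \le\ \frac{2^nC_1}{c_n}\,V(x,r),
\]
which is the desired inequality with $C=2^nC_1/c_n$.

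It therefore remains to prove the on-diagonal lower bound, and here I would use conservation of the semigroup together with the Cauchy--Schwarz inequality. By Chapman--Kolmogorov and symmetry, $p(2t,x,x)=\int_{\mathbb{M}}p(t,x,y)^2\,d\mu(y)$, so for any $A\ge 1$,
\[
p(2t,x,x)\ \ge\ \frac{1}{V(x,A\sqrt t)}\left(\int_{B(x,A\sqrt t)}p(t,x,y)\,d\mu(y)\right)^{\!2}.
\]
Since $\int_{\mathbb{M}}p(t,x,y)\,d\mu(y)=1$ by Theorem~\ref{T:sc}, everything reduces to an \emph{escape-of-mass} estimate: there should be a dimensional constant $A=A(n)$ such that $\int_{B(x,A\sqrt t)^c}p(t,x,y)\,d\mu(y)\le \tfrac12$, which then yields $p(2t,x,x)\ge \tfrac14\,V(x,A\sqrt t)^{-1}$.

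The escape-of-mass estimate is the genuine obstacle, and it is where the Gaussian upper bound re-enters: one decomposes $B(x,A\sqrt t)^c$ into dyadic annuli and sums the Gaussian tails coming from Theorem~\ref{T:ub}. The difficulty is that the factor $V(y,\sqrt t)^{-1/2}$ in that bound, for points $y$ far from $x$, is not a priori controlled in terms of $V(x,\sqrt t)$, and the naive estimate of the annular integrals would require precisely the doubling inequality one is trying to establish. Breaking this circularity is the delicate point: one runs the dyadic argument using the polynomial bound $V(x,r)\le C_n r^n$ as an anchor together with a Vitali-type covering of each annulus by balls of radius $\sqrt t$, first obtaining a volume comparison with a loss, and then bootstrapping it (as in the Baudoin--Garofalo argument) until the large ball $V(x,A\sqrt t)$ in the lower bound can be traded for $V(x,\sqrt t)$ via a preliminary doubling at a fixed ratio. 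Once this structural step is arranged, the remaining work — the summation of the Gaussian tails and the covering bookkeeping — is routine.
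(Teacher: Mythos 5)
Your overall reduction is the same as the paper's: derive a scale-invariant on-diagonal lower bound $p(t,x,x)\gtrsim \mu(B(x,\sqrt t))^{-1}$, combine it with the on-diagonal upper bound and the Harnack time comparison $p(s,x,x)\le (t/s)^{n/2}p(t,x,x)$, and doubling drops out. That part is correct, and so is the elementary observation $\mu(B(x,r))\le C_nr^n$ (valid here since $\mathbf{Ric}\ge 0$, so the Cheeger--Yau lower bound has no exponential correction). The trouble is that you have correctly identified the one genuinely hard step --- the non-escape-of-mass estimate --- and then declared it ``routine'' after acknowledging it is circular, without supplying the argument that breaks the circularity.

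Concretely: to sum the Gaussian tails from Theorem~\ref{T:ub} over dyadic annuli you must control $\mu(B(y,\sqrt t))^{-1/2}$ for $y$ at distance $k\sqrt t$ from $x$, and the only tool you propose for this is ``a volume comparison with a loss, and then bootstrapping as in the Baudoin--Garofalo argument.'' But the Baudoin--Garofalo argument, which is the one the paper actually follows, does \emph{not} proceed by dyadic tail summation and does not use the Gaussian upper bound at all; there is no bootstrap of a preliminary doubling. It instead proves non-escape of mass in one stroke from the Li--Yau variational inequality. Specifically, applying Theorem~\ref{T:source} with $a(t)=\tau+T-t$, $\gamma(t)=-n/4(\tau+T-t)$, specializing to the function $e^{\lambda f}$ with $\Gamma(f)\le 1$, and setting $\psi(\lambda,t)=\lambda^{-1}\log P_t(e^{\lambda f})$, the paper extracts the differential inequality $\partial_t\psi\ge -\sqrt{n/2t}$, hence after integration $f\le \psi(\lambda,t)+\sqrt{2nt}$. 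Taking $f(y)=-d(y,x)$ and using $e^{\lambda f}\le e^{-\lambda r}\mathbf 1_{B(x,r)^c}+\mathbf 1_{B(x,r)}$ gives $P_t(\mathbf 1_{B(x,r)})(x)\ge e^{-\lambda\sqrt{2nt}}-e^{-\lambda r}$, and choosing $\lambda=1/r$, $t=Ar^2$ gives a uniform positive lower bound $P_{Ar^2}(\mathbf 1_{B(x,r)})(x)\ge e^{-A\sqrt{2n}}-e^{-1}$ that depends only on $n$. The on-diagonal lower bound then follows from the Cauchy--Schwarz step you wrote, together with the parabolic Harnack inequality to pass between times $r^2$, $Ar^2$ and $2r^2$.

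So the status is: the skeleton of your proof matches the paper, but the load-bearing lemma is left as an unproven claim, and the route you gesture at (dyadic Gaussian tails plus a Vitali bootstrap) is not the one the cited authors use and, as you yourself note, it would need a form of doubling to run. You should replace that paragraph by the actual argument: prove the uniform bound $P_{Ar^2}(\mathbf 1_{B(x,r)})(x)\ge K(n)>0$ via the Li--Yau variational inequality applied to $P_t(e^{\lambda f})$, which requires no Gaussian upper bound and has no circularity to break.
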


Actually by suitably adapting the arguments given in the sequel, the previous result can be extended to the case of negative Ricci curvature as follows:
\begin{theorem}
Assume $\mathbf{Ric} \ge -K$ with $K \ge 0$. There exist positive  constants $C_1=C_1(n,K), C_2=C_2(n,K)$ such that for every $x\in \mathbb{M}$ and every $r>0$ one has
\[
\mu(B(x,2r))\le C_1e^{K r^2} \mu (B(x,r)).
\]
\end{theorem}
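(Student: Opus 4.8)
The plan is to obtain the doubling property from three facts already established for the heat semigroup under a lower Ricci bound: the conservativeness $\int_{\mathbb M} p(x,y,t)\,d\mu(y)=1$ (Theorem \ref{T:sc}); the Gaussian upper bound of Theorem \ref{T:ub}, in particular its on-diagonal consequence $p(x,x,t)\le C_1 e^{C_2Kt}\,\mu(B(x,\sqrt t))^{-1}$; and the parabolic Harnack inequality in the heat-kernel form of Corollary \ref{C:harnackheat}. I will carry the argument out in detail for $K=0$ (the theorem just stated), and then indicate the adaptation to $\mathbf{Ric}\ge -K$.

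\emph{Step 1 (mass-in-ball estimate).} First I would show there is $R_0=R_0(n)$ with $\int_{B(x,R_0\sqrt t)} p(x,y,t)\,d\mu(y)\ge \tfrac12$ for all $x\in\mathbb M$ and $t>0$. By conservativeness this is the uniform tail bound $\int_{\mathbb M\setminus B(x,R_0\sqrt t)}p(x,y,t)\,d\mu(y)\le \tfrac12$, which I would extract from Theorem \ref{T:ub}: decomposing $\mathbb M\setminus B(x,R_0\sqrt t)$ into the dyadic annuli $\{2^jR_0\sqrt t\le d(x,y)<2^{j+1}R_0\sqrt t\}$, the Gaussian factor $e^{-d(x,y)^2/((4+\epsilon)t)}$ gives a term $\le e^{-(2^jR_0)^2/(4+\epsilon)}$, summable over $j$. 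I expect this step to be the main obstacle, because of the volume factor $\mu(B(y,\sqrt t))^{-1/2}$ appearing in the bound of Theorem \ref{T:ub}: one must control $\int\mu(B(y,\sqrt t))^{-1/2}\,d\mu(y)$ over each annulus uniformly in $x$ and $t$, which requires a covering of the annulus at scale $\sqrt t$ of bounded multiplicity; this has to be argued out with care so as not to circularly invoke the doubling property being proved.

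\emph{Step 2 (on-diagonal lower bound).} From Step 1, Chapman--Kolmogorov and Cauchy--Schwarz,
\[
p(x,x,2t)=\int_{\mathbb M} p(x,y,t)^2\,d\mu(y)\ \ge\ \frac{1}{\mu(B(x,R_0\sqrt t))}\Big(\int_{B(x,R_0\sqrt t)} p(x,y,t)\,d\mu(y)\Big)^{2}\ \ge\ \frac{1}{4\,\mu(B(x,R_0\sqrt t))}.
\]
Note it is this slightly weaker form, with the ball $B(x,R_0\sqrt t)$ rather than $B(x,\sqrt{2t})$, that one uses below — so no further volume comparison is needed here.

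\emph{Step 3 (integration against conservativeness).} Fix $x$ and $r>0$. Choose $\lambda=\lambda(n)>0$ with $R_0\sqrt{\lambda/2}\le 1$ and set $\tau=(\lambda+1)r^2$. For $y\in B(x,2r)$, Corollary \ref{C:harnackheat} with source $x$, times $\lambda r^2<\tau$, and $\tau-\lambda r^2=r^2$, gives (for $K=0$, using $d(x,y)\le 2r$)
\[
p(x,x,\lambda r^2)\ \le\ p(x,y,\tau)\,\Big(\tfrac{\lambda+1}{\lambda}\Big)^{n/2} e^{\,d(x,y)^2/(4r^2)}\ \le\ c(n)^{-1}\,p(x,y,\tau),\qquad c(n)=\Big(\tfrac{\lambda}{\lambda+1}\Big)^{n/2}e^{-1}.
\]
Applying Step 2 at time $\lambda r^2=2\cdot\frac{\lambda r^2}{2}$ and using $R_0\sqrt{\lambda/2}\,r\le r$ yields $p(x,x,\lambda r^2)\ge \frac{1}{4\mu(B(x,r))}$, hence $p(x,y,\tau)\ge \frac{c(n)}{4\,\mu(B(x,r))}$ for every $y\in B(x,2r)$. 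Integrating over $B(x,2r)$ and using conservativeness,
\[
\frac{c(n)}{4}\cdot\frac{\mu(B(x,2r))}{\mu(B(x,r))}\ \le\ \int_{B(x,2r)} p(x,y,\tau)\,d\mu(y)\ \le\ \int_{\mathbb M} p(x,y,\tau)\,d\mu(y)=1,
\]
which is $\mu(B(x,2r))\le C(n)\,\mu(B(x,r))$ with $C(n)=4/c(n)$. For $\mathbf{Ric}\ge -K$ the same scheme applies once one tracks the $K$-dependent factors in Theorem \ref{T:ub} and Corollary \ref{C:harnackheat} and chooses the free parameters (the constant $\lambda$ and the time gap $\tau-\lambda r^2$) as suitable functions of $n$ and $K$; at time scale $\tau\sim r^2$ these factors contribute an exponential of the form $e^{CKr^2}$, and organizing the estimates so that the exponent does not exceed $Kr^2$ (absorbing the rest into the constant) gives $\mu(B(x,2r))\le C_1(n,K)e^{Kr^2}\mu(B(x,r))$.
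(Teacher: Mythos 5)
Your Steps 2 and 3 are correct and in fact a slightly streamlined variant of what the paper does: the paper establishes both an on‑diagonal lower bound $p(x,x,2r^2)\ge K^*/\mu(B(x,r))$ and an on‑diagonal upper bound $p(x,x,r^2)\le C(n)/\mu(B(x,r))$ (the latter from the Harnack inequality plus $P_t1\le 1$) and then compares them; you instead push the on‑diagonal lower bound out to $p(x,y,\tau)$ for all $y\in B(x,2r)$ via Harnack, integrate, and invoke conservativeness $\int p\,d\mu=1$ directly. That part is sound.

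The genuine gap is Step 1, and it is not a technicality you can ``argue out with care'': it is the crux of the whole proof. The tail bound
\[
\int_{\mathbb M\setminus B(x,R_0\sqrt t)}p(x,y,t)\,d\mu(y)\le \tfrac12
\]
cannot be extracted from the Gaussian upper bound of Theorem \ref{T:ub} without first controlling the volume factors $\mu(B(y,\sqrt t))^{-1/2}$ uniformly over the annuli, and the only tools available for that (bounded multiplicity of coverings at scale $\sqrt t$, comparison of $\mu(B(y,\sqrt t))$ with $\mu(B(x,\sqrt t))$ across distance $\sim\sqrt t$, or a bound $\mu(B(y,\sqrt t))\ge c\,t^{n/2}$) are all either consequences of, or equivalent in strength to, the doubling property you are trying to prove or to the Bishop--Gromov comparison the paper is expressly trying not to use. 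Note that the Cheeger--Yau lower bound does not save you either, since $\int_{B(x,R\sqrt t)}p(x,y,t)\,d\mu(y)\ge (4\pi t)^{-n/2}e^{-R^2/4}\mu(B(x,R\sqrt t))$ again requires an a priori lower bound on ball volumes, which fails, e.g., on cylinders.

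The paper sidesteps this circularity entirely. Its key input is a uniform, scale‑invariant lower bound
\[
P_{Ar^2}\bigl(\mathbf 1_{B(x,r)}\bigr)(x)\ \ge\ K,\qquad x\in\mathbb M,\ r>0,
\]
obtained \emph{not} from the Gaussian upper bound but from the Li--Yau--type variational inequality (Theorem \ref{T:source}) with the choices $a(t)=\tau+T-t$, $\gamma=-n/(4(\tau+T-t))$, applied to $e^{\lambda f}$ with $f(y)=-d(y,x)$. Writing $\psi(\lambda,t)=\lambda^{-1}\log P_t(e^{\lambda f})$ and optimizing over $\tau$ yields $\partial_t\psi\ge-\sqrt{n/2t}$, hence $\lambda f\le \lambda\psi(\lambda,t)+\lambda\sqrt{2nt}$; plugging in $e^{\lambda f}\le e^{-\lambda r}\mathbf 1_{B^c}+\mathbf 1_B$ and choosing $\lambda=1/r$, $t=Ar^2$ gives the displayed lower bound. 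This delivers, after your Cauchy--Schwarz step, the on‑diagonal lower bound $p(x,x,2r^2)\ge K^*/\mu(B(x,r))$ without ever touching the Gaussian upper bound or any volume comparison. I would encourage you to replace your Step 1 with this argument; your Steps 2 and 3 can then be kept as is, and the $K>0$ case follows by tracking the $K$‑dependent factors exactly as you outline.
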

For simplicity, we show the arguments in the case $K=0$ and let the reader work out the arguments in the case $K \neq 0$.

This result can be obtained from geometric methods as a consequence of the Bishop-Gromov comparison theorem. The proof we give instead only relies on the previous methods and has the advantage to generalize to a much larger class of operators than Laplace-Beltrami on Riemannian manifolds.

The key heat kernel estimate that leads to the doubling property is the following uniform and scale invariant lower bound on the heat kernel measure of balls.

\begin{theorem}
There exist an absolute constant $K>0$, and $A>0$, depending only on $n$, such that
\begin{equation*}
 P_{Ar^2}(\mathbf 1_{B(x,r)})(x) \ge K, \ \ \ \ \ x\in \mathbb{M}, r>0.
\end{equation*}
\end{theorem}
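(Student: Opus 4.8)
The plan is to combine stochastic completeness with the Gaussian upper bound and the scale--invariant consequences of the Li--Yau inequality established above; crucially, no comparison geometry is used, which is what lets the argument survive in the sub--Riemannian setting. Since here $\mathbf{Ric}\ge 0$, Theorem~\ref{T:sc} gives $P_t1\equiv 1$, so $\int_{\mathbb M}p(t,x,y)\,d\mu(y)=1$ and
\[
P_{Ar^2}(\mathbf 1_{B(x,r)})(x)=1-\int_{\mathbb M\setminus B(x,r)}p(Ar^2,x,y)\,d\mu(y).
\]
Hence it suffices to show that for a suitable $A=A(n)>0$ the tail integral on the right is at most $\tfrac12$ for every $x$ and every $r$; one then takes $K=\tfrac12$.

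Before attacking the tail I would record two scale--invariant a priori estimates. Combining the Cheeger--Yau lower bound $p(t,x,x)\ge(4\pi t)^{-n/2}$ with the diagonal case of the Gaussian upper bound (Theorem~\ref{T:ub}), $p(t,x,x)\le C_1\mu(B(x,\sqrt t))^{-1}$, one gets the Euclidean--type volume bound $\mu(B(x,R))\le C(n)R^{n}$ for all $x\in\mathbb M$, $R>0$. Integrating Cheeger--Yau against $d\mu(y)$ and using $P_t1=1$ gives $\int_{\mathbb M}e^{-d(x,y)^2/4t}\,d\mu(y)\le(4\pi t)^{n/2}$, and therefore $\int_{\mathbb M}e^{-\lambda d(x,y)^2/t}\,d\mu(y)\le C(n,\lambda)t^{n/2}$ for every $\lambda>0$. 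For the tail itself I would feed the full Gaussian bound of Theorem~\ref{T:ub} (say with $\epsilon=1$) into a dyadic decomposition $\mathbb M\setminus B(x,r)=\bigcup_{k\ge0}\bigl(B(x,2^{k+1}r)\setminus B(x,2^kr)\bigr)$: on the $k$-th annulus $d(x,y)\ge 2^kr$ makes the Gaussian factor at most $e^{-4^kr^2/5t}$, and the remaining local volume factor $\mu(B(y,\sqrt t))^{-1/2}$ is handled through Cauchy--Schwarz together with a Vitali covering of the annulus by balls of radius $\sqrt t$, which reduces matters to the integral $\int_{\mathbb M}\mu(B(y,\sqrt t))^{-1}e^{-d(x,y)^2/5t}\,d\mu(y)$, controlled by $C(n)$ via the volume bound $\mu(B(\cdot,R))\le C(n)R^n$. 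Summing over $k$, the Gaussian decay $e^{-c4^k r^2/t}$ beats the polynomial growth of the volume ratios, and finally setting $t=Ar^2$ produces a bound of the form $C(n)e^{-c(n)/A}$ for the tail, which is $\le\tfrac12$ once $A$ is chosen small. (Alternatively one can replace the use of Theorem~\ref{T:ub} by the Davies--Gaffney estimate coming from the integrated maximum principle, at the cost of essentially the same volume bookkeeping.)

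The main obstacle --- and the genuinely delicate point of the whole argument --- is to make every estimate uniform in $x$ without invoking the volume doubling property, which is exactly what is being proved and therefore unavailable. Concretely, the factor $\mu(B(y,\sqrt t))^{-1/2}$ cannot be integrated naively, since a priori distant $\sqrt t$-balls may have much smaller volume than $B(x,\sqrt t)$. What rescues the argument is precisely the a priori Euclidean volume bound $\mu(B(\cdot,R))\le C(n)R^n$ extracted from Cheeger--Yau and Theorem~\ref{T:ub}, combined with the Gaussian--moment estimate $\int e^{-\lambda d(x,y)^2/t}\,d\mu(y)\le C(n,\lambda)t^{n/2}$ and the covering argument at scale $\sqrt t$; the exponential decay built into the heat kernel bounds is what makes the crude volume comparisons affordable. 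Once the tail bound is in hand, the theorem follows with the dimensional constants $A$ and $K=\tfrac12$.
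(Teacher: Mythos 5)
Your plan reduces the statement to the tail bound $\int_{\mathbb M\setminus B(x,r)}p(Ar^2,x,y)\,d\mu(y)\le\tfrac12$ and then tries to establish that tail bound from the Gaussian upper bound of Theorem \ref{T:ub}. The paper's own proof goes an entirely different way: it applies the Li--Yau variational inequality (Theorem \ref{T:source}) to the function $e^{\lambda f}$ with $f(y)=-d(y,x)$, obtains a differential inequality for $\psi(\lambda,t)=\tfrac{1}{\lambda}\log P_t(e^{\lambda f})$, integrates it to get $\lambda f\le\lambda\psi(\lambda,t)+\lambda\sqrt{2nt}$, and then uses the pointwise bound $e^{\lambda f}\le e^{-\lambda r}\mathbf 1_{B^c}+\mathbf 1_B$ together with $\lambda=1/r$, $t=Ar^2$ to conclude $P_{Ar^2}(\mathbf 1_B)(x)\ge e^{-\sqrt{2nA}}-e^{-1}$. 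No heat--kernel upper bound and no volume estimate whatsoever enter the argument; this is precisely what makes it non--circular.

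Your version has a genuine gap, and it is exactly at the spot you yourself flag as delicate. The Gaussian upper bound available at this stage (Theorem \ref{T:ub}) carries the factor $\mu(B(x,\sqrt t))^{-1/2}\mu(B(y,\sqrt t))^{-1/2}$, and you need to integrate the $y$--factor over the annuli. Your claim is that $\int_{\mathbb M}\mu(B(y,\sqrt t))^{-1}e^{-d(x,y)^2/5t}\,d\mu(y)\le C(n)$ follows from the upper bound $\mu(B(\cdot,R))\le C(n)R^n$. But that inequality only tells you $\mu(B(y,\sqrt t))^{-1}\ge C(n)^{-1}t^{-n/2}$ --- it makes the integrand \emph{larger}, not smaller. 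To bound the integral from above you would need a uniform \emph{lower} bound on $\mu(B(y,\sqrt t))$, i.e. a reverse volume comparison, and the only source of such a bound in the paper is the volume doubling property, which is derived \emph{from} this very theorem. The Vitali covering trick does not escape this: counting how many disjoint balls of radius $\sqrt t/4$ fit in an annulus requires again a lower volume bound for each ball. The alternative you mention, replacing Theorem \ref{T:ub} by the integrated maximum principle (Davies--Gaffney), has the same obstruction, since the on--diagonal factor $p(2t,x,x)^{1/2}\le C_1^{1/2}\mu(B(x,\sqrt t))^{-1/2}$ reappears and would again need to be matched against $\mu(B(x,2^{k+1}r))^{1/2}$; without a two--sided volume comparison there is no uniform cancellation.

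It may help to see that even the most optimistic short cut fails: the Cheeger--Yau bound gives directly
\begin{equation*}
P_{Ar^2}(\mathbf 1_{B(x,r)})(x)\ \ge\ \frac{e^{-1/4A}}{(4\pi A r^2)^{n/2}}\,\mu(B(x,r)),
\end{equation*}
which would finish the proof \emph{if} we knew $\mu(B(x,r))\ge c_n r^n$. But on $\mathbb R\times S^1$, for example, $\mu(B(x,r))\sim r$ for large $r$, so no such bound holds, and this route collapses. This is a good illustration of why the theorem is genuinely subtle: it is essentially the on--diagonal lower bound in disguise, and all naive heat--kernel estimates leave behind an uncontrolled volume factor. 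The paper's exponential argument is specifically designed to avoid ever producing such a factor.
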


\begin{proof}
We first recall the following result that was proved in a previous section: Let $a \in C^1([0,T],[0,\infty))$ and  $\gamma \in C((0,T),\R)$.  Given $f \ge 0$, which is bounded and such that $\sqrt{f}$ is Lipschitz, we have
\begin{align*}
  & a(T) P_T \left(  f \Gamma (\ln f) \right) -a(0)(P_{T} f) \Gamma (\ln P_{T}f)
 \\
 \ge &  \int_0^T \left(a'+2\rho a -\frac{4a\gamma}{n} \right)\Phi (s)  ds +\left(\frac{4}{n}\int_0^T a\gamma ds\right)LP_{T} f -\left(\frac{2 }{n}\int_0^T a\gamma^2ds\right)P_T f.
\end{align*}

We choose
\[
a(t)=\tau+T-t,
\]
\[
\gamma(t)=-\frac{n}{4(\tau+T-t)} 
\]
where  $\tau >0$ will later be optimized. Noting that we presently have
\[
a' = 1,\ \ \ a\gamma = - \frac{n}{4},\ \ \ \ a\gamma^2 = \frac{n^2}{16(\tau + T - t)^2},
\]
we obtain the inequality
\begin{equation*}
\tau P_T(f \Gamma(\ln f)) -(T+\tau) P_T f \Gamma(\ln P_T f)
\ge -T L P_T f -\frac{n}{8} \ln \left( 1+\frac{T}{\tau}\right) P_T f
\end{equation*}
In what follows we consider a bounded function $f$ on $\mathbb{M}$ such that 
$\Gamma(f) \le 1$ almost everywhere on $\mathbb{M}$.  For any $\lambda \in \mathbb R$
we consider the function $\psi$ defined by 
\[
\psi(\lambda,t) = \frac{1}{\lambda} \log P_t(e^{\lambda f}), \ \ \ \text{or alternatively}\ \ \ P_t(e^{\lambda f}) = e^{\lambda \psi}.
\]
Notice that Jensen's inequality gives $\lambda \psi \ge  \lambda P_t f,$ and so we have $P_t f \le \psi.$

We now apply the previous inequality to the function $e^{\lambda f}$, obtaining
\begin{align*}
 \lambda^2 \tau P_T\left(e^{\lambda f} \Gamma(f)\right) - \lambda^2 (T+\tau) e^{\lambda \psi} \Gamma(\psi) \ge - T L P_T(e^{\lambda f}) - \frac{n}{8} e^{\lambda \psi} \ln\left(1+\frac{T}{\tau}\right).
\end{align*}
Keeping in mind that 
$\Gamma(f) \le 1$, we see that  $P_T(e^{\lambda f} \Gamma(f)) \le  e^{\lambda \psi}.$
Using this observation in combination with the fact that
\[
L \left(P_t (e^{\lambda f})\right) = \frac{\partial}{\partial t} \left(P_t (e^{\lambda f})\right) = \frac{\partial e^{\lambda \psi}}{\partial t} = \lambda e^{\lambda \psi} \frac{\partial \psi}{\partial t} ,
\]
and switching notation from $T$ to $t$, we infer
\begin{align*}
 \lambda^2 \tau  \ge \lambda^2 (t+\tau) e^{\lambda \psi} \Gamma(\psi)- \lambda t \frac{\partial \psi}{\partial t} - \frac{n}{8}  \ln\left(1+\frac{t}{\tau}\right).
\end{align*}
The latter inequality finally gives
\begin{equation}\label{ine3}
\frac{\partial \psi}{\partial t} \ge - \frac{\lambda}{t}\left(\tau + \frac{n}{8\lambda^2} \ln\left(1+\frac{t}{\tau}\right)\right)\ge - \frac{\lambda}{t}\left(\tau + \frac{nt}{8\lambda^2\tau} \right).
\end{equation}
We now optimize the right-hand side of the inequality with respect to $\tau$. We notice explicitly that the maximum value of the right-hand side is attained at
\[
\tau_0 = \sqrt{\frac{nt}{8 \lambda^2} }.
\]
We find therefore
\[
\frac{\partial \psi}{\partial t} \ge -\sqrt{\frac{n}{2t} }
\]
We now integrate the inequality  between $s$ and $t$, obtaining
\[
\psi(\lambda,s) \le \psi(\lambda,t)   +\sqrt{ \frac{n}{2}} \int_{s}^t   \frac{d\tau}{\sqrt{\tau}}.
\]
 We infer then
\begin{equation*}
P_s(\lambda f) \le  \lambda \psi(\lambda,t)  + \lambda \sqrt{ 2nt}.
\end{equation*}
Letting $s\to 0^+$ we conclude
\begin{equation}\label{ine5}
\lambda f \le  \lambda \psi(\lambda,t)  + \lambda \sqrt{ 2nt}.
\end{equation}

At this point we   let $B = B(x,r) = \{x\in \mathbb{M}\mid d(y,x)<r\}$, and consider the function $f(y) = - d(y,x)$. Since we clearly have
 \[
 e^{\lambda f} \le e^{-\lambda r} \mathbf 1_{B^c} + \mathbf 1_B,
 \]
 it follows that for every $t>0$ one has
 \[
 e^{\lambda \psi(\lambda,t)(x)}  = P_t(e^{\lambda f})(x) \le e^{-\lambda r} + P_t(\mathbf 1_B)(x).
 \]
 This gives the lower bound
 \[
  P_t(\mathbf 1_B)(x) \ge  e^{\lambda \psi(\lambda,t)(x)} - e^{-\lambda r} .
  \]
To estimate the first term in the right-hand side of the latter inequality, we use the previous estimate  which gives
\[
 P_{t}(\mathbf 1_B)(x) \ge  e^{-\lambda \sqrt{2nt}} - e^{-\lambda r}.
 \]
To make use of this estimate, we now choose $\lambda = \frac{1}{r}$, $t = Ar^2$, obtaining
\[
 P_{Ar^2}(\mathbf 1_B)(x) \ge  e^{-A\sqrt{2n}} - e^{-1}.
\]
The conclusion follows then easily.
\end{proof}

We now turn to the proof of the volume doubling property. We first recall the following basic result which is a straightforward consequence of the Li Yau inequality.

\begin{corollary}\label{C:harnackheat}
Let $p(x,y,t)$ be the heat kernel on $\bM$. For every $x,y, z\in
\bM$ and every $0<s<t<\infty$ one has
\[
p(x,y,s) \le p(x,z,t) \left(\frac{t}{s}\right)^{\frac{n}{2}}
\exp\left( \frac{d(y,z)^2}{4(t-s)} \right).
\]
\end{corollary}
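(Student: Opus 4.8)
The plan is to deduce this heat-kernel Harnack inequality from the function-level parabolic Harnack inequality of Theorem~\ref{T:harnack}, specialized to the present setting of non-negative Ricci curvature (take $K=0$), which reads
\[
P_s f(y) \le P_t f(z)\left(\frac{t}{s}\right)^{\frac n2}\exp\left(\frac{d(y,z)^2}{4(t-s)}\right),\qquad f\in C_0^\infty(\mathbb{M}),\ f\ge 0,\ 0<s\le t.
\]
The bridge between the two statements is the semigroup property together with the hypoellipticity of $L-\partial_t$, exactly as in the proof of the $K>0$ version of Corollary~\ref{C:harnackheat} given earlier.

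First I would fix $\tau>0$ and a point $x\in\mathbb{M}$. By hypoellipticity of $L-\partial_t$, the function $p(x,\cdot,\cdot+\tau)$ is smooth on $\mathbb{M}\times(-\tau,\infty)$, and the semigroup property gives $p(x,y,s+\tau)=P_s(p(x,\cdot,\tau))(y)$ and $p(x,z,t+\tau)=P_t(p(x,\cdot,\tau))(z)$. One cannot apply Theorem~\ref{T:harnack} directly to $P_\bullet(p(x,\cdot,\tau))$, since $p(x,\cdot,\tau)$ need not be compactly supported, so I would instead use the localizing sequence $h_n\in C_0^\infty(\mathbb{M})$, $0\le h_n\le 1$, $h_n\nearrow 1$ provided by completeness (Proposition~\ref{L:exhaustion}). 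Since $h_n\, p(x,\cdot,\tau)\in C_0^\infty(\mathbb{M})$ and is non-negative, Theorem~\ref{T:harnack} with $K=0$ yields
\[
P_s\big(h_n\, p(x,\cdot,\tau)\big)(y)\le P_t\big(h_n\, p(x,\cdot,\tau)\big)(z)\left(\frac{t}{s}\right)^{\frac n2}\exp\left(\frac{d(y,z)^2}{4(t-s)}\right).
\]

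Next I would let $n\to\infty$: since $p(x,\cdot,\tau)\ge 0$ and $h_n\,p(x,\cdot,\tau)\nearrow p(x,\cdot,\tau)$, Beppo Levi's monotone convergence theorem gives $P_s(h_n\, p(x,\cdot,\tau))(y)\nearrow P_s(p(x,\cdot,\tau))(y)=p(x,y,s+\tau)$, and likewise for the right-hand side, so that
\[
p(x,y,s+\tau)\le p(x,z,t+\tau)\left(\frac{t}{s}\right)^{\frac n2}\exp\left(\frac{d(y,z)^2}{4(t-s)}\right).
\]
Finally, letting $\tau\to 0^+$ and invoking the joint continuity of $(x,y,t)\mapsto p(x,y,t)$ on $\mathbb{M}\times\mathbb{M}\times(0,\infty)$ established earlier, I obtain the claimed inequality. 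I do not expect a genuine obstacle here: the only points requiring care are the use of the truncation $h_n$ to legitimately apply Theorem~\ref{T:harnack}, and the verification that the monotone limits pass correctly on both sides, which follows from positivity of the heat kernel and monotone convergence.
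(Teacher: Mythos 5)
Your argument is correct and is essentially the same proof the paper gives for the general (Ricci $\ge -K$) version of this corollary, specialized to $K=0$: fix $\tau>0$ and $x$, use hypoellipticity and the semigroup property to identify $p(x,\cdot,\cdot+\tau)$ as $P_\bullet(p(x,\cdot,\tau))$, truncate by the localizing sequence $h_n$ so that Theorem \ref{T:harnack} applies, pass to the limit $n\to\infty$ by Beppo Levi's theorem, and finally let $\tau\to 0$. The paper itself just cites the earlier proof and calls this instance a straightforward consequence of the Li-Yau inequality, which your write-up spells out correctly.
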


We are now in position to prove the volume doubling property.

From the semigroup property and the symmetry of the heat kernel we
have for any $y\in \bM$ and $t>0$
\[ p(y,y,2t) = \int_\bM  p(y,z,t)^2 d\mu(z).
\]

Consider now a function $h\in C^\infty_0(\bM)$ such that $0\le h\le
1$, $h\equiv 1$ on $B(x,\sqrt{t}/2)$ and $h\equiv 0$ outside
$B(x,\sqrt t)$. We thus have
\begin{align*}
P_t h(y) & = \int_\bM p(y,z,t) h(z) d\mu(z) \le \left(\int_{B(x,\sqrt t)}
p(y,z,t)^2 d\mu(z)\right)^{\frac{1}{2}} \left(\int_\bM h(z)^2
d\mu(z)\right)^{\frac{1}{2}}
\\
& \le p(y,y,2t)^{\frac{1}{2}} \mu(B(x,\sqrt t))^{\frac{1}{2}}.
\end{align*}
If we take $y=x$, and $t =r^2$, we obtain
\begin{equation}\label{ine7}
P_{r^2} \left(\mathbf 1_{B(x,r)}\right)(x)^2 \le P_{r^2} h(x)^2 \leq
p(x,x,2r^2)\ \mu(B(x,r)).
\end{equation}
At this point we use the crucial previous theorem, which gives for some $0<A = A(n)<1$
\[
 P_{Ar^2}(\mathbf 1_{B(x,r)})(x) \ge K, \ \ \ \ \ x\in \mathbb{M}, r>0.
\]
Combining the latter inequality with the Harnack inequality and with \eqref{ine7}, we obtain the following 
on-diagonal lower bound
\begin{equation}\label{odlb}
p(x,x,2r^2) \ge \frac{K^*}{\mu(B(x,r))},\ \ \ \ \ x\in \bM,\ r>0.
\end{equation}
Applying Corollary \ref{C:harnackheat} to $(y,t)\to p(x,y,t)$
for every $y\in B(x,\sqrt t)$ we find
\[
p(x,x,t) \le  C(n) p(x,y,2t).
\]
Integration over $B(x,\sqrt t)$ gives
\[
p(x,x,t)\mu(B(x,\sqrt t)) \le C(n) \int_{B(x,\sqrt
t)}p(x,y,2t)d\mu(y) \le C(n),
\]
where we have used $P_t1\le 1$. Letting $t = r^2$, we obtain from this the on-diagonal upper
bound \begin{equation}\label{odub} p(x,x,r^2) \le
\frac{C(n)}{\mu(B(x,r))}.
\end{equation}
Combining \eqref{odlb} with \eqref{odub} we finally obtain
\[
\mu(B(x,2r)) \le \frac{C}{p(x,x,4r^2)} \le \frac{C^*}{p(x,x,2r^2)}
\le C^{**} \mu(B(x,r)),
\]
where we have used once more Corollary \ref{C:harnackheat}.
which gives
\[
\frac{p(x,x,2r^2)}{p(x,x,4r^2)}\le C.
\]

\subsection{Upper and lower Gaussian bounds for the heat kernel}

In this short  section, as in the previous one,  we consider a complete and $n$-dimensional  Riemannian manifold $(\mathbb{M},g)$ with non negative Ricci curvature. 
The volume doubling property that was proved is closely related to  sharp lower and upper Gaussian bounds that are due to P. Li and S.T. Yau.
We first record a basic consequence of the volume doubling property whose proof is let to the reader.

\begin{theorem}
Let $C>0$ be the constant such that for every $x \in \mathbb{M}$, $R>0$,
\[
\mu(B(x,2R)) \le C \mu (B(x,R)).
\]
Let $Q = \log_2 C$. For any $x\in \mathbb{M}$ and $r>0$ one has
\[
\mu(B(x,tr)) \ge C^{-1} t^{Q} \mu(B(x,r)),\ \ \ 0\le t\le 1.
\]
\end{theorem}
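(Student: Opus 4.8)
The plan is to convert the continuous scaling factor $t\in[0,1]$ into an integer number of applications of the doubling hypothesis. First I would dispose of the degenerate cases: if $C=1$ then $Q=0$ and the asserted inequality is trivial, and if $t=0$ it reduces to $\mu(B(x,0))\ge 0$. So assume henceforth that $C>1$ (hence $Q=\log_2 C>0$) and $0<t\le 1$.

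The key bookkeeping step is to pick the unique integer $k\ge 0$ with
\[
2^{-k-1}<t\le 2^{-k},\qquad\text{equivalently}\qquad 2^{k}\le \frac1t<2^{k+1}.
\]
From $1/t<2^{k+1}$ we get $r=\frac{tr}{t}\le 2^{k+1}(tr)$, so $B(x,r)\subset B(x,2^{k+1}tr)$ and therefore $\mu(B(x,r))\le\mu(B(x,2^{k+1}tr))$. Now I would iterate the doubling inequality $k+1$ times, starting from radius $tr$, to obtain $\mu(B(x,2^{k+1}tr))\le C^{\,k+1}\mu(B(x,tr))$. Combining the two displays gives
\[
\mu(B(x,tr))\ge C^{-(k+1)}\,\mu(B(x,r)).
\]

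To finish, rewrite the constant: $C^{-(k+1)}=C^{-1}C^{-k}=C^{-1}2^{-kQ}=C^{-1}\bigl(2^{-k}\bigr)^{Q}$. Since $2^{k}\le 1/t$ we have $2^{-k}\ge t$, and because $Q>0$ this yields $\bigl(2^{-k}\bigr)^{Q}\ge t^{Q}$. Hence
\[
\mu(B(x,tr))\ge C^{-1}\bigl(2^{-k}\bigr)^{Q}\mu(B(x,r))\ge C^{-1}t^{Q}\mu(B(x,r)),
\]
which is the claim. There is no real obstacle here; the only point requiring care is the choice of $k$, which must simultaneously satisfy $1/t<2^{k+1}$ (used in the covering inclusion) and $2^{-k}\ge t$ (used in the final exponent comparison), and the stated choice does exactly that.
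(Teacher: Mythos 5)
Your proof is correct and is the standard dyadic iteration argument that the paper has in mind; note that the paper explicitly leaves this theorem as an exercise ("whose proof is let to the reader"), so there is no authorial proof to compare against. The one small imprecision is the claim that the case $C=1$ is "trivial": with $C=1$ and $Q=0$ the assertion reads $\mu(B(x,tr))\ge \mu(B(x,r))$, which reversed monotonicity is not automatic — it holds only because $C=1$ together with $B(x,R)\subset B(x,2R)$ forces $\mu(B(x,\cdot))$ to be constant, so it does require a (one-line) argument rather than being vacuous. The main case $C>1$, $0<t\le 1$ is handled cleanly: the choice $2^{-k-1}<t\le 2^{-k}$ simultaneously gives the inclusion $B(x,r)\subset B(x,2^{k+1}tr)$ and the exponent bound $(2^{-k})^{Q}\ge t^{Q}$, and $k+1$ iterations of doubling deliver $\mu(B(x,tr))\ge C^{-(k+1)}\mu(B(x,r))=C^{-1}(2^{-k})^{Q}\mu(B(x,r))\ge C^{-1}t^{Q}\mu(B(x,r))$.
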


We are now in position to prove the main result of the section.
\begin{theorem}\label{T:gb}
For any $0<\ve <1$
there exists a constant $C = C(n,\ve)>0$, which tends to $\infty$ as $\ve \to 0^+$, such that for every $x,y\in \mathbb{M}$ and $t>0$ one has
\[
\frac{C^{-1}}{\mu(B(x,\sqrt
t))} \exp
\left(-\frac{ d(x,y)^2}{(4-\ve)t}\right)\le p(x,y,t)\le \frac{C}{\mu(B(x,\sqrt
t))} \exp
\left(-\frac{d(x,y)^2}{(4+\ve)t}\right).
\]
\end{theorem}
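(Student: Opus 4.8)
The plan is to deduce both inequalities from ingredients already established: the rough (two-ball) Gaussian upper bound of Theorem~\ref{T:ub} with $K=0$, the volume doubling property, the on-diagonal lower bound \eqref{odlb}, and the parabolic Harnack inequality for the heat kernel in the form of Corollary~\ref{C:harnackheat}. No genuinely new estimate is needed; the work is to convert the symmetric two-ball expression into the stated one-ball form, and to trade polynomial factors against Gaussian decay.

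\emph{Upper bound.} Fix $0<\ve<1$ and choose $\ve_0\in(0,\ve)$. Theorem~\ref{T:ub} with $K=0$ gives
\[
p(x,y,t)\le \frac{C_1}{\mu(B(x,\sqrt t))^{1/2}\mu(B(y,\sqrt t))^{1/2}}\,\exp\!\left(-\frac{d(x,y)^2}{(4+\ve_0)t}\right).
\]
Since $B(x,\sqrt t)\subset B(y,d(x,y)+\sqrt t)$, iterating the volume doubling property yields $\mu(B(y,\sqrt t))\ge c\bigl(1+d(x,y)/\sqrt t\bigr)^{-Q}\mu(B(x,\sqrt t))$ with $Q=\log_2 C$, so that
\[
\frac{1}{\mu(B(x,\sqrt t))^{1/2}\mu(B(y,\sqrt t))^{1/2}}\le \frac{c^{-1/2}\bigl(1+d(x,y)/\sqrt t\bigr)^{Q/2}}{\mu(B(x,\sqrt t))}.
\]
Finally, any power is dominated by a Gaussian: there is a constant $C=C(n,\ve)$, with $C\to\infty$ as $\ve\to0^+$, such that $(1+s)^{Q/2}e^{-s^2/(4+\ve_0)}\le C\,e^{-s^2/(4+\ve)}$ for all $s\ge0$; applying this with $s=d(x,y)/\sqrt t$ gives the stated upper bound.

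\emph{Lower bound.} Rewriting \eqref{odlb} with $t=2r^2$ and using $B(x,\sqrt{t/2})\subset B(x,\sqrt t)$ gives $p(x,x,s)\ge c/\mu(B(x,\sqrt s))$, hence also $p(x,x,s)\ge c/\mu(B(x,\sqrt t))$ whenever $s\le t$. Now set $s=\tfrac{\ve}{4}t<t$ and apply Corollary~\ref{C:harnackheat} with source and target points $x,x,y$ and times $s<t$:
\[
p(x,x,s)\le p(x,y,t)\Bigl(\tfrac{t}{s}\Bigr)^{n/2}\exp\!\left(\frac{d(x,y)^2}{4(t-s)}\right).
\]
With $s=\tfrac{\ve}{4}t$ one has $4(t-s)=(4-\ve)t$ and $(s/t)^{n/2}=(\ve/4)^{n/2}$, so rearranging and using the on-diagonal bound,
\[
p(x,y,t)\ge \Bigl(\tfrac{\ve}{4}\Bigr)^{n/2}p(x,x,s)\,\exp\!\left(-\frac{d(x,y)^2}{(4-\ve)t}\right)\ge \frac{c\,(\ve/4)^{n/2}}{\mu(B(x,\sqrt t))}\exp\!\left(-\frac{d(x,y)^2}{(4-\ve)t}\right),
\]
which is the asserted lower bound, the constant blowing up as $\ve\to0^+$ because of the factor $(\ve/4)^{n/2}$.

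\emph{Main obstacle.} Nothing deep remains: the heavy machinery --- Li--Yau, the parabolic Harnack inequality, the heat-kernel lower bound coming from $P_{Ar^2}(\mathbf 1_{B(x,r)})(x)\ge K$, and volume doubling --- is already in place. The only genuine care points are the uniformity of constants (the polynomial-versus-Gaussian trade in the upper bound and the small intermediate time $s=\ve t/4$ in the lower bound must produce constants depending only on $n$ and $\ve$, blowing up as $\ve\to0$, and independent of $x,y,t$) and applying the doubling property in the correct direction so as to bound $\mu(B(y,\sqrt t))$ from below by $\mu(B(x,\sqrt t))$ up to a controlled polynomial factor in $d(x,y)/\sqrt t$.
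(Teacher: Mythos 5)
Your proof is correct and follows essentially the same route as the paper: Theorem~\ref{T:ub} (with $K=0$) combined with the iterated volume-doubling property and a polynomial-versus-Gaussian trade for the upper bound, and the on-diagonal lower bound \eqref{odlb} combined with the Harnack corollary for the lower bound. Your choice of intermediate time $s=\tfrac{\ve}{4}t$ in the lower bound is in fact slightly cleaner than the paper's $s=\ve t$ (which gives $(4-4\ve)t$ and then requires a relabeling of $\ve$ to reach the stated $(4-\ve)t$).
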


\begin{proof}
We begin by establishing the lower bound. First, from the Harnack inequality we obtain for all $y \in \mathbb{M}$, $t>0$, and every $0<\ve <1$,
\begin{align*}
p(x,y,t)\ge p(x,x,\ve t)  \ve^\frac{n}{2} \exp\left( -\frac{d(x,y)^2}{(4-\ve)t}\right).
\end{align*}
We thus need to estimate $p(x,x,\ve t)$ from below. But this has already been done in the proof of the volume doubling property where we established:
\[
p(x,x,\ve t) \ge \frac{C^*}{\mu(B(x,\sqrt{\ve/2} \sqrt t))},\ \ \ \ \ x\in \mathbb{M},\ t>0.
\]
On the other hand, since $\sqrt{\ve/2}<1$, by the trivial inequality $\mu(B(x,\sqrt{\ve/2} \sqrt t)) \le \mu(B(x,\sqrt t))$, we conclude
\[
p(x,y,t) \geq \frac{C^*}{ \mu(B(x,\sqrt t))}  \ve^\frac{n}{2} \exp\left( -\frac{d(x,y)^2}{(4-\ve)t}\right).
\]
This proves the Gaussian lower bound.

For the Gaussian upper bound, we first observe that the following upper bound was proved in a previous section:
$$
p(x,y,t)\le \frac{C}{\mu(B(x,\sqrt
t))^{\frac{1}{2}} \mu(B(y,\sqrt
t))^{\frac{1}{2}}} \exp
\left(-\frac{d(x,y)^2}{(4+\ve')t}\right).
$$
At this point, by the triangle inequality and the volume doubling property we find.
\beas
\mu(B(x,\sqrt{ t})) &\leq& \mu(B(y,d(x,y)+\sqrt{ t}))\\
                 &\leq& C_1 \mu(B(y,\sqrt{ t})) \left(\frac {d(x,y)+\sqrt{ t}}{\sqrt t} \right)^Q.
\eeas
with $Q=\log_2 C$, where $C$ is the doubling constant.
This gives
$$
\frac{1}{\mu(B(y,\sqrt{ t}))}\leq \frac{C_1}{\mu(B(x,\sqrt{ t}))} \left(\frac {d(x,y)}{\sqrt{ t}}+1 \right)^Q.
$$
Combining this with the above estimate we obtain
\[
p(x,y,t)\le \frac{C_1^{1/2}C}{\mu(B(x,\sqrt
t))}  \left(\frac {d(x,y)}{\sqrt{ t}}+1 \right)^{\frac{Q}{2}} \exp
\left(-\frac{d(x,y)^2}{(4+\ve')t}\right).
\]
If now $0<\ve<1$, it is clear that we can choose $0<\ve'<\ve$ such that 
\[
\frac{C_1^{1/2}C}{\mu(B(x,\sqrt
t))}  \left(\frac {d(x,y)}{\sqrt{ t}}+1 \right)^{\frac{Q}{2}} \exp
\left(-\frac{d(x,y)^2}{(4+\ve')t}\right) \le  \frac{C^*}{\mu(B(x,\sqrt
t))} \exp
\left(-\frac{d(x,y)^2}{(4+\ve)t}\right),
\]
where $C^*$ is a constant which tends to $\infty$ as $\ve \to 0^+$. The desired conclusion follows by suitably adjusting the values of both $\ve'$ and of the constant in the right-hand side of the estimate.

\end{proof}

To conclude, we finally mention without proof, what the previous arguments give in the case where $\mathbf{Ric} \ge -K$ with $K \ge 0$. We encourage the reader to do the proof by herself/himself as an exercise.

\begin{theorem}
Let us assume $\mathbf{Ric} \ge -K$ with $K \ge 0$. For any $0<\ve <1$
there exist  constants $C_1,C_2= C(n,K,\ve)>0$, such that for every $x,y\in \mathbb{M}$ and $t>0$ one has
\[
 p(x,y,t)\le \frac{C_1}{\mu(B(x,\sqrt
t))} \exp
\left(-\frac{d(x,y)^2}{(4+\ve)t} +KC_2 (t +d(x,y)^2)\right).
\]
\[
 p(x,y,t)\ge \frac{C^{-1}_1}{\mu(B(x,\sqrt
t))} \exp
\left(-\frac{d(x,y)^2}{(4-\ve)t} -KC_2 (t +d(x,y)^2)\right).
\]
\end{theorem}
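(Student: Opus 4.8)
The plan is to run the same chain of implications used for $K=0$ --- heat kernel measure lower bound $\Rightarrow$ on-diagonal bounds $\Rightarrow$ volume doubling $\Rightarrow$ two-sided Gaussian bounds --- keeping the curvature parameter $\rho=-K$ in every estimate. Two ingredients are already available in the needed form: the parabolic Harnack inequality (Theorem \ref{T:harnack}), and the off-diagonal upper bound
\[
p(x,y,t)\le \frac{C_1}{\mu(B(x,\sqrt t))^{1/2}\mu(B(y,\sqrt t))^{1/2}}\exp\left(C_2Kt-\frac{d(x,y)^2}{(4+\ve')t}\right)
\]
of Theorem \ref{T:ub}; likewise the volume doubling property was stated above as $\mu(B(x,2r))\le C_1e^{Kr^2}\mu(B(x,r))$. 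What remains is to symmetrize the volume factor to $\mu(B(x,\sqrt t))$ and to collect all curvature errors into a factor $\exp(\pm C_2K(t+d(x,y)^2))$.

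\emph{Upper bound.} Starting from Theorem \ref{T:ub} with exponent $(4+\ve')t$, I would replace $\mu(B(y,\sqrt t))$ by $\mu(B(x,\sqrt t))$. Since $B(x,\sqrt t)\subset B(y,d(x,y)+\sqrt t)$, iterating $\lceil\log_2(1+d(x,y)/\sqrt t)\rceil$ times the inequality $\mu(B(y,2r))\le C_1e^{Kr^2}\mu(B(y,r))$ and summing the geometric series $\sum_j 4^j$ produced by the (non scale invariant) factors $e^{K(2^j\sqrt t)^2}$ gives
\[
\mu(B(x,\sqrt t))\le C_1\left(1+\frac{d(x,y)}{\sqrt t}\right)^{Q}\exp\left(CK\big(t+d(x,y)^2\big)\right)\mu(B(y,\sqrt t)),\qquad Q=\log_2 C_1 .
\]
Inserting this, the polynomial factor $(1+d(x,y)/\sqrt t)^{Q/2}$ is absorbed into the Gaussian at the cost of replacing the exponent $(4+\ve')^{-1}$ by $(4+\ve)^{-1}$, which is legitimate once $\ve'<\ve$; this yields the stated upper bound.

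\emph{Lower bound.} Here I would apply the heat kernel Harnack inequality (Corollary \ref{C:harnackheat}, in the version valid under $\mathbf{Ric}\ge -K$) between times $\theta t$ and $t$, with $0<\theta<1$, in the form
\[
p(x,x,\theta t)\le p(x,y,t)\,\theta^{-n/2}\exp\left(\frac{d(x,y)^2}{4(1-\theta)t}+\frac{Kd(x,y)^2}{6}+\frac{nK}{4}(1-\theta)t\right),
\]
together with the on-diagonal lower bound $p(x,x,\theta t)\ge c\,e^{-CK\theta t}\,\mu(B(x,\sqrt{\theta t}))^{-1}$, the $K$-analogue of the estimate extracted in the volume doubling argument, and the trivial bound $\mu(B(x,\sqrt{\theta t}))\le\mu(B(x,\sqrt t))$. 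Choosing $\theta$ a small multiple of $\ve$ one has $\tfrac{1}{4(1-\theta)}<\tfrac{1}{4-\ve}$, and combining the two displays puts all curvature errors into $\exp(-C_2K(t+d(x,y)^2))$, which is the desired lower bound.

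\emph{What is genuinely new.} The on-diagonal lower bound used above rests on the $K$-version of the heat kernel measure lower bound $P_{Ar^2}(\mathbf 1_{B(x,r)})(x)\ge c\,e^{-c'Kr^2}$. I would derive it by rerunning that proof while retaining the term $2\rho a(s)\Phi(s)=-2Ka(s)\Phi(s)$ that was discarded in the $K=0$ case, estimating $\Phi(s)\le P_s(e^{\lambda f})$ via $\Gamma(f)\le1$, and re-optimizing in $\tau$; the extra term produces only an additive $O(K)$ loss in the differential inequality for $\partial_t\psi$, hence the factor $e^{-c'Kr^2}$ after the choices $\lambda=1/r$, $t=Ar^2$, $f=-d(\cdot,x)$. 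The main obstacle is precisely the bookkeeping forced by the loss of scale invariance when $K\neq0$: in the doubling iteration one must check that the accumulated factors sum to a clean $\exp(CK(d(x,y)+\sqrt t)^2)\le\exp(2CK(d(x,y)^2+t))$ rather than growing uncontrollably, and in the heat-measure step that the optimization in $\tau$ still isolates the principal term $\sqrt{n/(2t)}$ up to a controlled $K$-dependent error.
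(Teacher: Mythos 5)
Your overall plan is the intended one: the paper explicitly leaves this theorem as an exercise, noting that it follows from ``the previous arguments,'' and the two pieces you correctly identify as being available already in $K$-dependent form (the parabolic Harnack inequality of Theorem \ref{T:harnack} and the off-diagonal upper bound of Theorem \ref{T:ub}) are exactly the ones to reuse. The reduction of the upper bound to a doubling iteration that converts $\mu(B(y,\sqrt t))$ into $\mu(B(x,\sqrt t))$ while tracking the non-scale-invariant factors $e^{Kr^2}$ is the right move, and your identification of the $K$-version of the heat-kernel-measure lower bound as the single genuinely new ingredient is correct. However, your treatment of that ingredient has two problems.

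First, the bound ``$\Phi(s)\le P_s(e^{\lambda f})$'' is not the right estimate: $\Phi(s)=P_s\bigl((P_{T-s}e^{\lambda f})\,\Gamma(\ln P_{T-s}e^{\lambda f})\bigr)$, and the factor $\Gamma(\ln P_{T-s}e^{\lambda f})$ is controlled by the gradient bound $\sqrt{\Gamma(P_t g)}\le e^{Kt}\,P_t\sqrt{\Gamma(g)}$ (with $\rho=-K$) applied to $g=e^{\lambda f}$ and $\Gamma(f)\le 1$, giving $\Gamma(\ln P_{T-s}e^{\lambda f})\le \lambda^2 e^{2K(T-s)}$; the resulting estimate is $\Phi(s)\le \lambda^2 e^{2K(T-s)}\,P_T(e^{\lambda f})=\lambda^2 e^{2K(T-s)}e^{\lambda\psi(\lambda,T)}$, not $P_s(e^{\lambda f})$. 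Second, and more seriously, once this bound is fed into $\int_0^T\bigl(a'+2\rho a-\tfrac{4a\gamma}{n}\bigr)\Phi(s)\,ds$ with $a(s)=\tau+T-s$, the discarded term $-2K\int_0^T a(s)\Phi(s)\,ds$ contributes, after dividing by $\lambda e^{\lambda\psi}$ and optimizing in $\tau$, an extra term of order $K\lambda\,e^{2Kt}(\tau+t)$ in the lower bound for $\partial_t\psi$; integrating this from $0$ to $T=Ar^2$ with $\lambda=1/r$ produces $e^{cKr^2}$ \emph{inside} the exponent, so the lower bound one obtains is of the form $e^{-c_1-c_2 e^{cKr^2}}-e^{-1}$, which is negative for $Kr^2$ large rather than of the claimed form $c\,e^{-c'Kr^2}$. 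So ``the extra term produces only an additive $O(K)$ loss'' is only true in the regime $Kr^2\lesssim 1$. For $Kr^2\gg 1$ you need an extra step — for instance, apply the heat-kernel Harnack inequality (Corollary \ref{C:harnackheat}) with $y=z$ between times $Ar_0^2$ and $Ar^2$ where $r_0=1/\sqrt K$, together with the bound $P_{Ar_0^2}(\mathbf 1_{B(x,r_0)})(x)\ge c$ from the $Kr_0^2=1$ case and the inclusion $B(x,r_0)\subset B(x,r)$, to conclude $P_{Ar^2}(\mathbf 1_{B(x,r)})(x)\ge c\,(Kr^2)^{-n/2}e^{-cKr^2}\ge c'e^{-c''Kr^2}$. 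Without some such argument the crucial on-diagonal lower bound is not established uniformly in $r$.
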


\subsection{The Poincar\'e inequality on domains}

Let $(\mathbb{M},g)$ be a complete Riemannian manifold and $\Omega \subset \mathbb{M}$ be a non empty bounded set. Let $\mathcal{D}^\infty $ be the set of smooth functions $f \in C^\infty(\bar{\Omega})$ such that for every $g \in C^\infty(\bar{\Omega})$,
\[
\int_\Omega g Lf d\mu=-\int_\Omega \Gamma(f,g) d\mu.
\]
It is easy to see that $L$ is essentially self-adjoint on $\mathcal{D}^\infty $. Its Friedrichs extension, still denoted $L$,  is called the Neumann Laplacian on $\Omega$ and the semigroup it generates, the Neumann semigroup. If the boundary $\partial \Omega$ is smooth, then it is known from the Green's formula that
\[
\int_\Omega g Lf d\mu=-\int_\Omega \Gamma(f,g) d\mu+\int_{\partial \Omega} g Nf d\mu,
\]
where $N$ is the normal unit vector. As a consequence, $f \in \mathcal{D}^\infty$ if and only if $Nf=0$. However, we stress that no regularity assumption on the boundary $\partial \Omega$ is needed to define the Neumann Laplacian and the Neumann semigroup.

Since $\bar{\Omega}$ is compact, the Neumann semigroup is a compact operator and $-L$  has a discrete spectrum $0 =\lambda_0 <\lambda_1 \le \cdots$. We get then, the so-called Poincar\'e inequality on $\Omega$: For every $f \in C^\infty(\bar{\Omega})$, 
\[
\int_\Omega (f-f_\Omega)^2 d\mu \le \frac{1}{\lambda_1} \int_\Omega \Gamma(f) d\mu.
\]
Our goal is  to understand how the constant $\lambda_1$ depends on the size of the set $\Omega$. A first step in that direction was made by Poincar\'e himself in the Euclidean case.

\begin{theorem}
If $\Omega \subset \mathbb{R}^n$ is a bounded open convex set then for a smooth $f : \bar{ \Omega} \to \mathbb{R}$ with $\int_\Omega f(x) dx=0$, 
\[
\frac{C_n}{\mathbf{diam}(\Omega)^2} \int_\Omega f^2 (x)dx \le  \int_\Omega \| \nabla f (x) \|^2 dx.
\]
where $C_n$ is a constant depending on $n$ only.
\end{theorem}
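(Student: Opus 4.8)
The plan is to follow the bisection method of Payne and Weinberger, which reduces the $n$-dimensional inequality on a convex body to a one-dimensional weighted Poincaré inequality on an interval whose length is controlled by $\mathbf{diam}(\Omega)$. The whole argument is an induction on the dimension $n$, the base case $n=1$ being the elementary Neumann eigenvalue computation on an interval.

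First I would isolate the one-dimensional core. Suppose $\ell>0$ and $w:(0,\ell)\to(0,\infty)$ has the form $w=h^{n-1}$ with $h$ concave and positive; this is precisely the shape forced by the Brunn--Minkowski inequality applied to the cross-sectional $(n-1)$-volumes of a convex body, and such $w$ is in particular log-concave. The claim is that for $g\in C^1([0,\ell])$ with $\int_0^\ell g\,w\,dt=0$ one has $\int_0^\ell g^2 w\,dt\le \frac{\ell^2}{\pi^2}\int_0^\ell (g')^2 w\,dt$. This is a weighted Neumann spectral gap bound and can be proved by a Sturm--Liouville comparison against the constant weight (for which $\lambda_1=\pi^2/\ell^2$) together with the monotonicity properties of log-concave weights, or by a direct monotone-rearrangement argument. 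This yields $C_n=\pi^2$, in fact independent of $n$, but for the statement as written any explicit lower bound on the one-dimensional Neumann gap would suffice.

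Next I would set up the bisection. Given $\Omega$ convex and $f$ with $\int_\Omega f=0$: for a fixed point $p$, the map sending a unit normal $\nu$ to the $f$-mass of the half $\Omega\cap\{x\cdot\nu<p\cdot\nu\}$ is a continuous odd function on $S^{n-1}$, hence vanishes for some $\nu_0$; cutting $\Omega$ by the corresponding hyperplane gives two convex pieces, each still with $f$-mean zero and each of diameter at most $\mathbf{diam}(\Omega)$. Iterating, and choosing the cut points so as to shrink the width in the ``thick'' directions, one partitions $\Omega$ into finitely many convex pieces $\Omega_1,\dots,\Omega_N$, each with $\int_{\Omega_i}f=0$, such that every $\Omega_i$ is contained in a rectangular box with a single long edge of length $\le\mathbf{diam}(\Omega)$ and the other $n-1$ edges as short as we please. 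On a thin piece $\Omega_i$, slicing by hyperplanes orthogonal to the long edge and writing $f=\bar f_i(t)+(f-\bar f_i)$ with $\bar f_i(t)$ the cross-sectional average, the fluctuation term is controlled by the small width through the inductive hypothesis applied to the (tiny-diameter convex) cross-sections, while the term $\bar f_i$ is handled by the one-dimensional weighted inequality above, with interval length $\le\mathbf{diam}(\Omega)$ and weight the cross-sectional volume. Summing $\int_\Omega f^2=\sum_i\int_{\Omega_i}f^2$ over the partition and letting the pieces be refined gives the result.

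The main obstacle is the geometric bookkeeping of the bisection: one must show that finitely many well-chosen mean-preserving hyperplane cuts make every piece simultaneously thin in $n-1$ directions, and one must control the two error terms uniformly as the pieces are refined --- namely the cross-sectional fluctuations, and the flux contributions arising in $\frac{d}{dt}\int_{\Omega_i\cap\{x_1=t\}}f\,dy$ because the slices move with $t$ (these would vanish for a cylinder, and they are small precisely because the transverse width is small). The one-dimensional weighted Poincaré inequality with a log-concave weight, though classical, is the other point requiring genuine care. Finally I would remark that a softer argument --- representing $f(x)-f_\Omega=\frac{1}{|\Omega|}\int_\Omega(f(x)-f(y))\,dy$, integrating $\nabla f$ along the chords of $\Omega$ (using convexity) to get $|f(x)-f_\Omega|\le \frac{\mathbf{diam}(\Omega)^n\,|S^{n-1}|}{n|\Omega|}\int_\Omega|\nabla f(z)|\,|x-z|^{1-n}\,dz$, then applying Schur's test to the Riesz kernel --- proves the inequality immediately but with a constant that also involves $|\Omega|/\mathbf{diam}(\Omega)^n$, hence does not by itself give the purely dimensional constant asserted here.
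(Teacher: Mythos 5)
Your proposal is the Payne--Weinberger bisection argument, a genuinely different route from the paper's. The paper uses Poincar\'e's original direct method: since $\int_\Omega f=0$, rewrite $\frac{1}{\mu(\Omega)}\int_\Omega f^2$ as $\frac{1}{2\mu(\Omega)^2}\int_\Omega\int_\Omega(f(x)-f(y))^2$; represent $f(x)-f(y)=\int_0^1(x-y)\cdot\nabla f(tx+(1-t)y)\,dt$ (the segment lies in $\Omega$ by convexity); apply Cauchy--Schwarz to pull out $\mathbf{diam}(\Omega)^2$; and change variables $u=tx+(1-t)y$, observing that $\int_\Omega\mathbf 1_{t\Omega+(1-t)y}(u)\,dy\le\min\left(1,\tfrac{t^n}{(1-t)^n}\right)\mu(\Omega)$, which makes the volume factors cancel and leaves a finite, purely dimensional $t$-integral. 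This yields a non-sharp but dimension-only constant in a few lines, with no decomposition, no induction on $n$, and no one-dimensional lemma. Your route buys the sharp constant $\pi^2$ at the cost of the geometric bookkeeping and the weighted one-dimensional spectral-gap lemma for concave-power weights that you correctly flag as the real work. One correction to your closing aside: the direct chord-integration idea you dismiss does in fact produce a purely dimensional constant --- the paper's proof is exactly that idea, and it avoids the $\mu(\Omega)/\mathbf{diam}(\Omega)^n$ pollution you predict by keeping the square $(f(x)-f(y))^2$ and integrating over both $x$ and $y$ before estimating, rather than first averaging in $y$ to form $f-f_\Omega$ and then passing to absolute values and a Riesz kernel.
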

\begin{proof}
The argument of Poincar\'e is beautifully simple. 
\begin{align*}
\frac{1}{\mu( \Omega )}  \int_\Omega f^2 (x)dx  &=\frac{1}{2} \frac{1}{\mu( \Omega )}  \int_\Omega f^2 (x)dx+\frac{1}{2} \frac{1}{\mu( \Omega )}  \int_\Omega f^2 (y)dy  \\
  & =\frac{1}{2} \frac{1}{\mu( \Omega )^2} \int_\Omega \int_\Omega (f(x)-f(y))^2 dx dy.
\end{align*}
We now have
\[
f(x)-f(y)=\int_0^1 (x-y)\cdot \nabla f(tx +(1-t)y) dt,
\]
which implies
\[
 (f(x)-f(y))^2 \le \mathbf{diam}(\Omega)^2\int_0^1 \| \nabla f \|^2 (tx +(1-t)y) dt.
\]
By a simple change of variables, we see that
\begin{align*}
 \int_\Omega \int_\Omega  \| \nabla f \|^2 (tx +(1-t)y) dxdy &=\frac{1}{t^n}  \int_\Omega \int_{t \Omega+(1-t)y}   \| \nabla f \|^2 (u) dudy \\
  &=\frac{1}{t^n}  \int_\Omega \int_{ \Omega}  \mathbf{1}_{t \Omega+(1-t)y} (u) \| \nabla f \|^2 (u) dudy.
\end{align*}
Now, we compute
\[
\int_{ \Omega}  \mathbf{1}_{t \Omega+(1-t)y} (u) dy=\mu \left( \Omega \cap \frac{1}{1-t} (u-t\Omega) \right)\le \min \left( 1, \frac{t^n}{(1-t)^n} \right) \mu(\Omega).
\]
As a consequence we obtain
\[
\frac{1}{\mu( \Omega )}  \int_\Omega f^2 (x)dx \le \frac{ \mathbf{diam}(\Omega)^2}{2\mu( \Omega )}\int_0^1  \min \left( 1, \frac{t^n}{(1-t)^n} \right)\frac{dt}{t^n}  \int_\Omega \| \nabla f (x) \|^2 dx
\]
\end{proof}

It is known (Payne-Weinberger) that the optimal $C_n$ is $\pi^2$. 

In this Section, we extend the above inequality to the case of Riemannian manifolds with non negative Ricci curvature. The key point is a lower bound on the Neumann heat kernel of $\Omega$. From now on we assume that $\mathbf{Ric} \ge 0$ and consider an open set in $\mathbb{M}$ that has a smooth and  convex boundary in the sense the second fundamental form of $\partial \Omega$ is non negative. Due to the convexity of the boundary, all the results we obtained so far may be extended to the Neumann semigroup (see \cite{BV}). In particular, we have the following lower bound on the Neumann heat kernel:

\begin{theorem}
Let $p^N (x,y,t)$ be the Neumann heat kernel of $\Omega$. There exists a constant $C$ depending only on the dimension of $\mathbb{M}$ such that for every $t >0$, $x,y \in \mathbb{M}$,
\[
p^N (x,y,t) \ge \frac{C}{\mu( B(x,\sqrt{t}))} \exp \left(-\frac{d(x,y)^2}{3t}\right).
\]
\end{theorem}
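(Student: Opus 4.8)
The plan is to mimic, in the Neumann setting, the derivation of the Gaussian lower bound given in the proof of Theorem \ref{T:gb}: combine a scale-invariant on-diagonal lower bound with the parabolic Harnack inequality. Since $\partial\Omega$ is smooth and convex (its second fundamental form is nonnegative) and $\mathbf{Ric}\ge 0$, all the tools developed for the Laplace-Beltrami operator on a complete manifold — the Li-Yau inequality, the resulting parabolic Harnack inequality, the uniform lower bound $P^N_{Ar^2}(\mathbf 1_{B(x,r)})(x)\ge K$, and hence the volume doubling property — remain valid for the Neumann semigroup $(P^N_t)$ and the Neumann heat kernel $p^N$ (see \cite{BV}). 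In particular, exactly as in the proof of the volume doubling property (estimates \eqref{ine7} and \eqref{odlb}), one obtains the on-diagonal lower bound
\[
p^N(x,x,s) \ge \frac{C^*}{\mu(B(x,\sqrt{s}))},\qquad x\in\Omega,\ s>0,
\]
with $C^*=C^*(n)>0$; here one uses $\mu(B(x,\sqrt{s/2}))\le \mu(B(x,\sqrt s))$ to absorb the harmless factor coming from the $2r^2$-versus-$r^2$ bookkeeping.

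Next I would invoke the Neumann analogue of Corollary \ref{C:harnackheat} (with $K=0$): for $0<s<t$ and $x,y,z\in\Omega$,
\[
p^N(x,z,s)\le p^N(x,y,t)\left(\frac{t}{s}\right)^{n/2}\exp\left(\frac{d(y,z)^2}{4(t-s)}\right).
\]
Applying this with $z=x$ and $s=t/4$ gives $p^N(x,x,t/4)\le p^N(x,y,t)\,4^{n/2}\exp\!\big(d(x,y)^2/(3t)\big)$, since $4(t-t/4)=3t$. Rearranging and inserting the on-diagonal bound at scale $s=t/4$,
\[
p^N(x,y,t)\ \ge\ 4^{-n/2}\,p^N(x,x,t/4)\,\exp\!\left(-\frac{d(x,y)^2}{3t}\right)\ \ge\ \frac{4^{-n/2}C^*}{\mu(B(x,\sqrt{t/4}))}\,\exp\!\left(-\frac{d(x,y)^2}{3t}\right).
\]
Finally $\mu(B(x,\sqrt{t/4}))\le\mu(B(x,\sqrt t))$ yields the claim with $C=4^{-n/2}C^*$, a constant depending only on $\dim\mathbb{M}$.

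The only genuinely delicate point is the one I am assuming by reference to \cite{BV}: that the convexity of $\partial\Omega$ is precisely what lets the $\Gamma_2$/Bochner computations and the parabolic comparison theorem go through with Neumann boundary conditions, so that the Li-Yau inequality — and therefore the Harnack inequality and the ball lower bound $P^N_{Ar^2}(\mathbf 1_{B(x,r)})(x)\ge K$ — hold with no boundary term. One should also be careful that $d$ here is the intrinsic distance of $\Omega$ (which, for a convex $\Omega$, agrees with the Riemannian distance), since that is the metric for which $\Gamma(d)\le 1$ and along whose geodesics the Harnack chaining is performed; once that is pinned down the argument above is purely formal. Tracking constants is easy, since the exponent $3$ demanded by the statement is weaker than the sharp exponent $4$, so the crude choice $s=t/4$ already suffices.
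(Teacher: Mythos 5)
The paper itself gives no proof of this theorem: it is stated immediately after the remark that, because of the convexity of $\partial\Omega$ and $\mathbf{Ric}\ge 0$, all previously established results extend to the Neumann semigroup (with a citation to the reference for the Neumann version of the Li-Yau machinery). Your argument is precisely the natural way to produce the lower bound from that machinery, and it is correct: the on-diagonal bound $p^N(x,x,s)\ge C^*/\mu(B(x,\sqrt s))$ is exactly what the displays \eqref{ine7} and \eqref{odlb} give in the Neumann setting, and the Harnack corollary with $K=0$ applied between times $s=t/4$ and $t$ (for which $4(t-s)=3t$) produces exactly the exponent $-d(x,y)^2/(3t)$ demanded by the statement; the passage from $\mu(B(x,\sqrt{t/4}))$ to $\mu(B(x,\sqrt t))$ is just monotonicity, as you note. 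Your cautionary remarks — that the delicate point is the validity of the Bochner/Li-Yau argument under Neumann boundary conditions on a convex domain, and that $d$ must be the intrinsic distance of $\Omega$, which coincides with the ambient distance precisely by convexity — identify the same ingredients the paper is delegating to the reference. In short, you have supplied the derivation the paper leaves implicit, and it is consistent with the paper's intended route.
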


As we shall see, this implies the following Poincar\'e inequality:

\begin{theorem}
 For a smooth $f : \bar{ \Omega} \to \mathbb{R}$ with $\int_\Omega f  d\mu=0$, 
\[
\frac{C_n}{\mathbf{diam}(\Omega)^2} \int_\Omega f^2 d\mu \le  \int_\Omega \Gamma(f) d\mu.
\]
where $C$ is a constant depending on the dimension of $\mathbb{M}$ only.
\end{theorem}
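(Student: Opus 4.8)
The plan is to deduce the Poincaré inequality from a quantitative spectral gap estimate for the Neumann Laplacian at the time scale $t_0=\mathbf{diam}(\Omega)^2$, the gap itself being extracted from the Gaussian lower bound on the Neumann heat kernel $p^N$ stated just above.

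\textbf{Step 1 (uniform minorization of $p^N$ at scale $\mathbf{diam}(\Omega)^2$).} Set $t_0=\mathbf{diam}(\Omega)^2$. For $x,y\in\Omega$ one has $d(x,y)^2\le t_0$, so the Gaussian factor satisfies $\exp(-d(x,y)^2/(3t_0))\ge e^{-1/3}$. Moreover $x\in\Omega$ forces $\Omega\subseteq B(x,\mathbf{diam}(\Omega))$, and the convexity of $\Omega$ shows that the volume factor at this scale is comparable to $\mu(\Omega)$: a dilation of $\Omega$ towards $x$ by the ratio $r/\mathbf{diam}(\Omega)$ lies inside $B(x,r)\cap\Omega$, so at $r=\mathbf{diam}(\Omega)$ the relevant volume is exactly $\mu(\Omega)$. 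Feeding this into the lower bound for $p^N$, there is $\delta=\delta(n)\in(0,1)$ (necessarily $<1$ since $\int_\Omega p^N(x,\cdot,t_0)d\mu=1$) with $p^N(x,y,t_0)\ge \delta/\mu(\Omega)$ for all $x,y\in\Omega$.

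\textbf{Step 2 (one-time $L^2$ contraction on zero-mean functions).} Write $p^N(x,y,t_0)=\delta/\mu(\Omega)+\tilde k(x,y)$ with $\tilde k\ge 0$. Using the conservativeness $P^N_{t_0}1=1$ (valid since $\bar\Omega$ is compact) and the symmetry of the heat kernel, $\int_\Omega \tilde k(x,y)d\mu(y)=\int_\Omega \tilde k(x,y)d\mu(x)=1-\delta$. For $f\in C^\infty(\bar\Omega)$ with $\int_\Omega f\,d\mu=0$ the constant part integrates against $f$ to zero, hence $P^N_{t_0}f(x)=\int_\Omega \tilde k(x,y)f(y)d\mu(y)$; Cauchy--Schwarz in $y$ followed by integration in $x$, using the two mass identities above, yields $\|P^N_{t_0}f\|_2\le(1-\delta)\|f\|_2$.

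\textbf{Step 3 (from contraction to Poincaré).} Let $u(s)=\|P^N_sf\|_2^2$. Then $u'(s)=-2\int_\Omega \Gamma(P^N_sf)d\mu$, and $s\mapsto\int_\Omega \Gamma(P^N_sf)d\mu$ is non-increasing because its derivative equals $-2\int_\Omega (LP^N_sf)^2 d\mu\le 0$; therefore $u(0)-u(t_0)=2\int_0^{t_0}\int_\Omega \Gamma(P^N_sf)d\mu\,ds\le 2t_0\int_\Omega \Gamma(f)d\mu$. On the other hand Step 2 gives $u(0)-u(t_0)\ge\bigl(1-(1-\delta)^2\bigr)u(0)=\delta(2-\delta)\|f\|_2^2$. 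Combining these with $t_0=\mathbf{diam}(\Omega)^2$ gives $\int_\Omega f^2 d\mu\le \frac{2}{\delta(2-\delta)}\mathbf{diam}(\Omega)^2\int_\Omega \Gamma(f)d\mu$, i.e. the asserted inequality with $C_n=\delta(2-\delta)/2$, depending only on the dimension. The main obstacle is Step 1, namely bounding $\mu(B(x,\sqrt{t_0}))$ by $\mu(\Omega)$ uniformly in $x\in\Omega$: this is precisely where convexity (rather than just $\mathbf{Ric}\ge 0$) is essential, and a fully rigorous statement needs either the convention that the Neumann-heat-kernel volume is $\mu(B(x,\sqrt t)\cap\Omega)$ or the dilation estimate above combined with the volume doubling property of $\mathbb{M}$; everything else (conservativeness of the Neumann semigroup, spectral theory for its generator, monotonicity of the Dirichlet energy along the flow) is standard and carries over verbatim from the earlier sections.
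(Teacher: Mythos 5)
Your proof is correct and reaches the stated conclusion, with the same kernel-minorization input at $t_0=\mathbf{diam}(\Omega)^2$ that the paper uses, but the middle of the argument is genuinely different. The paper works with the variance functional $\int_\Omega [P^N_{R^2}(f^2)-(P^N_{R^2}f)^2]\,d\mu$: it is upper-bounded by $2R^2\int_\Omega\Gamma(f)\,d\mu$ using the interpolation identity along the flow, and lower-bounded by $C\int_\Omega(f-\bar f)^2\,d\mu$ using $p^N(x,y,R^2)\ge C/\mu(\Omega)$ together with the fact that the mean minimizes mean-square deviation. You instead extract a Doeblin-type $L^2$-contraction at time $t_0$ — writing $p^N(\cdot,\cdot,t_0)=\delta/\mu(\Omega)+\tilde k$, killing the constant against zero-mean $f$, and applying Cauchy-Schwarz to get $\|P^N_{t_0}f\|_2\le(1-\delta)\|f\|_2$ — and then compare $\|f\|_2^2-\|P^N_{t_0}f\|_2^2$ with $2t_0\int_\Omega\Gamma(f)\,d\mu$ via monotonicity of the Dirichlet energy. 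The skeletons are parallel (both ultimately compare the $L^2$-decay over $[0,R^2]$ with the Dirichlet energy and with a multiple of $\|f\|_2^2$, and both use that $t\mapsto\int_\Omega\Gamma(P^N_tf)\,d\mu$ is non-increasing), but your route produces the stronger intermediate statement of a quantitative spectral gap $\|P^N_{t_0}\|_{L^2_0\to L^2_0}\le 1-\delta$, which is reusable for exponential decay; the paper's variance computation is slightly shorter. Both proofs share the same delicacy in Step 1 about whether the Neumann kernel lower bound is stated with $\mu(B(x,\sqrt t))$ or $\mu(B(x,\sqrt t)\cap\Omega)$ — you correctly flag that the statement as written needs the latter reading (or an auxiliary comparison of $\mu(B(x,R))$ with $\mu(\Omega)$, which does not hold uniformly over all convex $\Omega$ of diameter $R$, e.g. thin slabs), and the paper silently assumes it.
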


\begin{proof}
We denote by $R$ the diameter of $\Omega$. From the previous lower bound on the Neumann kernel of $\Omega$, we have
\[
p^N (x,y,R^2) \ge \frac{C}{\mu( \Omega )},
\]
where $C$ only depends on $n$. Denote now by $P_t^N$ the Neumann semigroup. We have for $f \in \mathcal{D}^\infty$
\begin{align*}
P^N_{R^2} (f^2)-(P^N_{R^2} f)^2 =\int_0^{R^2} \frac{d}{dt}  P^N_{t} ((P^N_{R^2 -t} f)^2) dt.
\end{align*}
By integrating over $\Omega$, we find then,
\begin{align*}
\int_\Omega P^N_{R^2} (f^2)-(P^N_{R^2} f)^2 d\mu &  =-\int_0^{R^2} \int_\Omega  \frac{d}{dt}  (P^N_{t} f)^2 d\mu dt \\
 &=2 \int_0^{R^2} \int_\Omega  \Gamma( P^N_{t} f, P^N_{t} f ) d\mu dt  \\
 &\le 2 R^2 \int_\Omega \Gamma(f) d\mu.
\end{align*}
But on the other hand, we have
\begin{align*}
P^N_{R^2} (f^2)(x)-(P^N_{R^2} f)^2(x) &=P^N_{R^2}\left[ \left( f -(P^N_{R^2} f)(x) \right)^2 \right](x) \\
 & \ge \frac{C}{\mu(\Omega)} \int_\Omega (f(y)- (P^N_{R^2} f)(x) )^2 d\mu(y)
\end{align*}
which gives 
\[
\int_\Omega P^N_{R^2} (f^2)-(P^N_{R^2} f)^2 d\mu \ge C \int_\Omega \left( f(x) -\frac{1}{\mu(\Omega)}  \int_\Omega f d\mu \right)^2 d\mu(x)
\]
The  proof is complete.
\end{proof}

In applications, it is often interesting to have a scale invariant Poincar\'e inequality on balls. If the manifold $\mathbb{M}$ has conjugate points, the geodesic spheres may not be convex and thus the previous argument does not work. However the following result still holds true:

\begin{theorem}
There exists a constant $C_n>0$ depending only on the dimension of $\mathbb{M}$ such that  for every  $r>0$ and every smooth $f :B(x,r) \to \mathbb{R}$ with $\int_{B(x,r)} f  d\mu=0$, 
\[
\frac{C_n}{r^2} \int_{B(x,r)}  f^2 d\mu \le  \int_{B(x,r)} \Gamma(f) d\mu.
\]
\end{theorem}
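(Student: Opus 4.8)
The plan is to mimic the heat-semigroup proof of the Poincar\'e inequality on convex domains, with the lower bound on the \emph{Neumann} heat kernel of $\Omega$ replaced by the global on-diagonal lower bound \eqref{odlb} together with the volume doubling property. Whether $\partial B(x,r)$ is convex plays no role, since the global heat kernel does not see it; the price is that the argument first produces a \emph{dilated} Poincar\'e inequality, which then has to be upgraded.

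First I would use the elementary averaging inequality: writing $B=B(x,r)$, for $y\in B$ one has $f(y)-f_B=\frac{1}{\mu(B)}\int_B(f(y)-f(z))\,d\mu(z)$, so Cauchy--Schwarz and integration in $y$ give
\[
\int_B (f-f_B)^2\,d\mu \le \frac{1}{\mu(B)}\int_B\int_B (f(y)-f(z))^2\,d\mu(y)\,d\mu(z).
\]
From \eqref{odlb} and the parabolic Harnack inequality (Theorem \ref{T:harnack}) one gets $p(y,z,r^2)\ge c_0/\mu(B(y,r))$ whenever $d(y,z)\le 2r$, and the volume doubling property turns this into $p(y,z,r^2)\ge c/\mu(B)$ for all $y,z\in B$, with $c=c(n)>0$. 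Inserting this lower bound, replacing $f$ on $B$ by any compactly supported extension $F$ with $F=f$ on $B$, and enlarging the integration to all of $\mathbb{M}$ (legitimate since $p\ge 0$), I obtain
\[
\int_B (f-f_B)^2\,d\mu \le \frac{1}{c}\int_{\mathbb{M}}\int_{\mathbb{M}} (F(y)-F(z))^2\, p(y,z,r^2)\,d\mu(y)\,d\mu(z).
\]
Since $\mathbf{Ric}\ge 0$ the heat semigroup is stochastically complete (Theorem \ref{T:sc}), i.e. $P_t1=1$, so expanding the square yields the identity
\[
\int_{\mathbb{M}}\int_{\mathbb{M}}(F(y)-F(z))^2\, p(y,z,t)\,d\mu(y)\,d\mu(z)=2\big(\|F\|_2^2-\langle F,P_tF\rangle\big),
\]
and writing $\|F\|_2^2-\langle F,P_tF\rangle=-\int_0^t\langle LP_sF,F\rangle\,ds=\int_0^t\int_{\mathbb{M}}\Gamma(P_sF,F)\,d\mu\,ds$, Cauchy--Schwarz together with the fact that $s\mapsto\int_{\mathbb{M}}\Gamma(P_sF)\,d\mu$ is non-increasing bounds this by $t\int_{\mathbb{M}}\Gamma(F)\,d\mu$. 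Choosing $F=\eta\,(f-f_{B(x,2r)})$ with $\eta\equiv 1$ on $B(x,r)$, $\operatorname{supp}\eta\subset B(x,2r)$ and $\Gamma(\eta)\le C/r^2$, and taking $t=r^2$, one arrives at the dilated Poincar\'e inequality
\[
\int_{B(x,r)} (f-f_{B(x,r)})^2\,d\mu \le C r^2 \int_{B(x,2r)} \Gamma(f)\,d\mu + C\int_{B(x,2r)} (f-f_{B(x,2r)})^2\,d\mu .
\]

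The main obstacle is removing the dilation: deducing from this weak inequality (which involves the larger ball $B(x,2r)$ and an extra zero-order term) the genuine inequality over $B(x,r)$ alone with constant $\sim r^2$. This is a purely metric-measure phenomenon — on a doubling space a weak $L^2$-Poincar\'e inequality self-improves to a strong one — and I would establish it by the classical Whitney-type chaining of the ball, using only the volume doubling property proved above (the argument of Jerison, resp. Haj\l asz--Koskela), with no further geometry of $\mathbb{M}$. An alternative that bypasses the dilation is a direct geodesic-chaining estimate: for $y,z\in B(x,r)$ write $(f(y)-f(z))^2\le 2r\int_0^{d(y,z)}\Gamma(f)(\gamma_{y,z}(s))\,ds$ along a minimizing geodesic $\gamma_{y,z}$ (which stays in $B(x,3r)$), and bound $\int_B\int_B\int_0^{d(y,z)}\Gamma(f)(\gamma_{y,z}(s))\,ds\,d\mu(y)\,d\mu(z)$ by a change of variables along the geodesic flow; the Jacobian estimate this requires again reduces to volume comparison, which the doubling property supplies. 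In either route the genuinely new ingredient is the heat-kernel lower bound of the first part, the remainder being the standard doubling machinery.
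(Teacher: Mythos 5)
Your first route diverges from the paper's in avoiding the Neumann semigroup of the ball. The paper transfers the global heat-kernel lower bound to a lower bound for the Neumann heat kernel $p^N(y,z,r^2)\ge C/\mu(B(x_0,r))$ on $B(x_0,r/2)\times B(x_0,r/2)$, then reruns the convex-domain computation with $P_t^N$ (which has no boundary contribution because the ball is the whole space as far as $P_t^N$ is concerned). That produces a clean weak Poincar\'e inequality: smaller ball on the left, larger ball on the right, \emph{no} zero-order term — exactly the shape the Whitney/Jerison/Haj\l asz--Koskela chaining upgrades to the full inequality. Your cutoff device instead yields
\[
\int_{B(r)}(f-f_{B(r)})^2\,d\mu \;\le\; C r^2\int_{B(2r)}\Gamma(f)\,d\mu \;+\; C\int_{B(2r)}(f-f_{B(2r)})^2\,d\mu,
\]
where the coefficient $C$ of the zero-order term is controlled only by the Gaussian lower-bound constant, the doubling constant and the cutoff constant, and is in practice much larger than $1$. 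Since the nesting inequality $\int_{B(r)}(f-f_{B(r)})^2\le \int_{B(2r)}(f-f_{B(2r)})^2$ is always true, your estimate is vacuous once $C\ge 1$; and the chaining machinery you invoke is built for weak Poincar\'e inequalities \emph{without} a zero-order remainder — it cannot absorb a right-hand contribution that is already of the same order as the quantity being estimated. So the step ``the weak inequality self-improves by doubling alone'' is where the argument breaks, and it breaks precisely because you cut off rather than pass to the Neumann kernel.

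Your alternative geodesic route is a correct classical proof, but it is not a repair inside the framework of the notes: the change of variables along minimizing geodesics requires the pointwise Bishop (Riccati) comparison for the volume element in geodesic polar coordinates — the Cheeger--Colding segment inequality. This is \emph{not} a consequence of the global volume doubling property: doubling controls measures of balls, not the Jacobian of $\exp_y$, and it is precisely the Jacobi-field argument that the heat-semigroup approach here is designed to sidestep. The genuinely missing ingredient in your first route is therefore the passage from the global heat kernel to the Neumann (reflected) heat kernel on the ball, which removes the cutoff and with it the spurious zero-order term.
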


We only sketch the argument. By using the global  lower bound 
\[
p (x,y,t) \ge \frac{C}{\mu( B(x,\sqrt{t}))} \exp \left(-\frac{d(x,y)^2}{3t}\right),
\]
for the heat kernel, it is possible to prove a lower bound for the Neumman heat kernel on the ball $B(x_0,r)$: For $x,y \in B(x_0.r/2)$,
\[
p^N (x,y,r^2) \ge \frac{C}{\mu( B(x_0,r))},
\]
Arguing as before, we get
\[
\frac{C_n}{r^2} \int_{B(x_0,{r/2})}  f^2 d\mu \le  \int_{B(x,r)} \Gamma(f) d\mu.
\]
and show then that the integral on the left hand side can be taken on $B(x_0,{r})$ by using a Whitney's type covering argument.

\subsection{Sobolev inequality and volume growth}

In this Section, we show how Sobolev inequalities on a Riemannian manifold are related to the volume growth of metric balls. The link between  the Hardy-Littlewood-Sobolev theory and  heat kernel upper bounds  is due to Varopoulos,

Let $(\mathbb{M},g)$ be a complete Riemannian manifold and let $L$ be the Laplace-Beltrami operator of $\mathbb{M}$. As usual, we denote by $P_t$ the semigroup generated by $P_t$ and we assume $P_t 1=1$.

We have the following so-called maximal ergodic lemma, which was first proved by Stein. We give here the probabilistic proof since it comes with a nice constant.

\begin{lemma}(Stein's maximal ergodic theorem) Let $p>1$. For $ f \in L^p(\M,\mu)$, denote $f^*(x)=\sup_{t \ge 0} |P_t f(x)|$. We have
\[
\| f^* \|_{L^p_\mu(\mathbb{M})} \le \frac{p}{p-1} \| f \|_{L^p_\mu(\mathbb{M})}.
\]
\end{lemma}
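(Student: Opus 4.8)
The plan is to give the probabilistic argument, which realizes $(P_t)_{t\ge 0}$ as the transition semigroup of a symmetric Markov process and then reduces the continuous-time maximal inequality to the classical $L^p$ martingale maximal inequality (Doob's inequality). First I would recall that since $(P_t)_{t\ge 0}$ is a conservative ($P_t 1 = 1$), symmetric sub-Markov semigroup, it is the semigroup of a $\mu$-symmetric Markov process $(X_t)_{t\ge 0}$; write $\mathbb E_x$ for expectation started at $x$, so that $P_t f(x) = \mathbb E_x[f(X_t)]$. The key trick is to pass to the \emph{space-time} process and exploit reversibility: for a fixed time horizon, run the process backwards. Concretely, fix $R>0$ and $t\in[0,R]$; by the Markov property and symmetry, $P_t f(x)$ can be written as a backward conditional expectation $\mathbb E_\mu[\,f(X_R)\mid X_{R-t}\,](x)$ in the stationary process on $[0,R]$, because the time-reversal of a symmetric stationary Markov process is again the same Markov process.

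The main steps, in order, are as follows. \emph{Step 1:} Construct the stationary process on $[0,R]$ with one-dimensional marginals $\mu$ and, using $P_s^* = P_s$, identify $P_{R-s}f(X_s)$ with $\mathbb E[f(X_R)\mid \mathcal F_s]$ where $\mathcal F_s = \sigma(X_u : u\le s)$. Thus $s\mapsto P_{R-s}f(X_s)$ is a martingale on $[0,R]$. \emph{Step 2:} Apply Doob's $L^p$ maximal inequality to this martingale:
\[
\left\| \sup_{0\le s\le R} |P_{R-s}f(X_s)| \right\|_{L^p(\mathbb P_\mu)} \le \frac{p}{p-1} \left\| f(X_R)\right\|_{L^p(\mathbb P_\mu)} = \frac{p}{p-1}\|f\|_{L^p(\mathbb M,\mu)},
\]
using stationarity for the last equality. \emph{Step 3:} Bound $\sup_{0\le t\le R}|P_t f(X_0)|$ from above using the previous display. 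Since $X_0 \sim \mu$, for each fixed $x$ one has $|P_t f(x)| = |\mathbb E_x[P_{R-s}f(X_s)\mid X_s]|$ with $s = R-t$... the cleaner route is: condition on $X_0 = x$ and use the Markov property to get $P_t f(x) = \mathbb E_x\big[ P_{R-s}f(X_s)\big]$ at $s=R-t$; hence $|P_tf(x)| \le \mathbb E_x\big[\sup_{0\le s\le R}|P_{R-s}f(X_s)|\big] =: \mathbb E_x[M_R]$. Taking sup over $t\in[0,R]$, $\sup_{0\le t\le R}|P_tf(x)| \le \mathbb E_x[M_R] = P_0(\text{something})$; then by contractivity of conditional expectation in $L^p$ (Jensen), $\|\mathbb E_\cdot[M_R]\|_{L^p(\mathbb M,\mu)} \le \|M_R\|_{L^p(\mathbb P_\mu)} \le \frac{p}{p-1}\|f\|_{L^p(\mathbb M,\mu)}$. \emph{Step 4:} Let $R\to\infty$ and use monotone convergence to replace $\sup_{0\le t\le R}$ by $f^* = \sup_{t\ge 0}|P_tf|$, obtaining the claimed bound $\|f^*\|_{L^p(\mathbb M,\mu)} \le \frac{p}{p-1}\|f\|_{L^p(\mathbb M,\mu)}$.

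The main obstacle — and the point that requires care rather than the formal manipulation — is Step 1: justifying the time-reversal identity $P_{R-s}f(X_s) = \mathbb E[f(X_R)\mid \mathcal F_s]$ under $\mathbb P_\mu$, i.e. that the stationary symmetric semigroup really yields a \emph{martingale} in reversed time. This rests on the symmetry $\int g P_t f\, d\mu = \int f P_t g\, d\mu$ together with the Markov property, and on having a genuine Markov process realization of $(P_t)$ (which is standard here since $L$ is a subelliptic diffusion operator with smooth heat kernel $p(t,x,y)$, so the process exists and is a.s. continuous). One should also note that if $\mu(\mathbb M)=\infty$ the "stationary process" is only $\sigma$-finite stationary, but Doob's inequality and the conditioning arguments go through verbatim on the $\sigma$-finite measure space, so no extra hypothesis is needed. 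A purely analytic alternative avoiding probability — interpolating between the trivial $L^\infty$ bound $\|f^*\|_\infty \le \|f\|_\infty$ and a weak-$(1,1)$ bound for the maximal operator — would also work but yields a worse constant, which is why the probabilistic proof is preferred here.
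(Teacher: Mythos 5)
Your approach --- probabilistic, via the $\mu$-symmetric Markov process with generator $L$, reversibility, and Doob's $L^p$ maximal inequality --- is the one the paper takes. But Step~3 contains a genuine gap. The identity you invoke, $P_tf(x) = \mathbb{E}_x\big[P_{R-s}f(X_s)\big]$ at $s = R-t$, is false: the Markov and semigroup properties give
\[
\mathbb{E}_x\big[P_tf(X_{R-t})\big] \;=\; P_{R-t}\big(P_tf\big)(x) \;=\; P_Rf(x),
\]
independently of $t$. So your display only yields $|P_Rf(x)| \le \mathbb{E}_x[M_R]$, and taking the supremum over $t\in[0,R]$ on the left-hand side is unwarranted. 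The underlying structural problem is that in the martingale $M_s = P_{R-s}f(X_s)$ the spatial argument $X_s$ moves with $s$, so $\sup_s|M_s|$ never samples $P_tf$ across all $t$ at a single fixed point $x$. Your Step~1 ``time reversal'' is also doing no work: $s\mapsto P_{R-s}f(X_s)$ is a forward martingale by the Markov property alone, and reversibility is not needed at that stage.

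The missing idea, which the paper uses, is an anchoring trick. Run the process on the doubled interval $[0,2T]$ and condition on the value at the midpoint, $X_T$. The forward Markov property from time $T$ gives, for $0\le t\le T$,
\[
P_{2(T-t)}f(X_T) \;=\; \mathbb{E}\big[\,(P_{T-t}f)(X_{2T-t}) \mid X_T\,\big],
\]
and since $s=2(T-t)$ sweeps all of $[0,2T]$, the quantity $\sup_{0\le t\le T}|P_{2(T-t)}f(X_T)|$ equals $\sup_{0\le s\le 2T}|P_sf|$ evaluated at the \emph{single} (random) point $X_T$ --- precisely what you need. One then applies Jensen inside the conditional expectation, integrates against $d\mu(x)$ (so that $X_T\sim\mu$), uses reversibility under $\mathbb{P}_\mu$ to replace the time-reversed segment $(X_{2T-t})_{0\le t\le T}$ by $(X_t)_{0\le t\le T}$ --- this is where the symmetry of $P_t$ actually enters --- and finally applies Doob's inequality to the genuine forward martingale $(P_{T-t}f(X_t))_{0\le t\le T}$. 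Without this anchoring step, the process started at $X_0=x$ cannot produce the full running maximum of $P_tf$ at the fixed point $x$.
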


\begin{proof}
For $x \in \mathbb{M}$, we denote by $(X_t^x)_{t \ge 0}$ the Markov process with generator $L$ and started at $x$. We fix $T>0$. By construction, for $t \le T$, we have,
\[
P_{T-t}f (X_T^x) =\mathbb{E} \left( f (X_{2T-t}^x) | X_T^x \right),
\]
and thus 
\[
P_{2(T-t)}f (X_T^x) =\mathbb{E} \left( (P_{T-t} f) (X_{2T-t}^x) | X_T^x \right).
\]
As a consequence, we obtain
\[
\sup_{0 \le t \le T} | P_{2(T-t)}f (X_T^x) | \le \mathbb{E} \left(\sup_{0 \le t \le T} | (P_{T-t} f) (X_{2T-t}^x) | \mid X_T^x\right) .
\]
Jensen's inequality yieelds then
\[
\sup_{0 \le t \le T} | P_{2(T-t)}f (X_T^x) |^p  \le \mathbb{E} \left(\sup_{0 \le t \le T} | (P_{T-t} f) (X_{2T-t}^x) |^p \mid X_T^x\right). 
\]
We deduce
\[
\mathbb{E} \left( \sup_{0 \le t \le T} | P_{2(T-t)}f (X_T^x) |^p \right)  \le \mathbb{E} \left(\sup_{0 \le t \le T} | (P_{T-t} f) (X_{2T-t}^x) |^p \right). 
\]
Integrating the inequality with respect to the Riemannian measure $\mu$, we obtain
\[
\left\|  \sup_{0 \le t \le T} | P_{2(T-t)}f  | \right\|_p   \le \left( \int_\mathbb{M} \mathbb{E} \left(\sup_{0 \le t \le T} | (P_{T-t} f) (X_{2T-t}^x) |^p \right)d\mu(x)\right)^{1/p}. 
\]
By reversibility, we get then
\[
\left\|  \sup_{0 \le t \le T} | P_{2(T-t)}f  | \right\|_p   \le \left( \int_\mathbb{M} \mathbb{E} \left(\sup_{0 \le t \le T} | (P_{T-t} f) (X_t^x) |^p \right)d\mu(x)\right)^{1/p}. 
\]
We now observe that the process $(P_{T-t} f) (X_t^x)$ is martingale and thus Doob's maximal inequality gives
\[
 \mathbb{E} \left(\sup_{0 \le t \le T} | (P_{T-t} f) (X_t^x) |^p \right)^{1/p} \le \frac{p}{p-1}  \mathbb{E} \left( | f(X_T^x)|^p \right)^{1/p}.
\]
The proof is complete.
\end{proof}

We now turn to the theorem by Varopoulos.

\begin{theorem} Let $n>0$, $0<\alpha <n$, and $1 <p<\frac{n}{\alpha}$. If there exists $C>0$ such that for every $t>0$, $x,y \in \mathbb{M}$,
\[
p(x,y,t) \le \frac{C}{t^{n/2}},
\]
then for every $f \in L^p_\mu(\mathbb{M})$, 
\[
\| (-L)^{-\alpha/2} f \|_{\frac{np}{n-p\alpha}} \le \left( \frac{p}{p-1} \right)^{1-\alpha/n} \frac{ 2n C^{\alpha / n}}{ \alpha (n-p\alpha) \Gamma(\alpha /2)} \|f \|_p
\]
\end{theorem}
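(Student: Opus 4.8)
The plan is to combine the subordination identity for $(-L)^{-\alpha/2}$ with Hedberg's splitting trick and the maximal ergodic theorem proved just above.

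\textbf{Step 1 (subordination).} Since $-L$ is non‑negative self‑adjoint, the elementary identity $\lambda^{-\alpha/2}=\frac{1}{\Gamma(\alpha/2)}\int_0^\infty t^{\alpha/2-1}e^{-\lambda t}\,dt$ and the spectral theorem give, for $f$ in a suitable dense class,
\[
(-L)^{-\alpha/2}f=\frac{1}{\Gamma(\alpha/2)}\int_0^\infty t^{\alpha/2-1}P_tf\,dt .
\]
First I would justify this identity and the convergence of the integral; convergence at $t=0$ uses $\alpha>0$, while convergence at $t=\infty$ is exactly where the hypothesis $p<n/\alpha$ will enter.

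\textbf{Step 2 (splitting and the two basic bounds).} Fix $x\in\mathbb{M}$ and a parameter $T>0$ and split the integral at $T$. For the small‑time part use $|P_tf(x)|\le f^*(x)$, so that $\big|\int_0^T t^{\alpha/2-1}P_tf(x)\,dt\big|\le\frac{2}{\alpha}T^{\alpha/2}f^*(x)$. For the large‑time part, turn the hypothesis $p(x,y,t)\le Ct^{-n/2}$ together with $\int_{\mathbb{M}}p(x,y,t)\,d\mu(y)=P_t1(x)=1$ into ultracontractivity: interpolating the $L^\infty$ and $L^1$ bounds on $y\mapsto p(x,y,t)$ gives $\|p(x,\cdot,t)\|_{L^{p'}(\mathbb{M},\mu)}\le C^{1/p}t^{-n/(2p)}$, hence $\|P_tf\|_\infty\le C^{1/p}t^{-n/(2p)}\|f\|_p$ by H\"older. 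Since $p<n/\alpha$ we have $\frac{\alpha}{2}-\frac{n}{2p}<0$, so
\[
\Big\|\int_T^\infty t^{\alpha/2-1}P_tf\,dt\Big\|_\infty\le C^{1/p}\|f\|_p\int_T^\infty t^{\alpha/2-1-n/(2p)}\,dt=\frac{2p\,C^{1/p}}{n-\alpha p}\,T^{-(n-\alpha p)/(2p)}\|f\|_p .
\]

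\textbf{Step 3 (optimize in $T$, then pass to $L^q$).} Combining the two bounds, for every $T>0$ and a.e.\ $x$,
\[
\Gamma(\alpha/2)\,\big|(-L)^{-\alpha/2}f(x)\big|\le\frac{2}{\alpha}\,T^{\alpha/2}f^*(x)+\frac{2p\,C^{1/p}}{n-\alpha p}\,T^{-(n-\alpha p)/(2p)}\|f\|_p .
\]
Choosing $T$ to balance the two terms (the optimal $T$ is an explicit power of $\|f\|_p/f^*(x)$) yields a pointwise inequality of the form
\[
\big|(-L)^{-\alpha/2}f(x)\big|\le \frac{\kappa(n,\alpha,p)}{\Gamma(\alpha/2)}\,C^{\alpha/n}\,\|f\|_p^{\,\alpha p/n}\,f^*(x)^{\,1-\alpha p/n}.
\]
Now put $q=\frac{np}{n-p\alpha}$ and note $q\big(1-\tfrac{\alpha p}{n}\big)=p$; raising this to the power $q$ and integrating over $\mathbb{M}$ gives $\|(-L)^{-\alpha/2}f\|_q\le \frac{\kappa}{\Gamma(\alpha/2)}C^{\alpha/n}\|f\|_p^{\alpha p/n}\|f^*\|_p^{\,1-\alpha p/n}$. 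Finally Stein's maximal ergodic theorem gives $\|f^*\|_p\le\frac{p}{p-1}\|f\|_p$, and substituting this and collecting the absolute constants produces the asserted bound, first on a dense class and then in general by density. The only genuinely delicate point is the bookkeeping of the constants — the factor $\Gamma(\alpha/2)$, the power $C^{\alpha/n}$, and the power of $\frac{p}{p-1}$ — together with the rigorous justification of the subordination formula and of Fubini's theorem; the restriction $1<p<n/\alpha$ is used precisely to keep the large‑time tail integrable.
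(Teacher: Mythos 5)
Your proposal is correct and follows essentially the same route as the paper: the subordination identity, a Hedberg-type split at a free cutoff with the small-time piece controlled by $f^*$ and the tail by the $L^p\to L^\infty$ ultracontractivity drawn from the on-diagonal heat kernel bound, optimization over the cutoff, and then Stein's maximal ergodic theorem. (Your intermediate exponent $1-\alpha p/n$ on $\|f^*\|_p$ agrees with what the paper's proof produces; the exponent $1-\alpha/n$ on $\tfrac{p}{p-1}$ in the theorem as stated appears to be a typo.)
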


\begin{proof}
We first observe that the bound 
\[
p(x,y,t) \le \frac{C}{t^{n/2}},
\]
implies that $|P_t f(x)| \le  \frac{C^{1/p}}{t^{n/2p}} \| f \|_p$.
Denote $I_\alpha f (x)=(-L)^{-\alpha/2} f (x)$. We have
\[
I_\alpha f (x)=\frac{1}{\Gamma(\alpha /2) } \int_0^{+\infty} t^{\alpha /2 -1 }P_t f (x) dt
\]
Pick $\delta >0$, to be later chosen, and split the integral in two parts:
\[
I_\alpha f (x)=J_\alpha f(x) +K_\alpha f (x),
\]
where $J_\alpha f (x)=\frac{1}{\Gamma(\alpha /2) } \int_0^{\delta} t^{\alpha /2 -1 }P_t f (x) dt$ and $K_\alpha f (x)=\frac{1}{\Gamma(\alpha /2) } \int_\delta^{+\infty} t^{\alpha /2 -1 }P_t f (x) dt$. We have 
\[
| J_\alpha f (x) | \le \frac{1}{\Gamma(\alpha /2) } \int_0^{+\infty} t^{\alpha /2 -1 }dt | f^* (x) | =\frac{2}{\alpha \Gamma(\alpha /2) } \delta^{\alpha /2}  | f^* (x) |.
\]
On the other hand,
\begin{align*}
| K_\alpha f(x)| & \le \frac{1}{\Gamma(\alpha /2) } \int_\delta^{+\infty} t^{\alpha /2 -1 } | P_t f  (x)| dt \\
 & \le  \frac{C^{1/p}}{\Gamma(\alpha /2) } \int_\delta^{+\infty} t^{\frac{\alpha} {2}-\frac{n}{2p} -1 } dt \| f \|_p \\
 &\le   \frac{C^{1/p}}{\Gamma(\alpha /2) } \frac{1}{-\frac{\alpha} {2}+\frac{n}{2p} }\delta^{\frac{\alpha} {2}-\frac{n}{2p} }  \| f \|_p .
\end{align*}
We deduce
\[
| I_\alpha f (x) | \le \frac{2}{\alpha \Gamma(\alpha /2) } \delta^{\alpha /2}  | f^* (x) |+ \frac{C^{1/p}}{\Gamma(\alpha /2) } \frac{1}{-\frac{\alpha} {2}+\frac{n}{2p} }\delta^{\frac{\alpha} {2}-\frac{n}{2p} }  \| f \|_p.
\]
Optimizing the right hand side of the latter inequality with respect to $\delta$ yields
\[
| I_\alpha f (x) |\le \frac{ 2n C^{\alpha / n}}{ \alpha (n-p\alpha) \Gamma(\alpha /2)} \|f \|^{\alpha p /n}_p |f^*(x)|^{1-p\alpha/n}.
\]
The proof is then completed by using Stein's maximal ergodic theorem.
\end{proof}

A special case, of particular interest, is when $\alpha =1$ and $p=2$. We get in that case the following Sobolev inequality:
\begin{theorem}
Let $n>2$.  If there exists $C>0$ such that for every $t>0$, $x,y \in \mathbb{M}$,
\[
p(x,y,t) \le \frac{C}{t^{n/2}},
\]
then for every $f \in C^\infty_0(\mathbb{M})$,
\[
\|  f \|_{\frac{2n}{n-2}} \le 2^{1-1/n} \frac{ 2n C^{1 / n}}{  (n-2) \sqrt{\pi}} \| \sqrt{\Gamma(f)} \|_2
\]
\end{theorem}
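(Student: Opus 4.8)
The plan is to deduce this statement directly from the preceding theorem of Varopoulos by taking $\alpha = 1$ and $p = 2$ there, and then converting the resulting estimate on $(-L)^{-1/2}$ into one involving $\sqrt{\Gamma(f)}$.

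First I would record the arithmetic. With $\alpha = 1$ and $p = 2$ one has $\frac{np}{n-p\alpha} = \frac{2n}{n-2}$ (here $n>2$ is used), $\left(\frac{p}{p-1}\right)^{1-\alpha/n} = 2^{1-1/n}$, and, since $\Gamma(1/2) = \sqrt\pi$,
\[
\frac{2nC^{\alpha/n}}{\alpha(n-p\alpha)\Gamma(\alpha/2)} = \frac{2nC^{1/n}}{(n-2)\sqrt\pi}.
\]
Hence the theorem of Varopoulos applied with these parameters gives, for every $h \in L^2(\M,\mu)$,
\[
\|(-L)^{-1/2} h\|_{\frac{2n}{n-2}} \le 2^{1-1/n}\,\frac{2nC^{1/n}}{(n-2)\sqrt\pi}\,\|h\|_2.
\]

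Next, given $f \in C^\infty_0(\M)$, I would apply this inequality with $h = (-L)^{1/2} f$. Since $f \in \mathcal{D}(L) \subset \mathcal{D}((-L)^{1/2})$, the function $h$ is well defined and belongs to $L^2(\M,\mu)$, and by the spectral theorem $(-L)^{-1/2}(-L)^{1/2} f = f$; the only subtlety is that this identity holds modulo the projection of $f$ onto $\ker L$, but any $h \in \mathcal{D}(L)$ with $Lh=0$ satisfies $\Gamma(h)=0$ and is therefore constant, and the heat-kernel hypothesis $p(x,y,t)\le C t^{-n/2}$ forces $\mu(\M) = +\infty$, so no nonzero constant lies in $L^2(\M,\mu)$, i.e. $\ker L = \{0\}$. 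Consequently
\[
\|f\|_{\frac{2n}{n-2}} \le 2^{1-1/n}\,\frac{2nC^{1/n}}{(n-2)\sqrt\pi}\,\|(-L)^{1/2} f\|_2.
\]

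Finally, I would identify $\|(-L)^{1/2} f\|_2$ with $\|\sqrt{\Gamma(f)}\|_2$ by the integration by parts recorded earlier for essentially self-adjoint subelliptic operators,
\[
\|(-L)^{1/2} f\|_2^2 = \langle -Lf, f\rangle_{L^2(\M,\mu)} = \int_\M \Gamma(f,f)\, d\mu = \big\|\sqrt{\Gamma(f)}\big\|_2^2,
\]
which gives the desired inequality. The only genuinely delicate point is the identity $(-L)^{-1/2}(-L)^{1/2} f = f$, that is, the verification that $f$ has no component along $\ker L$; everything else is bookkeeping with the constants already computed in the theorem of Varopoulos.
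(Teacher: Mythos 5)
Your proof is correct and follows exactly the route the paper intends: the theorem is stated in the paper as the special case $\alpha=1$, $p=2$ of Varopoulos's theorem, and you carry out the bookkeeping that is left implicit, including the genuinely necessary observation that the heat-kernel decay $p(x,x,t)\le Ct^{-n/2}$ together with $P_t1=1$ rules out $\mu(\M)<\infty$ and hence forces $\ker L=\{0\}$, so that $(-L)^{-1/2}(-L)^{1/2}f=f$ and $\|(-L)^{1/2}f\|_2^2=\int_\M\Gamma(f)\,d\mu$ close the argument.
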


We mention that the constant in the above Sobolev inequality is not sharp. 

Combining the above with the Li-Yau upper bound for the heat kernel, we deduce the following theorem:

\begin{theorem}
Assume that $\mathbf{Ric} \ge 0$ and that there exists a constant $C >0$ such that for every $x \in \mathbb{M}$ and $r \ge 0$, $\mu (B(x,r)) \ge C r^n$, then there exists a constant $C'=C'(n)>0$ such that for every  $f \in C^\infty_0(\mathbb{M})$,
\[
\|  f \|_{\frac{2n}{n-2}} \le C' \| \sqrt{\Gamma(f)} \|_2.
\]
\end{theorem}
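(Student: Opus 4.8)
The plan is to derive this as a straightforward combination of the Li--Yau Gaussian upper bound for the heat kernel (Theorem~\ref{T:gb}) with the Sobolev inequality already established from an on-diagonal heat kernel estimate. Recall that the latter states: if $P_t1=1$ and $p(x,y,t)\le C\,t^{-n/2}$ for all $t>0$ and all $x,y\in\mathbb{M}$ (with $n>2$), then
\[
\|f\|_{\frac{2n}{n-2}}\le 2^{1-1/n}\,\frac{2n\,C^{1/n}}{(n-2)\sqrt\pi}\,\|\sqrt{\Gamma(f)}\|_2,\qquad f\in C_0^\infty(\mathbb{M}).
\]
So it is enough to check the two hypotheses of that theorem under the present assumptions, namely stochastic completeness and a uniform on-diagonal bound.

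First I would observe that $\mathbf{Ric}\ge 0$ gives in particular a lower Ricci bound, so Theorem~\ref{T:sc} applies and $P_t1=1$; hence Stein's maximal ergodic lemma and the Varopoulos argument underlying the Sobolev inequality are available. Second, I would produce the on-diagonal bound: applying Theorem~\ref{T:gb} with $K=0$ and, say, $\ve=1$, one gets for all $x,y\in\mathbb{M}$ and $t>0$
\[
p(x,y,t)\le \frac{C(n)}{\mu(B(x,\sqrt t))}\exp\left(-\frac{d(x,y)^2}{5t}\right)\le \frac{C(n)}{\mu(B(x,\sqrt t))}.
\]
Feeding in the assumed volume growth with $r=\sqrt t$, i.e. $\mu(B(x,\sqrt t))\ge C\,t^{n/2}$, yields $p(x,y,t)\le \frac{C(n)}{C}\,t^{-n/2}$ uniformly in $x,y,t$. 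Plugging this into the Sobolev inequality recalled above gives the claim, with $C'$ of the stated form. (Alternatively one could bypass Theorem~\ref{T:gb} and use the cruder two-sided estimate $p(x,y,t)\le \frac{C_1}{\mu(B(x,\sqrt t))^{1/2}\mu(B(y,\sqrt t))^{1/2}}\exp(-d(x,y)^2/((4+\ve)t))$ from Theorem~\ref{T:ub}, which together with the volume lower bound again produces $p(x,y,t)\le C''t^{-n/2}$.)

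I do not expect a genuine obstacle here; the only subtlety is bookkeeping of constants. Since the on-diagonal bound carries the factor $C(n)/C$, the final constant $C'$ in fact depends on the volume constant $C$ as well as on $n$, so the dimensional constant of Theorem~\ref{T:gb} should be tracked to make this dependence explicit. One should also note that the statement is only meaningful for $n\ge 3$, the cases $n=1,2$ being degenerate for the Sobolev exponent $\tfrac{2n}{n-2}$.
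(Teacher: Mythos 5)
Your proposal is correct and follows essentially the same route the paper takes: the paper itself derives this theorem in one line by combining the Li--Yau Gaussian upper bound with the volume lower bound to obtain $p(x,y,t)\le C''t^{-n/2}$, and then invoking the Varopoulos-based Sobolev inequality already proved from a uniform on-diagonal bound. Your remark that the resulting constant actually depends on the volume constant $C$ as well as on $n$ (so the theorem's claim ``$C'=C'(n)$'' is a slight imprecision in the paper's statement) is accurate, and your note that $n>2$ is required for the Sobolev exponent to make sense matches the explicit hypothesis $n>2$ in the preceding Sobolev theorem.
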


In many situations, heat kernel upper bounds with a polynomial decay are only available in small times the following result is thus useful:

\begin{theorem} Let $n>0$, $0<\alpha <n$, and $1 <p<\frac{n}{\alpha}$. If there exists $C>0$ such that for every $0<t \le 1$, $x,y \in \mathbb{M}$,
\[
p(x,y,t) \le \frac{C}{t^{n/2}},
\]
then, there is constant $C'$ such that for every $f \in L^p_\mu(\mathbb{M})$, 
\[
\| (-L+1)^{-\alpha/2} f \|_{\frac{np}{n-p\alpha}} \le C' \|f \|_p.
\]
\end{theorem}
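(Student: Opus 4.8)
The plan is to represent the operator through the subordination (resolvent) formula
\[
(-L+1)^{-\alpha/2} f = \frac{1}{\Gamma(\alpha/2)} \int_0^{+\infty} t^{\alpha/2-1} e^{-t} P_t f \, dt ,
\]
which is legitimate because $-L+1$ is a positive self-adjoint operator and $\int_0^{+\infty} t^{\alpha/2-1}e^{-(1+\lambda)t}\,dt = \Gamma(\alpha/2)(1+\lambda)^{-\alpha/2}$ on the spectrum. I would then split the integral at $t=1$, writing $(-L+1)^{-\alpha/2}f = Af + Bf$, where $Af$ is the integral over $(0,1)$ and $Bf$ the integral over $(1,+\infty)$, and estimate the two pieces separately. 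The point of the splitting is that the hypothesis only controls the kernel for small times, so the large-time contribution must be absorbed by the exponential weight $e^{-t}$.

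For the large-time term $Bf$: for $t\ge 1$ write $P_t f = P_{t-1}(P_1 f)$; the assumption $p(x,y,1)\le C$ together with H\"older's inequality gives $\|P_1 f\|_\infty \le C^{1/p}\|f\|_p$, and since $P_{t-1}$ is an $L^\infty$-contraction (by \eqref{submarkov}) we get $|P_t f(x)|\le C^{1/p}\|f\|_p$ for all $t\ge 1$. Hence $\|Bf\|_\infty \le \frac{C^{1/p}}{\Gamma(\alpha/2)}\big(\int_1^{+\infty} t^{\alpha/2-1}e^{-t}dt\big)\|f\|_p$, while by \eqref{smp} also $\|Bf\|_p \le \frac{1}{\Gamma(\alpha/2)}\big(\int_1^{+\infty} t^{\alpha/2-1}e^{-t}dt\big)\|f\|_p$; interpolating the $L^r$ norms between $p$ and $\infty$ yields $\|Bf\|_q \le C_2\|f\|_p$ for our exponent $q=\frac{np}{n-p\alpha}$, which indeed satisfies $p<q<\infty$.

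For the small-time term $Af$: since $e^{-t}\le 1$ we have the pointwise bound $|Af(x)| \le \frac{1}{\Gamma(\alpha/2)}\int_0^1 t^{\alpha/2-1}|P_t f(x)|\,dt$, which is a truncated Riesz potential, and I would run the Hedberg argument exactly as in the proof of Varopoulos' theorem above. Fixing a cutoff $\delta\in(0,1]$ and splitting at $\delta$: on $(0,\delta)$ bound $|P_t f(x)|\le f^*(x)$ to get a term controlled by $\delta^{\alpha/2}f^*(x)$; on $(\delta,1)$ use the kernel hypothesis, which gives $|P_t f(x)|\le C^{1/p}t^{-n/(2p)}\|f\|_p$, and since $p<n/\alpha$ forces $\alpha/2 - n/(2p) - 1<-1$ the remaining integral is controlled by $\delta^{\alpha/2-n/(2p)}\|f\|_p$. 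Optimizing over $\delta$ (taking $\delta\sim(\|f\|_p/f^*(x))^{2p/n}$ when this is $\le1$, and $\delta=1$ otherwise, in which case $f^*(x)\le C\|f\|_p$ and the same bound follows) produces
\[
|Af(x)| \le C_3\, f^*(x)^{1-\frac{p\alpha}{n}}\,\|f\|_p^{\frac{p\alpha}{n}} .
\]
Raising to the power $q$, using $q\big(1-\frac{p\alpha}{n}\big)=p$ and $q\frac{p\alpha}{n}+p=q$, and then invoking Stein's maximal ergodic theorem $\|f^*\|_p\le\frac{p}{p-1}\|f\|_p$, gives $\|Af\|_q\le C_4\|f\|_p$. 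Adding the two estimates completes the proof. I expect the only delicate point to be the bookkeeping forced by having the kernel bound only for $t\le 1$: one must route the large-time control through the exponential weight and the $L^\infty$-contractivity of the semigroup, and handle the degenerate branch of the Hedberg optimization where the optimal cutoff would exceed $1$; everything else is the classical Varopoulos--Hedberg scheme applied to the resolvent in place of the fractional integral.
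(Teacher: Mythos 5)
Your proof is correct, and it does not follow the route the paper actually suggests; the paper's proof is the one-line remark ``apply the Varopoulos theorem to the semigroup $Q_t=e^{-t}P_t$.'' To carry that out one must first upgrade the hypothesis $p(x,y,t)\le Ct^{-n/2}$ for $t\le 1$ to a global bound $q(x,y,t)=e^{-t}p(x,y,t)\le C't^{-n/2}$ for all $t>0$: by Chapman--Kolmogorov and $\|p(\cdot,y,t-1/2)\|_{L^1}\le 1$ one has $p(x,y,t)\le \|p(x,\cdot,1/2)\|_\infty\le 2^{n/2}C$ for $t\ge 1/2$, and since $e^{-t}t^{n/2}$ is bounded on $[1,\infty)$ the global bound on $q$ follows; one then invokes the proved Varopoulos theorem verbatim, together with the observation that the $Q_t$-maximal function is dominated by the $P_t$-maximal function $f^*$, so Stein's theorem still applies. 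Your argument instead splits the subordination integral hard at $t=1$, handles $t\ge1$ by $\|P_1f\|_\infty\le C^{1/p}\|f\|_p$ plus $L^\infty$-contractivity and $L^p$--$L^\infty$ interpolation, and runs a truncated Hedberg optimization with $\delta\in(0,1]$ for $t<1$. This is self-contained and avoids any reasoning about kernel bounds at large times, at the cost of redoing the Hedberg scheme rather than citing the already-proved theorem; the degenerate branch $\delta=1$, which you handle by $f^*(x)\le\|f\|_p\Rightarrow f^*(x)\le f^*(x)^{1-p\alpha/n}\|f\|_p^{p\alpha/n}$, is the one new wrinkle versus the unbounded Hedberg split, and you deal with it correctly. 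Both routes are fine; the paper's is shorter once the lemma bank is in place, yours is more explicit about exactly where the restriction to small times is harmless.
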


\begin{proof}
We apply the Varopoulos theorem to the semigroup $Q_t=e^{-t} P_t$. Details are let to the reader.
\end{proof}

The following corollary shall be later used:

\begin{corollary}
Let $n>2$.  If there exists $C>0$ such that for every $0<t \le 1$, $x,y \in \mathbb{M}$,
\[
p(x,y,t) \le \frac{C}{t^{n/2}},
\]
then there is constant $C'$ such that for every $f \in C^\infty_0(\mathbb{M})$,
\[
\|  f \|_{\frac{2n}{n-2}} \le C' \left(  \| \sqrt{\Gamma(f)} \|_2 + \| f \|_2 \right).
\]
\end{corollary}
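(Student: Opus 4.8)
The plan is to deduce this corollary from the immediately preceding theorem applied with the specific parameters $\alpha = 1$ and $p = 2$, for which the hypothesis $1 < p < n/\alpha$ reduces precisely to $n > 2$. That theorem then furnishes a constant $C'$ with
\[
\| (-L+1)^{-1/2} g \|_{\frac{2n}{n-2}} \le C' \| g \|_2, \qquad g \in L^2(\M,\mu).
\]
The idea is to apply this to $g = (-L+1)^{1/2} f$, so that $(-L+1)^{-1/2} g = f$, and then to identify the right-hand side $\|g\|_2$ with the Sobolev-type quantity $\| \sqrt{\Gamma(f)} \|_2 + \| f \|_2$.

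First I would check that $f \in C^\infty_0(\M)$ lies in the domain of the (nonnegative, self-adjoint) operator $(-L+1)^{1/2}$; this is immediate from the spectral theorem since $f \in \mathcal{D}(L) \subset \mathcal{D}((-L+1)^{1/2})$. Next, using the spectral decomposition of $-L$ together with the integration by parts formula $-\int_\M f L f \, d\mu = \int_\M \Gamma(f,f)\, d\mu$ valid for $f \in C^\infty_0(\M)$, I would compute
\[
\| (-L+1)^{1/2} f \|_2^2 = \langle (-L+1) f, f \rangle_{L^2(\M,\mu)} = \| f \|_2^2 + \int_\M \Gamma(f,f)\, d\mu = \| f \|_2^2 + \| \sqrt{\Gamma(f)} \|_2^2 .
\]
Taking square roots and using the elementary inequality $\sqrt{a^2+b^2} \le a + b$ for $a,b \ge 0$ gives $\| (-L+1)^{1/2} f \|_2 \le \| f \|_2 + \| \sqrt{\Gamma(f)} \|_2$. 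Combining with the displayed bound applied to $g = (-L+1)^{1/2} f$ yields exactly
\[
\| f \|_{\frac{2n}{n-2}} = \| (-L+1)^{-1/2} g \|_{\frac{2n}{n-2}} \le C' \| g \|_2 \le C' \left( \| \sqrt{\Gamma(f)} \|_2 + \| f \|_2 \right).
\]

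There is essentially no hard step here: everything rests on the preceding theorem plus routine spectral calculus and the carré du champ integration by parts. The only point requiring a little care is the self-consistency of the chain $f \mapsto g = (-L+1)^{1/2} f \mapsto (-L+1)^{-1/2} g = f$, which is justified because $(-L+1)^{1/2}$ is injective with dense range and $(-L+1)^{-1/2}$ is its bounded inverse on that range; since $f \in C^\infty_0(\M) \subset \mathcal{D}((-L+1)^{1/2})$ and $g \in L^2(\M,\mu)$, all quantities are well defined and the identity $(-L+1)^{-1/2}(-L+1)^{1/2} f = f$ holds by the functional calculus.
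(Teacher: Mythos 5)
Your proof is correct and is exactly the intended argument: the paper leaves the corollary without an explicit proof, but it plainly follows from the preceding theorem with $\alpha=1$, $p=2$ (note $\tfrac{np}{n-p\alpha}=\tfrac{2n}{n-2}$ and $1<p<n/\alpha$ becomes $n>2$), combined with the identity $\|(-L+1)^{1/2}f\|_2^2=\|f\|_2^2+\|\sqrt{\Gamma(f)}\|_2^2$ and $\sqrt{a^2+b^2}\le a+b$. This mirrors the way the earlier (global-in-time) Sobolev theorem was deduced from Varopoulos's theorem via the substitution $f\mapsto (-L)^{1/2}f$, so your route is the same as the one the paper implicitly has in mind.
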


\subsection{Isoperimetric inequality and volume growth}

In this section, we study in further details the connection between volume growth of metric balls, heat kernel upper bounds and the $L^1$ Sobolev inequality. As we shall see, on a manifold with non negative Ricci curvature, all these properties are equivalent one to each other and equivalent to the isoperimetric inequality as well. We start with some preliminaries about geometric measure theory  on Riemannian manifolds.

Let $(\mathbb{M},g)$ be a complete and non compact Riemannian manifold.

In what follows, given an open set $\Omega \subset \mathbb{M}$ we will indicate with $\mathcal F(\Omega)$ the set of $C^1$ vector fields $V$'s, on $\Omega$ such that $\| V \|_\infty \le 1$.

Given a function $f\in L^1_{loc}(\Omega)$ we define the total variation of $f$ in $\Omega$ as
\[
\text{Var} (f;\Omega) = \underset{\phi\in \mathcal{F}(\Omega)}{\sup} \int_\Om f \mathbf{div} \phi d\mu.
\]
The space \[ BV (\Omega) = \{f\in L^1(\Omega)\mid \text{Var}(f;\Omega)<\infty\},
\]
endowed with the norm
\[
||f||_{BV(\Omega)} = ||f||_{L^1(\bM)} + \text{Var} (f;\Omega),
\]
is a Banach space. It is well-known that $W^{1,1}(\Omega) = \{f\in L^1(\Omega)\mid \| \nabla f \| \in L^1(\Omega )\}$ is a strict subspace of
$BV(\Omega)$. It is important to note that when $f\in W^{1,1}(\Omega)$, then $f\in BV(\Omega)$, and one has in fact
\[
\text{Var}(f;\Omega) = ||\sqrt{\Gamma(f)}||_{L^1(\Omega)}.
\]
Given a measurable set $E\subset \mathbb{M}$ we say that it has finite perimeter in $\Omega $ if $\mathbf 1_E\in BV(\Omega)$. In such case the horizontal perimeter of $E$ relative to $\Omega$ is by
definition
\[
P(E;\Omega) = \text{Var}(\mathbf 1_E;\Omega).
\]
We say that a measurable set $E\subset \mathbb{M}$ is a Caccioppoli set if $P(E;\Omega)<\infty$ for any $\Omega \subset \mathbb{M}$. For instance, $O$ is an open relatively compact set in $\mathbb{M}$ whose boundary $E$ is $n-1$ dimensional sub manifold of $\mathbb{M}$, then it is a Caccioppoli set and $P(E;\mathbb{M})=\mu_{n-1} (E)$ where $\mu_{n-1}$ is the Riemannian measure on $E$.
We will need the following approximation result.

\begin{proposition} Let $f\in BV(\Omega)$, then there exists a sequence $\{f_n\}_{n\in \mathbb N}$ of functions in $C^\infty(\Omega)$ such that:
\begin{itemize}
\item[(i)] $||f_n - f||_{L^1(\Omega)} \to 0$;
\item[(ii)] $\int_\Omega \sqrt{\Gamma(f_n)} d\mu \to
\text{Var}(f;\Omega)$.
\end{itemize}
If $\Omega = \mathbb{M}$, then the sequence $\{f_n\}_{n\in \mathbb N}$ can be taken in $C^\infty_0(\mathbb{M})$.
\end{proposition}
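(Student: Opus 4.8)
The plan is to prove this by a Riemannian version of the Meyers--Serrin approximation theorem, carried out by mollification in local charts and glued with a partition of unity. The first observation I would record is the lower semicontinuity of the total variation under $L^1(\Omega)$ convergence: if $g_n \to g$ in $L^1(\Omega)$, then $\text{Var}(g;\Omega) \le \liminf_n \text{Var}(g_n;\Omega)$. This is immediate from the definition of $\text{Var}$ as a supremum over $\phi \in \mathcal F(\Omega)$, since for each fixed such $\phi$ one has $\int_\Omega g_n\, \mathbf{div}\,\phi\, d\mu \to \int_\Omega g\, \mathbf{div}\,\phi\, d\mu$. Because of this, it is enough to construct, for every $\delta>0$, a function $g_\delta \in C^\infty(\Omega)$ with $\|g_\delta - f\|_{L^1(\Omega)} < \delta$ and $\int_\Omega \sqrt{\Gamma(g_\delta)}\, d\mu < \text{Var}(f;\Omega) + \delta$; taking $\delta = 1/n$ produces the sequence, and the two inequalities combined with lower semicontinuity force $\int_\Omega \sqrt{\Gamma(f_n)}\, d\mu \to \text{Var}(f;\Omega)$.

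To construct $g_\delta$, I would fix a locally finite atlas $\{(U_i,\varphi_i)\}$ of $\Omega$, a subordinate partition of unity $\{\zeta_i\} \subset C^\infty_0(U_i)$ with $\sum_i \zeta_i \equiv 1$, and standard mollifiers $\rho_\varepsilon$ in the coordinate patches. Each $f\zeta_i$ lies in $BV$ with compact support in $U_i$, and the crucial identity $\sum_i f\nabla\zeta_i = f\,\nabla\!\left(\sum_i \zeta_i\right) = 0$ holds. Choose radii $\varepsilon_i>0$ small enough that the mollification $(f\zeta_i)_{\varepsilon_i}$ (performed in the chart, transported back to $\mathbb{M}$) is supported in $U_i$ and satisfies $\|(f\zeta_i)_{\varepsilon_i} - f\zeta_i\|_{L^1} < \delta\, 2^{-i}$, $\|(f\nabla\zeta_i)_{\varepsilon_i} - f\nabla\zeta_i\|_{L^1} < \delta\, 2^{-i}$, and $\big|\int (\zeta_i\, |Df|)_{\varepsilon_i} - \int \zeta_i\, d|Df|\big| < \delta\, 2^{-i}$, where $|Df|$ is the total-variation measure of $f$. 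Set $g_\delta = \sum_i (f\zeta_i)_{\varepsilon_i}$: the sum is locally finite, so $g_\delta \in C^\infty(\Omega)$, and $\|g_\delta - f\|_{L^1} < \delta$. Writing $\nabla(f\zeta_i) = \zeta_i\, Df + f\nabla\zeta_i$ and using $\sum_i f\nabla\zeta_i = 0$ after mollification,
\[
\nabla g_\delta = \sum_i (\zeta_i\, Df)_{\varepsilon_i} + \sum_i \big[ (f\nabla\zeta_i)_{\varepsilon_i} - f\nabla\zeta_i \big] .
\]
Integrating the pointwise norm, the bracketed sum contributes at most $\delta$, while $\int_\Omega \big| \sum_i (\zeta_i Df)_{\varepsilon_i}\big|\, d\mu \le \sum_i \int (\zeta_i |Df|)_{\varepsilon_i} < \sum_i \int \zeta_i\, d|Df| + \delta = |Df|(\Omega) + \delta = \text{Var}(f;\Omega) + \delta$. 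Rescaling $\delta$ gives the general case.

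For $\Omega = \mathbb{M}$, I would first reduce to compactly supported data using the exhaustion cutoffs $h_n \in C^\infty_0(\mathbb{M})$ from Proposition \ref{L:exhaustion}, with $0 \le h_n \le 1$, $h_n \nearrow 1$, and $\|\Gamma(h_n)\|_\infty \to 0$. Then $fh_n \in BV(\mathbb{M})$ has compact support, $fh_n \to f$ in $L^1(\mathbb{M})$ by dominated convergence, and $\text{Var}(fh_n;\mathbb{M}) \le \int_{\mathbb{M}} h_n\, d|Df| + \int_{\mathbb{M}} |f|\sqrt{\Gamma(h_n)}\, d\mu \le |Df|(\mathbb{M}) + \|\sqrt{\Gamma(h_n)}\|_\infty \|f\|_{L^1(\mathbb{M})}$, so $\limsup_n \text{Var}(fh_n;\mathbb{M}) \le \text{Var}(f;\mathbb{M})$; with lower semicontinuity this gives $\text{Var}(fh_n;\mathbb{M}) \to \text{Var}(f;\mathbb{M})$. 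Applying the general case to each $fh_n$ and exploiting that its mollifications $(fh_n\zeta_i)_{\varepsilon_i}$ all live in a fixed compact set yields $g^{(n)} \in C^\infty_0(\mathbb{M})$ with $\|g^{(n)} - fh_n\|_{L^1} < 1/n$ and $\big| \int_{\mathbb{M}} \sqrt{\Gamma(g^{(n)})}\, d\mu - \text{Var}(fh_n;\mathbb{M}) \big| < 1/n$; a diagonal argument over $n$ then produces the required sequence in $C^\infty_0(\mathbb{M})$.

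The main obstacle is the simultaneous choice of the mollification radii $\varepsilon_i$ in the Meyers--Serrin step: they must be taken small enough to control three families of errors at once (the $L^1$ error of $g_\delta$, the $L^1$ error of the gradient corrections $\sum_i [(f\nabla\zeta_i)_{\varepsilon_i} - f\nabla\zeta_i]$, and the weak-$\ast$ approximation $\int (\zeta_i|Df|)_{\varepsilon_i} \to \int \zeta_i\, d|Df|$), while keeping track that mollification is done in charts, so that the Riemannian density and the coordinate changes contribute only smooth lower-order perturbations that vanish in the limit. The cancellation $\sum_i f\nabla\zeta_i = 0$ is exactly what prevents the partition of unity from inflating the total variation, and it is the heart of the argument.
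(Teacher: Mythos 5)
The paper states this proposition without giving a proof, citing it as a known approximation fact, so there is no argument in the source to compare against; I will assess your construction on its own terms. Your proof is correct: it is the classical Meyers--Serrin strict-approximation argument (as in Anzellotti--Giaquinta) transplanted to a Riemannian manifold. The two pillars are exactly right: (a) lower semicontinuity of $\text{Var}(\cdot;\Omega)$ under $L^1$ convergence, which reduces the problem to producing, for each $\delta>0$, a smooth $g_\delta$ with $\|g_\delta-f\|_{L^1}<\delta$ and $\int_\Omega\sqrt{\Gamma(g_\delta)}\,d\mu<\text{Var}(f;\Omega)+\delta$; and (b) the partition-of-unity mollification with the crucial cancellation $\sum_i f\nabla\zeta_i=0$ that prevents the gluing from blowing up the variation. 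Your handling of the $\Omega=\mathbb M$ case is also the natural one: truncating with the exhaustion functions $h_n$ from Proposition \ref{L:exhaustion}, using the BV Leibniz rule to bound $\text{Var}(fh_n;\mathbb{M})\le |Df|(\mathbb{M})+\|\sqrt{\Gamma(h_n)}\|_\infty\|f\|_{L^1}$, invoking lower semicontinuity again to get $\text{Var}(fh_n;\mathbb{M})\to\text{Var}(f;\mathbb{M})$, and then a diagonal selection. Two small points you could make explicit for a fully rigorous write-up: first, the identification $\text{Var}(f;\Omega)=|Df|(\Omega)$ (Riesz representation applied to the linear functional $\phi\mapsto\int_\Omega f\,\mathbf{div}\,\phi\,d\mu$) is used silently and deserves a line; second, when mollifying in charts, the pointwise norm $\sqrt{\Gamma(\cdot)}$ is the Riemannian one while the mollification is Euclidean, so the Jensen-type inequality $|(\zeta_i Df)_{\varepsilon_i}|\le(\zeta_i|Df|)_{\varepsilon_i}$ only holds up to a factor $1+o(1)$ depending on how close the chart metric is to flat on the support of $\zeta_i$; shrinking the chart pieces or absorbing the discrepancy into the $\delta 2^{-i}$ budget, as you indicate in your closing paragraph, takes care of it.
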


 Our main result of the section  is the following:

\begin{theorem} Let $n>1$. Let us assume that $\mathbf{Ric} \ge 0$. then the following assertions  are equivalent:
\begin{itemize}
\item[(1)] There exists a constant $C_1 >0$ such that for every $x \in \mathbb{M}$, $r \ge 0$,
\[
\mu (B(x,r)) \ge C_1 r^n.
\]
\item[(2)] There exists a constant $C_2>0$ such that for $x \in \mathbb{M}$, $t>0$,
\[
p(x,x,t) \le \frac{C_2}{t^{\frac{n}{2}}}.
\]
\item[(3)] There exists a constant $C_3 >0$ such that for every Caccioppoli set $E\subset \mathbb{M}$ one has
\[
\mu(E)^{\frac{n-1}{n}} \le C_3 P(E;\mathbb{M}).
\]
\item[(4)] With the same constant $C_3>0$ as in (3), for every $f \in BV(\mathbb{M})$ one has
\[
\left( \int_\mathbb{M} |f|^{\frac{n}{n-1}} d\mu\right)^{\frac{n-1}{n}}  \le C_3 \emph{Var}(f;\mathbb{M}).
\]
\end{itemize}
\end{theorem}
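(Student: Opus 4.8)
The plan is to close the cycle $(1)\Leftrightarrow(2)$, $(2)\Rightarrow(3)\Rightarrow(4)\Rightarrow(2)$, together with the trivial implication $(4)\Rightarrow(3)$ which accounts for the two occurrences of the same constant $C_3$. The equivalence $(1)\Leftrightarrow(2)$ is essentially already contained in the previous sections: combining the Gaussian upper bound $p(x,x,t)\le C\,\mu(B(x,\sqrt t))^{-1}$ with $(1)$ gives $(2)$, and combining the on-diagonal lower bound $p(x,x,2r^2)\ge C^*\mu(B(x,r))^{-1}$, obtained in the proof of the volume doubling property, with $(2)$ gives $(1)$. The equivalence $(3)\Leftrightarrow(4)$ is the classical Federer--Fleming argument: $(4)\Rightarrow(3)$ follows by taking $f=\mathbf 1_E$, since $\text{Var}(\mathbf 1_E;\mathbb M)=P(E;\mathbb M)$; for $(3)\Rightarrow(4)$ one uses the layer-cake formula $|f|=\int_0^\infty\mathbf 1_{\{|f|>s\}}\,ds$, Minkowski's integral inequality in $L^{n/(n-1)}$ to get $\||f|\|_{n/(n-1)}\le\int_0^\infty\mu(\{|f|>s\})^{(n-1)/n}\,ds$, then applies $(3)$ levelwise together with the coarea formula $\int_0^\infty P(\{|f|>s\};\mathbb M)\,ds=\text{Var}(f;\mathbb M)$; this preserves the constant $C_3$.

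The heart of the matter is $(2)\Rightarrow(3)$, and this is where the heat semigroup enters. First, $(2)$ upgrades itself: by Chapman--Kolmogorov and Cauchy--Schwarz, $p(x,y,t)=\langle p(x,\cdot,t/2),p(y,\cdot,t/2)\rangle\le\sqrt{p(x,x,t)\,p(y,y,t)}\le C_2 t^{-n/2}$, so that $\|P_tf\|_\infty\le C_2 t^{-n/2}\|f\|_{L^1(\mathbb M)}$ (ultracontractivity). Second, one needs a \emph{pseudo-Poincar\'e inequality}: $\|g-P_tg\|_{L^1(\mathbb M)}\le C\sqrt t\,\|\sqrt{\Gamma(g)}\|_{L^1(\mathbb M)}$ for $g\in C_0^\infty(\mathbb M)$. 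This follows by duality: writing $\|g-P_tg\|_1=\sup\{\int_{\mathbb M}g(\phi-P_t\phi)\,d\mu:\phi\in C_0^\infty(\mathbb M),\ \|\phi\|_\infty\le1\}$, using $\phi-P_t\phi=-\int_0^tLP_s\phi\,ds$ and integrating by parts gives $\int_{\mathbb M}g(\phi-P_t\phi)\,d\mu=\int_0^t\int_{\mathbb M}\Gamma(g,P_s\phi)\,d\mu\,ds\le\|\sqrt{\Gamma(g)}\|_1\int_0^t\|\sqrt{\Gamma(P_s\phi)}\|_\infty\,ds$, and then the $L^\infty$ gradient estimate $\|\sqrt{\Gamma(P_s\phi)}\|_\infty\le Cs^{-1/2}\|\phi\|_\infty$—a standard consequence of the Li--Yau inequality of a previous section, valid because $\mathbf{Ric}\ge0$—together with $\int_0^ts^{-1/2}\,ds=2\sqrt t$ finishes the bound. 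Third, given a set $E$ of finite perimeter and finite measure (the remaining cases are trivial), approximate $\mathbf 1_E$ by $f_j\in C_0^\infty(\mathbb M)$ with $0\le f_j\le1$, $f_j\to\mathbf 1_E$ in $L^2(\mathbb M)$ and $\limsup_j\int_{\mathbb M}\sqrt{\Gamma(f_j)}\,d\mu\le P(E;\mathbb M)$ (the $BV$ approximation proposition, composed with a smooth truncation). Then $\langle f_j-P_tf_j,f_j\rangle\le\|f_j-P_tf_j\|_1\le C\sqrt t\int_{\mathbb M}\sqrt{\Gamma(f_j)}\,d\mu$, and letting $j\to\infty$ yields $\mu(E)-\|P_{t/2}\mathbf 1_E\|_2^2\le C\sqrt t\,P(E;\mathbb M)$. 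Since also $\|P_{t/2}\mathbf 1_E\|_2^2=\langle\mathbf 1_E,P_t\mathbf 1_E\rangle\le\|P_t\mathbf 1_E\|_\infty\,\mu(E)\le C_2 t^{-n/2}\mu(E)^2$, adding the two estimates gives $\mu(E)\le C_2 t^{-n/2}\mu(E)^2+C\sqrt t\,P(E;\mathbb M)$ for every $t>0$, and optimizing over $t$ (the optimum being of order $(\mu(E)^2/P(E;\mathbb M))^{2/(n+1)}$) produces $\mu(E)^{(n-1)/n}\le C_3 P(E;\mathbb M)$.

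For $(4)\Rightarrow(2)$ one runs the classical chain Sobolev $\Rightarrow$ Nash $\Rightarrow$ on-diagonal heat kernel bound. A power (or truncation) manipulation appropriate to the value of $n$ converts the $L^1$ Sobolev inequality into a Nash inequality $\|g\|_2^{2+4/n}\le C\|\sqrt{\Gamma(g)}\|_2^2\|g\|_1^{4/n}$; fed with $g=P_tf$, this forces $\frac{d}{dt}\|P_tf\|_2^2=-2\int_{\mathbb M}\Gamma(P_tf)\,d\mu\le-c\,\|P_tf\|_2^{2+4/n}\|f\|_1^{-4/n}$, whence $\|P_tf\|_2\le Ct^{-n/4}\|f\|_1$; the semigroup property and self-adjointness then upgrade this to $\|P_tf\|_\infty\le Ct^{-n/2}\|f\|_1$, i.e. $p(x,x,t)\le Ct^{-n/2}$, which is $(2)$.

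I expect the pseudo-Poincar\'e inequality—more precisely the $L^\infty$ gradient bound $\|\sqrt{\Gamma(P_s\phi)}\|_\infty\le Cs^{-1/2}\|\phi\|_\infty$ on which it rests—to be the delicate step. It is the only place where the curvature hypothesis is used beyond the heat kernel bound itself, and it is exactly what makes this equivalence a statement about manifolds with $\mathbf{Ric}\ge0$ rather than about arbitrary metric measure spaces carrying such a heat kernel bound.
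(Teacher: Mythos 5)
Your proposal is correct, and the core implication $(2)\Rightarrow(3)$ — the pseudo-Poincar\'e inequality $\|g-P_tg\|_1\le C\sqrt t\,\|\sqrt{\Gamma(g)}\|_1$ derived by duality from the $L^\infty$ gradient bound, which itself comes from Li--Yau and requires $\mathbf{Ric}\ge 0$, combined with ultracontractivity and optimization in $t$ — is exactly the argument in the paper. Where you diverge is in how the cycle is closed. The paper proves $(1)\Rightarrow(2)\Rightarrow(3)\Rightarrow(4)\Rightarrow(1)$, with the last step $(4)\Rightarrow(1)$ going through a H\"older interpolation $\|f\|_p\le\|f\|_\nu^\theta\|f\|_q^{1-\theta}$ (with $\nu=n/(n-1)$), applied to the cutoff $f(y)=(r-d(y,x))^+$ and then iterated on the dyadic balls $B(x,r/2^k)$ using the doubling property that is already available; the geometric series in the iteration produces $\mu(B(x,r))\ge c\,r^n$. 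You instead prove $(2)\Leftrightarrow(1)$ directly (the reverse implication using the on-diagonal lower bound $p(x,x,2r^2)\ge K^*/\mu(B(x,r))$ from the volume-doubling section, which is legitimate since that bound needs only $\mathbf{Ric}\ge0$), and close via $(4)\Rightarrow(2)$ by the classical Sobolev--Nash--ultracontractivity chain. Both routes are sound. Yours is more modular and reuses well-known machinery; the paper's $(4)\Rightarrow(1)$ iteration has the advantage of producing the volume lower bound directly, without passing through Nash (which, as you note, requires some care with exponents for $1<n\le2$), and of not needing the on-diagonal lower bound. One small remark: the remark ``with the same constant $C_3$ as in (3)'' is justified by you exactly as in the paper, through the coarea/layer-cake argument for $(3)\Rightarrow(4)$ and the trivial specialization $f=\mathbf 1_E$ for $(4)\Rightarrow(3)$.
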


\begin{proof}
In the proof, we denote by $d$ the dimension of $\M$.
That (1) $\rightarrow$ (2) follows immediately from the Li-Yau upper Gaussian bound.

The proof that (2) $\rightarrow$ (3) is not straightforward, it relies on the Li-Yau inequality. Let $f \in C_0(\mathbb{M})$ with $f\ge 0$. By Li-Yau inequality, we obtain
\begin{equation}\label{LY}
\Gamma (P_t f) - P_tf  \frac{\partial P_tf}{\partial t} \le \frac{d}{2t}(P_tf)^2.
\end{equation}
This gives in particular, ,
\begin{equation}\label{LYpt}
\left(\frac{\partial P_tf}{\partial t}\right)^- \le \frac{d}{2t} P_tf,
\end{equation}
where we have denoted $a^+ = \sup\{a,0\}$, $a^- = \sup\{-a,0\}$. Since $\int_\mathbb{M} \frac{\partial P_tf}{\partial t} d\mu=0$, we deduce
\[
||\frac{\partial P_tf}{\partial t}||_{L^1(\mathbb{M})} \le \frac{d}{t} ||f||_{L^1(\bM)},\ \ \ \ t>0.
\]
By duality, we deduce that for every $f \in C^\infty_0(\mathbb{M})$, $f\ge 0$,
\[
 \|\frac{\partial P_tf}{\partial t} \|_{L^\infty(\mathbb{M})} \le \frac{d}{t} \| f\|_{L^\infty(\bM)}.
 \]
Once we have this crucial information we can return to \eqref{LY} and infer
\begin{align*}
\Gamma (P_t f)  \le \frac{1}{t} \frac{3d}{2} \| f\|^2_{L^\infty(\mathbb{M})},\ \ \ \ t>0.
\end{align*}
Thus,
\[
\| \sqrt{\Gamma (P_t f)} \|_{L^\infty(\mathbb{M})} \le \sqrt{\frac{3d}{2t} }\| f\|_{L^\infty(\mathbb{M})}.
\]
Applying this inequality to $g \in C_0^\infty(\mathbb{M})$, with $g\ge 0$ and $||g||_{L^\infty(\mathbb{M})}\le 1$, if $f \in C_0^1(\mathbb{M})$ we have
\begin{align*}
\int_\mathbb{M}g(f-P_tf) d\mu & = \int_0^t \int_\mathbb{M} g \frac{\partial P_sf}{\partial s}
d\mu ds = \int_0^t \int_\mathbb{M} g L P_sf d\mu ds =  \int_0^t \int_\mathbb{M} L
g P_sf d\mu ds
\\
& = \int_0^t \int_\mathbb{M} P_sLg f d\mu ds = \int_0^t \int_\mathbb{M} L P_sg f
d\mu ds  = - \int_0^t \int_\mathbb{M} \Gamma(P_sg,f) d\mu ds
\\
& \le \int_0^t  \| \sqrt{\Gamma(P_sg)} \|_{L^\infty(\mathbb{M})}\int_\mathbb{M}
\sqrt{\Gamma(f)} d\mu ds  \le \sqrt{6d} \ \sqrt{t}
\int_\mathbb{M} \sqrt{\Gamma(f)} d\mu.
\end{align*}
We thus obtain the following basic inequality: for $f \in
C_0^1(\bM)$,
\begin{align}\label{PoincareP_t}
\|P_tf - f\|_{L^1(\mathbb{M})} \le \sqrt{6d}\ \ \sqrt{t}\ \| \sqrt{\Gamma(f)} \|_{L^1(\mathbb{M})},\ \ \ t>0.
\end{align}
Suppose now that $E\subset \mathbb{M}$ is a bounded Caccioppoli set. But then, $\mathbf 1_E\in BV(\Omega)$, for any bounded open set
$\Omega \supset E$. It is easy to see that $\text{Var}(\mathbf 1_E;\Omega) = \text{Var}_\Ho(\mathbf 1_E;\mathbb{M})$, and therefore $\mathbf 1_E\in
BV(\mathbb{M})$. There exists a sequence $\{f_n\}_{n\in \mathbb N}$ in $C^\infty_0(\mathbb{M})$ satisfying (i) and
(ii) above. Applying \eqref{PoincareP_t} to $f_n$ we obtain \[ \|P_tf_n -
f_n\|_{L^1(\bM)} \le \sqrt{6d} \ \sqrt{t}\ \| \sqrt{\Gamma(f_n)}
\|_{L^1(\mathbb{M})} = \sqrt{6d} \ \sqrt{t}\ Var_\Ho(f_n,\mathbb{M}),\ \ \ n\in
\mathbb N.
\]
Letting $n\to \infty$ in this inequality, we conclude
\[ \|P_t \mathbf 1_E -
\mathbf 1_E\|_{L^1(\bM)} \le \sqrt{6d} \ \sqrt{t}\ Var_\Ho(\mathbf 1_E,\mathbb{M}) = \sqrt{6d} \ \sqrt{t}\ P(E;\mathbb{M}),\ \
\ \ t>0.
\]
Observe now that, using $P_t 1 = 1$, we have
\[
||P_t \mathbf 1_E - \mathbf 1_E||_{L^1(\mathbb{M})}  = 2\left(\mu(E) - \int_E P_t \mathbf 1_E d\mu\right).
\]
On the other hand,
\[
\int_E  P_t \mathbf 1_E d\mu  = \int_\mathbb{M} \left(P_{t/2}\mathbf
1_\mathbb{M}\right)^2 d\mu.
\]
We thus obtain
\[
||P_t \mathbf 1_E - \mathbf 1_E||_{L^1(\mathbb{M})} = 2 \left(\mu(E) -
\int_\bM \left(P_{t/2}\mathbf 1_E\right)^2 d\mu\right).
\]
We now observe that the assumption (1) implies
\[
p(x,x,t)  \le \frac{C_4}{t^{n/2}},\ \ \ x\in \mathbb{M}, t>0.
\]
This gives
\begin{align*}
\int_\mathbb{M} (P_{t/2} \mathbf 1_E)^2 d\mu & \le \left(\int_E \left(\int_\mathbb{M} p(x,y,t/2)^2 d\mu(y)\right)^{\frac{1}{2}}d\mu(x)\right)^2 \\
& = \left(\int_E p(x,x,t)^{\frac{1}{2}}d\mu(x)\right)^2 \le \frac{C_4}{t^{n/2}} \mu(E)^2.
\end{align*}
Combining these equations we reach the conclusion
\[
\mu(E)  \le \frac{\sqrt{6d}}{2} \ \sqrt{t}\ P(E;\mathbb{M}) + \frac{C_4}{t^{n/2}} \mu(E)^2,\ \ \ \ t>0.
\]
Now the absolute minimum of the function $g(t) = A t^\alpha + B
t^{-\beta}$, $t>0$, where $A, B, \alpha, \beta>0$, is given by
\[
g_{\min} =
\left[\left(\frac{\alpha}{\beta}\right)^{\frac{\beta}{\alpha +
\beta}} + \left(\frac{\beta}{\alpha}\right)^{\frac{\alpha}{\alpha +
\beta}}\right] A^{\frac{\beta}{\alpha + \beta}}
B^{\frac{\alpha}{\alpha + \beta}}
\]
Applying this observation with $\alpha = \frac{1}{2}, \beta =
\frac{n}{2}$, we conclude
\[
\mu(E)^{\frac{n-1}{n}} \le C_3 P(E,\mathbb{M}).
\]
The fact that 3) implies 4) is classical geometric measure theory. It relies on the Federer co-area formula that we recall: For every $f,g \in C_0^\infty(\mathbb{M})$,
\[
\int_\mathbb{M} g \| \nabla f \| d\mu=\int_{-\infty}^{+\infty} \left( \int_{f(x)=t} g(x) d\mu_{n-1}(x) \right) dt.
\]
Let now $f \in C_0^\infty(\mathbb{M})$. We have
\[
f(x)=\int_0^{+\infty} \mathbf{1}_{f(x) > t} (t) dt.
\]
By using Minkowski inequality, we get then
\begin{align*}
\| f \|_{\frac{n}{n-1}} & \le \int_0^\infty \| \mathbf{1}_{f(\cdot) > t} \|_{\frac{n}{n-1}}dt \\
 & \le \int_0^\infty \mu ( f > t )^{\frac{n}{n-1}}dt \\
 &\le C_3 \int_0^\infty \mu_{n-1} ( f=t) dt =C_3 \int_\mathbb{M} \sqrt{\Gamma(f)} d\mu
\end{align*}

Finally, we show that $(4) \rightarrow (1)$. In what follows we let
$\nu = n/(n-1)$. Let $p,q\in (0,\infty)$ and $0<\theta\le 1$ be such
that
\[
\frac{1}{p} = \frac{\theta}{\nu} + \frac{1-\theta}{q}.
\]
H\"older inequality, combined with assumption (4), gives for any
$f\in Lip_d(\mathbb{M})$ with compact support
\[
||f||_{L^p(\mathbb{M})} \le ||f||^\theta_{L^{\nu}(\mathbb{M})}
||f||^{1-\theta}_{L^q(\mathbb{M})}\le \left(C_3
||\sqrt{\Gamma(f)}||_{L^1(\mathbb{M})}\right)^\theta
||f||^{1-\theta}_{L^q(\mathbb{M})}.
\]
For any $x\in \mathbb{M}$ and $r>0$ we now let $f(y) = (r-d(y,x))^+$.
Clearly such $f\in Lip_d(\bM)$ and supp$\ f = \overline B(x,r)$.
Since with this choice $||\sqrt{\Gamma(f)}||_{L^1(\mathbb{M})}^\theta \le \mu(B(x,r))^{\theta}$, the above inequality implies
\[
\frac{r}{2} \mu(B(x,\frac{r}{2})^{\frac{1}{p}} \le r^{1-\theta}
\left(C_3 \mu(B(x,r)\right)^\theta \mu(B(x,r))^{\frac{1-\theta}{q}},
\]
which, noting that $\frac{1-\theta}{q} + \theta = \frac{n+\theta
p}{pn}$, we can rewrite as follows
\[
\mu(B(x,r)) \ge \left(\frac{1}{2C_3^\theta}\right)^{pa}
\mu(B(x,\frac{r}{2}))^a r^{\theta p a},
\]
where we have let $a = \frac{n}{n+\theta p}$. Notice that $0<a<1$.
Iterating the latter inequality we find
\[
\mu(B(x,r)) \ge \left(\frac{1}{2C_3^\theta}\right)^{p\sum_{j=1}^k
a^j} r^{\theta p \sum_{j=1}^k a^j} 2^{-\theta p \sum_{j=1}^k
(j-1)a^j}\mu(B(x,\frac{r}{2^k}))^{a^k},\ \ \ k\in \mathbb N.
\]
From the doubling property for any $x\in \mathbb{M}$ there exist
constants $C(x), R(x)>0$ such that with $Q(x) = \log_2 C(x)$ one has
\[
\mu(B(x,tr)) \ge C(x)^{-1} t^{Q(x)} \mu(B(x,r)),\ \ \ 0\le t\le 1,
0<r\le R(x).
\]
This estimate implies that
\[
\underset{k\to \infty}{\liminf}\ \mu(B(x,\frac{r}{2^k}))^{a^k}\ge
1,\ \ \ x\in \bM, r>0.
\]
Since on the other hand $\sum_{j=1}^\infty a^j = \frac{n}{\theta
p}$, and $\sum_{j=1}^\infty (j-1) a^j = \frac{n^2}{\theta^2p^2}$, we
conclude that
\[
\mu(B(x,r)) \ge \left(2^{-\frac{1}{\theta}(1+\frac{n}{p})}
C_3^{-1}\right)^n r^n,\ \ \ x\in \mathbb{M}, r>0.
\]
This establishes (1), thus completing the proof.
\end{proof}

\subsection{Sharp Sobolev inequalities}

In this section we are interested in sharp Sobolev inequalities in positive curvature. Let $(\mathbb{M},g)$ be a complete and $n$-dimensional Riemannian manifold such that $\mathbf{Ricci} \ge \rho$ where $\rho >0$. We assume $n >2$.  As we already know, we have $\mu (\mathbb{M}) <+\infty$, but as we already stressed we do not want to use Bonnet-Myers theorem, since one of our goals will be to recover it by using heat kernel techniques. Without loss of generality, and to simplify the constants, we assume that $\mu(\mathbb{M}) =1$. Our goal is to prove the following sharp result:

\begin{theorem}
For every $1 \le p \le \frac{2n}{n-2}$ and $ f \in C^\infty_0(\mathbb{M})$,
\[
\frac{n \rho}{(n-1)(p-2)} \left( \left(  \int_{\mathbb{M}} | f |^p d\mu\right)^{2/p}-\int_\mathbb{M} f^2 d\mu \right) \le \int_\mathbb{M} \Gamma(f) d\mu.
\]
\end{theorem}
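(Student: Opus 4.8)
\emph{Strategy.} The idea is to write the left-hand side as an integral along the heat flow $t\mapsto P_t f$ and thereby reduce the theorem to one pointwise-in-$t$ functional inequality, which is then extracted from the curvature--dimension inequality $\Gamma_2(g,g)\ge\frac1n(Lg)^2+\rho\,\Gamma(g,g)$ together with the identity $\int_{\mathbb M}\Gamma_2(g,g)\,d\mu=\int_{\mathbb M}(Lg)^2\,d\mu$ proved earlier.

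\emph{Reductions.} Since replacing $f$ by $|f|$ affects neither $\int_{\mathbb M}f^2\,d\mu$ nor $\int_{\mathbb M}|f|^p\,d\mu$ and does not increase $\int_{\mathbb M}\Gamma(f)\,d\mu$, and since $\mu(\mathbb M)=1$ forces every constant into $L^q(\mathbb M,\mu)$, a mollification followed by the substitution $f\mapsto f+\ve$ and the limit $\ve\to0$ reduces matters to $f\in C^\infty(\mathbb M)$ with $0<\ve\le f$ bounded. The right-hand side is independent of $p$, and the quotient on the left is continuous in $p$ with a removable singularity at $p=2$; hence we may assume $p\ne2$, the case $p=2$ following by continuity, and the endpoint $p=\frac{2n}{n-2}$ following from the cases $p<\frac{2n}{n-2}$ by letting $p\uparrow\frac{2n}{n-2}$. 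The range $1\le p<2$ is handled by the same computation with all signs, including that of $p-2$, reversed; so assume $2<p<\frac{2n}{n-2}$.

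\emph{Heat-flow representation.} Put $g_t=P_t f$. Then $g_t$ is smooth and bounded, $g_t\ge m:=\int_{\mathbb M}f\,d\mu>0$, and $g_t\to m$ in $L^2$ and pointwise as $t\to\infty$ (finiteness of $\mu$ and convergence to equilibrium). We use the identities
\[
\tfrac{d}{dt}\!\int_{\mathbb M} g_t^2\,d\mu=-2\!\int_{\mathbb M}\Gamma(g_t)\,d\mu,\qquad
\tfrac{d}{dt}\!\int_{\mathbb M}\Gamma(g_t)\,d\mu=-2\!\int_{\mathbb M}(Lg_t)^2\,d\mu,
\]
\[
\tfrac{d}{dt}\!\int_{\mathbb M} g_t^{\,p}\,d\mu=-p(p-1)\!\int_{\mathbb M} g_t^{\,p-2}\,\Gamma(g_t)\,d\mu .
\]
Consider $\Lambda(t)=\dfrac{\|g_t\|_p^2-\|g_t\|_2^2}{p-2}$. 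Since $\Lambda(t)\to0$ and $\int_{\mathbb M}\Gamma(g_t)\,d\mu\to0$ (Lichnerowicz gap) as $t\to\infty$, we get
\[
\Lambda(0)=-\int_0^\infty\Lambda'(t)\,dt ,\qquad
\int_{\mathbb M}\Gamma(f)\,d\mu=2\int_0^\infty\!\!\int_{\mathbb M}(Lg_t)^2\,d\mu\,dt .
\]
Thus the theorem is equivalent to the pointwise estimate $-\Lambda'(t)\le\frac{2(n-1)}{n\rho}\int_{\mathbb M}(Lg_t)^2\,d\mu$ for all $t>0$, which, upon computing $\Lambda'$ from the identities above, is exactly the following functional inequality for a positive bounded $g\in C^\infty(\mathbb M)$:
\[
\frac{(p-1)\,\|g\|_p^{\,2-p}\displaystyle\int_{\mathbb M} g^{\,p-2}\,\Gamma(g)\,d\mu-\displaystyle\int_{\mathbb M}\Gamma(g)\,d\mu}{p-2}\ \le\ \frac{n-1}{n\rho}\int_{\mathbb M}(Lg)^2\,d\mu .
\]

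\emph{The key inequality and the obstacle.} Integrating $\Gamma_2(g,g)\ge\frac1n(Lg)^2+\rho\,\Gamma(g,g)$ over $\mathbb M$ and using $\int_{\mathbb M}\Gamma_2(g,g)\,d\mu=\int_{\mathbb M}(Lg)^2\,d\mu$ yields $\int_{\mathbb M}(Lg)^2\,d\mu\ge\frac{n\rho}{n-1}\int_{\mathbb M}\Gamma(g)\,d\mu$, which disposes of the term $-\int_{\mathbb M}\Gamma(g)\,d\mu$; the real work is to bound the weighted energy $\|g\|_p^{\,2-p}\int_{\mathbb M}g^{\,p-2}\Gamma(g)\,d\mu$ by $\int_{\mathbb M}(Lg)^2\,d\mu$ with the sharp constant. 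This is the analytic heart of the argument and is carried out by the $\Gamma_2$-calculus attached to the curvature--dimension inequality --- applied, as needed, to powers of $g$, and combined with integration by parts and H\"older/Cauchy--Schwarz to remove the auxiliary higher-order terms --- in a computation of the same nature as, but appreciably more delicate than, those used earlier for the Poincar\'e and logarithmic Sobolev inequalities. It is precisely here that the restriction $1\le p\le\frac{2n}{n-2}$ must be imposed (the sign of a discriminant-type quantity changes there) and that the sharp constant $\frac{n\rho}{(n-1)(p-2)}$ is produced; one checks afterward that it is optimal, equality holding to second order at the first eigenfunctions of the model sphere. Establishing this pointwise inequality is the main obstacle; everything else --- the approximation, the heat-flow identities, the two endpoint cases --- is routine given the tools developed in the preceding sections.
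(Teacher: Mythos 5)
Your proposal takes a genuinely different structural route from the paper, but it has a real gap: the analytic core of the argument is never supplied. You reduce the theorem, via differentiation of
\[
\Lambda(t)=\frac{\lVert P_t f\rVert_p^2-\lVert P_t f\rVert_2^2}{p-2},
\]
to the claim that for every smooth bounded positive $g$,
\[
\frac{(p-1)\,\lVert g\rVert_p^{\,2-p}\int_{\mathbb M} g^{\,p-2}\,\Gamma(g)\,d\mu-\int_{\mathbb M}\Gamma(g)\,d\mu}{p-2}\ \le\ \frac{n-1}{n\rho}\int_{\mathbb M}(Lg)^2\,d\mu ,
\]
but you then say explicitly that establishing this is ``the main obstacle'' and leave it unproved, asserting only that it follows from the $\Gamma_2$-calculus applied to powers of $g$. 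That functional inequality carries the full content of the theorem (to leading order in a perturbative expansion $g=c+\varepsilon h$ it already reduces to the Lichnerowicz gap $\lambda_1\ge\frac{n\rho}{n-1}$), and it is not at all clear that it is true for arbitrary smooth bounded positive $g$ or that the functional $\Lambda$ is monotone along the \emph{linear} heat flow at the sharp rate; the known proofs of sharp Sobolev-type inequalities by flow methods use either cleverer functionals or nonlinear (fast-diffusion) flows, not the naive $p$-norm along $P_t$. So the reduction may be setting up an inequality that is actually false, and even if it is true you have not shown it.

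The paper's own proof is quite different. It first proves a non-sharp version (existence of some constant $C_p$) via the Li--Yau-type heat kernel upper bound, Varopoulos' theorem, and the Poincar\'e inequality. It then obtains the sharp constant by the Bakry argument, following Ledoux: one takes an extremal $f$ of the Sobolev quotient (from nonlinear variational principles), which satisfies the Euler--Lagrange equation $C_p(f^{p-1}-f)=-Lf$; writing $f=u^r$, multiplying this PDE by appropriately chosen test quantities, integrating by parts, and applying Bochner's inequality to $u^s$ for a well-chosen $s$, one eliminates the cross terms and arrives at $C_p\frac{(p-2)(n-1)}{n}-\rho\ge 0$. This route uses no heat-flow interpolation at all, and the restriction $1\le p\le\frac{2n}{n-2}$ appears as a sign condition in the algebra rather than in a ``discriminant''. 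Your sketch and the paper's argument share only the use of the curvature--dimension inequality and the identity $\int\Gamma_2(g)\,d\mu=\int(Lg)^2\,d\mu$; beyond that they are disjoint, and yours is incomplete precisely at the step where the sharp constant is produced.
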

We observe that for $p=1$, the inequality becomes
\[
\frac{n \rho}{(n-1)}  \left( \int_\mathbb{M} f^2 d\mu -  \left( \int_{\mathbb{M}} | f | d\mu\right)^{2} \right) \le \int_\mathbb{M} \Gamma(f) d\mu.
\]
which is the Poincar\'e inequality with optimal Lichnerowicz constant. For $p=2$, we get the log-Sobolev inequality
\[
\frac{n \rho}{2(n-1)} \left( \int_\mathbb{M} f^2 \ln f^2 d\mu -\int_\mathbb{M} f^2 d\mu \ln\int_\mathbb{M} f^2 d\mu \right) \le \int_\mathbb{M} \Gamma(f) d\mu.
\]
We prove our Sobolev inequality in several steps.

\begin{lemma}
For every $ 1 \le  p \le \frac{2n}{n-2}$, there exists a constant $C_p >0$ such that for every  $ f \in C^\infty_0(\mathbb{M})$,
\[
C_p \left( \left(  \int_{\mathbb{M}} | f |^p d\mu\right)^{2/p}-\int_\mathbb{M} f^2 d\mu \right) \le \int_\mathbb{M} \Gamma(f) d\mu.
\]
\end{lemma}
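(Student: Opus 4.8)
I would separate the trivial range $1\le p\le 2$ from the substantive one $2<p\le\frac{2n}{n-2}$. If $1\le p\le 2$, then since $\mu(\mathbb{M})=1$ Jensen's inequality applied to the concave function $t\mapsto t^{p/2}$ gives $\big(\int_{\mathbb{M}}|f|^{p}\,d\mu\big)^{2/p}\le\int_{\mathbb{M}}f^{2}\,d\mu$, so the left-hand side of the asserted inequality is non-positive and any $C_{p}>0$ works. From now on assume $2<p\le\frac{2n}{n-2}$.

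The first step is to establish a \emph{defective} Sobolev inequality. Since $\mathbf{Ric}\ge\rho>0$ implies $\mathbf{Ric}\ge 0$, all the heat kernel estimates of the preceding sections hold with $K=0$; in particular, combining the on-diagonal bound $p(x,x,r^{2})\le C(n)/\mu(B(x,r))$ with the uniform small-scale lower volume bound $\mu(B(x,r))\ge c\,r^{n}$ for $0<r\le 1$ yields $p(x,y,t)\le C\,t^{-n/2}$ for $0<t\le 1$, and hence, by the Corollary of the section on Sobolev inequalities and volume growth, $\|f\|_{\frac{2n}{n-2}}\le C'\big(\|\sqrt{\Gamma(f)}\|_{2}+\|f\|_{2}\big)$ for $f\in C_{0}^{\infty}(\mathbb{M})$. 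Squaring and interpolating against $\|\cdot\|_{2}$ by Hölder's inequality followed by Young's inequality (again using $\mu(\mathbb{M})=1$) produces, for each $p\in(2,\frac{2n}{n-2}]$, constants $A,B>0$ with
\[
\Big(\int_{\mathbb{M}}|f|^{p}\,d\mu\Big)^{2/p}\ \le\ A\int_{\mathbb{M}}\Gamma(f)\,d\mu\ +\ B\int_{\mathbb{M}}f^{2}\,d\mu,\qquad f\in C_{0}^{\infty}(\mathbb{M}).
\]

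The second step is a \emph{tightening} argument removing the defect $B\int_{\mathbb{M}}f^{2}d\mu$, using the Poincar\'e (Lichnerowicz) inequality $\int_{\mathbb{M}}f^{2}d\mu-\big(\int_{\mathbb{M}}f\,d\mu\big)^{2}\le\frac{n-1}{n\rho}\int_{\mathbb{M}}\Gamma(f)\,d\mu$ proved earlier. Put $m=\int_{\mathbb{M}}f\,d\mu$; replacing $f$ by $-f$ we may assume $m\ge 0$, and the case $m=0$ is immediate since then the left-hand side of the claim is $\le\|f\|_{p}^{2}$ which the defective inequality bounds by $A\int\Gamma(f)\,d\mu$. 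For $m>0$, apply the defective inequality to $f-m$ (legitimate after a routine localization, as $\mathbb{M}$ has finite measure): using $\Gamma(f-m)=\Gamma(f)$ and $\int_{\mathbb{M}}(f-m)^{2}d\mu\le\frac{n-1}{n\rho}\int_{\mathbb{M}}\Gamma(f)\,d\mu$, one gets $\|f-m\|_{p}^{2}\le C_{1}\int_{\mathbb{M}}\Gamma(f)\,d\mu$. Now feed into the elementary pointwise estimate $|m+g|^{p}\le m^{p}+pm^{p-1}g+c_{p}\big(m^{p-2}g^{2}+|g|^{p}\big)$ — valid for all $g\in\mathbb{R}$, $m\ge0$, with $c_{p}=\sup_{s}(|1+s|^{p}-1-ps)/(s^{2}+|s|^{p})<\infty$ — the function $g=f-m$; integrating (the linear term drops since $\int_{\mathbb{M}}(f-m)\,d\mu=0$), then extracting the $2/p$-th power using subadditivity of $t\mapsto t^{2/p}$ and $(1+x)^{2/p}\le 1+\tfrac{2}{p}x$, gives $\big(\int_{\mathbb{M}}|f|^{p}d\mu\big)^{2/p}\le m^{2}+\tfrac{2c_{p}}{p}\int_{\mathbb{M}}(f-m)^{2}d\mu+c_{p}^{2/p}\|f-m\|_{p}^{2}$. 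Subtracting $\int_{\mathbb{M}}f^{2}d\mu=m^{2}+\int_{\mathbb{M}}(f-m)^{2}d\mu$ and using the two bounds above yields $\big(\int_{\mathbb{M}}|f|^{p}d\mu\big)^{2/p}-\int_{\mathbb{M}}f^{2}d\mu\le C_{2}\int_{\mathbb{M}}\Gamma(f)\,d\mu$, i.e.\ the claim with $C_{p}=1/C_{2}$.

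The step requiring the most care is the tightening: it is the familiar self-improvement of a defective Sobolev inequality in the presence of a spectral gap (cf.\ \cite{BGL}), and the delicate part is verifying the pointwise estimate and the power extraction with constants genuinely independent of $f$, not merely for perturbations of a constant. A secondary technical input is the uniform small-time bound $p(x,y,t)\le C\,t^{-n/2}$, which rests on the uniform local volume lower bound; I would obtain this from the compactness of $\mathbb{M}$ -- itself a consequence of $\mu(\mathbb{M})<\infty$ via the linear volume growth of complete manifolds with $\mathbf{Ric}\ge 0$ -- so as not to invoke the Bonnet--Myers theorem that this section is designed to recover by heat semigroup methods.
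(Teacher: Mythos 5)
Your overall strategy (Jensen for $1\le p\le 2$, defective Sobolev from a small-time heat kernel bound via Varopoulos, then tightening by Poincar\'e) matches the paper's, but both substantive steps are executed differently, and one detour undermines the stated goal of the section.

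For the small-time bound $p(x,y,t)\le Ct^{-n/2}$, you route through compactness of $\mathbb{M}$ (deduced from $\mu(\mathbb{M})<\infty$ together with the Calabi--Yau linear volume growth theorem) so as to get a uniform local lower volume bound $\mu(B(x,r))\ge c r^{n}$ for $r\le 1$, which you then combine with the on-diagonal upper bound. Your instinct to avoid Bonnet--Myers is correct, but Calabi--Yau is itself extraneous comparison geometry that the paper never introduces and that the heat-semigroup program is designed to bypass. The paper reaches the same bound directly and without compactness: it takes the $\rho>0$ Li--Yau inequality established earlier,
\[
\Gamma(\ln P_t f)\ \le\ e^{-\frac{2\rho t}{3}}\,\frac{LP_tf}{P_tf}\ +\ \frac{n\rho}{3}\,\frac{e^{-\frac{4\rho t}{3}}}{1-e^{-\frac{2\rho t}{3}}},
\]
drops the non-negative left side, integrates the resulting lower bound on $\partial_s\ln P_sf$ from $t$ to $\infty$ using $P_sf\to\int f\,d\mu$, and obtains $P_tf\le(1-e^{-2\rho t/3})^{-n/2}\int f\,d\mu$, hence $p(x,y,t)\le(1-e^{-2\rho t/3})^{-n/2}\le Ct^{-n/2}$ for $t\le1$. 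This needs no volume estimate, no compactness, and no external geometric theorem, which is the whole point of the section; you should replace your detour by this derivation.

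Your tightening step is a genuine and valid alternative. The paper invokes an inequality, $(\int|f|^p\,d\mu)^{2/p}\le(\int f\,d\mu)^{2}+\|f-\int f\,d\mu\|_p^{2}$, that is in fact false as written (take $p=4$, the two-point probability space with equal masses and $f=1+g$, $g=\pm1$: the left side is $2\sqrt2$, the right side is $2$); a factor of order $p-1$ is needed, consistent with the $(p-1)C_p'$ that appears in the paper's next display. Your explicit pointwise estimate $|m+g|^{p}\le m^{p}+pm^{p-1}g+c_p(m^{p-2}g^{2}+|g|^{p})$, followed by integration (the linear term vanishes since $\int(f-m)\,d\mu=0$), subadditivity of $t\mapsto t^{2/p}$, and $(1+x)^{2/p}\le 1+\tfrac{2}{p}x$, carries out carefully the computation the paper gestures at, and is perfectly adequate for the non-sharp constant the lemma requires.
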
 

\begin{proof}
Using Jensen's inequality, it is enough to prove the result for $p= \frac{2n}{n-2}$. We already proved the following Li-Yau inequality:  For $f \in C^\infty_0(\mathbb{M})$, $f \neq 0$, $t  > 0$, and $x \in \bM$,
\[
\| \nabla \ln P_t f (x) \|^2 \le e^{-\frac{2\rho t}{3}}  \frac{  L P_t f (x)}{P_t f (x)} +\frac{n\rho}{3} \frac{e^{-\frac{4\rho t}{3}}}{ 1-e^{-\frac{2\rho t}{3}}}.
 \]
As a consequence we have
\[
 \frac{  L P_t f (x)}{P_t f (x)} \ge-  \frac{n\rho}{3} \frac{e^{-\frac{2\rho t}{3}}}{ 1-e^{-\frac{2\rho t}{3}}},
\]
which yields,
\[
\int_t^{+\infty} \partial_t \ln P_sf (x) ds  \ge-  \frac{n\rho}{3} \int_t^{+\infty} \frac{e^{-\frac{2\rho s}{3}}}{ 1-e^{-\frac{2\rho s}{3}}} ds.
\]
We obtain then
\[
 P_tf(x)  \le  \left( \frac{1}{1-e^{-\frac{2\rho t}{3}} }\right)^{n/2} \int_\mathbb{M} f d\mu.
\]
This of course implies the following upper bound on the heat kernel,
\[
p(x,y,t) \le \left( \frac{1}{1-e^{-\frac{2\rho t}{3}} }\right)^{n/2}.
\]
Using Varopoulos theorem, we deduce therefore that there is constant $C_p'$ such that for every $f \in C^\infty_0(\mathbb{M})$,
\[
\|  f \|^2_{p} \le C'_p \left(  \| \sqrt{\Gamma(f)} \|^2_2 + \| f \|^2_2 \right).
\]
We now use the following inequality which is easy to see:
\[
\left( \int_{\mathbb{M}} | f |^p d\mu \right)^{2/p} \le \left( \int_{\mathbb{M}}  f  d\mu \right)^{2}+\left( \int_{\mathbb{M}} \left| f -\int_{\mathbb{M}}  f  d\mu \right|^p d\mu \right)^{2/p} 
\]
This yields
\[
\left( \int_{\mathbb{M}} | f |^p d\mu \right)^{2/p} \le \left( \int_{\mathbb{M}}  f  d\mu \right)^{2} +(p-1)C'_p  \left(  \| \sqrt{\Gamma(f)} \|^2_2 + \left\| f-\int_{\mathbb{M}}  f  d\mu \right\|^2_2 \right)
\]
We can now bound
\[
\left\| f-\int_{\mathbb{M}}  f  d\mu \right\|^2_2 \le \frac{1}{\lambda_1} \int_\mathbb{M} \Gamma(f) d\mu,
\]
using the Poincar\'e inequality.
\end{proof}
Our proof follows now an argument due to Bakry and largely follows  the presentation by Ledoux \cite{Led} to which we refer for the details. 
We now want to prove that the optimal $C_p$ in the previous inequality is given by $C_p=\frac{n \rho}{(n-1)(p-2)} $. We assume $p>2$ and consider  the functional
\[
\frac{\left(  \int_{\mathbb{M}} | f |^p d\mu\right)^{2/p}-\int_\mathbb{M} f^2 d\mu}{ \int_\mathbb{M} \Gamma(f) d\mu}.
\]
Classical non linear variational principles on the functional provide then a positive non trivial solution of the equation
\[
C_p(f^{p-1}-f)=-Lf.
\]
We point out that existence and smoothness results for this non linear pde are non a  trivial issue, we refer to the comments by Ledoux in \cite{Led} and the references therein. Set $f=u^r$ where $r$ is a constant to be later chosen. By the chain rule for diffusion operators, we get
\[
C_p ( u^{r(p-1)}-u^r)=-ru^{r-1} -r(r-1)u^{r-2} \Gamma(u).
\]
Multiplying by $u^{-r} \Gamma(u)$ and integrating yields
\[
C_p \left( \int_{\mathbb{M}} u^{r(p-2)} \Gamma(u) d\mu -\int_{\mathbb{M}} \Gamma (u) d\mu \right)=-r \int_{\mathbb{M}} \frac{Lu}{u} \Gamma(u) d\mu -r (r-1) \int_{\mathbb{M}} \frac{\Gamma(u)^2}{u^2} d\mu.
\]
Now, integrating by parts,
\[
\int_{\mathbb{M}} u^{r(p-2)} \Gamma(u) d\mu=-\frac{1}{r(p-2)+1}  \int_{\mathbb{M}} u^{r(p-2)+1} Lu d\mu.
\]
On the other hand, multiplying
\[
C_p ( u^{r(p-1)}-u^r)=-ru^{r-1} -r(r-1)u^{r-2} \Gamma(u),
\]
by $u^{1-r} Lu$ and integrating with respect to $\mu$ yields
\[
C_p \left( \int_{\mathbb{M}} u^{r(p-2)+1} Lu d\mu - \int_{\mathbb{M}} uLu d\mu \right)=-r \int_{\mathbb{M}} (Lu)^2 d\mu -r(r-1)  \int_{\mathbb{M}} \frac{Lu}{u} \Gamma(u) d\mu. 
\]
Combining the previous computations gives
\[
C_p \left( r(p-2) +1\right) \int_{\mathbb{M}} u^{r(p-2)} \Gamma(u) d\mu = r  \int_{\mathbb{M}} (Lu)^2 d\mu +r(r-1)  \int_{\mathbb{M}} \frac{Lu}{u} \Gamma(u) d\mu +C_p  \int_{\mathbb{M}} \Gamma(u) d\mu
\]
Hence, we have
\[
C_p (p-2)  \int_{\mathbb{M}} \Gamma(u) d\mu= \int_{\mathbb{M}} (Lu)^2 d\mu + \int_{\mathbb{M}} \frac{Lu}{u} \Gamma(u) d\mu +(r-1)  \left( r(p-2) +1\right)  \int_{\mathbb{M}} \frac{\Gamma(u)^2}{u^2} d\mu
\]
We have from Bochner's inequality,
\[
\Gamma_2 (u^s)\ge \frac{1}{n} ( L u^s)^2 +\rho \Gamma(u^s).
\]
Once again, $s$ is a parameter that will be later decided. Using the chain, to rewrite the previous inequality, leads after tedious computations to
\[
\Gamma_2(u) +(s-1) \frac{1}{u} \Gamma ( u , \Gamma(u)) +(s-1)^2 \frac{\Gamma(u)^2}{u^2} \ge \rho \Gamma(u) +\frac{1}{n} (Lu)^2 +\frac{2}{n} (s-1) \frac{1}{u} Lu \Gamma (u) +\frac{1}{n} (s-1)^2 \frac{1}{u^2} \Gamma(u)^2.
\]
After integration and integration by parts, we see that
\[
\rho \int_{\mathbb{M}} \Gamma(u) d\mu \le \left( 1 -\frac{1}{n} \right) \int_{\mathbb{M}} (Lu)^2 d\mu-s' \left( 1+\frac{2}{n} \right) \int_{\mathbb{M}}  \frac{1}{u} Lu \Gamma (u) d\mu+s'\left( 1+s'\left( 1-\frac{1}{n} \right) \right) \int_{\mathbb{M}} \frac{\Gamma(u)^2}{u^2} d\mu,
\]
where $s'=s-1$. Combining the previous inequalities we can eliminate the term $ \int_{\mathbb{M}}  \frac{1}{u} Lu \Gamma (u) d\mu$. Chosing
\[
\frac{s'}{r}=(p-1) \frac{n-1}{n+2},
\]
we see that the coefficient in front of $ \int_{\mathbb{M}} (Lu)^2 d\mu$ is zero and we are left with
\[
\left( C_p \frac{(p-2)(n-1)}{n} -\rho \right)\int_\mathbb{M} \Gamma(u) d\mu \ge K(s',r) \int_{\mathbb{M}} \frac{\Gamma(u)^2}{u^2} d\mu,
\]
for some constant $K(s',r)$ which is seen to be non-negative as soon as $2< p \le \frac{2n}{n-2}$. We conclude
\[
 C_p \frac{(p-2)(n-1)}{n} -\rho \ge 0.
\]

\subsection{The Sobolev inequality proof of the  Myer's diameter theorem}

It is a well-known result  that if $\mathbb{M}$ is a complete $n$-dimensional Riemannian manifold with $\mathbf{Ricci} \ge \rho$, for some $\rho >0$, then $\mathbb{M}$ has to be compact with diameter less than $\pi \sqrt{\frac{n-1}{\rho} }$. The proof of this fact can be found in any graduate book about Riemannian geometry and classically relies on the study of Jacobi fields. We propose here an alternative proof of the diameter theorem that relies on the sharp Sobolev inequality proved in the previous section. The beautiful argument goes back to Bakry and Ledoux. We only sketch the main arguments and refer the readers to the original article \cite{BL}. 

The theorem by Bakry and Ledoux is the following:

\begin{theorem}
Assume that for some $p>2$, we have the inequality,
\[
\| f \|_p^2 \le \| f \|_2^2 + A \int_\mathbb{M} \Gamma(f) d\mu, \quad f \in C_0^\infty(\mathbb{M}),
\]
then $\mathbb{M}$ is compact with diameter less than $\pi \frac{ \sqrt{2pA}}{p-2}$.
\end{theorem}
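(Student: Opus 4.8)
The plan is to derive the diameter estimate from a bound on the oscillation of $1$-Lipschitz functions, which I will in turn extract from a one-dimensional reduction of the Sobolev inequality. \textbf{Step 1 (reduction to an oscillation bound).} Set $D=\pi\sqrt{2pA}/(p-2)$. First I would prove that every locally Lipschitz $w:\M\to\R$ with $\Gamma(w)\le 1$ $\mu$-almost everywhere satisfies the oscillation bound $\sup_\M w-\inf_\M w\le D$. Granting this, fix $x_0\in\M$ and apply it to $w=d(x_0,\cdot)$: by the characterization \eqref{di}, $w$ is $1$-Lipschitz for the intrinsic distance, hence $\Gamma(w)\le 1$ a.e.\ (the eikonal inequality), and since $w(x_0)=0$ one gets $\sup_{x\in\M}d(x_0,x)\le D$. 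As $x_0$ is arbitrary, $\operatorname{diam}(\M)\le D$; in particular $\M$ is bounded, and being complete it is compact by the Hopf--Rinow theorem quoted above.

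\textbf{Step 2 (plugging a profile of $w$ into the Sobolev inequality).} Let $w$ be as in Step 1, normalized so that $\inf_\M w=0$ and $R:=\sup_\M w$ (a priori $R\le+\infty$), and let $\phi:[0,\infty)\to[0,\infty)$ be a nonincreasing profile with bounded support. I would apply the hypothesis to $f=\phi(w)$; this first requires approximating $\phi\circ w$ in the energy norm $\|\cdot\|_2^2+\int\Gamma(\cdot)\,d\mu$ by functions of $C_0^\infty(\M)$, which is routine since $w$ is Lipschitz with $\Gamma(w)\le 1$ and $\phi\circ w$ is compactly supported (supported in the compact set $\overline B(x_0,D)$). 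Because $\Gamma(\phi(w))=\phi'(w)^2\,\Gamma(w)\le\phi'(w)^2$, the Sobolev inequality yields
\beq
\left(\int_\M\phi(w)^p\,d\mu\right)^{2/p}\le\int_\M\phi(w)^2\,d\mu+A\int_\M\phi'(w)^2\,d\mu .
\eeq
Letting $\nu$ be the push-forward of $\mu$ under $w$, a probability measure on $[0,R]$ whose support contains both $0$ and $R$, this reads $\|\phi\|_{L^p(\nu)}^2\le\|\phi\|_{L^2(\nu)}^2+A\|\phi'\|_{L^2(\nu)}^2$.

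\textbf{Step 3 (the one-dimensional analysis — the main obstacle).} The heart of the matter is to choose $\phi$ so that this one-dimensional inequality becomes impossible when $\{0,R\}\subset\operatorname{supp}\nu$ and $R>D$. Following Bakry--Ledoux, the efficient choice is (a truncation of) the extremal profile of the model one-dimensional Sobolev inequality on an interval of length $D$, namely
\[
\phi(t)=\left(\cos\!\Big(\tfrac{(p-2)\,t}{2\sqrt{2pA}}\Big)\right)_+^{\!2/(p-2)},
\]
for which $\phi(0)=1$, $\phi\equiv 0$ on $[D,\infty)$, and $\phi'(t)^2$ is an explicit combination of powers of $\phi(t)$. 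Inserting this $\phi$ and exploiting that $\nu$ is a probability measure with mass near both endpoints $0$ and $R$, I would combine Hölder/Cauchy--Schwarz relations among the quantities $\int\phi^p\,d\nu$, $\int\phi^2\,d\nu$ and $\int\phi'^2\,d\nu$ with an optimization over the frequency of the cosine to force $R\le D$. The genuinely hard point is carrying this out \emph{without loss}, so as to land on the sharp constant $D=\pi\sqrt{2pA}/(p-2)$ rather than merely a finite bound: this is exactly where the nonlinear extremal equation $A\phi''=\phi-\phi^{p-1}$ (with Neumann conditions) and the detailed one-dimensional computations of Bakry--Ledoux enter, and it is for these that one must consult the original article.
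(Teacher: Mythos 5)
Your Steps 1 and 2 correctly set up the strategy that the paper (and Bakry--Ledoux) use: reduce to an oscillation bound for a function $f$ with $\|\Gamma(f)\|_\infty\le 1$, and test the Sobolev inequality against profiles $\phi(f)$ so that only $\phi'(f)^2$ enters the energy. Up to that point you are aligned with the paper. However, Step 3 is where essentially all of the work lies, and your sketched plan there is not the one the paper uses and, as written, would not go through. You propose to push forward the measure, fix a single truncated profile $\phi(t)=\big(\cos\tfrac{(p-2)t}{2\sqrt{2pA}}\big)_+^{2/(p-2)}$, and extract a contradiction via H\"older/Cauchy--Schwarz and an optimization over the frequency. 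There are two structural problems with this. First, a compactly supported profile that vanishes on $\{w>D\}$ simply does not ``see'' the mass of $\nu$ on $[D,R]$ that you are trying to rule out; the Sobolev inequality for that single $\phi$ only constrains the restriction of $\nu$ to $[0,D]$, and a sub-probability measure there does not by itself produce a contradiction (the $L^p$--$L^2$ comparison goes the wrong way, and the $\|\phi'\|^2$ term blows up if one tries to sharpen the cutoff). Second, even allowing the frequency to vary only gives a one-parameter family of \emph{scalar} inequalities, which is too little information to recover the sharp endpoint constant.

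The paper's argument avoids both problems by a different device, which is worth naming explicitly since your sketch glosses over it. The test functions are $f_\lambda=(1+\lambda\sin f)^{1-n/2}$, $\lambda\in(-1,1)$, which are bounded above and \emph{bounded away from zero} (so they see all of $\mu$, including the putative far region). One then studies the single scalar functional $F(\lambda)=\int_{\M}(1+\lambda\sin f)^{2-n}\,d\mu$, and the crucial point is that the Sobolev inequality, combined with the chain rule and $\Gamma(f)\le 1$, converts into a genuine nonlinear \emph{ordinary differential inequality} in $\lambda$ for $G=D_{n-1}F$ involving the operators $D_k=\tfrac{1}{k}\lambda\partial_\lambda+I$. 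This ODE inequality is then compared against the explicit solutions $H_c$, and the conclusion of the comparison --- $\int_{\M}(1\pm\sin f)^{n-1}\,d\mu<\infty$, and after iteration $\|(1\pm\sin f)^{-1}\|_\infty<\infty$ --- is what finally gives the uniform diameter bound. It is this ODE comparison structure, not H\"older optimization over a single cosine frequency, that produces the sharp constant $\pi\sqrt{2pA}/(p-2)$. So the proposal correctly identifies the test-function strategy and honestly flags that the hard step is not carried out, but the route suggested for Step 3 is genuinely different from the one the paper sketches, and as a plan it is not salvageable without replacing it by the one-parameter differential inequality and the $H_c$-comparison.

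One smaller point: in Step 2 you describe $\nu=w_*\mu$ as a probability measure; the hypotheses of the theorem do not give $\mu(\M)=1$ (nor even $\mu(\M)<\infty$ a priori), so this normalization needs justification or should be dropped. The paper's test functions $(1+\lambda\sin f)^{1-n/2}$ being bounded above and below also makes the passage from $C_0^\infty$ to these test functions a real issue that the paper's earlier normalization $\mu(\M)=1$ is implicitly used to handle.
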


Combining this with the inequality
\[
\frac{n \rho}{(n-1)(p-2)} \left( \left(  \int_{\mathbb{M}} | f |^p d\mu\right)^{2/p}-\int_\mathbb{M} f^2 d\mu \right) \le \int_\mathbb{M} \Gamma(f) d\mu,
\]
that was proved in the previous section gives $\mathbf{diam} (\mathbb{M}) \le \pi \sqrt{\frac{2p}{p-2} } \sqrt{\frac{n-1}{n\rho}}$. When $n=2$ we conclude then by letting $p \to \infty$ and when $n >2$, we conclude by choosing $p=\frac{2n}{n-2}$.

\

By using a scaling argument it is easy to see that it is enough to prove that if for some $n>2$, 
\[
\| f \|_{\frac{2n}{n-2}}^2 \le \| f \|_2^2 + \frac{4}{n(n-2)} \int_\mathbb{M} \Gamma(f) d\mu,
\]
then $\mathbf{diam} (\mathbb{M}) \le \pi $.

The main idea is to apply the Sobolev inequality to the functions which are the extremals functions on the sphere. Such extremals are solutions of the fully non linear PDE
\[
f^{(n+2)/(n-2)} -f =- \frac{4}{n(n-2)} Lf
\]
and on the spheres the extremals are explicitly given by
\[
f=(1+\lambda \sin d)^{1-n/2}
\]
where $-1 < \lambda <1$ and $d$ is the distance to a fixed point. So, on our manifold $\mathbb{M}$, that satisfies the inequality
\[
\| f \|_{\frac{2n}{n-2}}^2 \le \| f \|_2^2 + \frac{4}{n(n-2)} \int_\mathbb{M} \Gamma(f) d\mu,
\]
we consider the functional
\[
F(\lambda) =\int_\mathbb{M} ( 1+\lambda \sin (f) )^{2-n} d\mu, \quad -1 < \lambda <1, 
\]
where $f$ is a function on $\mathbb{M}$ that satisfies $\| \Gamma(f) \|_\infty \le 1$. The first step is to prove a differential inequality on $F$. For $k>0$, we denote by $D_k$ the differential operator on $(-1,1)$ defined by
\[
D_k=\frac{1}{k} \lambda \frac{\partial}{\partial \lambda} +I.
\]

\begin{lemma}
Denoting $G=D_{n-1} F$, we have
\[
(D_{n-2} G)^{(n-2)/n} +\frac{n-2}{n} (1-\lambda^2) D_{n-2}G \le \left( 1+\frac{n-2}{n}\right)G.
\]
\end{lemma}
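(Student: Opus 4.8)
The plan is to feed the Sobolev inequality the one–parameter family $g_\lambda=(1+\lambda\sin f)^{1-n/2}$ and recognize each of the three resulting terms as an expression in $F$ and $F'$. First I would record the pointwise identities from the chain rule for diffusion operators. Write $u=1+\lambda\sin f$; since $|\lambda|<1$ and $|\sin f|\le 1$, $u$ is bounded above and below away from $0$. Then $g_\lambda^{2}=u^{2-n}$, $g_\lambda^{2n/(n-2)}=u^{-n}$ (the exponent $(1-n/2)\cdot\frac{2n}{n-2}$ equals $-n$), and, chaining twice,
\[
\Gamma(g_\lambda)=\Big(1-\tfrac n2\Big)^{2}u^{-n}\,\Gamma(u)=\tfrac{(n-2)^2}{4}\,u^{-n}\,\lambda^2\cos^2 f\,\Gamma(f).
\]
Using $\|\Gamma(f)\|_\infty\le 1$ and the trigonometric identity $\lambda^2\cos^2 f=\lambda^2-\lambda^2\sin^2 f=\lambda^2-(u-1)^2$, this gives the pointwise bound $\Gamma(g_\lambda)\le \tfrac{(n-2)^2}{4}\big((\lambda^2-1)u^{-n}+2u^{1-n}-u^{2-n}\big)$.

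Next I would introduce the moments $H_j(\lambda)=\int_{\mathbb{M}}u^{-j}\,d\mu$, so that $F=H_{n-2}$. Differentiating under the integral sign (legitimate because $u$ and $1/u$ are uniformly bounded and $\mu(\mathbb{M})<\infty$) and using $\sin f=(u-1)/\lambda$ yields $\lambda H_j'=-j(H_j-H_{j+1})$, i.e. $H_{j+1}=D_jH_j$. Hence $H_{n-1}=D_{n-2}F$, and since all the operators $D_k=\tfrac1k\lambda\partial_\lambda+I$ commute (each is $c\Theta+I$ with $\Theta=\lambda\partial_\lambda$), $H_n=D_{n-1}H_{n-1}=D_{n-2}D_{n-1}F=D_{n-2}G$ with $G=D_{n-1}F$. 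I would also record the purely algebraic identity $F+(n-2)H_{n-1}=\lambda F'+(n-1)F=(n-1)D_{n-1}F=(n-1)G$, which is just the definition of $D_{n-1}$ rewritten.

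With these in hand, substituting into $\|g_\lambda\|_{2n/(n-2)}^2\le\|g_\lambda\|_2^2+\tfrac{4}{n(n-2)}\int\Gamma(g_\lambda)\,d\mu$, and noting $\tfrac{4}{n(n-2)}\cdot\tfrac{(n-2)^2}{4}=\tfrac{n-2}{n}$, gives
\[
(D_{n-2}G)^{(n-2)/n}\le F+\frac{n-2}{n}\big((\lambda^2-1)D_{n-2}G+2H_{n-1}-F\big).
\]
Moving the $D_{n-2}G$ terms to the left and using $1-\tfrac{n-2}{n}=\tfrac2n$ turns the right-hand side into $\tfrac2n\big(F+(n-2)H_{n-1}\big)=\tfrac{2(n-1)}{n}G=\big(1+\tfrac{n-2}{n}\big)G$, which is exactly the claimed inequality.

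The routine but delicate point is the bookkeeping with the $D_k$: one must verify $H_{j+1}=D_jH_j$, the commutation of the $D_k$, and that the constants $\tfrac{n-2}{n}$, $\tfrac2n$, $n-1$ reassemble exactly — there is no slack, as the inequality in the lemma is sharp (equality on the sphere). The only genuine analytic obstacle is justifying that $g_\lambda$ is an admissible test function in the Sobolev inequality: since $u$ is bounded above and below, $g_\lambda\in L^2\cap L^{2n/(n-2)}$ with $\Gamma(g_\lambda)\in L^1$, so the inequality extends from $C_0^\infty(\mathbb{M})$ by the density/approximation already used earlier (and, if one later wishes to take for $f$ a distance function, by the standard smoothing of Lipschitz functions satisfying $\Gamma(f)\le 1$).
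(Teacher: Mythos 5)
Your proposal is correct and takes essentially the same approach as the paper: apply the Sobolev inequality to the family $f_\lambda=(1+\lambda\sin f)^{1-n/2}$, use $\Gamma(f)\le 1$ together with $\lambda^2\cos^2 f=\lambda^2-(1+\lambda\sin f-1)^2$, and then identify the three moments $\int(1+\lambda\sin f)^{-j}d\mu$ for $j=n-2,n-1,n$ with $F$, $D_{n-2}F$, and $D_{n-2}D_{n-1}F$. The paper itself stops at the Sobolev inequality and declares the rest ``an easy calculus exercise''; you have supplied exactly that calculus (the recursion $H_{j+1}=D_jH_j$, commutation of the $D_k$, and the algebraic identity $(n-1)G=F+(n-2)H_{n-1}$), and your bookkeeping of the constants is verified correct. (Note in passing that the displayed bound on $\int_\mathbb{M}\Gamma(f_\lambda)\,d\mu$ in the paper is missing a factor $\lambda^2$, a typo which your version corrects.)
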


\begin{proof}
We denote $\alpha=\frac{n-2}{n}$ and $f_\lambda=(1+\lambda \sin f)^{1-n/2}$, $-1 < \lambda < 1$. By the chain-rule and the hypothesis that $\Gamma(f) \le 1$, we get
\[
\int_\mathbb{M} \Gamma(f_\lambda) d\mu \le \left( \frac{n}{2} -1 \right)^2 \int_\mathbb{M} (1+\lambda \sin f)^{-n} (1-\sin^2 f) d\mu.
\]
From the Sobolev inequality applied to $f_\lambda$, we thus have,
\[
\left( \int_\mathbb{M} ( 1+\lambda \sin (f) )^{-n} d\mu  \right)^\alpha \le \int_\mathbb{M} ( 1+\lambda \sin (f) )^{2-n} d\mu +\alpha \lambda^2 \int_\mathbb{M} (1+\lambda \sin f)^{-n} (1-\sin^2 f) d\mu.
\]
It is then an easy calculus exercise to deduce our claim.
\end{proof}

The next idea is then to use a comparison theorem to bound $F$ in terms of solutions of the equation
\[
(D_{n-2} H)^{(n-2)/n} +\frac{n-2}{n} (1-\lambda^2) D_{n-2}H \le \left( 1+\frac{n-2}{n}\right)H.
\]
Actually, such solutions are given by 
\[
H_c(\lambda)=\frac{1}{1+\alpha} U_c(\lambda) ^{\frac{2\alpha}{1-\alpha}} +\frac{\alpha}{1+\alpha}  (1-\lambda^2) U_c(\lambda)^{\frac{2}{1-\alpha}},
\]
where $c \in \mathbb{R}$, $\alpha=\frac{n-2}{n}$ and
\[
U_c(\lambda)=\frac{ c\lambda +\sqrt{c^2 \lambda^2 +(1-\lambda^2) }}{1-\lambda^2}.
\]
We have then the following comparison result:

\begin{lemma}
Let $G$ be such that
\[
(D_{n-2} G)^{(n-2)/n} +\frac{n-2}{n} (1-\lambda^2) D_{n-2}G \le \left( 1+\frac{n-2}{n}\right)G,
\]
and assume that $G(\lambda_0) < H_c (\lambda_0)$ for some $\lambda_0 \in [0,1)$. Then for every $\lambda_0 \le \lambda <1$,
\[
G(\lambda) \le H_c(\lambda).
\]
\end{lemma}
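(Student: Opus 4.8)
The plan is to read both the hypothesis on $G$ and the identity satisfied by $H_c$ as, respectively, a first--order differential \emph{inequality} and a first--order differential \emph{equation} for a function of the single variable $\lambda$, and then to conclude by a Gronwall--type comparison argument.

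First I would make the operator explicit. With $\alpha=\frac{n-2}{n}$ one has $D_{n-2}g=\frac{1}{n-2}\lambda g'+g$, so the hypothesis on $G$ reads
\[
(D_{n-2}G)^{\alpha}+\alpha(1-\lambda^2)D_{n-2}G\le (1+\alpha)G,\qquad \lambda\in[\lambda_0,1),
\]
and $H_c$ satisfies exactly this relation with equality (the content of the preceding display exhibiting $H_c$, which one checks directly from the formula for $U_c$). For each fixed $\lambda\in[0,1)$ the map $\Phi_\lambda(y):=y^{\alpha}+\alpha(1-\lambda^2)y$ is continuous and strictly increasing from $[0,\infty)$ onto $[0,\infty)$, hence has a continuous increasing inverse $\Phi_\lambda^{-1}$. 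Setting $\Psi(\lambda,g):=\Phi_\lambda^{-1}\big((1+\alpha)g\big)$, the hypothesis becomes $D_{n-2}G\le\Psi(\lambda,G)$ and the identity becomes $D_{n-2}H_c=\Psi(\lambda,H_c)$; equivalently, for $\lambda>0$,
\[
G'(\lambda)\le F\big(\lambda,G(\lambda)\big),\qquad H_c'(\lambda)=F\big(\lambda,H_c(\lambda)\big),\qquad F(\lambda,g):=\frac{n-2}{\lambda}\big(\Psi(\lambda,g)-g\big).
\]
One works throughout in the region $g\ge 0$, $D_{n-2}g\ge 0$, where all expressions (in particular the non--integer power) are meaningful; in the situation at hand $G$ and $H_c$ stay in this range.

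Next I would check that $F$ is locally Lipschitz in its second argument, uniformly on $\lambda\in[\lambda_0,1-\delta]$ for each $\delta\in(0,1-\lambda_0)$. Since $\Phi_\lambda'(y)=\alpha y^{\alpha-1}+\alpha(1-\lambda^2)\ge\alpha(1-\lambda^2)$, the inverse $\Phi_\lambda^{-1}$ is Lipschitz with constant at most $\big(\alpha(1-\lambda^2)\big)^{-1}\le\big(\alpha\delta(2-\delta)\big)^{-1}$; combined with $\frac{n-2}{\lambda}\le\frac{n-2}{\lambda_0}$ this yields a Lipschitz constant $L=L(n,\delta,\lambda_0)$ for $F(\lambda,\cdot)$ on $[\lambda_0,1-\delta]$.

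Finally I would run the comparison. If $\lambda_0=0$, then $U_c(0)=1$ forces $H_c(0)=1$, so $G(0)<1$ and by continuity $G<H_c$ on some $[0,\lambda_0']$ with $\lambda_0'>0$; this reduces matters to $\lambda_0>0$. Fix $\delta\in(0,1-\lambda_0)$ and suppose $G$ and $H_c$ coincide somewhere on $(\lambda_0,1-\delta]$; let $\lambda_1$ be the first such point. Then $w:=H_c-G$ satisfies $w\ge 0$ on $[\lambda_0,\lambda_1]$, $w(\lambda_0)>0$, $w(\lambda_1)=0$, and the Lipschitz bound gives
\[
w'(\lambda)=H_c'(\lambda)-G'(\lambda)\ge F\big(\lambda,H_c(\lambda)\big)-F\big(\lambda,G(\lambda)\big)\ge -L\,w(\lambda),
\]
so $\big(e^{L\lambda}w(\lambda)\big)'\ge 0$ and hence $e^{L\lambda_1}w(\lambda_1)\ge e^{L\lambda_0}w(\lambda_0)>0$, contradicting $w(\lambda_1)=0$. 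Thus $G<H_c$ on $[\lambda_0,1-\delta]$ for every such $\delta$, which proves the assertion on $[\lambda_0,1)$. The only genuine subtlety I anticipate is the bookkeeping in the reformulation: verifying that $H_c$ solves the equation with equality and that $G$, $H_c$ and the combinations $D_{n-2}G$, $D_{n-2}H_c$ stay nonnegative so that $\Phi_\lambda^{-1}$ applies. Once the problem is cast as $G'\le F(\lambda,G)$ against $H_c'=F(\lambda,H_c)$ with $F$ locally Lipschitz away from $\lambda=0$, the conclusion is the textbook ODE comparison principle.
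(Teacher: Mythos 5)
The paper does not prove this lemma in the text; it is stated without proof and the reader is referred to the original article \cite{BL} for the details. Your argument is correct and self-contained, and it is the natural comparison proof. You invert the strictly increasing map $\Phi_\lambda(y)=y^{\alpha}+\alpha(1-\lambda^2)y$ with $\alpha=\frac{n-2}{n}$, thereby turning the differential inequality for $G$ into a scalar first-order ODE inequality $G'\le F(\lambda,G)$ and the identity for $H_c$ into $H_c'=F(\lambda,H_c)$; you establish local Lipschitz continuity of $F(\lambda,\cdot)$ on compact subintervals of $(0,1)$ from the uniform lower bound $\Phi_\lambda'(y)\ge \alpha(1-\lambda^2)$; and you close with the Gronwall comparison applied to $w=H_c-G$ on $[\lambda_0,\lambda_1]$, where $\lambda_1$ is the putative first touching point. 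Two small remarks. First, the nonnegativity of $D_{n-2}G$ (and of $D_{n-2}H_c$) which you flag as a tacit hypothesis really is implicit in the statement: the fractional power $(D_{n-2}G)^{(n-2)/n}$ is only meaningful in that range, and once $D_{n-2}G\ge 0$ the inequality itself forces $(1+\alpha)G\ge 0$, so $\Phi_\lambda^{-1}\bigl((1+\alpha)G\bigr)$ is well defined; your phrasing is therefore accurate. Second, your reduction of the case $\lambda_0=0$ is correct: $U_c(0)=1$ gives $H_c(0)=1$, and the strict initial inequality persists on a short interval by continuity, which sidesteps the singularity of $F$ at $\lambda=0$. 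The proof is in the spirit of the one in \cite{BL}.
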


Using the previous lemma, we see (again we refer to the original article for the details) that $\int_{\mathbb{M}}  \sin f d\mu >0 $ implies that $\int_{\mathbb{M}}  (1+\sin f )^{n-1}d\mu <\infty  $ and $\int_{\mathbb{M}}  \sin f d\mu < 0 $ implies that $\int_{\mathbb{M}}  (1-\sin f )^{n-1}d\mu <\infty  $. Iterating this result on the basis of the Sobolev inequality again, we actually have
\[
\| (1 \pm \sin f)^{-1} \|_{\infty} < \infty,
\]
from which the conclusion easily follows.

\section{The heat semigroup on sub-Riemannian manifolds and its applications}

It turns out that many of methods presented in the last section can be extended to some sub-Riemannian manifolds, provided that a correct generalization of curvature-dimension inequality is used. This generalized curvature dimension condition was first introduced in \cite{BG} in the context of sub-Riemannian manifolds with transverse symetries. In \cite{BG}, it has been shown that this generalized curvature dimension condition implies a Li-Yau inequality and a Gaussian upper bound for the heat kernel. Then, in \cite{BBG}, it was proved that the curvature dimension inequality implies a lower bound for the heat kernel. Combining those results the volume doubling property and the 2-Poincar\'e inequality on balls were deduced. In this Section we sketch the results obtained in those two papers without entering into details. We refer to the survey \cite{B2} and the forthcoming book \cite{BG5} for further applications of the sub-Riemannian curvature dimension condition (like Bonnet-Myers theorems and eigenvalue estimates), see also \cite{GT1,GT2}.

\subsection{Framework} \label{ssec:foliation}

Hereafter in this section, $\bM$ will be a $C^\infty$ connected  manifold endowed with a smooth measure $\mu$ and a second-order diffusion operator $L$ on $\M$ with real coefficients,  locally subelliptic, satisfying $L1=0$ and 
\begin{equation*}
\int_\bM f L g d\mu=\int_\bM g Lf d\mu,\ \ \ \ \ \ \int_\bM f L f d\mu \le 0,
\end{equation*}
for every $f , g \in C^ \infty_0(\bM)$, where $C^ \infty_0(\bM)$ denotes the space of compactly supported functions. A distance $d$ is constructed as follows (see Section 1):
\begin{equation}\label{di}
d(x,y)=\sup \left\{ |f(x) -f(y) | \mid f \in  C^\infty(\bM) , \| \Gamma(f) \|_\infty \le 1 \right\},\ \ \  \ x,y \in \bM,
\end{equation}
where for a function $g$ on $\bM$ we have let $||g||_\infty = \underset{\bM}{\text{ess} \sup} |g|$. This distance will often be referred to as the subelliptic or sub-Riemannian distance. As before, the quadratic functional $\Gamma(f) = \Gamma(f,f)$, where
\begin{equation}\label{gamma}
\Gamma(f,g) =\frac{1}{2}(L(fg)-fLg-gLf), \quad f,g \in C^\infty(\bM),
\end{equation}
is known as \textit{le carr\'e du champ}.  
Given any point $x\in \M$ there exists an open set $x\in U\subset \M$ in which the operator $L$ can be written as 
\begin{equation}\label{locrep}
L = - \sum_{i=1}^m X^*_i X_i,
\end{equation}
where  the vector fields $X_i$ have Lipschitz continuous coefficients in $U$, and $X_i^*$ indicates the formal adjoint of $X_i$ in $L^2(\M,d\mu)$. We remark that such local representation of $L$ is not unique.

In addition to the differential form \eqref{gamma}, we assume that $\M$ be endowed with another smooth bilinear differential form, indicated with $\Gamma^Z$, satisfying for $f,g \in C^\infty(\M)$
\[
\Gamma^Z(fg,h) = f\Gamma^Z(g,h) + g \Gamma^Z(f,h),
\] 
and $\Gamma^Z(f) = \Gamma^Z(f,f) \ge 0$. We assume that given any point $x\in \M$ there exists an open set $x\in U\subset \M$ in which the operator $\Gamma^Z$ can be written as
\[
\Gamma^Z(f,g) = \sum_{i=1}^p (Z_i f) (Z_i g),
\] 
where  the vector fields $Z_i$ have Lipschitz continuous coefficients in $U$.

Given the first-order bilinear forms $\Gamma$ and $\Gamma^Z$ on $\bM$, we now introduce the following second-order differential forms:
\begin{equation}\label{gamma2}
\Gamma_{2}(f,g) = \frac{1}{2}\big[L\Gamma(f,g) - \Gamma(f,
Lg)-\Gamma (g,Lf)\big],
\end{equation}
\begin{equation}\label{gamma2Z}
\Gamma^Z_{2}(f,g) = \frac{1}{2}\big[L\Gamma^Z (f,g) - \Gamma^Z(f,
Lg)-\Gamma^Z (g,Lf)\big].
\end{equation}
As for $\Gamma$ and $\Gamma^Z$, we will use the notations  $\Gamma_2(f) = \Gamma_2(f,f)$, $\Gamma_2^Z(f) = \Gamma^Z_2(f,f)$.

We make the following assumptions:

\begin{itemize}
\item[(H.1)] There exists an increasing
sequence $h_k\in C^\infty_0(\bM)$   such that $h_k\nearrow 1$ on
$\bM$, and \[
||\Gamma (h_k)||_{\infty} +||\Gamma^Z (h_k)||_{\infty}  \to 0,\ \ \text{as} \ k\to \infty.
\]
\item[(H.2)]  
For any $f \in C^\infty(\bM)$ one has
\[
\Gamma(f, \Gamma^Z(f))=\Gamma^Z( f, \Gamma(f)).
\]
\item[(H.3)]  The heat semigroup generated by $L$, which will denoted $P_t$ throughout the paper, is stochastically complete that is, for $t \ge 0$, $P_t 1=1$ and   for every $f \in C_0^\infty(\bM)$ and $T \ge 0$, one has 
\[
\sup_{t \in [0,T]} \| \Gamma(P_t f)  \|_{ \infty}+\| \Gamma^Z(P_t f) \|_{ \infty} < +\infty.
\]
\item[(H.4)] Given any two points $x, y\in \M$, there exist a subunit curve joining them.
\item[(H.5)] The metric space $(\M,d)$ is complete. 
\end{itemize}

A large class of examples where all these assumptions are satisfied arises in the context of  totally geodesic Riemannian foliations and sub-Riemannian manifolds with transverse symmetries (see \cite{B2, BG, BKW} for a detailed proof of these assumptions).

\begin{definition}(see \cite{BG})\label{D:cdi}
We shall say that  $\mathbb{M}$ satisfies the generalized curvature-dimension inequality CD($\rho_{1},\rho_{2},\kappa,n$) if there exist constants $\rho_{1}\in\mathbb{R}, \rho_{2}>0, \kappa\geq 0,$ and $n > 0$ such that the inequality
\begin{equation}
\label{sRCD}
\Gamma_{2}\left(f\right)+\nu\Gamma^{Z}_{2}\left(f\right)\geq \frac{1}{n}\left(Lf\right)^{2}+\left(\rho_{1}-\frac{\kappa}{\nu}\right)\Gamma\left(f\right)+\rho_{2}\Gamma^{Z}\left(f\right)
\end{equation}
holds for every $f\in C^{\infty}\left(\mathbb{M}\right)$ and every $\nu>0$.
\end{definition}

\subsection{Li-Yau inequality and volume doubling properties for the subelliptic distance}

Throughout the section we assume that $\mathbb{M}$ satisfies the generalized curvature-dimension inequality CD($\rho_{1},\rho_{2},\kappa,n$), and we show  how to obtain the Li-Yau estimate for the subelliptic operator $L$.

Henceforth, we will indicate $C_b^\infty(\mathbb M) = C^\infty(\M)\cap L^\infty(\M)$ and by $P_t$ the semigroup generated by $L$. A key lemma is the following.

\begin{lemma}\label{L:derivatives}
Let $f \in C^\infty_b(\mathbb{M})$, $f > 0$ and $T>0$, and consider the functions
\[
\phi_1 (x,t)=(P_{T-t} f) (x)\Gamma (\ln P_{T-t}f)(x),
\]
\[
\phi_2 (x,t)= (P_{T-t} f)(x) \Gamma^{Z} (\ln P_{T-t}f)(x),
\]
which are defined on $\M\times [0,T)$.  We have
\[
L \phi_1+\frac{\partial \phi_1}{\partial t} =2 (P_{T-t} f) \Gamma_2 (\ln P_{T-t}f)
\]
and
\[
L \phi_2+\frac{\partial \phi_2}{\partial t} =2 (P_{T-t} f) \Gamma_2^{Z} (\ln P_{T-t}f).
\]
\end{lemma}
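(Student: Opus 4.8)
Since this lemma has a direct Riemannian antecedent (the lemma with the same statement in Section 2.8), the plan is to imitate that computation, the genuinely new point being the need for the commutation hypothesis (H.2) in order to handle $\Gamma^Z$. Throughout, write $g(x,t)=P_{T-t}f(x)$; then $g>0$, $g$ is smooth on $\M\times[0,T)$ by hypoellipticity of $L$, and $\p_t g=-Lg$, so that $Lg+g_t=0$. I will also use freely the Leibniz rule $L(uv)=uLv+vLu+2\Gamma(u,v)$, the derivation property of $\Gamma$ and $\Gamma^Z$, the chain rule for the diffusion operator $L$, and the fact that $\p_t$ commutes with the (Lipschitz) vector fields defining $\Gamma$ and $\Gamma^Z$.

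For $\phi_1$ the argument is verbatim the Riemannian one. First compute
\[
\frac{\p \phi_1}{\p t} = g_t\,\Gamma(\ln g) + 2g\,\Gamma\!\left(\ln g, \frac{g_t}{g}\right),
\qquad
L\phi_1 = Lg\,\Gamma(\ln g) + gL\Gamma(\ln g) + 2\Gamma\!\left(g,\Gamma(\ln g)\right).
\]
Adding these, the terms $(Lg+g_t)\Gamma(\ln g)$ cancel, and then one invokes the definition \eqref{gamma2} of $\Gamma_2$ together with the chain-rule identity
\[
L(\ln g)=\frac{Lg}{g}-\frac{\Gamma(g)}{g^2}=-\frac{g_t}{g}-\frac{\Gamma(g)}{g^2}
\]
to recognize the remaining sum as $2g\,\Gamma_2(\ln g)$, using $\Gamma(\ln g,\cdot)=\tfrac1g\Gamma(g,\cdot)$ and $\Gamma(\ln g)=\Gamma(g)/g^2$. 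This yields the first formula, with no extra hypothesis beyond the definitions.

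For $\phi_2$ the same two computations (with $\Gamma$ replaced inside by $\Gamma^Z$, but the Leibniz rule for $L$ still producing the $\Gamma$-cross-term) give, after the cancellation of $(Lg+g_t)\Gamma^Z(\ln g)$,
\[
\frac{\p \phi_2}{\p t}+L\phi_2 = gL\Gamma^Z(\ln g) + 2\Gamma\!\left(g,\Gamma^Z(\ln g)\right) + 2g\,\Gamma^Z\!\left(\ln g,\frac{g_t}{g}\right),
\]
whereas by \eqref{gamma2Z},
\[
2g\,\Gamma^Z_2(\ln g) = gL\Gamma^Z(\ln g) - 2g\,\Gamma^Z(\ln g, L\ln g).
\]
Subtracting, and substituting the chain-rule identity for $L\ln g$, the claimed equality $\p_t\phi_2+L\phi_2=2g\,\Gamma^Z_2(\ln g)$ reduces to
\[
\Gamma\!\left(g,\Gamma^Z(\ln g)\right) = g\,\Gamma^Z\!\left(\ln g,\ \frac{\Gamma(g)}{g^2}\right).
\]
Expanding both sides via the derivation property of $\Gamma$ and $\Gamma^Z$ and the identities $\Gamma^Z(\ln g)=\Gamma^Z(g)/g^2$, $\Gamma^Z(\ln g,h)=\tfrac1g\Gamma^Z(g,h)$, one finds that the left side equals $\tfrac1{g^2}\Gamma(g,\Gamma^Z(g))-\tfrac{2}{g^3}\Gamma(g)\Gamma^Z(g)$ and the right side equals $\tfrac1{g^2}\Gamma^Z(g,\Gamma(g))-\tfrac{2}{g^3}\Gamma(g)\Gamma^Z(g)$. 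These coincide precisely because of hypothesis (H.2) applied to the smooth function $g(\cdot,t)$.

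The one non-formal point — and hence the main obstacle — is exactly this last reconciliation: the cross-term $2\Gamma(g,\Gamma^Z(\ln g))$ created by the Leibniz rule for $L$ mixes the two forms, and it can only be absorbed into $\Gamma^Z_2$ after swapping $\Gamma(g,\Gamma^Z(g))$ with $\Gamma^Z(g,\Gamma(g))$, which is content of (H.2). Everything else is routine manipulation with the derivation property and the chain rule. In the write-up I would dispatch the $\phi_1$ identity quickly (citing the Riemannian computation) and devote the bulk of the proof to the $\phi_2$ case, highlighting where (H.2) enters; the smoothness justifying all differentiations comes from the hypoellipticity of $L$ and of $L-\p_t$.
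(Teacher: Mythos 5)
Your proof is correct and follows essentially the same route as the paper's: a direct computation modeled on the Riemannian case, with (H.2) singled out as the one non-trivial ingredient needed to reconcile the $\Gamma(g,\Gamma^Z(\ln g))$ cross-term produced by the Leibniz rule for $L$ with the $\Gamma^Z(\ln g, L\ln g)$ term arising from the definition of $\Gamma_2^Z$. The paper's own proof is only the one-line remark ``direct computation without trick; the $\phi_2$ formula uses $\Gamma(g,\Gamma^Z(g))=\Gamma^Z(g,\Gamma(g))$''; your write-up correctly supplies exactly the computation that remark is pointing to, including the reduction to the identity $\Gamma(g,\Gamma^Z(\ln g)) = g\,\Gamma^Z(\ln g, \Gamma(g)/g^2)$ and its verification via (H.2).
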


\begin{proof}
This is direct computation without trick. Let us just point out that the formula
\[
L \phi_2+\frac{\partial \phi_2}{\partial t} =2 (P_{T-t} f) \Gamma_2^{Z} (\ln P_{T-t}f).
\]
uses the fact that $\Gamma(g , \Gamma^Z (g))=\Gamma^Z (g , \Gamma (g))$.
\end{proof}

We will need the following  lemma already used before.
\begin{lemma}\label{P:missing_key2}
Let $T>0$. Let $u,v: \mathbb{M}\times [0,T] \to \mathbb{R}$ be  smooth functions such that  for every $T>0$,  $\sup_{t \in [0,T]} \| u(\cdot,t)\|_\infty <\infty$, $\sup_{t \in [0,T]} \| v(\cdot,t)\|_\infty <\infty$; If the  inequality 
\[
L u+\frac{\partial u}{\partial t} \ge v
\]
holds on $\mathbb{M}\times [0,T]$, then we have
\[
P_T(u(\cdot,T))(x) \ge u(x,0) +\int_0^T P_s(v(\cdot,s))(x) ds.
\]

\end{lemma}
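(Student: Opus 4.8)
The plan is to repeat, in the present $L^\infty$ framework, the argument used to prove the parabolic comparison theorem of the Riemannian section (the Proposition preceding Theorem~\ref{T:sc}), replacing the $L^2$/$L^p$ integrability hypotheses used there by the uniform sup-bounds available here, and using (H.1) and (H.3) in place of Riemannian completeness. First I reduce to an integrated (weak) statement: it suffices to show that for every $g\in C_0^\infty(\M)$ with $g\ge 0$,
\[
\int_\M g\, P_T\big(u(\cdot,T)\big)\, d\mu-\int_\M g\, u(\cdot,0)\, d\mu\ \ge\ \int_\M g\left(\int_0^T P_s\big(v(\cdot,s)\big)\, ds\right)d\mu ,
\]
since both $P_T(u(\cdot,T))$ and $x\mapsto\int_0^T P_s(v(\cdot,s))(x)\, ds$ are continuous (the latter by $\sup_{[0,T]}\|v(\cdot,s)\|_\infty<\infty$, Minkowski's integral inequality and $P_t1\le 1$), and $C_0^\infty(\M)$ is separating.

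Fix such a $g$ and let $h_k\in C_0^\infty(\M)$, $0\le h_k\le 1$, $h_k\nearrow 1$, $\|\Gamma(h_k)\|_\infty\to 0$, be the exhaustion provided by (H.1). Set
\[
\phi_k(t)=\int_\M g\, P_t\big(h_k\, u(\cdot,t)\big)\, d\mu=\int_\M (P_tg)\, h_k\, u(\cdot,t)\, d\mu ,
\]
the second equality being self-adjointness of $P_t$; since $h_k u(\cdot,t)$ is bounded and compactly supported, $\phi_k$ is well defined and differentiable on $[0,T]$. Differentiating, using $\partial_t P_t=LP_t$, the Leibniz rule $L(h_k u)=(Lh_k)u+2\Gamma(h_k,u)+h_kLu$, the hypothesis $Lu+\partial_t u\ge v$ with $h_k\ge 0$, $P_tg\ge 0$, and then integrating by parts (legitimate because $h_k\in C_0^\infty(\M)$), I get
\begin{align*}
\phi_k'(t)&\ \ge\ \int_\M (P_tg)\big[(Lh_k)u+2\Gamma(h_k,u)\big]\, d\mu+\int_\M g\, P_t(h_k v)\, d\mu\\
&\ =\ \int_\M(P_tg)\,\Gamma(h_k,u)\, d\mu-\int_\M u\,\Gamma(h_k,P_tg)\, d\mu+\int_\M g\, P_t(h_k v)\, d\mu .
\end{align*}

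Now I integrate over $[0,T]$ and let $k\to\infty$. By dominated convergence (using $\sup_{[0,T]}\|u(\cdot,t)\|_\infty<\infty$, $\sup_{[0,T]}\|v(\cdot,t)\|_\infty<\infty$, $h_k\uparrow 1$, the sub-Markov property, $P_tg\in L^1(\M)$, and $P_t1=1$) one obtains $\phi_k(T)\to\int_\M g\,P_T(u(\cdot,T))\,d\mu$, $\phi_k(0)\to\int_\M g\,u(\cdot,0)\,d\mu$, and $\int_0^T\!\!\int_\M g\,P_t(h_kv)\,d\mu\,dt\to\int_\M g\int_0^T P_s(v(\cdot,s))\,ds\,d\mu$. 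For the first error term, $|\Gamma(h_k,u)|\le\|\sqrt{\Gamma(h_k)}\|_\infty\sqrt{\Gamma(u)}$, so (using that $u$, hence $\sqrt{\Gamma(u)}$, is bounded on $\M\times[0,T]$ for the functions to which the lemma is applied) $\big|\int_\M(P_tg)\Gamma(h_k,u)\,d\mu\big|\le\|\sqrt{\Gamma(h_k)}\|_\infty\|g\|_{L^1}\sup_{[0,T]}\|\sqrt{\Gamma(u)}\|_\infty\to 0$. For the second term, $|\Gamma(h_k,P_tg)|\le\|\sqrt{\Gamma(h_k)}\|_\infty\sqrt{\Gamma(P_tg)}$ with $\sup_{t\in[0,T]}\|\sqrt{\Gamma(P_tg)}\|_\infty<\infty$ by (H.3), while $\Gamma(h_k)$ is supported in $\{h_k<1\}$, a region escaping to infinity; since $g$ is compactly supported, $\sqrt{\Gamma(P_tg)}$ decays rapidly at infinity and hence is integrable on $\M$, so $\int_{\{h_k<1\}}\sqrt{\Gamma(P_tg)}\,d\mu\to 0$ and $\int_\M u\,\Gamma(h_k,P_tg)\,d\mu\to 0$. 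Passing to the limit gives the weak statement above, and the arbitrariness of $g\ge 0$ yields the pointwise conclusion.

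The main obstacle is exactly this last step: forcing the two error integrals to vanish with only the $L^\infty$-information on $u$ and $v$, instead of the $L^2$/$L^p$ integrability used in the Riemannian argument. This is where the uniform-in-$t$ gradient bound $\sup_{[0,T]}\|\sqrt{\Gamma(P_tg)}\|_\infty<\infty$ from (H.3) and the quantitative escape-to-infinity together with $\|\sqrt{\Gamma(h_k)}\|_\infty\to0$ from (H.1) are essential; one should also note that the hypotheses as stated ought to be supplemented (as in the source) by $\sup_{t\in[0,T]}\|\sqrt{\Gamma(u)}(\cdot,t)\|_\infty<\infty$ — automatic for every $u$ to which the lemma is applied — without which the first error term cannot be controlled. (The analogous control of a possible $\Gamma^Z$-term is identical, using the $\Gamma^Z$ part of (H.1) and (H.3).)
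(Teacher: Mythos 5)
Your overall strategy — adapting the Riemannian parabolic comparison Proposition to the present $L^\infty$ framework, using (H.1) for the exhaustion and (H.3) for the gradient control — is the natural one, and it is also the one the paper intends (it offers no proof of its own for this lemma, merely referring back to that earlier Proposition as ``already used before''). The reduction to a weak statement tested against $g\in C_0^\infty(\M)$, the definition and differentiation of $\phi_k$, the integration by parts leading to
\[
\phi_k'(t)\ \ge\ \int_\M\Bigl[(P_tg)\,\Gamma(h_k,u)-u\,\Gamma(h_k,P_tg)\Bigr]\,d\mu+\int_\M g\,P_t\bigl(h_k v(\cdot,t)\bigr)\,d\mu,
\]
the passage to the limit in the three principal terms, and the honest remark that the first error term requires $\sup_{[0,T]}\|\sqrt{\Gamma(u)(\cdot,t)}\|_\infty<\infty$ (not among the stated hypotheses, but supplied in every application of the lemma) are all correct.

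The genuine gap is in the second error term. You assert that ``since $g$ is compactly supported, $\sqrt{\Gamma(P_tg)}$ decays rapidly at infinity and hence is integrable on $\M$.'' This is not a consequence of the standing hypotheses (H.1)--(H.5). What one has at this stage is $\sqrt{\Gamma(P_tg)}\in L^2(\M,\mu)$ (spectral theorem) and, by (H.3), $\sup_{t\in[0,T]}\|\sqrt{\Gamma(P_tg)}\|_\infty<\infty$; together these give $\sqrt{\Gamma(P_tg)}\in L^p(\M,\mu)$ for $2\le p\le\infty$, but say nothing about $L^1$. Gaussian decay of $\sqrt{\Gamma(P_tg)}$ at infinity is a quantitative heat-kernel gradient estimate — precisely the kind of bound this chain of lemmas is being built up to prove — and cannot be invoked here. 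The alternative Cauchy--Schwarz bound $\bigl|\int_\M u\,\Gamma(h_k,P_tg)\,d\mu\bigr|\le\|u\|_\infty\,\|\sqrt{\Gamma(h_k)}\|_{L^2}\,\|\sqrt{\Gamma(P_tg)}\|_{L^2}$ does not rescue the argument either, because (H.1) gives $\|\sqrt{\Gamma(h_k)}\|_\infty\to 0$ but not $\|\sqrt{\Gamma(h_k)}\|_{L^2}\to 0$: the measure of the transition region $\{0<h_k<1\}$ may grow without bound as $k\to\infty$. The Riemannian version of the comparison Proposition sidesteps exactly this difficulty by assuming $u(\cdot,t)\in L^2(\M,\mu)$, which lets one pair $u$ against $\Gamma(h_k,P_tg)$ in $L^2\times L^2$ where both factors are under control; that pairing is unavailable here with only $u\in L^\infty$. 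As written, then, your argument needs an additional input — a strengthened (H.1) with $\int_\M\Gamma(h_k)\,d\mu\to 0$, or a hypothesis of the form $\sqrt{\Gamma(P_tg)}\in L^1(\M,\mu)$ for $g\in C_0^\infty(\M)$, or some integrability of $u$ beyond the stated $L^\infty$ bound — and this is a point the lecture notes themselves leave implicit.
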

We now show how to prove the Li-Yau estimates for the semigroup $P_t$. 

\begin{theorem}\label{T:ge}
Let $\alpha >2$. For  $f \in C_0^\infty(\M)$, $f  \ge 0$, $f \neq 0$, the following inequality holds for $t>0$:
\begin{align*}
 & \Gamma (\ln P_t f) +\frac{2 \rho_2}{\alpha}  t \Gamma^Z (\ln P_t f) \\
  \le & \left(1+\frac{\alpha \kappa}{(\alpha-1)\rho_2}-\frac{2\rho_1}{\alpha} t\right)
\frac{L P_t f}{P_t f} +\frac{n\rho_1^2}{2\alpha} t-\frac{\rho_1 n}{2}\left(
1+\frac{\alpha \kappa}{(\alpha-1)\rho_2}\right) +\frac{n(\alpha-1)^2\left(
1+\frac{\alpha \kappa}{(\alpha-1)\rho_2}\right)^2}{8(\alpha-2)t}.
\end{align*}
\end{theorem}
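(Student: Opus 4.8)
The strategy mirrors exactly the Riemannian Li–Yau argument (Theorem \ref{T:source} and the proposition deriving the Li–Yau inequality from it), but now we must track the extra form $\Gamma^Z$ and exploit the generalized curvature-dimension inequality $CD(\rho_1,\rho_2,\kappa,n)$ together with Lemma \ref{L:derivatives} and the comparison Lemma \ref{P:missing_key2}. The plan is as follows. Fix $T>0$, $f\in C_0^\infty(\M)$ with $f\ge 0$, $f\not\equiv 0$, and work with $f_\varepsilon = f+\varepsilon$ so that $P_{T-t}f_\varepsilon>0$ throughout (letting $\varepsilon\to 0$ at the end). Introduce the two-parameter functional
\[
\phi(x,t) = a(t)\,(P_{T-t}f_\varepsilon)(x)\,\Gamma(\ln P_{T-t}f_\varepsilon)(x) + b(t)\,(P_{T-t}f_\varepsilon)(x)\,\Gamma^Z(\ln P_{T-t}f_\varepsilon)(x),
\]
with $a\in C^1([0,T],[0,\infty))$, $b\in C^1([0,T],[0,\infty))$ to be chosen. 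Using Lemma \ref{L:derivatives} twice,
\[
L\phi + \frac{\partial\phi}{\partial t} = \left(a' \phi_1/a \cdot a + 2a\,(P_{T-t}f_\varepsilon)\Gamma_2(\ln P_{T-t}f_\varepsilon)\right) + \left(b'\,(P_{T-t}f_\varepsilon)\Gamma^Z(\ln P_{T-t}f_\varepsilon) + 2b\,(P_{T-t}f_\varepsilon)\Gamma_2^Z(\ln P_{T-t}f_\varepsilon)\right).
\]
Now apply \eqref{sRCD} to $g=\ln P_{T-t}f_\varepsilon$ with the free parameter $\nu = b/a$ (this is the point where the $\nu$-freedom in $CD$ is used): $2a\Gamma_2(g) + 2b\,\Gamma_2^Z(g) \ge \frac{2a}{n}(Lg)^2 + 2a(\rho_1 - \kappa a/b)\Gamma(g) + 2b\rho_2 \Gamma^Z(g)$.

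Next, combine this with the identity $L(\ln P_{T-t}f_\varepsilon) = \frac{LP_{T-t}f_\varepsilon}{P_{T-t}f_\varepsilon} - \Gamma(\ln P_{T-t}f_\varepsilon)$ and the elementary bound $(Lg)^2 \ge 2\gamma Lg - \gamma^2$ for any $\gamma\in C((0,T),\R)$, exactly as in the proof of Theorem \ref{T:source}. After collecting terms, one obtains an inequality of the form
\[
L\phi + \frac{\partial\phi}{\partial t} \ge A_1(t)\,(P_{T-t}f_\varepsilon)\Gamma(\ln P_{T-t}f_\varepsilon) + A_2(t)\,(P_{T-t}f_\varepsilon)\Gamma^Z(\ln P_{T-t}f_\varepsilon) + A_3(t)\,LP_{T-t}f_\varepsilon + A_4(t)\,P_{T-t}f_\varepsilon,
\]
where $A_1 = a' + 2\rho_1 a - \tfrac{4a\gamma}{n}$, $A_2 = b' + 2\rho_2 b - 2\kappa a$ (the $-2\kappa a$ coming from the $-\kappa a/b \cdot b$ term), $A_3 = \tfrac{4a\gamma}{n}$, $A_4 = -\tfrac{2a\gamma^2}{n}$. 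To make $\phi$ (and hence the Li–Yau quantity) controllable we want $A_1 \ge 0$ and $A_2\ge 0$; the cleanest route — following the Riemannian case — is to choose $\gamma$ so that $A_1 = 0$, i.e. $\gamma = \tfrac{n}{4}\big(\tfrac{a'}{a} + 2\rho_1\big)$, and to impose $A_2 = 0$, i.e. $b' = 2\kappa a - 2\rho_2 b$, which is a linear ODE for $b$ given $a$. Then apply Lemma \ref{P:missing_key2} (the $L^\infty$ parabolic comparison, legitimate thanks to (H.3) and the boundedness of all the relevant quantities for $f\in C_0^\infty$) to deduce
\[
a(T)P_T\big(f_\varepsilon\Gamma(\ln f_\varepsilon) + \nu_0 f_\varepsilon \Gamma^Z(\ln f_\varepsilon)\big) - a(0)\Big(P_T f_\varepsilon\,\Gamma(\ln P_T f_\varepsilon) + \tfrac{b(0)}{a(0)}P_T f_\varepsilon\,\Gamma^Z(\ln P_T f_\varepsilon)\Big) \ge \int_0^T\!\Big(A_3 LP_T f_\varepsilon + A_4 P_T f_\varepsilon\Big)\,\text{(pushed forward)}\,,
\]
and after dividing by $P_T f_\varepsilon$, letting $\varepsilon\to 0$, and making the specific algebraic choice $a(t) = \big(1 + \tfrac{\alpha\kappa}{(\alpha-1)\rho_2} - \tfrac{2\rho_1}{\alpha}t\big)\cdot(\text{const})$ — the choice engineered so that the coefficient of $\Gamma^Z(\ln P_T f)$ works out to $\tfrac{2\rho_2}{\alpha}t$ and the coefficient of $LP_Tf/P_Tf$ to $1+\tfrac{\alpha\kappa}{(\alpha-1)\rho_2}-\tfrac{2\rho_1}{\alpha}t$ — one reads off the stated inequality, the remaining terms assembling into $\tfrac{n\rho_1^2}{2\alpha}t - \tfrac{\rho_1 n}{2}(1+\tfrac{\alpha\kappa}{(\alpha-1)\rho_2}) + \tfrac{n(\alpha-1)^2(1+\tfrac{\alpha\kappa}{(\alpha-1)\rho_2})^2}{8(\alpha-2)t}$ after the $t$-integration $\int_0^T\gamma(t)/a(t)\,dt$ and $\int_0^T\gamma(t)^2/a(t)\,dt$ are computed.

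The main obstacle is bookkeeping rather than conceptual: one has to choose $a$ (equivalently the exponent structure), solve for $b$, pick $\gamma$, and verify that all the resulting time-integrals collapse to the precise closed-form coefficients in the statement — in particular that the condition $\alpha>2$ is exactly what makes the $\tfrac{1}{8(\alpha-2)t}$ term finite and positive, and that $b(t)\ge 0$ on $[0,T]$ (which needs $\rho_2>0$, $\kappa\ge 0$, and the right sign of $b(T)$; taking $b(T)=0$ and integrating the ODE backward gives $b(t) = 2\kappa\int_t^T e^{-2\rho_2(s-t)}a(s)\,ds \ge 0$ automatically). A secondary subtlety is ensuring the hypotheses of Lemma \ref{P:missing_key2} hold for $\phi$: this requires (H.3) to guarantee $\sup_{[0,T]}\|\Gamma(P_t f)\|_\infty + \|\Gamma^Z(P_t f)\|_\infty<\infty$, together with the fact that $\Gamma_2, \Gamma_2^Z$ of $\ln P_{T-t}f_\varepsilon$ are likewise bounded — which follows because $P_{T-t}f_\varepsilon$ is bounded below by $\varepsilon\,P_{T-t}1 = \varepsilon$ (using (H.3) stochastic completeness) and bounded above, so all the logarithmic derivatives stay bounded on $[0,T]$; then one lets $\varepsilon\to 0$ at the very end.
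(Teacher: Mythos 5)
Your overall strategy — introduce the two-parameter ansatz $\phi = a\,(P_{T-t}f_\varepsilon)\Gamma(\ln P_{T-t}f_\varepsilon) + b\,(P_{T-t}f_\varepsilon)\Gamma^Z(\ln P_{T-t}f_\varepsilon)$, apply Lemma~\ref{L:derivatives}, use the CD inequality with the free parameter $\nu=b/a$, linearize $(Lg)^2\ge 2\gamma Lg-\gamma^2$, and close with the $L^\infty$ parabolic comparison — is exactly the paper's. However, the application of the CD inequality contains an algebraic error that changes the ODE system, and hence the constants, so the argument as written does not establish the stated bound.

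Recall that $CD(\rho_1,\rho_2,\kappa,n)$ reads
\[
\Gamma_2(g) + \nu\,\Gamma_2^Z(g) \ \ge\ \frac{1}{n}(Lg)^2 + \left(\rho_1 - \frac{\kappa}{\nu}\right)\Gamma(g) + \rho_2\,\Gamma^Z(g).
\]
Multiplying by $2a$ and substituting $\nu=b/a$ gives
\[
2a\,\Gamma_2(g) + 2b\,\Gamma_2^Z(g) \ \ge\ \frac{2a}{n}(Lg)^2 + \left(2a\rho_1 - \frac{2\kappa a^2}{b}\right)\Gamma(g) + 2a\rho_2\,\Gamma^Z(g).
\]
Two things go wrong in your version. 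First, you write the $\Gamma^Z$-coefficient as $2b\rho_2$ rather than $2a\rho_2$. Second, and more seriously, when you then read off $A_1,A_2$ you move the $\kappa$-contribution from the coefficient of $\Gamma$ to the coefficient of $\Gamma^Z$, getting $A_1 = a'+2\rho_1 a -\tfrac{4a\gamma}{n}$ and $A_2 = b' + 2\rho_2 b - 2\kappa a$. But the $-\kappa/\nu$ term in the CD inequality multiplies $\Gamma$, not $\Gamma^Z$, so the correct system is
\[
A_1 = a' + 2\rho_1 a - \frac{2\kappa a^2}{b} - \frac{4a\gamma}{n}, \qquad A_2 = b' + 2\rho_2 a,
\]
which is what the paper uses. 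The misplacement is fatal for two reasons. (i) With your $A_1=0$ you get $\gamma = \tfrac{n}{4}\big(\tfrac{a'}{a}+2\rho_1\big)$, with no $\kappa$-dependence; but in the statement the coefficient of $LP_tf/P_tf$, namely $1+\tfrac{\alpha\kappa}{(\alpha-1)\rho_2}-\tfrac{2\rho_1}{\alpha}t$, is $-\tfrac{1}{a(0)}\int_0^T\tfrac{4a\gamma}{n}\,dt$, and in the paper's computation the $\kappa$ enters precisely through the $-2\kappa a^2/b$ term in $\gamma$. Your $\gamma$ can therefore never reproduce this coefficient. (ii) Your ODE $b' + 2\rho_2 b = 2\kappa a$ with the terminal condition $b(T)=0$ yields
\[
b(t) = -2\kappa\int_t^T e^{2\rho_2(s-t)}a(s)\,ds \ \le\ 0,
\]
not $+2\kappa\int_t^T e^{-2\rho_2(s-t)}a(s)\,ds$ as you claim; so $b\le 0$ and $\nu = b/a\le 0$, which is outside the range $\nu>0$ required to invoke Definition~\ref{D:cdi} in the first place. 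By contrast, the paper's ODE $b' = -2\rho_2 a$ with $b(T)=0$ gives $b(t)=2\rho_2\int_t^T a(s)\,ds\ge 0$, and the explicit choice $b(t)=(T-t)^\alpha$, $a=-b'/(2\rho_2)=\tfrac{\alpha}{2\rho_2}(T-t)^{\alpha-1}$ is what makes all the integrals close up to the stated constants (with $\alpha>2$ ensuring $a(T)=b(T)=0$ and the finiteness of $\int_0^T a\gamma^2\,dt$). Correcting the placement of the $\kappa$ term and the $\rho_2$ coefficient would make your argument coincide with the paper's.
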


\begin{proof}
We fix $T>0$ and consider two functions $a,b:[0,T] \to \mathbb{R}_{\ge 0}$ to be chosen later. Let $f \in C^\infty(\mathbb{M})$, $ f \ge 0$.
Consider the function
\[
\phi (x,t)=a(t)(P_{T-t} f) (x)\Gamma (\ln P_{T-t}f)(x)+b(t)(P_{T-t} f) (x) \Gamma^Z (\ln P_{T-t}f)(x).
\]
Applying Lemma \ref{L:derivatives} and the curvature-dimension inequality, we obtain
\begin{align*}
 & L \phi+\frac{\partial \phi}{\partial t} \\
=& a' (P_{T-t} f) \Gamma (\ln P_{T-t}f)+b' (P_{T-t} f) \Gamma^Z (\ln P_{T-t}f) +2a (P_{T-t} f) \Gamma_2 (\ln P_{T-t}f) \\
 & +2b (P_{T-t} f) \Gamma_2^Z (\ln P_{T-t}f) \\
\ge&  \left(a'+2\rho_1 a -2\kappa \frac{a^2}{b}\right)(P_{T-t} f) \Gamma (\ln P_{T-t}f)  +(b'+2\rho_2 a) (P_{T-t} f)  \Gamma^Z (\ln P_{T-t}f) \\
&+\frac{2a}{n}  (P_{T-t} f) (L(\ln P_{T-t} f))^2. 
\end{align*}
But, for any function $\gamma:[0,T]\to \R$
\[
(L(\ln P_{T-t} f))^2 \ge 2\gamma L(\ln P_{T-t}f) -\gamma^2,
\]
and from chain rule
\[
 L(\ln P_{T-t}f)=\frac{L P_{T-t}f}{P_{T-t}f} -\Gamma(\ln P_{T-t} f ).
 \]
Therefore, we obtain
 \begin{align*}
L \phi+\frac{\partial \phi}{\partial t}   \ge & \left(a'+2\rho_1 a -2\kappa \frac{a^2}{b}-\frac{4a\gamma}{n} \right) (P_{T-t} f) \Gamma (\ln P_{T-t}f)
\\
& +(b'+2\rho_2 a)  (P_{T-t} f) \Gamma^Z (\ln P_{T-t}f) +\frac{4a\gamma}{n} L P_{T-t} f - \frac{2a\gamma^2}{n} P_{T-t} f.
\end{align*}
The idea is now to chose $a,b,\gamma$ such that
\begin{align*}
\begin{cases}
a'+2\rho_1 a -2\kappa \frac{a^2}{b}-\frac{4a\gamma}{n} =0 \\
b'+2\rho_2 a=0
\end{cases}
\end{align*}
With this choice we get
\begin{align}\label{estimetr}
L \phi+\frac{\partial \phi}{\partial t}   \ge \frac{4a\gamma}{n} L P_{T-t} f - \frac{2a\gamma^2}{n} P_{T-t} f
\end{align}

We wish to apply Lemma \ref{P:missing_key2}. We take now $f \in C_0^\infty(\M)$ and apply the previous inequality with $f_\varepsilon=f+\varepsilon$ instead of $f$, where $\varepsilon >0$. If moreover $a(T)=b(T)=0$, we end up with the inequality
\begin{align}\label{jkli}
 & a(0)(P_{T} f_\varepsilon) (x)\Gamma (\ln P_{T}f_\varepsilon)(x)+b(0)(P_{T} f) (x) \Gamma^Z (\ln P_{T}f_\varepsilon)(x) \notag \\
 \le &  -\int_0^T \frac{4a\gamma}{n} dt L P_{T} f_\varepsilon (x)  +\int_0^T \frac{2a\gamma^2}{n}dt  P_{T} f_\varepsilon(x)
\end{align}
If we now chose $b(t)=(T-t)^\alpha$ and $b,\gamma$ such that
\begin{align*}
\begin{cases}
a'+2\rho_1 a -2\kappa \frac{a^2}{b}-\frac{4a\gamma}{n} =0 \\
b'+2\rho_2 a=0
\end{cases}
\end{align*}
the result follows by a simple computation and sending then $\varepsilon \to 0$.
\end{proof}

Observe that if $\rho_1 \ge 0$, then we can take $\rho_1 =0$ and the estimate simplifies to
\begin{align*}
 \Gamma (\ln P_t f) +\frac{2 \rho_2}{\alpha}  t \Gamma^Z (\ln P_t f)  \le & \left(1+\frac{\alpha \kappa}{(\alpha-1)\rho_2}\right) \frac{L P_t f}{P_t f}+\frac{n(\alpha-1)^2\left(
1+\frac{\alpha \kappa}{(\alpha-1)\rho_2}\right)^2}{8(\alpha-2)t}.
\end{align*}

By adapting the classical method of Li and Yau and integrating this last inequality on subunit curves leads to a parabolic Harnack inequality  (details are in \cite{BG}). For $\alpha >2$, we denote
\begin{align}\label{D}
D_\alpha=\frac{n(\alpha-1)^2\left(
1+\frac{\alpha \kappa}{(\alpha-1)\rho_2}\right)}{4(\alpha-2)}.
\end{align}
The minimal value of $D_\alpha$ is difficult to compute, depends on $\kappa, \rho_2$ and does not seem relevant because the constants we get are anyhow not optimal. We just point out that the choice $\alpha=3$ turns out to simplify many computations and is actually optimal when $\kappa=4\rho_2$.

\begin{corollary}
Let us assume that $\rho_1 \ge 0$. Let $f\in
L^\infty(\bM)$, $f \ge 0$, and consider $u(x,t) =
P_t f(x)$. For every $(x,s), (y,t)\in \bM\times (0,\infty)$ with
$s<t$ one has with $D_\alpha$ as in \eqref{D}
\begin{equation*}
u(x,s) \le u(y,t) \left(\frac{t}{s}\right)^{\frac{D_\alpha}{2}} \exp\left(
\frac{D_\alpha}{n} \frac{d(x,y)^2}{4(t-s)} \right).
\end{equation*}
Here $d(x,y)$ is the sub-Riemannian distance between $x$ and $y$.
\end{corollary}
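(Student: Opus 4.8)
The plan is to imitate the proof of Theorem~\ref{T:harnack}, substituting the sub-Riemannian Li--Yau estimate of Theorem~\ref{T:ge} for the Riemannian one and subunit curves for Riemannian geodesics. First I would rewrite the Li--Yau inequality in a form suited to integration along a curve: since $\rho_1\ge 0$ we may take $\rho_1=0$, and dropping the non-negative term $\tfrac{2\rho_2}{\alpha}t\,\Gamma^Z(\ln P_tf)$ from the left-hand side of Theorem~\ref{T:ge} leaves, for $f\in C_0^\infty(\M)$ with $f\ge 0$, $f\not\equiv 0$ and $t>0$,
\begin{equation*}
\frac{1}{A}\,\Gamma(\ln P_tf)\ \le\ \frac{LP_tf}{P_tf}+\frac{D_\alpha}{2t},\qquad A:=1+\frac{\alpha\kappa}{(\alpha-1)\rho_2},
\end{equation*}
where I have used the identity $\frac{n(\alpha-1)^2A^2}{8(\alpha-2)}=\frac{A}{2}D_\alpha$, immediate from the definition \eqref{D} of $D_\alpha$; note that $P_tf>0$ for $t>0$, so $\ln P_tf$ is a well-defined smooth function.

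Next I would fix $0<s<t$ and $x,y\in\M$. By (H.4) there is a subunit curve joining $x$ and $y$; reparametrizing it onto $[s,t]$ produces an absolutely continuous path $\gamma:[s,t]\to\M$ with $\gamma(s)=x$, $\gamma(t)=y$, together with a speed density $v\ge 0$ such that $\big|\tfrac{d}{du}g(\gamma(u))\big|\le v(u)\sqrt{\Gamma(g)(\gamma(u))}$ for every $g\in C^\infty(\M)$, the subunit length of $\gamma$ being $\ell=\int_s^t v\,du$ (independent of the parametrization). Setting $\phi(u)=\ln P_uf(\gamma(u))$ and using $\partial_u P_uf=LP_uf$ together with the subunit bound I would obtain
\[
\phi'(u)\ \ge\ \frac{LP_uf}{P_uf}(\gamma(u))-v(u)\sqrt{\Gamma(\ln P_uf)(\gamma(u))},
\]
then apply Young's inequality $v\sqrt{\Gamma}\le \tfrac{1}{A}\Gamma+\tfrac{A}{4}v^2$ and the displayed Li--Yau estimate to cancel the two copies of $\frac{LP_uf}{P_uf}$, arriving at $\phi'(u)\ge -\tfrac{D_\alpha}{2u}-\tfrac{A}{4}v(u)^2$. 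Integrating over $[s,t]$, bounding $\int_s^t v^2\,du\ge \ell^2/(t-s)$ by Cauchy--Schwarz, and taking the infimum over all such curves (so that $\ell\to d(x,y)$ by \eqref{ds}), I arrive at
\begin{equation*}
P_sf(x)\ \le\ P_tf(y)\Big(\frac{t}{s}\Big)^{\frac{D_\alpha}{2}}\exp\Big(\frac{A\,d(x,y)^2}{4(t-s)}\Big).
\end{equation*}
Since $\tfrac{D_\alpha}{n}=\tfrac{(\alpha-1)^2}{4(\alpha-2)}\,A\ge A$ for every $\alpha>2$ (equality precisely at $\alpha=3$, the case of \cite{BG}), the exponent is bounded by $\tfrac{D_\alpha}{n}\tfrac{d(x,y)^2}{4(t-s)}$, which is the claimed inequality for $f\in C_0^\infty(\M)$. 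To reach an arbitrary $f\in L^\infty(\M)$, $f\ge 0$, I would proceed exactly as at the end of the proof of Theorem~\ref{T:harnack}: apply the bound to $h_n P_\tau f\in C_0^\infty(\M)$, with $h_n$ the sequence from (H.1), and let $n\to\infty$ and then $\tau\to 0^+$, using Beppo Levi's monotone convergence theorem and the continuity of $(u,z)\mapsto P_uf(z)$ on $(0,\infty)\times\M$.

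The step I expect to be the main obstacle is the curve argument, since here $d$ is the sub-Riemannian distance, defined through subunit curves rather than through a metric tensor: one has to check that a subunit curve joining $x$ and $y$ exists and reparametrizes onto $[s,t]$ with an admissible speed density $v$, that the infimum of the subunit lengths of such curves equals $d(x,y)$ (this is \eqref{ds}, whose equivalence with the $\sup$-characterization of $d$ was established in Section~1), and that $\phi$ is absolutely continuous so that the fundamental theorem of calculus applies --- which is where $s>0$, the joint smoothness of $P_uf$, and the positivity of $P_uf$ enter. A minor point, handled as in the proof of the gradient bound for $P_t$, is the possible vanishing of $\Gamma(P_uf)$ or of $P_uf$: one runs the argument first with $f$ replaced by $f+\varepsilon$ (equivalently, with the regularization $g_\varepsilon(\Gamma(\,\cdot\,))$ used there) and lets $\varepsilon\to 0$; for $f\ge 0$, $f\not\equiv 0$ this last issue can in fact be avoided, as then $P_uf>0$ for $u>0$.
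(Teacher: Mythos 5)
Your proof is correct and follows the approach the paper itself indicates (integrating the sub-Riemannian Li--Yau estimate of Theorem~\ref{T:ge} along subunit curves, exactly as in Theorem~\ref{T:harnack}, with the details deferred to \cite{BG}); the algebraic identification $\tfrac{n(\alpha-1)^2A^2}{8(\alpha-2)}=\tfrac{A}{2}D_\alpha$ and the cancellation of the $\tfrac{LP_uf}{P_uf}$ terms via Young's inequality are both carried out correctly. Your optimization of the Young parameter in fact yields the slightly sharper exponential factor $\exp\bigl(\tfrac{A\,d(x,y)^2}{4(t-s)}\bigr)$ with $A=1+\tfrac{\alpha\kappa}{(\alpha-1)\rho_2}\le \tfrac{D_\alpha}{n}$ (equality at $\alpha=3$), which dominates the stated bound, so the corollary follows.
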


It is classical since the work by Li and Yau  and not difficult  to prove that a parabolic Harnack inequality implies a Gaussian upper bound on the heat kernel. With the curvature dimension inequality in hand, it is actually also possible, but much more difficult, to prove a lower bound. The final result proved in \cite{BBG} is:

\begin{theorem} Let us assume that $\rho_1 \ge 0$, then for any $0<\ve <1$
there exists a constant $C(\ve) = C(n,\kappa,\rho_2,\ve)>0$, which tends
to $\infty$ as $\ve \to 0^+$, such that for every $x,y\in \bM$
and $t>0$ one has
\[
\frac{C(\ve)^{-1}}{\mu(B(x,\sqrt
t))} \exp
\left(-\frac{D_\alpha d(x,y)^2}{n(4-\ve)t}\right)\le p_t(x,y)\le \frac{C(\ve)}{\mu(B(x,\sqrt
t))} \exp
\left(-\frac{d(x,y)^2}{(4+\ve)t}\right),
\]
where $p_t(x,y)$ is the heat kernel of $L$.
\end{theorem}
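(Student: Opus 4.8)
The plan is to transplant, essentially verbatim, the three–step scheme developed in Section 2 for the Laplace--Beltrami operator, the only structural modifications being that the effective dimension $n$ gets replaced by the constant $D_\alpha$ of \eqref{D}, and that in the intermediate estimates the auxiliary form $\Gamma^Z$ must be carried along and then discarded using assumptions (H.1)--(H.3). The first ingredient, the parabolic Harnack inequality, is already available: it is the Corollary to Theorem \ref{T:ge}, obtained by integrating the Li--Yau estimate along subunit curves (which exist by (H.4)), and it gives for $0\le f\in L^\infty(\M)$, $u=P_tf$ and $s<t$,
\[
u(x,s) \le u(y,t)\left(\frac{t}{s}\right)^{D_\alpha/2}\exp\left(\frac{D_\alpha}{n}\,\frac{d(x,y)^2}{4(t-s)}\right).
\]
By hypoellipticity of $L-\partial_t$ and the semigroup property this passes verbatim to the heat kernel, i.e. $p(x,y,s)\le p(x,z,t)(t/s)^{D_\alpha/2}\exp\big(\tfrac{D_\alpha}{n}\tfrac{d(y,z)^2}{4(t-s)}\big)$. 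The second ingredient is the volume doubling property: I would run the argument that produced the uniform ball estimate in the non-negatively curved Riemannian case, now with the curvature–dimension inequality CD($\rho_1,\rho_2,\kappa,n$) of Definition \ref{D:cdi} in place of its Riemannian counterpart and with the $\Gamma^Z$–terms controlled by (H.3), to get constants $A,K$ depending only on $n,\kappa,\rho_2$ such that $P_{Ar^2}(\mathbf 1_{B(x,r)})(x)\ge K$; feeding this together with $P_t1=1$ into the heat-kernel Harnack inequality yields the on-diagonal upper bound $p(x,x,t)\le C\mu(B(x,\sqrt t))^{-1}$, and comparing $p(x,x,2r^2)$ with $p(x,x,4r^2)$ gives $\mu(B(x,2r))\le C\mu(B(x,r))$.

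With doubling and Harnack in hand the Gaussian \emph{upper} bound follows as in the proof of the Riemannian Gaussian upper bound (Theorem \ref{T:ub}). One first establishes the integrated maximum principle: if $g\le 0$ satisfies $\partial_t g+\tfrac12\Gamma(g)\le 0$ in the distributional sense, then $\int_\M e^{g(\cdot,t)}(P_tf)^2\,d\mu\le\int_\M e^{g(\cdot,0)}f^2\,d\mu$ for every $f\in L^2(\M,\mu)$; the proof is the same integration by parts against $h_k^2e^g$, with $h_k$ the localizing sequence of (H.1), and it uses only $\Gamma$ since $(L-\partial_t)(P_tf)^2=2\Gamma(P_tf)$, so $\Gamma^Z$ never enters. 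Inserting $g(y,t)=-d(x,y)^2/\big(2((1+2\alpha)T-t)\big)$, which satisfies the required differential inequality because $\Gamma(d)\le 1$, localizing on balls $B(x,\sqrt t)$ and then invoking the heat-kernel Harnack inequality exactly as in Section 2 produces $p(x,y,t)\le C_1\big(\mu(B(x,\sqrt t))\mu(B(y,\sqrt t))\big)^{-1/2}\exp\big(-d(x,y)^2/((4+\ve')t)\big)$ (there is no linear-in-$t$ term since $\rho_1\ge 0$); the doubling property converts $\mu(B(y,\sqrt t))^{-1/2}$ into $\mu(B(x,\sqrt t))^{-1/2}(1+d(x,y)/\sqrt t)^{Q/2}$ with $Q=\log_2 C$, and absorbing this polynomial factor into the exponential while worsening $\ve'$ to $\ve$ gives the stated upper bound with $C(\ve)\to\infty$ as $\ve\to 0^+$.

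The Gaussian \emph{lower} bound is the genuinely hard half. Its off-diagonal part is cheap once a matching on-diagonal lower bound $p(x,x,t)\ge c\,\mu(B(x,\sqrt t))^{-1}$ is available: applying the heat-kernel Harnack inequality with $y=x$ at time $\ve t$ chains it to $p(x,y,t)\ge c\,\ve^{D_\alpha/2}\mu(B(x,\sqrt t))^{-1}\exp\big(-\tfrac{D_\alpha}{n}\tfrac{d(x,y)^2}{(4-\ve)t}\big)$, using $\mu(B(x,\sqrt{\ve t}))\le\mu(B(x,\sqrt t))$. The on-diagonal lower bound, however, cannot be extracted from the Li--Yau inequality of Theorem \ref{T:ge} alone: as the authors indicate, it is precisely the content of \cite{BBG} and rests on substantially more delicate reverse-type estimates. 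Thus the \textbf{main obstacle} is this on-diagonal lower bound; once it is granted, the entire two-sided statement assembles formally by the steps above, but the proof of that ingredient genuinely goes beyond the tools displayed in this excerpt and should be cited from \cite{BBG} rather than reproved here.
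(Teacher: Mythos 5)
The paper does not prove this theorem in the displayed excerpt at all: it states it verbatim as the main result of \cite{BBG}, with a one-sentence remark that the upper bound is classical given a parabolic Harnack inequality while the lower bound is ``much more difficult.'' Your sketch is therefore more detailed than what the paper provides, and its general architecture (Li--Yau $\Rightarrow$ Harnack; Harnack $+$ doubling $\Rightarrow$ two-sided Gaussian bounds; doubling from a uniform small-time ball estimate) is indeed the right one. The correct instinct is also that the final ingredient must be delegated to \cite{BBG}.

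There is, however, an internal inconsistency that amounts to a genuine gap. In your second paragraph you assert that the uniform ball estimate $P_{Ar^2}(\mathbf 1_{B(x,r)})(x)\ge K$ carries over from the Riemannian case ``with the $\Gamma^Z$--terms controlled by (H.3),'' and from this you derive doubling. But in the Riemannian argument that same ball estimate, fed through Harnack and Cauchy--Schwarz, \emph{is} the on-diagonal lower bound $p(x,x,2r^2)\ge K^*/\mu(B(x,r))$; these two facts are essentially one and the same. So your third paragraph, which declares the on-diagonal lower bound ``the genuinely hard half'' requiring \cite{BBG}'s ``substantially more delicate reverse-type estimates,'' contradicts your second paragraph. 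The resolution is that the second paragraph is where the real difficulty hides: to run the reverse log-Sobolev argument behind the ball estimate one must apply the analogue of Theorem~\ref{T:source} to $e^{\lambda f}$ with $\Gamma(f)\le 1$, and the generalized curvature--dimension inequality $CD(\rho_1,\rho_2,\kappa,n)$ injects $\Gamma^Z$-terms that are not controlled by the subunit distance (functions with $\Gamma(f)\le 1$ give no bound on $\Gamma^Z(f)$), nor quantitatively by the mere uniform boundedness of (H.3). That is precisely the delicate analysis in \cite{BBG}. Note also that, because of the doubling step needed to pass from $\mu(B(x,\sqrt t))^{-1/2}\mu(B(y,\sqrt t))^{-1/2}$ to the centered $\mu(B(x,\sqrt t))^{-1}$, even the \emph{upper} bound in the form stated already requires this nontrivial ingredient; so it is not accurate to present the upper bound as transplanting ``verbatim'' while isolating only the lower bound as hard. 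You should either flag the ball estimate as the actual obstruction (and cite \cite{BBG} already there), or, if you keep the citation where you put it, drop the claim that the uniform ball estimate extends straightforwardly.
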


From the equivalence between Gaussian estimates for the heat kernel and volume doubling properties and Poincar\'e inequalities (see \cite{St1,St2}), this theorem implies the following important result:

\begin{theorem}
Let us assume that $\rho_1 \ge 0$. Then, the metric measure space $(\M,d,\mu)$ satisfies the global volume doubling property and supports a scale invariant 2-Poincar\'e inequality on balls.
\end{theorem}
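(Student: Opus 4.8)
The plan is to deduce both properties from the two-sided Gaussian heat kernel bounds recalled in the preceding theorem, which under the standing assumption $\rho_1\ge 0$ read
\[
\frac{C(\ve)^{-1}}{\mu(B(x,\sqrt t))}\exp\left(-\frac{D_\alpha d(x,y)^2}{n(4-\ve)t}\right)\le p_t(x,y)\le \frac{C(\ve)}{\mu(B(x,\sqrt t))}\exp\left(-\frac{d(x,y)^2}{(4+\ve)t}\right),
\]
for all $x,y\in\bM$, $t>0$ and any fixed $0<\ve<1$, with $D_\alpha$ as in \eqref{D}. I would obtain the volume doubling property by a short self-contained argument from the on-diagonal case $y=x$ together with the parabolic Harnack inequality, and then obtain the scale-invariant $2$-Poincar\'e inequality on balls by invoking the equivalence, for strongly local Dirichlet forms, between two-sided Gaussian estimates and the conjunction of volume doubling and the Poincar\'e inequality, as recorded in \cite{St1,St2}.

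For volume doubling, fix $\ve=1/2$ and put $y=x$ above; since $d(x,x)=0$ this yields
\[
\frac{C^{-1}}{\mu(B(x,\sqrt t))}\le p_t(x,x)\le \frac{C}{\mu(B(x,\sqrt t))},\qquad x\in\bM,\ t>0,
\]
with $C=C(n,\ka,\rho_2)$. Next, arguing exactly as in Corollary~\ref{C:harnackheat} in the Riemannian case — approximating $p(x,\cdot,\tau)$ by $h_k\,p(x,\cdot,\tau)\in C_0^\infty(\bM)$, using the semigroup identity $p(x,\cdot,s+\tau)=P_s(p(x,\cdot,\tau))$, and letting $k\to\infty$ and then $\tau\to 0$ — the parabolic Harnack inequality stated just above upgrades to the heat kernel: for $0<s<t$,
\[
p(x,x,s)\le p(x,x,t)\left(\frac{t}{s}\right)^{\frac{D_\alpha}{2}}.
\]
Applying this with $s=r^2$, $t=4r^2$ and combining with the on-diagonal bounds at scales $r$ and $2r$ gives
\[
\mu(B(x,2r))\le \frac{C}{p(x,x,4r^2)}\le \frac{C\,4^{D_\alpha/2}}{p(x,x,r^2)}\le C^{2}\,4^{D_\alpha/2}\,\mu(B(x,r)),
\]
so $(\bM,d,\mu)$ is globally volume doubling, with a constant depending only on $n,\ka,\rho_2$.

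It remains to produce the scale-invariant $2$-Poincar\'e inequality on balls, and this is the genuine content of the statement. I would not reprove it from scratch: unlike the Riemannian case, the hands-on route through a Neumann heat kernel lower bound on balls uses convexity of geodesic spheres, which is unavailable in the sub-Riemannian setting. Instead, one observes that $L$ generates a strongly local regular Dirichlet form $\mathcal E(f)=\int_\bM\Gamma(f)\,d\mu$ on $L^2(\bM,\mu)$ (strong locality being a consequence of the diffusion property, i.e. of the Leibniz rule for $\Gamma$), whose intrinsic distance is precisely $d$ by \eqref{di}, while $(\bM,d)$ is complete by (H.5). Under these structural hypotheses the equivalence established by Sturm, Saloff-Coste and Grigor'yan (see \cite{St1,St2}) asserts that a two-sided Gaussian estimate of the form displayed above is equivalent to the conjunction of global volume doubling and a scale-invariant weak $2$-Poincar\'e inequality on balls; the fact that the two exponential rates differ is harmless, since once doubling is known the constants may be freely readjusted. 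Since we have just verified both the Gaussian bounds and doubling, the weak $2$-Poincar\'e inequality follows, and the standard self-improvement argument promotes it to the strong one. The main obstacle is thus not any computation here but the depth of the quoted equivalence; all that is required of us is to check that its hypotheses are met, which they are.
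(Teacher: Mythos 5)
Your proof is correct and follows essentially the same route as the paper, which proves the theorem in a single line by citing the Sturm/Saloff-Coste/Grigor'yan equivalence (from \cite{St1,St2}) between two-sided Gaussian heat kernel bounds, on one side, and the conjunction of global volume doubling and the scale-invariant weak $2$-Poincar\'e inequality, on the other, under exactly the structural hypotheses you verify (strongly local regular Dirichlet form, completeness of the intrinsic metric). Your hand-derivation of the doubling half from the on-diagonal bounds together with the parabolic Harnack inequality is a correct and slightly more self-contained elaboration, though strictly redundant once the equivalence is invoked, and your worry about the mismatched exponents in the upper and lower bounds is unfounded in the first place: the equivalence in \cite{St1,St2} is stated for Gaussian bounds with possibly different constants in the two exponentials, so no readjustment via doubling is needed.
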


\end{document}